\newcounter{thm}
\numberwithin{thm}{section}
\newtheorem{remark}[thm]{Remark}
\newtheorem{definition}[thm]{Definition}
\newtheorem{conjecture}[thm]{Conjecture}
\newtheorem{theorem}[thm]{Theorem}
\newtheorem{lemma}[thm]{Lemma}
\newtheorem{proposition}[thm]{Proposition}
\newcommand{\vphi}{\varphi}
\newcommand{\eps}{\varepsilon}
\newcommand{\vt}{\vartheta}
\newcommand{\E}{\mathbb{E}}
\renewcommand{\P}{\mathbb{P}}
\renewcommand{\O}{\Omega}
\renewcommand{\o}{\omega}
\newcommand{\mcB}{\mathcal{B}}
\newcommand{\mcF}{\mathcal{F}}
\newcommand{\F}{\mathcal F}
\newcommand{\R}{\mathbb{R}}
\newcommand{\C}{\mathbb{C}}
\newcommand{\N}{\mathbb{N}}
\newcommand{\Z}{\mathbb{Z}}
\newcommand{\T}{\mathbb{T}}
\newcommand{\D}{\Delta}
\newcommand{\mfp}{\mathfrak{p}}
\newcommand{\e}{\varepsilon}
\newcommand{\adiv}{\mathcal{R}}
\newcommand{\esssup}{\mathrm{ess~sup}}
\DeclareMathOperator*{\supp}{supp}
\DeclareMathOperator*{\sign}{sign}
\DeclareMathOperator{\divv}{div}
\begin{document}

\title{Local Nonuniqueness for Stochastic Transport Equations with Deterministic Drift}
\author{Stefano Modena, Andre Schenke}
\email{stefano.modena@gssi.it, andre.schenke@cims.nyu.edu}
\address{\noindent Gran Sasso Science Institute, 67100 L'Aquila, Italy \newline
Courant Institute of Mathematical Sciences
New York University, New York, N.Y. 10012-1185, USA}

\keywords{Stochastic partial differential equations, passive scalar transport, stochastic transport equation, convex integration, nonuniqueness, pathwise uniqueness}
\subjclass{60H15; 35R60; 35Q49; 35R25}
\begin{abstract}
We study well-posedness for the stochastic transport equation with transport noise, as introduced by Flandoli, Gubinelli and Priola \cite{FGP10}. We consider periodic solutions  in $\rho \in L^{\infty}_{t} L_{x}^{p}$ for divergence-free drifts $u \in L^{\infty}_{t} W_{x}^{\theta, \tilde{p}}$ for a large class of parameters. We prove local-in-time pathwise nonuniqueness and compare them to uniqueness results by Beck, Flandoli, Gubinelli and Maurelli \cite{BFGM19}, addressing a conjecture made by these authors, in the case of bounded-in-time drifts for a large range of spatial parameters. To this end, we use convex integration techniques to construct  velocity fields $u$ for which several solutions $\rho$ exist in the classes mentioned above. The main novelty lies in the ability to construct deterministic drift coefficients, which makes it necessary to consider a convex integration scheme \textit{with a constraint}, which poses a series of technical difficulties.
\end{abstract}

\maketitle

\section{Introduction}

We consider the stochastic transport equation with transport noise on the torus $\mathbb{T}^{d} = \R^{d} / \Z^{d}$, $d \geq 1$, informally given by
\begin{equation}\label{eq_STE}
    \begin{cases}
    \partial_{t} \rho + (u \cdot \nabla) \rho &= (\nabla \cdot \rho) \circ \dot{B}(t), \\
    \rho(0,x) &= \rho_{0}(x).     
    \end{cases}
\end{equation}
Here, $\rho \colon \R_{+} \times \mathbb{T}^{d} \times \O \to \R$ denotes the density being transported, $u \colon \R_{+} \times \mathbb{T}^{d} \to \R^{d}$ a deterministic transporting vector field, and $B \colon \R_{+} \times \O \to \R^{d}$ denotes a $d$-dimensional Brownian motion. This equation models transport of scalar properties like heat or dyes in a random medium, i.e. advected by a random, time-dependent velocity field $u - \dot{B}(\o)$. Here, the randomness is modelled as a white-in-time, uniform in space perturbation of the deterministic vector field $u$.

\subsection{Literature Overview: deterministic case}
In the deterministic case ($B = 0$) and for smooth (say, Lipschitz continuous) vector fields, it is a classical fact that there is a strong connection between solutions to the transport equation \eqref{eq_STE} and the associated ordinary differential equation for the \textit{characteristics} of the vector field:
\begin{equation}
 \begin{cases}
        \dot{y}(t) &= u(t,y(t)) \\
    y(0) &= x.
 \end{cases}
\end{equation}
Indeed, calling the solution to the ODE $\Phi_{t}(x)$ (the so called \textit{flow} of the vector field $u$), the solution to the transport equation is given by 
\begin{equation}
\label{eq_flow_map}
\rho(t,x) = \rho_{0}(\Phi_{t}^{-1}(x)). 
\end{equation}
In case of less regular vector fields,  the theory of well-posedness for the transport equation was pioneered by Di Perna and Lions \cite{DPL89} who proved well-posedness in the class
\begin{align}
\label{eq_classes}
    \rho \in L^{\infty}_{t} L^{p}_{x}, \quad u \in L^{1}_{t} W^{1,\tilde{p}}_{x} 
\end{align}
for
\begin{equation}
\label{eq_dpl}
    \frac{1}{p} + \frac{1}{\tilde{p}} \leq 1.
\end{equation}
Their method was based on applying a regularisation operation, e.g. by a mollification $\rho_{\eps} := \phi_{\eps} * \rho$, $u_{\eps} := \phi_{\eps} * u$, where $\phi_{\eps}$ is a mollifier and $*$ denotes convolution and subsequent estimates of the commutator
\begin{align*}
    r_{\eps} := (\rho * u)_{\eps} - \rho_{\eps} * u_{\eps}
\end{align*}
in a strong sense in some $L^{1}_{tx}$. These estimates lead to strong convergence of the commutator and can then be used to show uniqueness. In this argument, the fundamental assumption is that the integrability of the density and the one of the derivative of the vector field ``match'' in the sense that \eqref{eq_dpl} holds. Di Perna and Lions' result was extended by Ambrosio \cite{Ambrosio04} in the case of vector fields of bounded variation and divergence in $L^\infty$, for densities in $L^\infty_{tx}$. Subsequent important improvements can be found, for instance, in \cite{Crippa1} and \cite{Bianchini:2017vf}.

All the uniqueness results we mentioned are strongly based on two assumptions on the vector field $u$: a bound on its full derivative $\nabla u$ (e.g. an $L^p$ bound like in the DiPerna-Lions theory) and a (stronger) bound on the divergence (e.g. $\divv u \in L^\infty$). 

In addition, it is a sort of rule of thumb that, whenever a uniqueness result holds for a vector field (with low regularity) in the class of bounded densities, it is possible to define a unique ``reasonable'' flow map $\Phi_t$ (called \textit{regular Lagrangian flow}) and the solution to the PDE is still given by \eqref{eq_flow_map}.

This, in turn, implies that, whenever one can show non-uniqueness at the ODE level for the characteristics (like e.g. in the classical 1D example $b = - \mathrm{sign}(x) |x|^{\theta}$, $\theta \in (0,1)$), such ``ODE non-uniqueness'' can be used to construct examples of ``PDE non-uniqueness''. More sophisticated higher-dimensional examples are due to Aizenman \cite{Aizenman78}, whose paper inspired further works such as \cite{Depauw03}.

In recent years example of non-uniqueness for the deterministic transport equation have been constructed by pure PDE methods, using \textit{convex integration}, also for vector fields for which existence and uniqueness of a regular Lagrangian flow was well established.

This method originates from pioneering work of Nash \cite{Nash54} on isometric $C^{1}$ embeddings. Gromov \cite{Gromov86}, Tartar \cite{Tartar79}, M\"uller and \v{S}ver\'{a}k \cite{MS03}, and many other authors subsequently extended and refined the method. More recently, De Lellis, Sz\'{e}kelyhidi and others have applied these methods in the context of fluid dynamics, culminating in the proof of Onsager's conjecture \cite{Isett18, BDLSV18} and nonuniqueness of weak solutions for the Navier--Stokes equations \cite{BV19a}. See \cite{BV19b} for a good overview of the literature and the technique  and \cite{novack2}, \cite{novack1}, \cite{giri20233}, \cite{giri2023wavelet} for more recent developments.

Most relevant for us, the first named author together with Sz\'{e}kelyhidi \cite{MS18,MS19}, as well as with Sattig \cite{MS20} extended the method to the deterministic transport equation with Sobolev vector fields $u$, showing that when \eqref{eq_dpl} fails, or, more precisely, when
\begin{equation*}
\frac{1}{p} + \frac{1}{\tilde p} > 1+ \frac{1}{d},
\end{equation*}
then there are vectors fields $u$ and nonzero densities $\rho$ in the classes \eqref{eq_classes}, with $\rho|_{t=0} = 0$ (thus showing non-uniqueness). The crucial point here is, opposite to the DiPerna-Lions theory, that the ``matching'' between integrability of $\rho$ and integrability of $\nabla u$ fails.
 Further applications of convex integration methods to the deterministic transport equation can be found in \cite{CL21}, \cite{CL22} as well as in \cite{brue2021positive}, \cite{pitcho2021almost} and \cite{giri2021non}, where PDE methods have been used to show almost everywhere non-uniqueness of integral curves.   

The aim of this paper is to apply \textit{convex integration} to the stochastic transport equation. 

\subsection{Literature Overview: stochastic case}

Let us now discuss briefly the literature about the stochastic transport equation \eqref{eq_STE}. The rule of thumb here is that the presence of the noise should \textit{regularize} the equation, thus yielding uniqueness, somewhat similarly to the effect of a diffusion term $\Delta \rho$  in the deterministic setting.

Some of the first papers to treat stochastic transport equations seem to be due to Keller \cite{Keller62}, Frisch \cite{Frisch68} and Ogawa \cite{Ogawa73, Ogawa78}, with further contributions due to Funaki \cite{Funaki79}.
Kunita \cite{Kunita84a, Kunita84b, Kunita97} provided a well-posedness theory for smooth initial data and regular coefficients, including a flow representation by means of an SDE.
A theory of Di Perna--Lions type for the stochastic equations has been developed by Figalli \cite{Figalli08}, see also the works of Le Bris and Lions \cite{LBL04,LBL08}. 

Another important landmark result was the celebrated paper of Flandoli, Gubinelli and Priola \cite{FGP10}, in which they proved a \textit{well-posedness by noise} result for the stochastic transport equation perturbed by transport-type noise (i.e. \eqref{eq_STE}) in the case of a deterministic, H\"older-continuous drift coefficient $u \in L^{\infty}(0,T; C^{\alpha}(\R^{d}))$ with $\divv u \in L^{p}([0,T] \times \R^{d})$ for a $p > 2$,  for which, in general, uniqueness  in the deterministic case fails, as we observed before. 

 The authors of \cite{FGP10} introduced a new strategy for proving the uniqueness by \textit{weak} commutator estimates using the flow representation of the stochastic transport equation which holds in much worse situations than in the deterministic case. This fact allowed them to transfer the randomness in the commutator estimates to a test function, if one has enough control on the Jacobian of the flow, which is the case for H\"older continuous vector fields $u$. Interestingly, the H\"older-ness of $u$ is used only to prove existence of a sufficiently regular flow, i.e. only on the SDE level.

We note also that Flandoli \textit{et al.} gave a counterexample \cite[Section 6.2]{FGP10} to uniqueness in the case of a \textit{random} drift coefficient, essentially by using a nonuniqueness result for ODEs with irregular coefficients and a translation. Therefore, one expects the strongest uniqueness results in the class of \textit{deterministic} vector fields.

Much of the subsequent work has been aimed at reducing the above strong regularity assumptions. In fact, as it turned out, the essential condition to have a stochastic flow is not H\"older continuity of the coefficients, but rather the so-called \textit{Ladyzhenskaya--Prodi--Serrin (or LPS) condition} for $r,q \in [2,\infty]$,
\begin{align}\tag{$\text{LPS}_{r}$}\label{eq_LPS_r}
    u \in L_{t}^{r} L_{x}^{q} \quad \text{with} \quad \frac{2}{r} + \frac{d}{q} \leq 1,
\end{align}
in analogy to the transport-diffusion equation (see e.g. \cite{bianchini1}).
The case with strict inequality in the LPS condition is commonly referred to as the \textit{Krylov--R\"ockner (KR) condition} in the SDE literature, after their seminal work \cite{KR05} on SDEs. 

We will give an incomplete, personal selection from the long list of contributions after \cite{FGP10}.
Attanasio and Flandoli \cite{AF11} considered weakly differentiable  (or more precisely, $BV$) drift coefficients and relaxed the assumption on the divergence when compared with deterministic results of \cite{Ambrosio04, DPL89}.
Maurelli \cite{Maurelli11} used a Wiener chaos decomposition technique to get uniqueness for $\rho$ in the class $L^{\infty}$ and also in the class $L^{2}$ given a deterministic drift $u \in L^{q}(\R^{d};\R^{d}) \cap L^{2}_{\mathrm{loc}}(\R^{d};\R^{d})$ with $q > d$ (which implies the KR condition with $q = \infty$) and $\divv u \in L^{r}(\R^{d}) \cap L^{2}_{\mathrm{loc}}(\R^{d})$, $r > \frac{d}{2}$. Similarly, Mohammed, Nilssen and Proske \cite{MNP15} used Malliavin calculus to deal with drift terms that are only bounded and measurable and solutions in the class $C_{b}^{1}(\R^{d})$.
Fedrizzi, Neves and Olivera \cite{FNO18} considered the case $u \in L^{2}_{\mathrm{loc}}([0,T]\times \R^{d})$ and showed uniqueness of weak solutions in $L^{2} \cap L^{\infty}$.
Moreover, Fedrizzi and Flandoli \cite{FF13} showed that no loss of regularity occurs in time if the drift is in the KR class and the initial condition is in $\bigcap_{r \geq 1} W^{1,r}(\R^{d})$. Finally, Neves and Olivera \cite{NO15} extended the result of \cite{FGP10} to the KR class for $\divv u = 0$ and later \cite{NO16} to more general drifts.
We note also that there are many similar results available on the level of characteristics, i.e. in the pure SDE case, cf. \cite{FF11, KR05, Nam20, RZ23, WLW17} and references therein. See also the recent work of Krylov (e.g. \cite{Krylov23} and references therein) on the critical case for the SDE. 

To the best of our knowledge, the state of the art for the stochastic transport equation in terms of uniqueness theory in the LPS class is given by the seminal work of Beck, Flandoli, Gubinelli and Maurelli \cite{BFGM19}. They extended the well-posedness theory to the case of critical parameters, i.e. where equality holds in the LPS condition. However, they also do much more than that. Their more analytical method deviates from the flow-based methods of \cite{FGP10} and many other works, they treat not only stochastic transport equations but also stochastic continuity equations and stochastic ordinary differential equations (SDEs), extend the notion of uniqueness from pathwise to path-by-path uniqueness.\footnote{ Concerning the integrability class for the densities in \cite{BFGM19}, they have to be compatible to the vector fields in the H\"older conjugacy sense, i.e. $\rho \in L^{\theta}_{\o} L^{\theta}_{t} L^{p}_{x,\mathrm{loc}}$ with $\theta \geq 2, p \geq q'$. So they consider all time integrabilities $r,\theta \geq 2$,  contrary to the current work, where we focus exclusively on the $r=\theta = \infty$ case. Note further that their main results, as they are stated, assume that $\theta = p$. However, we think that it should not be too difficult to extend them to the case with different integrabilities in time and space.} 

In particular, they give the following conjecture \cite[p. 6]{BFGM19}:
\begin{conjecture}\label{conj_LPS}
    If the LPS condition \eqref{eq_LPS_r} does not hold, i.e. if $\frac{2}{r} + \frac{d}{q} > 1$, a general result for regularisation by noise will probably be false.
\end{conjecture}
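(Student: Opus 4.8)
The plan is to address Conjecture~\ref{conj_LPS} by exhibiting, for a large range of the parameters for which the $\mathrm{LPS}$ condition fails, a \emph{single deterministic} divergence-free drift $u\in L^\infty_t W^{\theta,\tilde p}_x$ such that \eqref{eq_STE} fails pathwise uniqueness on some short interval $[0,T]$. Since \eqref{eq_STE} is linear in $\rho$, the difference of two solutions with the same datum again solves \eqref{eq_STE} with $\rho_0=0$, so it suffices to produce one deterministic $u$ in the required class together with a nonzero solution $\rho\in L^\infty_t L^p_x$ (for a.e.\ $\o$) with $\rho_0=0$. The first step is to remove the noise through the change of variables $\tilde\rho(t,x):=\rho(t,x+B_t)$: for a fixed Brownian path this is an isometry on $L^p_x$ and on $W^{\theta,\tilde p}_x$ and preserves the divergence-free constraint, and, as in \cite{FGP10}, $\rho$ solves \eqref{eq_STE} if and only if, for a.e.\ $\o$, the function $\tilde\rho(\cdot,\cdot,\o)$ is a weak solution of the \emph{deterministic-type} transport equation $\partial_t\tilde\rho+u(t,\cdot+B_t(\o))\cdot\nabla\tilde\rho=0$ -- the only subtlety being to establish this equivalence for the low-regularity solution class at hand. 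We are thus reduced to building a deterministic $u$ in the desired class so that, for a.e.\ $\o$, the transport equation with the \emph{randomly shifted yet deterministic} drift $u(t,\cdot+B_t(\o))$ admits a nonzero weak solution vanishing at $t=0$.

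Second, I would run a convex integration iteration producing $(\rho_q,u_q,R_q)_{q\ge0}$, with $u_q\colon[0,T]\times\T^d\to\R^d$ \emph{deterministic} and divergence-free, $\rho_q$ adapted, and $R_q$ a random vector field, solving $\partial_t\rho_q+u_q(t,\cdot+B_t)\cdot\nabla\rho_q=\divv R_q$ with $\|R_q\|\to0$, $\rho_q|_{t=0}=0$, and $\|\rho_q\|_{L^\infty_tL^p_x}$ bounded away from $0$. At stage $q$ one adds $\rho_{q+1}=\rho_q+\theta_{q+1}$ and $u_{q+1}=u_q+w_{q+1}$, built from concentrated Mikado-type building blocks oscillating at frequency $\lambda_{q+1}$ with amplitudes chosen as in \cite{MS18,MS19}, so that the admissible relation between the amplitudes $\delta_q$ and the frequencies $\lambda_q$ is exactly the one that holds when $\mathrm{LPS}$ fails. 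The novelty -- the \emph{constraint} -- is that $w_{q+1}$ must not depend on $\o$: one therefore fixes, once and for all, a finite deterministic family of Mikado vector fields with prescribed deterministic amplitudes as the building blocks of $w_{q+1}$, and absorbs the \emph{random} error $R_q$ solely through the scalar amplitudes of the Mikado \emph{density} blocks making up $\theta_{q+1}$, obtained by solving a random, pointwise-in-$(t,x)$ linear system so that the low-frequency part of the resonant product $\theta_{q+1}\,w_{q+1}(t,\cdot+B_t)$ equals $-R_q$ up to a faster-decaying remainder; the remaining terms ($\partial_t\theta_{q+1}+u_q(t,\cdot+B_t)\cdot\nabla\theta_{q+1}$, $w_{q+1}(t,\cdot+B_t)\cdot\nabla\rho_q$, and the oscillatory part of the resonant product) make up the new error $R_{q+1}$.

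Third, one closes the inductive estimates -- $\|\theta_{q+1}\|_{L^\infty_tL^p_x}$ small, $\|w_{q+1}\|_{L^\infty_tW^{\theta,\tilde p}_x}$ summable, $\|R_{q+1}\|\lesssim\delta_{q+2}$, up to lower-order factors -- along a superexponentially growing sequence $\lambda_q$, and sums them to get $u=\lim_q u_q$ deterministic, divergence-free, in $L^\infty_tW^{\theta,\tilde p}_x$, and $\rho=\lim_q\rho_q$ adapted, nonzero, in $L^\infty_tL^p_x$ for a.e.\ $\o$, solving the transport equation with drift $u(t,\cdot+B_t)$ and $\rho_0=0$; undoing the change of variables yields a nonzero solution of \eqref{eq_STE} for the deterministic drift $u$, and hence pathwise nonuniqueness.

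I expect the main obstacle to be the interaction between the deterministic-drift constraint and the roughness of $B$. Because $w_{q+1}$ is deterministic, in the iteration it enters evaluated at the shifted point $x+B_t$, so each Mikado field $W_j(\lambda_{q+1}(x+B_t))$ carries a time- and $\o$-dependent phase $e^{i\lambda_{q+1}\xi_j\cdot B_t}$; the resonance identity that fixes the amplitudes of $\theta_{q+1}$ must absorb these phases, which makes those amplitudes only Hölder (and merely adapted) in time, with a constant deteriorating in $\lambda_{q+1}$. To make sense of, and estimate, the material derivatives of such amplitudes along the (also rough-in-time) drift $u_q(t,\cdot+B_t)$, one is forced -- besides the usual spatial mollification of the iterate -- to regularise the Brownian path in time at an auxiliary scale $\ell_q\to0$, to balance $\ell_q$ against $\lambda_q$ and $\delta_q$, and to carry the resulting a.s.-finite random constants through the entire scheme; one must also verify that the linear system for the density amplitudes remains non-degenerate uniformly in $\o$ and that every object built is measurable and adapted. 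Restricting to a short time $T$ and to the events $\{\sup_{[0,T]}|B_t|\le M\}$, then letting $M\uparrow\infty$, is what makes the estimates uniform enough for the iteration to close. This cluster of issues is the ``series of technical difficulties'' referred to above.
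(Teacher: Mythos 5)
Your plan is in essence the paper's strategy for Theorem~\ref{thm_main_SPDE}, which is the paper's partial answer to Conjecture~\ref{conj_LPS} (the conjecture itself is not, and cannot be, ``proved'' here; only the $r=\infty$ case is addressed in a certain parameter range). You correctly identify the shift $\tilde\rho(t,x)=\rho(t,x+B_t)$, the constrained (deterministic-$u$) convex integration on the resulting random PDE, the decision to absorb all randomness into the density perturbation, and the need for a temporal regularisation of $B$. However, there are several concrete points at which your outline diverges from what actually makes the argument close.

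First, restricting to the events $\{\sup_{[0,T]}|B_t|\le M\}$ is not enough. The commutator error $R^{\mathrm{com}}$ (Lemma~\ref{l_comm}) and every estimate involving $B-B_\ell$ or $\dot B_\ell$ (Lemma~\ref{l_brownian} and the terms $R^{\mathrm{sto},1},\dots,R^{\mathrm{sto},5}$) are controlled by the \emph{H\"older seminorm} $[B]_{C^{1/2-\kappa}}$, not by $\sup|B_t|$; the paper therefore stops at $\tau_L=\inf\{t:[B]_{C^{1/2-\kappa}([0,t])}>L\}\wedge 1$, and the crucial balance \eqref{eq_rkappa}--\eqref{eq_zeta} between $\kappa$, $\ell$ and the Mikado frequencies would be impossible to set up with a bound on $\sup|B_t|$ alone. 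Second, your description of the density amplitudes as solving a ``random pointwise linear system'' to absorb phases $e^{i\lambda\xi\cdot B_t}$ is more complicated than the actual construction, which simply sets $\vartheta=\sum_j R_\e^j\,\Theta^j(\Psi_\ell)$ with real-valued Mikado blobs and lets the mismatch between $\Psi$ (in $w$) and $\Psi_\ell$ (in $\vartheta$) become a new, separately estimated error $R^{\mathrm{sto},1}$; the resonance identity is the elementary mean-value property $\fint\Theta^j W^j=e_j$ rather than a non-degenerate system. Third, you pass over two genuinely problematic steps: (i) since all the $R$-dependence is shifted into the density, one loses the H\"older duality between $\rho\in L^p$ and $u\in L^{p'}$, so $\rho u\in L^1_x$ is \emph{not} automatic; the paper has to enlarge the notion of weak solution to include $\rho u\in L^1$ as a separate requirement and to carry an explicit inductive bound \eqref{eq_urho_L1} on the momentum difference. (ii) Transferring back to the SPDE cannot be done by applying It\^o's formula to the limit $\tilde\rho$ (which is only $C_tL^p_x$); the paper applies It\^o to each smooth iterate $\tilde\rho_n$, tested against $\phi$, and passes to the limit in every term, using Burkholder and Borel--Cantelli on the stochastic integral. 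Your sketch acknowledges a ``subtlety'' here but does not supply this mechanism, without which the conclusion does not follow.
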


The aim of this paper is to investigate this conjecture in the case $r = \infty$. In this case, the LPS condition becomes 
\begin{align}\tag{$\text{LPS}_{\infty}$}\label{eq_LPS_infty}
    u \in L_{t}^{\infty} L_{x}^{q} \quad \text{with} \quad \frac{d}{q} \leq 1.
\end{align}
We will study this conjecture in the following class of drifts:
\begin{align*}
    u \colon [0,{1}] \times \T^{d} \to \R^{d}, \quad \text{\textit{deterministic}}, \quad \divv u = 0, \quad u \in L_{t}^{\infty} W_{x}^{\theta,\tilde{p}}.
\end{align*}
More precisely, we will assume that $\rho \in L^{\infty}_{t} L_{x}^{p}$, $u \in  L^{\infty}_{t} W^{\theta,\tilde{p}}_{x}$ with $p \in [1,\infty)$, $\tilde{p} \in (1,\infty)$, $\theta \in [0,1]$ such that
\begin{equation}\label{eq_main_scaling_cond}
    \frac{1}{p} + \frac{1}{\tilde{p}} > 1 + \frac{\theta}{d}, \quad \frac{d}{\tilde{p}} > 1 + \theta.
\end{equation}
The first condition is a generalisation to arbitrary regularity of a similar condition in the deterministic case, cf. \cite{MS20}. The second condition is comparable to the LPS condition \eqref{eq_LPS_infty} as we can use the Sobolev embedding to ``trade'' the additional spatial regularity for integrability, cf. Fig. \ref{fig_main_result} below.
Note that in this case, we do not get  that $\rho u \in L^{1}$ automatically from the H\"older inequality, as $\rho$ and $u$ are not contained in H\"older-conjugated Lebesgue spaces. However, our construction will nevertheless yield that the pair $(\rho, u)$ satisfies $\rho u \in L^{1}_{x}$. Moreover, the condition $\divv u = 0$ ensures that we can treat simultaneously the transport and the continuity equation.  

We can allow for additional regularity of $u$, but in the end, our result will depend on the spatial integrability of $u$ after Sobolev embedding, and in particular on whether this integrability still violates the LPS condition.  
Furthermore, we expect that similar nonuniqueness results will hold if one relaxes the assumption on the divergence of $u$, since, as in the deterministic case, the ``worse'' the divergence, the easier it is to lose uniqueness.
We work on the torus $\T^{d}$ for the main reason that our method of proof, convex integration, is by far the most developed in this context. This also means we don't need any growth assumptions and hence get slightly simpler statements.

It might be interesting to try to extend the techniques to include the case of $\rho \in L^{s}_{t} L_{x}^{p}$ with $s < \infty$. There are pioneering works on the deterministic transport equation by Cheskidov and Luo \cite{CL21,CL22}, but they display some highly erratic behaviour, so we will focus here on the case $s=\infty$ only.

\subsection{Convex integration for stochastic PDEs} 

Adapting ideas of Buckmaster and Vicol \cite{BV19a} to the stochastic case, Hofmanov\'{a}, Zhu and Zhu \cite{HZZ19} proved nonuniqueness for martingale solutions of 3D stochastic Navier--Stokes equations. 
This showed that at least simple additive, spatially smooth Brownian noise is not sufficient to induce well-posedness of the perturbed system.  

Since then, the method has been developed further and applied to stochastic Euler equations \cite{HZZ22} (using a Baire category argument similar to \cite{DLS09}), arbitrary $L^{2}$ initial values for the stochastic Navier--Stokes equations \cite{HZZ21a}, Navier--Stokes perturbed by space-time white noise \cite{HZZ21b}, by linear multiplicative noise with prescribed energy \cite{Berkemeier22}, as well as ergodicity questions for Navier--Stokes \cite{HZZ22a, HZZ22b}. It has also been applied to other equations such as the Boussinesq equation \cite{Yamazaki22B}, hyper- \cite{Yamazaki22L} and hypodissipative fractional NSE \cite{RS23, Yamazaki21}, the magnetohydrodynamic (MHD) equations \cite{Yamazaki21}, surface quasigeostrophic equations \cite{Yamazaki22S}, power-law fluids \cite{LZ22} and sharp nonuniqueness for Navier--Stokes \cite{CDZ22}. There are also several works for compressible stochastic systems, such as \cite{BFH20,CFF19}. Most recently, the Euler and Navier--Stokes equations have been studied with transport noise \cite{HLP22, Pappalettera23}. 

The work that is closest to the present article is due to Koley and Yamazaki \cite{KY22}, who prove nonuniqueness in law for the transport-diffusion equation forced by several types of random noise, including transport noise. In this paper we focus only on transport-type noise as it is physically motivated (it can be understood as a noise acting on the level of characteristics as an additive Brownian noise) and allows for the closest comparison to the literature, where most regularisation by noise results are for transport-type noise. In the case of transport noise, following the deterministic transport equation papers \cite{MS18,MS19,MS20}, the authors of \cite{KY22} distribute the dependence on the previous defect $R_0$ over both the velocity and the density and put all the Brownian motion into the velocity. Since the previous defect depends on the previous velocity field in several ways, their previous defect is only $C^{1/2-}$ in time, just as ours. They then proceed to mollify the previous defect before putting it into their new perturbations. However, in this way, a nontrivial dependence on paths of the Brownian motion is created in their velocity fields which means that they are \textit{random} functions (cf. \cite[Eq. $(35)$ and $(39a)$]{KY22}), and owing to this mollification, this randomness persists even after transformation back to the stochastic transport equation. This allows for a line of attack in the proof which is ``closer'' to known works, and seemingly one can avoid the use of stopping times \cite[Remark 6.2]{KY22}, but at the disadvantage of producing, in some sense, a less “surprising” result: it has indeed been known for some time that random drifts may allow nonuniqueness, even in regimes where one can prove uniqueness for deterministic drifts, essentially by shifting nonuniqueness results in the deterministic case, cf. e.g. Section 6.2. in the paper \cite{FGP10} by Flandoli, Gubinelli and Priola. A nonuniqueness result for a stochastic transport equation with a \textit{random} drift is therefore, in a sense, much more to be  “expected” than one for a deterministic drift, in particular in view of the results in \cite{MS20,MS18,MS19}.

Therefore, in the present paper, we need to work hard to attain nonuniqueness for a stochastic transport equation with a \textit{deterministic} drift, making it more directly comparable to the well-posedness by noise literature cited above, in particular to \cite{BFGM19}.
As it will turn out and discussed below, this is a technically nontrivial extension and requires careful analysis and some further arguments, while seemingly preventing us from attaining the strongest possible results in terms of existence (global-in-time solutions).
\newpage

\subsection{Main results and methods}
Our notion of solution will be the following:
\begin{definition}\label{def_local_weak_soln} Let $\tau$ be an $(\mcF_{t})_{t}$-stopping time and $u \in L^{1}([0,1] \times \T^{d};\R^{d})$ be a deterministic vector field.

A \textbf{local weak solution to \eqref{eq_STE_proper} of class} $L^{\infty}_{\o}{L_{t}^{\infty}}L^{p}_{x}$ \textbf{up to time $\tau$} is a random field $\rho \colon  [0,\tau] \times \T^{d} \times \O \to \R$ with the following properties:
\begin{enumerate}[label = (\roman*)]
 \item[(0)]\label{itm_def_soln0} it is weakly progressively measurable with respect to $(\mcF_{t})_{t}$, i.e. for every $\phi \in C^{\infty}(\T^{d})$, the map $(t,\o) \mapsto \int_{\T^{d}} \rho(t,x;\o) \phi(x) dx $ is progressively measurable.
 \item \label{itm_def_soln1} $\rho$ is in $L^{\infty}(\O,{L^{\infty}}([0,\tau];L^{p}(\T^{d})))$ and $\rho u$ is in $L^{\infty}(\O,{L^{\infty}}([0,\tau]; L^{1}(\T^{d};\R^{d})))$.
 \item \label{itm_def_soln2} $(t,\o) \mapsto  \int_{\T^{d}} \rho(t,x;\o) \phi(x) dx$ has a modification that is a continuous semimartingale, for every $\phi \in C^{\infty}(\T^{d})$.
 \item \label{itm_def_soln3} For every $\phi \in C^{\infty}(\T^{d})$, for this continuous modification of $\int_{\T^{d}} \rho(t,x;\o) \phi(x) dx$, it holds that, $\P$-a.s., for all $t \in [0,\tau]$
 \begin{equation}
\label{eq_def_weak_sln}
\begin{aligned}
\int_{\T^d} \rho(t,x; \omega) \phi(x)dx = & \int_{\T^d} \rho_0(x; \omega) \phi(x)dx + \int_0^t \int_{\T^d} \rho(s,x; \omega) u(s,x) \cdot \nabla \phi(x) dx ds \\
& + \sum_{i=1}^d \int_0^t \left( \int_{\T^d}  \rho(s,x; \omega) \partial_{x_i} \phi(x) dx \right) d B^i(s) 
+ \frac{1}{2} \int_0^t \int_{\T^d} \rho(s,x; \omega) \D \phi(x) dx ds
\end{aligned}
\end{equation}
\end{enumerate}
\end{definition}

Note that this is the same definition as the one in \cite[Definition 3.1, Remark 3.2]{BFGM19}, localised in time and formulated for the torus instead of $\R^{d}$, except for the additional integrability condition on the product $\rho u$ in \ref{itm_def_soln1}, which generalises the condition $m \geq p'$ in \cite[Remark 3.2]{BFGM19}. 

Our main result is summarised in the following theorem and visualised in Fig. \ref{fig_main_result}. Note that a similar result holds for the transport-diffusion equation, cf. Section \ref{ssec_diffusion_thm}.
\begin{theorem}[Non-uniqueness for the stochastic transport equation]\label{thm_main_SPDE}
    Let $d \geq 2$, $\mathfrak{p} \in [0,1)$. Let $p \in [1, \infty)$, $\tilde{p} \in (1,\infty)$, $\theta \in [0,1]$ such that
    \begin{align}
    \label{eq_cond_exp}
        \frac{1}{p} + \frac{1}{\tilde{p}} & > 1 + \frac{\theta}{d}, \\
        \frac{d}{\tilde p} & > {1+\theta},     \label{eq_cond_exp_2}
    \end{align}    
    and let $B = (B(t))_{t \geq 0}$ be a $d$-dimensional Brownian motion defined on the stochastic basis $(\O,\F,\P,(\F_{t})_{t \geq 0})$ with a normal filtration.
    
    Then there exists a {stopping time $\tau_{\mfp}: \Omega \to [0,1]$ such that $\P(\tau_{\mfp} = {1}) > \mfp$} 
    as well as a deterministic, divergence-free vector field such that
    \begin{align*}
         u \in C^{0}([0,{1}];  W^{\theta, \tilde{p}}(\T^{d}) ).
    \end{align*}
    For this vector field, pathwise local-in-time uniqueness of distributional solutions to the stochastic transport equation
    \begin{equation}\label{eq_STE_proper}
        d \rho + (u \cdot \nabla) \rho = (\nabla \rho) \circ dB(t)
    \end{equation} 
    fails in the class of $L^{\infty}(\O; C^{0}([0,\tau_{\mfp}] ; L^{p}(\T^{d})))$ densities, in the sense that there exist two different, $(\F_{t})$-adapted processes
    \begin{align*}
        \rho_{1}, \rho_{2} \in L^{\infty}(\O; C^{0}([0,\tau_{\mfp}] ; L^{p}(\T^{d}))),
    \end{align*}
    differing on the set $\{ \tau_{\mfp} = {1} \}$, such that $\rho_{i} u \in L^{\infty}(\O; C^{0}([0,\tau_{\mfp}]; L^{1}(\T^{d};\R^{d})))$, $i = 1,2$, which both solve the equation \eqref{eq_STE_proper} with zero initial condition.
\end{theorem}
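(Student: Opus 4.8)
The plan is to run a convex integration scheme \emph{with a constraint}: the drift $u$ must be deterministic and divergence-free, so we cannot allow Brownian randomness to leak into $u$. We transform \eqref{eq_STE_proper} via the flow of the noise (i.e. set $\rho(t,x) = \vartheta(t, x + B(t))$, or equivalently work with the random shift $\o \mapsto B(t,\o)$) into a random \emph{continuity/transport-type} equation for a pair $(\vartheta, v)$ where $v(t,x) = u(t, x + B(t))$ is the pulled-back drift. Because $u$ is required to be deterministic, $v$ is necessarily random, and the defect field $R$ produced at each stage of the iteration inherits a dependence on the paths of $B$; as in \cite{KY22} this forces $R_0$ to be only $C^{1/2-}$ in time. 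The key structural point is that at each inductive step we produce perturbations $w_{q+1}$ to the \emph{density} and a correction to $v$, but when we translate back to $u = u_q + (\text{increment})$ we must ensure the increment, evaluated at $x - B(t)$, is deterministic — this is the ``constraint'' that complicates the scheme and is, I expect, the main obstacle. One pays for it with a stopping time $\tau_\mfp$: the estimates controlling the growth of $\|R_q\|$ in terms of path-dependent quantities only close on an event of probability $> \mfp$, up to which $\tau_\mfp = 1$.

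First I would set up the iteration: fix frequency parameters $\lambda_q \to \infty$ and amplitude parameters $\delta_q \to 0$ with the usual super-exponential growth $\lambda_{q+1} \sim \lambda_q^b$, $\delta_q \sim \lambda_q^{-2\beta}$, and a ``defect'' parameter $\eta_q$, and state the inductive proposition: there exist $(\vartheta_q, u_q, R_q)$ with $u_q$ deterministic and divergence-free, solving the transport-defect equation, with $\|\vartheta_q\|_{L^\infty_t L^p_x} \lesssim \sum_{j\le q}\delta_j^{1/2}$, $\|u_q\|_{L^\infty_t W^{\theta,\tilde p}_x} \lesssim \sum_{j\le q}$ (something summable), and $\|R_q\|_{L^1}$ small on the good event. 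The perturbations are built from Mikado-type or concentrated building blocks oscillating at frequency $\lambda_{q+1}$, with disjoint supports so that the quadratic self-interaction term is cancelled exactly by choice of amplitudes against $R_q$; the spatial profiles must be chosen so that the density sits in $L^p$ and the drift increment lands in $W^{\theta,\tilde p}$, and here conditions \eqref{eq_cond_exp}–\eqref{eq_cond_exp_2} are exactly what makes the budget $\|w_{q+1}\|_{L^p}\cdot\|(\text{drift incr})\|_{W^{\theta,\tilde p}}$ versus the gain from the oscillation $\lambda_{q+1}^{-1}$ close — this is the ``mismatch'' of integrabilities, the same counting as in \cite{MS20} but now at regularity $\theta$. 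The divergence-free constraint on $u$ is handled by a Leray-type projection/corrector at frequency $\lambda_{q+1}$, absorbed into $R_{q+1}$.

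Next, the core estimate: bounding the new defect $R_{q+1}$. It splits into a transport/flow error (coming from the non-smoothness in time of $R_q$, controlled using a mollification in time at scale $\ell_q$ and the $C^{1/2-}_t$ modulus, paying a factor depending on $\sup_{[0,1]}|B(t)|$ and its Hölder seminorm), an oscillation error (gaining $\lambda_{q+1}^{-1}$, or a fractional analogue matched to $\theta$), a Nash-type error, and a corrector error. Because several of these errors carry multiplicative constants that are random — functions of $\|B\|_{C^{1/2-}_t}$ — I would introduce, following the stochastic convex integration literature \cite{HZZ19, HZZ22}, a stopping time $\tau_\mfp := \inf\{t: \|B\|_{C^{1/2-}_t([0,t])} > L\} \wedge 1$ with $L = L(\mfp)$ chosen so that $\P(\|B\|_{C^{1/2-}} \le L) > \mfp$; on $[0,\tau_\mfp]$ all the random constants are bounded by a deterministic $L$, so the deterministic convex integration estimates apply verbatim and the iteration closes, giving $\|R_q\|_{L^1} \to 0$ and $\sum \delta_q^{1/2} < \infty$, hence $\vartheta_q \to \vartheta$ in $C^0_t L^p_x$, $u_q \to u$ in $C^0_t W^{\theta,\tilde p}_x$, $R_q \to 0$, and the limiting $(\vartheta, u)$ solves the transformed equation on $[0,\tau_\mfp]$ with $\vartheta|_{t=0} = 0$ but $\vartheta \not\equiv 0$.

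Finally I would translate back: set $\rho_1(t,x) := \vartheta(t, x - B(t))$ (the nonzero solution) and $\rho_2 := 0$; both are $(\F_t)$-adapted, lie in $L^\infty(\O; C^0([0,\tau_\mfp]; L^p(\T^d)))$, satisfy $\rho_i u \in L^\infty(\O; C^0([0,\tau_\mfp]; L^1))$ since the construction forces $\vartheta v \in L^1$, and both solve \eqref{eq_STE_proper} in the sense of Definition \ref{def_local_weak_soln} with zero initial datum — the stochastic integral term and the Itô correction $\tfrac12\int \rho \Delta\phi$ appearing precisely because of the change of variables by $B$. Since the convex integration starts from a nonzero ``ansatz'' on $\{\tau_\mfp = 1\}$, $\rho_1$ and $\rho_2$ differ there, and $\P(\tau_\mfp = 1) > \mfp$. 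The hard part, to reiterate, is keeping $u$ deterministic: every place where the scheme would naturally produce a path-dependent drift increment must be rearranged so the randomness is shunted either into the density $\vartheta$ or into the defect $R$, and then reabsorbed — this is the ``scheme with a constraint'' and it is what prevents a global-in-time statement, forcing the stopping time.
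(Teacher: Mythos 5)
Your high-level strategy is the correct one — transform \eqref{eq_STE_proper} by the random shift $x \mapsto x-B(t)$ to a random continuity-defect equation with the drift appearing as $u(t,x+B(t))$, run a constrained convex integration using Mikado blocks, and install a stopping time $\tau_\mfp := \inf\{t : [B]_{C^{1/2-\kappa}([0,t])} > L\} \wedge 1$ so that all random Hölder constants become deterministic on $[0,\tau_\mfp]$. That is exactly the paper's skeleton. However, several mechanisms you invoke but do not resolve are precisely where the proof lives, and as written your sketch does not close.

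First, ``shunting the randomness into $\vartheta$ and $R$'' is vague, and your description of the inductive step — choosing amplitudes on Mikado blocks to cancel $R_q$, with ``the density in $L^p$'' — is the \emph{deterministic} scheme of \cite{MS20}, which splits $R_0$ between the density amplitude $|R_0|^{1/p}$ and the drift amplitude $|R_0|^{1/p'}$. That is exactly what you cannot do here: $R_0$ is random, so any $R_0$-dependence in the drift increment would make $u$ random. The paper's resolution is to put \emph{all} of $R_\e$ into $\vartheta$ (see \eqref{eq_def_vt}) and use a drift increment $w=\sum_j W^j$ that has no $R$-dependence at all, hence is purely deterministic. This forced asymmetry breaks the $L^p$/$L^{p'}$ duality of the deterministic scheme: if one insists on concentrating $\Theta^j$ in $L^p$, the new density is only bounded by $\|R_0\|_{L^\infty_x}$, which is not controlled. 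One must instead concentrate $\Theta^j$ in $L^s$ for some $s>p$ (and dually $W^j$ in $L^{s'}$ with $s'<p'$), which is where the parameter $s$ chosen in Section~\ref{ssec_stopping_times} — and the auxiliary constraint \eqref{eq_cond_exp_2}, ensuring $s'<d$ — enters. A side effect your sketch does not address: $\rho_n\in L^s$, $u_n\in L^{s'}$, but $\rho\in L^p$ and $u\in L^{s'}$ are not Hölder-conjugate, so $\rho u\in L^1$ is no longer automatic; it must be propagated through the iteration (estimate \eqref{eq_urho_L1}, Section~\ref{sec_momentum}), which is why the definition of weak solution includes $\rho u\in L^1$ as an explicit hypothesis.

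Second, you apply Itô's formula to translate the limiting $\vartheta$ back to a solution of \eqref{eq_STE_proper}. But the limit is only in $C_tL^p_x$; it is not $C^{1,2}_{t,x}$, so Itô's formula does not apply to it. The paper resolves this by applying Itô's formula to each smooth iterate $\tilde\rho_n$, tested against a test function $\phi$, and then passing to the limit term by term; the stochastic integral is handled by Burkholder's inequality and a Borel--Cantelli subsequence extraction (see \eqref{eq_BorelCantelli}). Omitting this step is not a presentation shortcut: without it, one does not know that the constructed $\rho$ solves the SPDE in the sense of Definition~\ref{def_local_weak_soln}. Finally, your account of the time-mollification collapses two distinct scales into one $\ell_q$. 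There are two independent mollifications: an $\e$-mollification of $\rho_0$ and $R_0$ (needed because the defect is only $C_tL^1_x$), and a separate $\ell$-mollification of the Brownian motion used to build $\Psi_\ell$; the mismatch between $\Psi$ and $\Psi_\ell$ generates the extra defect terms $R^{\mathrm{sto},1},\dots,R^{\mathrm{sto},5}$ absent from the deterministic scheme, and balancing $\ell$ against the other parameters is where condition \eqref{eq_rkappa} (hence $s'<d$) becomes binding. Your proposal does not identify these terms or the constraint they impose.
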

 
\begin{figure}[htbp]
  \centering
  \begin{tikzpicture}[scale=\textwidth/3cm]
    \begin{scope}
      \fill[green!30] (0,1/3) -- (1/2,1/3) -- (1/2,0) -- (0,0) -- cycle;
      \fill[pattern=dots, pattern color=teal!80] (1/2,1/3) -- (1-1/3,1/3) -- (1,0) -- (1/2,0) --  cycle;
      \fill[cyan!30] (0,1/3) --(0,1/3+0.3/3) -- (1/2,1/3+0.3/3) -- (1/2,1/3)  -- cycle;
      \fill[pattern=dots, pattern color=cyan!80] (1/2,1/3) --(1/2,1/3+0.3/3) -- (1-1/3,1/3+0.3/3) -- (1-1/3,1/3)  -- cycle;
      \fill[pattern=dots, pattern color=cyan!80] (1-1/3,1/3+0.3/3) -- (1-1/3,1/3) -- (1,0) -- (1,0.3/3)  -- cycle;

      \fill[red!30] (0.3/3,1) -- (1-1/3,1/3+0.3/3) -- (1,1/3+0.3/3) -- (1,1) -- cycle;
    \end{scope}
    
    \draw[-latex] (0,0) -- (1.2,0) node[right] {$1/p$};
    \draw[-latex] (0,0) -- (0,1.2) node[above] {$1/{\tilde{p}}$};
    
    \foreach \x/\xtext in {0,1}
      \draw (\x,2pt) -- (\x,-2pt) node[below] {\xtext};
      
    \foreach \y/\ytext in {0,1}
      \draw (2pt,\y) -- (-2pt,\y) node[left] {\ytext};
    
    \node[left] at (-0.06,1/3) {$1/d$};
    \draw[dashed] (0,1/3) -- (-2pt,1/3);
        
    \draw[dashed, gray] (0,1) -- (1,1);
    \draw[dashed, gray] (1,0) -- (1,1);
    
    \draw (0,1) -- (1,0);
    \draw (0,1+0.3/3) -- (1+0.3/3,0);
    
    \draw[dotted, black] (1-1/3,0) -- (1-1/3,1);
    
    \node[left] at (-0.06,1/3+0.3/3) {$1/d + \theta/d$};
    \draw[dashed] (0,1/3+0.3/3) -- (-2pt,1/3+0.3/3);
    
    \draw[-latex, black, decorate, decoration={snake,amplitude=1mm,segment length=8mm,post length=1mm}] (1.11,0.17) -- (0.96,0.05);
    \node[above right] at (1.1,0.1) {$1/p + 1/\tilde{p} = 1$};
    
    \draw[-latex, black, decorate, decoration={snake,amplitude=1mm,segment length=8mm,post length=1mm}] (1.06,0.25+0.3/3) -- (0.9,0.12+0.3/3);
    \node[above right] at (1.05,0.17+0.3/3) {$1/p + 1/\tilde{p} = 1 + \theta/d$};
    
    \node[below] at (1-1/3,-0.05) {$~1-1/d$};
    \draw[dashed] (1-1/3,0) -- (1-1/3,-2pt);
    \node[below] at (1/2,-0.05) {$1/2$};
    \draw[dashed] (1/2,0) -- (1/2,-2pt);
    
    \node[left] at (0.35,0.2) {$!$};
    \node[left] at (0.35,0.38) {$!$};
    \node[left] at (0.55,0.8) {$\neg !$};
  \end{tikzpicture}
  \caption{State of (local-in-time) uniqueness versus nonuniqueness of solutions $\rho \in L_{\o}^{\infty} L_{t}^{\infty} L_{x}^{p}$ to \eqref{eq_STE_proper} for deterministic vector fields $u \in L^{\infty}_{t}W^{\theta,\tilde{p}}_{x}$. We have displayed the case $d=3$, $\theta = 0.3$ for concreteness. \newline 
  \textit{Green areas:} The green area corresponds to the results of \cite{BFGM19} for which $\theta = 0$, with our modified definition of solution. Otherwise one needs that $\frac{1}{p} + \frac{1}{\tilde{p}} \leq 1$. Technically speaking, the result of \cite{BFGM19} only yields uniqueness in the part of the green area below the $\frac{1}{p} + \frac{1}{\tilde{p}} = 1$ line, which we indicated by the solid line. But we think it should not be difficult to extend their results to give uniqueness in the more general class of Definition \ref{eq_def_weak_sln}, which we indicated by the dotted green area (see Appendix \ref{sec_AppB} for a more detailed discussion). Note that the comparison with \cite{BFGM19} is limited to the case $r=\infty$, and only to a small subset of results of their results, i.e. uniqueness questions for \eqref{eq_STE_proper}. \newline
  \textit{Blue areas:} When $\theta > 0$, one can apply the Sobolev embedding theorem to get $W^{\theta,\tilde{p}}_{x} \subset L^{q}_{x}$, and only $q = \tilde{p}\left( \frac{1}{1 - \theta \tilde{p}/d} \right)$ needs to satisfy the LPS condition $\frac{d}{q} \leq 1$, thus extending the area of uniqueness upwards in the vertical direction. Similar reasoning applies to the dotted green area, which can be extended by to give the dotted blue area. \newline
  \textit{Uniqueness range:} Together, for parameters $p,\tilde{p}$ in the green or blue ranges, including the dotted ones, i.e. for all $(p,\tilde{p})$ with $\tilde{p}$ satisfying the LPS condition $\frac{1}{\tilde{p}} \leq \frac{1}{d}$, we get uniqueness, symbolised as ``$!$''. \newline \textit{Red area:} The main result of this paper, Theorem \ref{thm_main_SPDE}, gives nonuniqueness of \eqref{eq_STE_proper} in the red parameter range (top right part of the figure), symbolised as ``$\neg !$''. As $\theta$ increases, the blue area expands upwards, the red area retreats upwards, but the two always touch in one point $\frac{1}{\tilde{p}} = \frac{1}{d} + \frac{\theta}{d}$, $\frac{1}{p} = 1 - \frac{1}{d}$. 
  The \textit{white area} to the left of the nonuniqueness area is still open and should admit nonuniqueness, by Conjecture \ref{conj_LPS}, but it is not tractable with current convex integration techniques used in this paper. The white area in the bottom right of the figure is open as well.} \label{fig_main_result} 
\end{figure}

To this end, we employ the method of convex integration, in particular utilising the Mikados and blobs of \cite{MS18,MS19,MS20}. Similarly to \cite{KY22}, we apply this method to a ``shifted'' equation, a random PDE, and later shift it back using It\^{o}'s formula. However, as the scalar fields in question are quite rough in time, we cannot simply apply It\^{o}'s formula. We circumvent this obstacle by applying It\^{o}'s formula along the iteration, not just to the limit, and show that all the terms converge.

In order to achieve a deterministic vector field, we need to employ the convex integration technique \textit{with a constraint}. This constraint, roughly speaking, destroys a lot of the usual flexibility of the scheme and prevents us from getting a global-in-time solution, i.e. a solution up to a fixed time $T$, not just up to the stopping time $\tau_{\mfp}$, as all ways of extending the solution to later times that we tried seem to either not work or to introduce randomness into the drift, which is inacceptable in our setting. The current work seems to be the first stochastic convex integration paper that can only achieve local results. This is due to the above-mentioned constraint of having a \textit{deterministic} vector field.

This problem is perhaps comparable to solving the stochastic Navier--Stokes equations with convex integration techniques, while demanding that the first component of the vector field there is deterministic. As such a problem has not been studied yet, it is not surprising that the known techniques are not optimal in this setting. We are planning to address this issue in the future.

Compared to earlier works, we need to employ a mollification in time and take great care of which terms are endowed with mollified quantities to preserve the non-randomness of the vector field $u$. Moreover, we use an interpolation inequality to investigate the whole range of Sobolev regularities $W_{x}^{\theta,\tilde{p}}$ instead of just the end point $\theta = 1$. Note also that the functions constructed in Theorem \ref{thm_main_SPDE} are continuous in time and $(\F_{t})_{t}$-adapted, which implies that they are progressively measurable. In particular, our result implies nonuniqueness in the class of solutions of Definition \ref{def_local_weak_soln}. Moreover, our proof gives nonuniqueness also in the smaller class $L_{\o}^{\infty} C^{0}_{t} L_{x}^{p}$.

The problems caused by the constraint implies that, on the one hand, as in the deterministic scheme of \cite{MS18}, the sequence of approximate densities converges in some $L^s$ space, on the other hand, the sequence of approximate fields does not converge in the dual $L^{s'}$,
which prevents us from concluding that $\rho u \in L_{x}^{1}$ via H\"older's inequality. A more detailed explanation of this problem will be given in Remark \ref{rem_cutoff} below. Therefore, we needed to generalise the definition of weak solution by demanding that additionally $\rho u \in L_{x}^{1}$, an idea already used in the deterministic case in \cite{giri2021non}.

Regarding the parameter ranges for $p, \tilde{p}$ covered in this work, we note that in the current form, only $\tilde{p} > 1$ is covered by our results. This restriction stems entirely from the interpolation results of Appendix \ref{sec_AppA}, which are only for $\tilde{p} \in (1,\infty)$. Extension of these results to $\tilde{p} = 1$ should immediately yield that our main result holds in that case as well.

\subsection{Extension to transport-diffusion}\label{ssec_diffusion_thm}

We conclude this introduction by spending a few words on higher order transport models. We have already mentioned that convex integration yields non-uniqueness for the deterministic transport equation in the class $\rho \in L^p$, $u \in W^{\theta, \tilde p}$, provided condition \eqref{eq_cond_exp} holds (see \cite{MS18}, \cite{MS19}, \cite{MS20})\footnote{To be precise, only $\theta=1$ is considered in the mentioned papers, but extension to any $\theta \in [0,1]$ can be easily achieved using the interpolation inequality in Proposition \ref{prop_interpolation}.}. In these papers it is also  observed that the same scheme yields non-uniqueness even if diffusion is added to the equation as in 
\begin{equation}
\label{eq-tr-diff}
\partial_t \rho + \divv (\rho u) = \Delta \rho
\end{equation}
provided condition \eqref{eq_cond_exp} is supplemented by additional conditions on the exponents $p, \tilde p$.  This is true also in the stochastic case, where, quite interestingly, no additional conditions besides \eqref{eq_cond_exp}, \eqref{eq_cond_exp_2} are needed. 

\begin{theorem}[Non-uniqueness for stochastic transport-diffusion]
\label{thm:spde-transp-diff}
Let $d, \mathfrak p, p, \tilde p, \theta, B$ be as in the statement of Theorem \ref{thm_main_SPDE}. In particular let \eqref{eq_cond_exp}, \eqref{eq_cond_exp_2} hold.  Then the same conclusion as in Theorem \ref{thm_main_SPDE} holds, with \eqref{eq_STE_proper} substituted by
\begin{equation}
\label{eq:SPDE-diff}
        d \rho + (u \cdot \nabla) \rho dt = (\nabla \rho) \circ dB(t) + \Delta \rho dt.
\end{equation}
\end{theorem}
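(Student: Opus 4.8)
\emph{Proof strategy.} The plan is to rerun the convex integration scheme underlying Theorem \ref{thm_main_SPDE} essentially verbatim: the diffusion term $\D\rho$ is of lower order with respect to the mechanism that drives the iteration, so the only genuinely new point is to check that it can be absorbed into the Reynolds-type defect \emph{without} forcing any restriction on $(p,\tilde p,\theta)$ beyond \eqref{eq_cond_exp}, \eqref{eq_cond_exp_2}. This is in contrast with the deterministic transport–diffusion equation \eqref{eq-tr-diff}, for which \cite{MS18,MS19,MS20} must supplement \eqref{eq_cond_exp} with extra conditions; here those extra conditions turn out to be implied by the margin we are forced to leave anyway in order to handle the noise.

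\emph{Step 1: reduction to a random transport–diffusion PDE.} Exactly as in the proof of Theorem \ref{thm_main_SPDE}, one passes from \eqref{eq:SPDE-diff} to the ``shifted'' random PDE via the change of variables $v(t,x):=\rho(t,x+B(t))$. Since the transport noise acts, at the level of characteristics, as a spatially constant translation, this substitution removes the stochastic term and replaces the drift $u(t,x)$ by the random field $\tilde u(t,x):=u(t,x+B(t))$; crucially, $\D$ commutes with translations, so the diffusion operator is left unchanged. Thus $v$ must solve the \emph{random} transport–diffusion equation $\partial_t v+\divv(v\tilde u)=\D v$, and — since $u$ is required to be deterministic while $\tilde u$ is its random shift — one is again driven into the convex integration scheme \emph{with a constraint}, now with an extra $\D v$ on the right-hand side.

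\emph{Step 2: the iteration with a diffusion term.} Run the iteration $(v_q,u_q,R_q)$ solving $\partial_t v_q+\divv(v_q u_q)=\divv R_q+\D v_q$ in the shifted frame, using the Mikado densities and fields of \cite{MS18,MS19,MS20} concentrated at frequency $\lambda_{q+1}$ and amplitude tuned to $R_q$, exactly as in the transport case. The only new contribution to the next defect $R_{q+1}$ is $\mcR\big(\D(v_{q+1}-v_q)\big)$, where $\mcR$ is the usual antidivergence; since the density perturbation $\vartheta_{q+1}:=v_{q+1}-v_q$ (plus the usual lower-order corrector) oscillates at frequency $\lambda_{q+1}$, this term has size $\lesssim\lambda_{q+1}\,\|\vartheta_{q+1}\|$ in the relevant norm, i.e. it costs exactly one derivative $\lambda_{q+1}$ over the amplitude of the perturbation. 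The key point — and the reason the statement asserts that no condition beyond \eqref{eq_cond_exp}, \eqref{eq_cond_exp_2} is required — is that \eqref{eq_cond_exp_2}, which must be imposed for the stochastic transport result regardless, already leaves enough room between the concentration and oscillation parameters to ensure $\lambda_{q+1}\|\vartheta_{q+1}\|\to 0$; one only re-tunes these exponents so that the diffusion error decays together with the others. All the remaining estimates — on $\|v_q\|_{C^0_tL^p_x}$, on $\|v_q u_q\|_{C^0_tL^1_x}$, on the time-mollified quantities that keep $u_q$ (hence $u$) deterministic, and the Itô-along-the-iteration bookkeeping — are unchanged, since $\D$ acts only on the density and never on $u_q$.

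\emph{Step 3: transforming back, and the main obstacle.} Finally, set $\rho_i(t,x):=v_i(t,x-B(t))$ for the two distinct limit solutions $v_1\ne v_2$ and verify that they solve \eqref{eq:SPDE-diff} in the distributional sense of Definition \ref{def_local_weak_soln}: applying Itô's formula along the iteration, as in the transport case, produces the Stratonovich-to-Itô correction $\tfrac12\int_0^t\!\int_{\T^d}\rho\,\D\phi\,dx\,ds$ \emph{and}, in addition, the honest diffusion term $\int_0^t\!\int_{\T^d}\rho\,\D\phi\,dx\,ds$ coming from $\D v_q$; both are well defined since $\rho_i\in L^p$ with $p\ge1$ and $\phi$ is smooth, and the passage $q\to\infty$ in this term uses only that $v_q\to v$ in $C^0_tL^p_x$ together with the continuity of $\D$ tested against smooth functions. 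Adaptedness, continuity in time, the zero initial condition, the stopping time $\tau_{\mfp}$ with $\P(\tau_{\mfp}=1)>\mfp$, the bound $\rho_i u\in L^\infty_\o C^0_t L^1_x$, and the fact that $\rho_1\ne\rho_2$ on $\{\tau_{\mfp}=1\}$ are all inherited verbatim from the construction of Theorem \ref{thm_main_SPDE}. The main obstacle is precisely the quantitative check in Step 2: that the slack provided by \eqref{eq_cond_exp_2} is exactly what is needed to absorb the two-derivative cost of the Laplacian, so that the admissible region in the $(1/p,1/\tilde p)$-plane of Figure \ref{fig_main_result} is not shrunk at all.
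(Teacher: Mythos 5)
Your proposal is correct and follows essentially the same route as the paper: reduce to the shifted random PDE (using that $\Delta$ commutes with the translation $\Psi$), rerun the iteration of Proposition \ref{prop_iteration_stage} with the single new defect term $R^{\rm diff}=\nabla\vartheta+\nabla q$ (the antidivergence of $\Delta(\vartheta+q)$, i.e.\ one derivative on the density perturbation), and observe that $\|R^{\rm diff}\|_{L^\infty_\omega C_\tau L^1_x}\lesssim \nu\,\mu^{-d/s'}=\lambda^{\gamma-\frac{d}{s'}\alpha}\to 0$ because the parameter choice $\gamma<\frac{1/2-\kappa}{1/2+\kappa}\frac{d}{s'}\alpha<\frac{d}{s'}\alpha$ is already forced by the stochastic error $R^{\mathrm{sto},1}$ via \eqref{eq_cond_exp_2}. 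The only nitpick is the phrase ``two-derivative cost'' in Step 3, which contradicts your (correct) one-derivative accounting in Step 2 after the antidivergence is applied.
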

\noindent The proof of Theorem \ref{thm:spde-transp-diff} is very similar to the one of Theorem \ref{thm_main_SPDE} and it will be sketched in Section \ref{s:diffusion}. 

It is interesting to compare the different conditions on the exponents needed in the deterministic/stochastic case for pure transport or transport/diffusion\footnote{ Actually, in \cite{MS20}, the additional condition besides \eqref{eq_cond_exp} needed in the deterministic case to deal with the diffusion term is not \eqref{eq_cond_exp_2}, as indicated in the table, but $p' < d$
which is equivalent to 
\begin{equation}
\label{eq_cond_exp_diff_2}
\frac{1}{p} < 1 - \frac{1}{d}.
\end{equation}
Conditions \eqref{eq_cond_exp} and \eqref{eq_cond_exp_diff_2} give together, in Figure \ref{fig_main_result}, the red \textit{triangle} to the \textit{left} of the dotted line. The reason why the red \textit{rectangle} to the \textit{right} of the dotted line could not be achieved in \cite{MS20} is that the result in \cite{MS20} actually provides a density $\rho \in L^p$ and a vector field $u \in W^{\theta, \tilde p}$ which in addition satisfies $u \in L^{p'}$. If one does not require explicitly that $u \in L^{p'}$ but only that $\rho u \in L^1$ (so that, in particular, $\rho u$ is well defined as a distribution), then it is not difficult to see that the non-uniqueness result of \cite{MS20} for deterministic transport-diffusion can be extended to the whole red area in Figure \ref{fig_main_result} (corresponding to the combination of conditions \eqref{eq_cond_exp}, \eqref{eq_cond_exp_2}). 
}:

\begin{spacing}{1.5}
\begin{center}
\begin{tabu}[h]{l|c|c}
& transport & transport-diffusion \\  \hline
deterministic PDE \cite{MS20} & \eqref{eq_cond_exp} & \eqref{eq_cond_exp} and \eqref{eq_cond_exp_2}
\\ \hline
SPDE [present paper] & \eqref{eq_cond_exp} and \eqref{eq_cond_exp_2} & \eqref{eq_cond_exp} and \eqref{eq_cond_exp_2} 
\end{tabu}
\end{center}
\end{spacing}
We observe that
\begin{enumerate}
\item the addition to the deterministic transport equation of a transport noise has the same effect (in terms of restriction on the exponents for which convex integration works) as the addition of a deterministic diffusion term; this is reminiscent of the fact that (formally) the expectation $\mathbb E [\rho(t,x)]$ of any solution to \eqref{eq_STE_proper} solves the deterministic transport-diffusion equation \eqref{eq-tr-diff}.

\item Since the very same conditions \eqref{eq_cond_exp} and \eqref{eq_cond_exp_2} are sufficient to make the construction work both in the deterministic case with diffusion \eqref{eq-tr-diff} and in the ``pure transport'' stochastic case \eqref{eq_STE_proper}, it is natural to expect that the same conditions are sufficient also in the SPDE \eqref{eq:SPDE-diff} where both transport noise and diffusion are simultaneously acting. This is exactly the content of Theorem \ref{thm:spde-transp-diff}.
\end{enumerate}

\textit{Organisation of the paper:} Section \ref{sec_preliminaries} contains the preliminaries used throughout the paper, including important technical lemmas and the definition of antidivergence operators. The main result needed to prove Theorem \ref{thm_main_SPDE} are given in Section \ref{sec_main_results}. It comes in the form of an iteration proposition, cf. Proposition \ref{prop_iteration_stage}. We prove that this proposition easily implies Theorem \ref{thm_main_SPDE}. The rest of the paper is then devoted to proving the proposition.
Section \ref{sec_Mikados} defines the main building blocks used in the convex integration scheme. Section \ref{sec_perturbations} uses these building blocks to construct perturbations of the density and vector fields to a relaxed, approximative continuity defect equation, Eq. \eqref{eq_cont_defect} and gives estimates on these. Section \ref{sec_defect_estimates} then moves on to define and estimate all the error terms involved in \eqref{eq_cont_defect}. Section \ref{sec_momentum} contains the estimates on the distance of momenta, ensuring that $\rho u \in L^1$ in the limit. The final section, Section \ref{sec_proof_of_prop}, combines the results from the previous sections to prove the iteration proposition, which concludes the paper.  Section \ref{s:diffusion} gives a sketch of the proof in the case of the stochastic transport-diffusion equation. \\ Appendix \ref{sec_AppA} contains a useful interpolation inequality used several times to get estimates for the $W^{\theta,\tilde{p}}$-norm from estimates for the $L^{\tilde{p}}$ and $W^{1,\tilde{p}}$-norms of the iterates. Finally, Appendix \ref{sec_AppB} gives a sketch of a proof of uniqueness for the stochastic transport equation, in the (dotted) green/blue  areas of Fig. \ref{fig_main_result}, following the method of \cite{BFGM19}.


\section{Preliminaries}\label{sec_preliminaries}

\subsection{Periodic maps}

For $n \in \N$, $n \geq 1$, $C_{0}^{\infty}(\T^{d};\R^{n})$ denotes the space of smooth functions with mean zero for all components. If $n=1$, we simply write $C_{0}^{\infty}(\T^{d})$.
If not stated otherwise, for a periodic function $f \colon \T^{d} \to \R^{n}$ and $\lambda \in \N$, 
$f_{\lambda} \colon \T^{d} \to \R$ will denote the dilation $f_{\lambda}(x) := f(\lambda x)$. Note that 
\begin{align*}
    \| D^{k} f_{\lambda} \|_{L^{p}(\T^{d})} = \lambda^{k} \| D^{k} f \|_{L^{p}(\T^{d})}, \quad \text{and in particular} \quad \| f_{\lambda} \|_{L^{p}_{x}} = \| f \|_{L^{p}_{x}}.
\end{align*}
Furthermore, let $\tau_{y}$, $y \in \R^{d}$ denote spatial translations, acting on functions according to
\begin{equation}\label{eq_def_translation}
    (f \circ \tau_{y}) := f(x-y). 
\end{equation}

\subsection{Stochastic notions}

All the stochastic bases $(\O,\F,\P,(\F_{t})_{t \geq 0})$ will include normal filtrations, i.e. $\mathcal{F}_{0}$ contains all $\P$-nullsets and $(\F_{t})_{t \geq 0}$ is a right-continuous filtration, i.e. $\F_{t} = \bigcap_{s > t} \F_{s}$ for all $t \in \R_{+}$. We will often suppress $\o$-dependence and generally write $f(t,x;\o)$ or just $f(t,x)$ for the evaluation of $f \colon \O \times I \times \T^{d} \to \R^{n}$ for a compact interval $I \subset \R$.

We give a few of the stochastic definitions used in the following: A map $X \colon \O \times [0,{1}] \to \R^{n}$ is adapted to the filtration $(\F_{t})_{t \in [0,{1}]}$ (or $(\F_{t})_{t}$-adapted)  if $X(t) \colon \O \to \R^{n}$ is $\F_{t}$-measurable for all $t \in [0,{1}]$. It is called progressively measurable with respect to the filtration $(\F_{t})_{t \in [0,{1}]}$ if for all $t \in [0,{1}]$, the restricted map  $X \colon \O \times [0,t] \to \R^{n}$ is $\F_{t} \otimes \mcB([0,t])$-measurable for all $t \in I$. Here, $\mcB([0,t])$ denotes the Borel $\sigma$-algebra of $[0,t]$. If a process is continuous and $(\F_{t})_{t}$-adapted, then it is also progressively measurable with respect to the same filtration (cf. \cite[Ch. 1, 1.13 Proposition]{KS91}). An  $\F$-measurable map $\tau \colon \O \to [0,{1}]$ is called a stopping time with respect to the filtration $(\F_{t})_{t \in [0,{1}]}$ (or an $(\F_{t})_{t}$-stopping time) if the event $\{ \tau \leq t \}$ belongs to $\F_{t}$ for every $t \in  [0,{1}]$. 

\subsection{Maps depending on space and time as trajectories in Banach spaces}

If $X$ is a Banach space and $I \subseteq \R$ is a compact interval, we can consider the Banach space $C(I; X)$ with norm $\|f\|_{C(I; X)} = \|f\|_{C_t X} = \max_{t \in I} \|f(t)\|_X$.   Similarly, if $\tau: \Omega \to  [0,{1}]$ is an $\F$-measurable function (in particular, if it is a stopping time), we can consider the Banach space
\begin{equation*}
L_{\o}^{\infty} C_{\tau}X := L^\infty(\Omega; C([0, \tau]; X) 
\end{equation*}
with norm 
\begin{equation*}
\|f\|_{L^\infty (\Omega; C([0, \tau]; X))} = \|f\|_{L^\infty_\omega C_\tau X} = \esssup_{\omega \in \Omega} \|f(\o) \|_{C([0,\tau(\omega)]; X)}.
\end{equation*}
Notice the difference between $C_{\tau} X$ and $C_{t} X$. The former space only consists of functions up to a random time $\tau = \tau(\o)$, whereas the latter consists of functions on the whole time interval $[0,1]$.

\subsection{Continuous maps depending on space and time}

Let $I \subseteq \R$ be a compact interval. Let $D := I \times \T^d$. In what follows, we assume that all functions take values in $\R$. Analogous definitions and properties hold if the maps under consideration take values in $\R^n$ (or $\T^d$, in the sense specified above).
The space $C^0(D)$ is the space of real valued continuous functions on $D$ with the $\sup$-norm. 
The space $C^k(D)$, $k \in \N$, is the space of all real valued, continuously differentiable maps on $D$ up to the order $k$, with the corresponding norm
\begin{equation*}
\|f\|_{C^k} := \sum_{j \leq k} \|\nabla^j_{(t,x)} f\|_{C^0},
\end{equation*}
where $\nabla^j_{(t,x)}$ denotes any derivative in space and time of order $j$. We define also $C^\infty(D) := \bigcap_k C^k(D)$ as a vector space (but we do not specify norms). Differential operators such as Laplacians $\D = \D_{x}$ or divergences $\divv = \divv_{x}$ will generally always be taken with respect to the space variables, except for the time derivative operator $\partial_{t}$. 

For $f \in C^0(D)$ and $\theta \in (0,1)$ we define the seminorm
\begin{equation*}
[f]_{\theta} := \sup_{z_1 \neq z_2} \frac{|f(z_1) - f(z_2)|}{|z_1 - z_2|^\theta}.
\end{equation*}
We denote by $C^\theta(D)$ is the space of continuous functions for which $[f]_\theta< \infty$. It has the natural norm $\|f\|_{C^\theta} \leq \|f\|_{C^0} + [f]_\theta$.  As before, if $\tau : \Omega \to  [0,{1}]$ is measurable, we can consider the Banach space 
$$
L_{\o}^{\infty} C_{\tau x} := L^\infty (\Omega; C([0,\tau] \times \T^d)
$$ 
with the natural norm $\|f\|_{L^\infty_\omega C_{\tau x}} := \esssup_{\omega \in \Omega} \|f (\o) \|_{C([0, \tau(\omega)] \times \T^d)}$, and similar notations applies to $L^\infty C^k_{\tau x}$ and $L^\infty C^\theta_{\tau x}$. We would like to point out once more the distinction between $C_{\tau x}$ and $C_{tx}$, as we did in the previous subsection.

We recall also the following two elementary facts.

\begin{lemma}
\label{l_holder1}
For $f,g \in C^\theta(D)$, $\|fg\|_{C^\theta} \leq \|f\|_{C^\theta} \|g\|_{C^\theta}$. 
\end{lemma}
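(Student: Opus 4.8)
\textbf{Proof plan for Lemma \ref{l_holder1}.}

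The plan is to estimate the two contributions to $\|fg\|_{C^\theta} = \|fg\|_{C^0} + [fg]_\theta$ separately, using only the submultiplicativity of $\|\cdot\|_{C^0}$ and an elementary Leibniz-type splitting of the difference quotient. First I would record that $\|fg\|_{C^0} \leq \|f\|_{C^0}\|g\|_{C^0}$, which is immediate from the definition of the supremum norm. This already handles the $C^0$ part.

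Next I would treat the seminorm. For $z_1 \neq z_2$, write the telescoping identity
\begin{equation*}
f(z_1)g(z_1) - f(z_2)g(z_2) = \bigl(f(z_1) - f(z_2)\bigr)g(z_1) + f(z_2)\bigl(g(z_1) - g(z_2)\bigr).
\end{equation*}
Dividing by $|z_1 - z_2|^\theta$, applying the triangle inequality, and bounding $|g(z_1)| \leq \|g\|_{C^0}$ and $|f(z_2)| \leq \|f\|_{C^0}$, one obtains
\begin{equation*}
\frac{|f(z_1)g(z_1) - f(z_2)g(z_2)|}{|z_1 - z_2|^\theta} \leq [f]_\theta \|g\|_{C^0} + \|f\|_{C^0} [g]_\theta.
\end{equation*}
Taking the supremum over $z_1 \neq z_2$ gives $[fg]_\theta \leq [f]_\theta\|g\|_{C^0} + \|f\|_{C^0}[g]_\theta$.

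Finally I would add the two estimates and absorb them into the product of the full norms: since all four quantities $\|f\|_{C^0}, [f]_\theta, \|g\|_{C^0}, [g]_\theta$ are nonnegative, we have
\begin{equation*}
\|fg\|_{C^\theta} \leq \|f\|_{C^0}\|g\|_{C^0} + [f]_\theta\|g\|_{C^0} + \|f\|_{C^0}[g]_\theta \leq \bigl(\|f\|_{C^0} + [f]_\theta\bigr)\bigl(\|g\|_{C^0} + [g]_\theta\bigr) = \|f\|_{C^\theta}\|g\|_{C^\theta},
\end{equation*}
where the middle inequality just discards the nonnegative cross term $[f]_\theta[g]_\theta$. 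This is entirely routine; there is no real obstacle, the only thing to be careful about is using the submultiplicative (rather than additive) form of the $C^\theta$ norm as stated in the excerpt, so that the cross term can simply be thrown away at the end.
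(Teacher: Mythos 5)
Your proof is correct and is the standard argument: the paper states this lemma without proof as a recalled elementary fact, and your Leibniz-splitting of the difference quotient followed by discarding the nonnegative cross term $[f]_\theta[g]_\theta$ is exactly the routine way to establish submultiplicativity of the H\"older norm. Nothing to add.
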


\begin{lemma}
\label{l_holder2}
Let $\Phi \in C^\theta(D; D)$ for some $\theta \in (0,1)$. Let $f \in C^\theta(D)$. 
\begin{equation*}
[f \circ \Phi]_\theta \leq \|f\|_{C^1} [\Phi]_\theta. 
\end{equation*}
\end{lemma}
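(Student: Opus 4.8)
\textbf{Proof proposal for Lemma \ref{l_holder2}.}

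The plan is to estimate the Hölder seminorm of the composition $f \circ \Phi$ directly from the definition, using the mean value theorem on $f$ and the Hölder continuity of $\Phi$. First I would fix two distinct points $z_1, z_2 \in D$ and write
\begin{equation*}
|f(\Phi(z_1)) - f(\Phi(z_2))| \leq \|\nabla f\|_{C^0} \, |\Phi(z_1) - \Phi(z_2)|,
\end{equation*}
which is valid because $f \in C^1(D)$ (here $D$ is convex in the $x$-variable up to the periodic identification, and one argues componentwise along a segment joining $\Phi(z_1)$ and $\Phi(z_2)$ inside $D$; since $f$ is periodic in $x$, the estimate by $\|\nabla f\|_{C^0} = \|f\|_{C^1}$ — more precisely by $\|f\|_{C^1}$, which dominates $\|\nabla f\|_{C^0}$ — goes through). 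Then, dividing by $|z_1 - z_2|^\theta$ and using the hypothesis $\Phi \in C^\theta(D;D)$,
\begin{equation*}
\frac{|f(\Phi(z_1)) - f(\Phi(z_2))|}{|z_1 - z_2|^\theta} \leq \|f\|_{C^1} \, \frac{|\Phi(z_1) - \Phi(z_2)|}{|z_1 - z_2|^\theta} \leq \|f\|_{C^1} \, [\Phi]_\theta.
\end{equation*}
Taking the supremum over $z_1 \neq z_2$ yields $[f \circ \Phi]_\theta \leq \|f\|_{C^1} [\Phi]_\theta$, as claimed.

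The only point requiring a little care — and the main (minor) obstacle — is justifying the mean value estimate on the torus: a straight segment between two points of $\T^d$ is not canonically defined, so one should instead lift to $\R^d$, choose representatives of $\Phi(z_1)$ and $\Phi(z_2)$ at distance $|\Phi(z_1) - \Phi(z_2)|$ (using the torus metric), and apply the fundamental theorem of calculus to the periodic lift of $f$ along that segment; since $f$ and its lift agree and $\nabla f$ is bounded by $\|f\|_{C^1}$, the bound is unaffected. Everything else is a direct two-line computation from the definitions, so no further structural work is needed.
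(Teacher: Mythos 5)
Your proof is correct and is the standard argument one would expect; the paper itself states this lemma as an ``elementary fact'' and gives no proof at all, so there is nothing to compare against. One small remark: the paper's convention for torus-valued maps (stated in the subsection ``Maps with values in the torus'') is that $\Phi$ is really an $\R^d$-valued map satisfying $\Phi(x+k)=\Phi(x)+k$, so $|\Phi(z_1)-\Phi(z_2)|$ is already the Euclidean distance in $I\times\R^d$, the segment joining $\Phi(z_1)$ and $\Phi(z_2)$ lies in $I\times\R^d$ (convex since $I$ is an interval), and you apply the mean value inequality to the periodic lift of $f$ there; your caveat is thus handled automatically by the paper's conventions, and the argument goes through exactly as you describe.
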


\subsection{Mollifications}

Let $I \subseteq \R$ be a compact interval. We denote by $D: = I \times \T^d$. Let $\chi$ be a mollification kernel in space and time. 
We recall the following two elementary estimates.

\begin{lemma}
\label{l_mollification_1}
Let $r  \in [1,\infty]$, let $f \in C(I; L^r(\T^d))$. Let $f_\e := f * \chi_\e$ (where we assumed that $f$ is extended by continuity for times $t \in \R \setminus I$, in order to define the mollification). It holds that
\begin{equation*}
\|f_\e\|_{C^k(D)} \leq C_{k} \e^{-k-d} \|f\|_{C_t L^1_x},
\end{equation*}
where $C_{k}$ is a constant depending only on the kernel $\chi$.
\end{lemma}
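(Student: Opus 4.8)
The estimate is the standard bound for a mollification in space-time, expressed in terms of the (weak) $C_t L^1_x$ norm rather than, say, a spatial $L^1$ norm at fixed time; so the plan is to reduce it to the classical Young-type inequality for convolution with a scaled smooth kernel. First I would write out the convolution explicitly: with $\chi_\e(s,y) = \e^{-(d+1)} \chi(s/\e, y/\e)$ supported in a ball of radius $\e$ around the origin in $\R \times \T^d$, one has
\begin{equation*}
f_\e(t,x) = \int_{\R} \int_{\T^d} f(t-s, x-y) \, \chi_\e(s,y) \, dy \, ds.
\end{equation*}
Differentiating under the integral sign, any space-time derivative $\nabla^j_{(t,x)}$ of order $j \le k$ hits only the kernel, producing $\nabla^j_{(t,x)} \chi_\e$, whose sup-norm obeys $\|\nabla^j \chi_\e\|_{\infty} \le C_j \e^{-(d+1)-j}$ by the scaling relation, with $C_j$ depending only on $\chi$.

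Next I would estimate pointwise: for fixed $(t,x)$,
\begin{equation*}
|\nabla^j_{(t,x)} f_\e(t,x)| \le \int_{\R} \|\nabla^j \chi_\e(s, \cdot)\|_{L^\infty_y} \, \|f(t-s, \cdot)\|_{L^1(\T^d)} \, ds \le C_j \, \e^{-(d+1)-j} \, \e \, \|f\|_{C_t L^1_x},
\end{equation*}
where the last factor of $\e$ comes from the fact that the $s$-integration ranges only over an interval of length $O(\e)$ (the temporal support of $\chi_\e$), and on that interval $\|f(t-s,\cdot)\|_{L^1_x} \le \|f\|_{C_t L^1_x}$ uniformly; one must note here that $f$ has been extended continuously beyond $I$, so the sup is still controlled by $\|f\|_{C_t L^1_x}$ up to a harmless constant (or one simply absorbs the extension into the definition of the norm, as the statement implicitly does). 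This gives $|\nabla^j_{(t,x)} f_\e| \le C_j \e^{-j-d} \|f\|_{C_t L^1_x}$, and summing over $j \le k$ yields the claimed bound with $C_k = \sum_{j \le k} C_j$.

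There is no real obstacle here — the only points requiring a word of care are the bookkeeping of the powers of $\e$ (the kernel scaling gives $\e^{-(d+1)-j}$, and the temporal integration recovers one power of $\e$, netting $\e^{-d-j}$, consistent with the stated exponent $-k-d$ for the top-order term) and the observation that using the $L^1_x$ norm inside forces us to pull the kernel out in $L^\infty_y$, which is exactly where the full factor $\e^{-d-1}$ rather than $\e^{-1}$ originates. Since we only need the crude power-counting estimate and not any sharp constant, I would not belabour the computation beyond this.
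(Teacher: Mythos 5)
Your proof is correct and is exactly the standard scaling argument the paper has in mind (the lemma is stated without proof as an elementary fact): put the derivatives on the kernel, bound $\|\nabla^j\chi_\e\|_\infty\le C_j\e^{-(d+1)-j}$, and recover one power of $\e$ from the length of the temporal support. The only cosmetic point is that to "differentiate onto the kernel" you should first write $f_\e(t,x)=\int f(\sigma,\xi)\chi_\e(t-\sigma,x-\xi)\,d\xi\,d\sigma$, since in the form you displayed the $(t,x)$-dependence sits in $f$, which need not be differentiable.
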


\begin{lemma}
\label{l_mollification_2}
Let $\theta \in (0,1]$, $f \in C^\theta(D)$. Let $f_\e := f * \chi_\e$ (where again we assumed that $f$ is extended by continuity for times $t \in \R \setminus I$, in order to define the mollification). Then 
\begin{equation*}
\|f_\e - f\|_{C^0(D)} \leq \e^{\theta} [ f ]_{C^\theta}. 
\end{equation*}
\end{lemma}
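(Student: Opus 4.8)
The statement to prove is Lemma~\ref{l_mollification_2}: for $\theta \in (0,1]$ and $f \in C^\theta(D)$ with $f_\e := f * \chi_\e$, one has $\|f_\e - f\|_{C^0(D)} \leq \e^\theta [f]_{C^\theta}$.

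\textbf{Plan of proof.} The plan is to exploit directly the defining property of a mollifier, namely that $\chi_\e$ is a probability density supported in a ball of radius $\e$ (after rescaling: $\chi_\e(z) = \e^{-(d+1)}\chi(z/\e)$, with $\int \chi = 1$ and $\supp \chi \subseteq B_1(0)$ in the $(t,x)$ variables). First I would fix a point $z = (t,x) \in D$ and write the difference as an average against the kernel:
\begin{equation*}
f_\e(z) - f(z) = \int_{B_\e(0)} \bigl( f(z - w) - f(z) \bigr) \chi_\e(w)\, \diff w,
\end{equation*}
using $\int \chi_\e = 1$ to subtract off $f(z)$. (Here $f$ is understood to be extended continuously in time outside $I$, as stipulated in the statement, so the translate $f(z-w)$ makes sense; the Hölder seminorm of the extension is controlled by $[f]_{C^\theta}$ on $D$ up to the harmless issue of the extension, which one can take to preserve the modulus of continuity.) Then I would estimate the integrand pointwise by the Hölder bound $|f(z-w) - f(z)| \leq [f]_{C^\theta} |w|^\theta \leq [f]_{C^\theta}\, \e^\theta$, valid because $|w| \leq \e$ on the support of $\chi_\e$. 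Pulling this uniform bound out of the integral and using $\int_{B_\e(0)} \chi_\e(w)\,\diff w = 1$ (and $\chi_\e \geq 0$) gives $|f_\e(z) - f(z)| \leq \e^\theta [f]_{C^\theta}$. Since $z \in D$ was arbitrary, taking the supremum over $z$ yields the claimed $C^0(D)$ bound.

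\textbf{Main obstacle.} This is an entirely routine estimate and there is no genuine obstacle; the only point requiring minor care is the role of the time-extension. Since $\chi$ mollifies in both space and time, evaluating $f_\e$ near the endpoints of the interval $I$ requires values of $f$ at times slightly outside $I$, which is why the statement assumes $f$ has been extended by continuity. One should note that such an extension can be chosen so that its $\theta$-Hölder seminorm on the enlarged time domain is still bounded by $[f]_{C^\theta(D)}$ (e.g.\ by reflection or by a constant extension at the endpoints), so the estimate $|f(z-w)-f(z)| \leq [f]_{C^\theta(D)}|w|^\theta$ remains valid for all $z \in D$ and $|w| \leq \e$. With that understood, the proof is just the two displayed lines above.
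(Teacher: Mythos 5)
Your proof is correct and is exactly the standard argument one would expect: the paper states this lemma without proof as a recalled elementary estimate, and your two-line computation (writing $f_\e - f$ as an average of increments against the nonnegative, unit-mass kernel supported in $B_\e$, then applying the H\"older bound $|f(z-w)-f(z)|\leq [f]_{C^\theta}|w|^\theta \leq [f]_{C^\theta}\e^\theta$) is precisely how it is proved. Your remark that the constant-in-time extension used later in the paper preserves the H\"older seminorm is also correct and is the only point needing any care.
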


\subsection{Fractional Sobolev spaces on \texorpdfstring{$\T^d$}{Td}}

For $n \in \N$, $\theta \in \R_{+}$, $r \in (1,\infty)$, we define the fractional Sobolev spaces as
\begin{align*}
    W^{\theta,r}_{x} := W^{\theta,r}(\T^{d};\R^{n}) := \left\{ f \colon \T^{d} \to \R^{n} \mid \| f \|_{W^{\theta,r}_{x}} := \sum_{i=1}^{n} \left\| \left( (1+4\pi^{2} |k|^{2})^{\theta/2} \hat{f}_{i}(k) \right)^{\vee} \right\|_{L^{r}_{x}} < \infty \right\},
\end{align*}
where $\hat{f}_{i}(k) := \int_{\T^{d}} f_{i}(x) e^{-2\pi i k \cdot x} dx$ denotes the Fourier coefficient of the $i$-th component of $f$, and $u^{\vee}$ is the inverse Fourier transform defined by $u^{\vee}(x) := \hat{u}(-x)$. For $\theta \in \N_{0}$, these spaces are just the usual Sobolev spaces with an equivalent norm.

\subsection{Maps with values in the torus}

We say that a map $f: \T^d \to \R^d$ is a map with values in $\T^d$ (and we write $f: \T^d \to \T^d$) if
\begin{equation*}
f(x+k) = f(x) + k \text{ for all } k \in \Z^d. 
\end{equation*}
Similarly, if $A$ is a nonempty set, we say that a map $f: A \times \T^d \to \R^d$  is  a map with values in $\T^d$ (and we write $f : A \times \T^d \to \T^d$) if $f(a, \cdot) : \T^d \to \T^d$ for all $a \in A$. 
In particular we will consider maps $f : I \times \T^d \to I \times \T^d$, meaning with this notation that $f = (f_0, f_1, \dots, f_d)$ and $f_0 : I \times \T^d \to I$ and $(f_1, \dots, f_n) : I \times \T^d \to \T^d$. Observe that if $f : \T^d \to \T^d$ is differentiable, its derivative is a map $\nabla f: \T^d \to \R^{d \times d}$ (i.e. a standard map with values in some $\R^n$ space).

\subsection{The role of fast oscillations}

In this section we will collect a series of lemmas from earlier works which will be used many times in our arguments. 

\begin{lemma}[Improved H\"older inequality]\label{lem_Holder}
Let $f, g \colon \T^{d} \to \R$ be smooth maps. Let $r \in [1,\infty]$. Then
\begin{align*}
    \| f g_{\lambda}\|_{L^{r}} \leq \| f \|_{L^{r}} \|g\|_{L^{r}} + C_{r} \lambda^{-\frac{1}{r}} \|f\|_{C^{1}} \|g\|_{L^{r}}.
\end{align*}
\end{lemma}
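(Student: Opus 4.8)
\textbf{Proof sketch for the Improved Hölder inequality (Lemma~\ref{lem_Holder}).}

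The plan is to reduce the statement to the mean-zero case for $g$ and then to exploit the fast oscillation of $g_\lambda$ against the slowly varying factor $f$. First I would split $g = \bar g + (g - \bar g)$, where $\bar g := \fint_{\T^d} g\,dx$ is the mean value. For the constant part, $\|f\,\bar g\|_{L^r} = |\bar g|\,\|f\|_{L^r} \le \|g\|_{L^1}\|f\|_{L^r} \le \|g\|_{L^r}\|f\|_{L^r}$ by Jensen/Hölder on the torus (which has unit measure), and this is already dominated by the first term on the right-hand side. So it suffices to treat $h := g - \bar g$, which has zero mean, and prove
\begin{equation*}
\| f\, h_\lambda \|_{L^r} \le \|f\|_{L^r}\|h\|_{L^r} + C_r \lambda^{-1/r}\|f\|_{C^1}\|h\|_{L^r}.
\end{equation*}

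The key step is a ``tiling'' argument. Partition $\T^d$ into $\lambda^d$ congruent small cubes $\{Q_j\}$ of side $1/\lambda$. On each $Q_j$, approximate $f$ by its value $f(x_j)$ at the center $x_j$; the error is $O(\lambda^{-1}\|f\|_{C^1})$ uniformly. Then
\begin{equation*}
\int_{Q_j} |f(x)|^r |h(\lambda x)|^r\,dx \le \Big( |f(x_j)| + \lambda^{-1}\|\nabla f\|_{C^0}\Big)^r \int_{Q_j} |h(\lambda x)|^r\,dx,
\end{equation*}
and by periodicity and the change of variables $y = \lambda x$, $\int_{Q_j}|h(\lambda x)|^r dx = \lambda^{-d}\int_{\T^d}|h(y)|^r dy = \lambda^{-d}\|h\|_{L^r}^r$. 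Summing over $j$, using $(a+b)^r \le 2^{r-1}(a^r+b^r)$ and $\sum_j |f(x_j)|^r \lambda^{-d} \le \|f\|_{L^r}^r + O(\lambda^{-1}\|f\|_{C^1}\|f\|_{C^0}^{r-1})$ (a Riemann-sum estimate, again using $f \in C^1$), one arrives at
\begin{equation*}
\|f h_\lambda\|_{L^r}^r \le \|f\|_{L^r}^r\|h\|_{L^r}^r + C_r \lambda^{-1}\|f\|_{C^1}^r \|h\|_{L^r}^r,
\end{equation*}
and taking $r$-th roots and using $(a+b)^{1/r}\le a^{1/r}+b^{1/r}$ gives the claimed bound with $\lambda^{-1/r}$. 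The case $r=\infty$ is immediate since $\|f g_\lambda\|_{L^\infty} \le \|f\|_{L^\infty}\|g\|_{L^\infty}$, absorbing everything into the first term. Finally I would replace $\|h\|_{L^r} \le \|g\|_{L^r} + |\bar g| \le 2\|g\|_{L^r}$ (or simply re-derive with $\|h\|_{L^r}\le C\|g\|_{L^r}$) to restate everything in terms of $g$, adjusting $C_r$; alternatively one keeps $g$ itself in the oscillating slot and only uses mean-zero-ness in the cross term that produces the gain.

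The main obstacle — really the only non-bookkeeping point — is making the ``replace $f$ by a constant on each small cube'' step quantitatively clean while keeping the exponent of $\lambda$ sharp: one must be careful that the correction terms genuinely carry a full power $\lambda^{-1}$ inside the $L^r$-integral (hence $\lambda^{-1/r}$ after the root), and that no logarithmic or dimensional losses creep in from summing $\lambda^d$ cubes. A slicker route that avoids the tiling entirely is a Fourier/commutator argument: write $f h_\lambda$, expand $f$ in Fourier series, note $\widehat{h}(0)=0$ so that the ``diagonal'' frequency interaction is absent, and estimate the off-diagonal part by $\|\nabla f\|$ times a gain in $\lambda$; but on $\T^d$ with general $r \in [1,\infty]$ the elementary tiling argument is the most robust and is the one I would write up. This lemma is standard in the convex integration literature (it appears in essentially this form in \cite{MS18,MS19,MS20,BV19a}), so I would keep the proof short and cite those references for the details.
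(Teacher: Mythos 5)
The paper cites \cite[Lemma~2.1]{MS18} rather than giving a proof, and the argument there is exactly the tiling argument you describe, so your route is the standard one. Two small remarks on your write-up, both concerning the leading constant, which in the statement is exactly $1$ (i.e.\ the first term is $\|f\|_{L^r}\|g\|_{L^r}$, not a constant multiple of it). First, the reduction to mean-zero $g$ is unnecessary: the identity $\int_{Q_j}|g(\lambda x)|^r\,dx=\lambda^{-d}\|g\|_{L^r}^r$ uses only $1$-periodicity of $g$ (and $\lambda\in\N$), not that $g$ has zero mean, so you can run the tiling directly on $g$; if you insist on splitting $g=\bar g+h$ and then bounding $\|h\|_{L^r}\le 2\|g\|_{L^r}$ and $|\bar g|\le\|g\|_{L^r}$, the leading term becomes $3\|f\|_{L^r}\|g\|_{L^r}$. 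Second, the inequality $(a+b)^r\le 2^{r-1}(a^r+b^r)$ applied to $a=|f(x_j)|$, $b=\lambda^{-1}\|\nabla f\|_{C^0}$ already puts a spurious $2^{(r-1)/r}$ in front of the first term. The clean way to keep the constant at $1$ is not to expand the $r$-th power but to write
\begin{equation*}
\int_{Q_j}|f|^r|g_\lambda|^r\,dx
=\int_{Q_j}|f(x_j)|^r|g_\lambda|^r\,dx
+\int_{Q_j}\bigl(|f|^r-|f(x_j)|^r\bigr)|g_\lambda|^r\,dx,
\end{equation*}
bound $\bigl||f(x)|^r-|f(x_j)|^r\bigr|\le r\|f\|_{C^0}^{r-1}|f(x)-f(x_j)|\le C_r\lambda^{-1}\|f\|_{C^1}^r$ by the mean value theorem, and likewise treat the Riemann-sum error $\sum_j\lambda^{-d}|f(x_j)|^r-\|f\|_{L^r}^r$ the same way. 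This yields $\|fg_\lambda\|_{L^r}^r\le\|f\|_{L^r}^r\|g\|_{L^r}^r+C_r\lambda^{-1}\|f\|_{C^1}^r\|g\|_{L^r}^r$, and subadditivity of $t\mapsto t^{1/r}$ then gives the stated bound with leading constant $1$. For the way the lemma is used in this paper (all absolute constants are absorbed into $M$), the degraded constant would not actually cause trouble, but strictly speaking your version proves a weaker statement than the one claimed.
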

\begin{proof}
We refer to Lemma 2.1 in \cite{MS18}.
\end{proof}

\begin{lemma}[Standard antidivergence operator]\label{lem_std_antidiv}
There is an operator
\begin{equation*}
\divv^{-1} : C^\infty_0(\T^d) \to C^\infty(\T^d; \R^d)
\end{equation*}
defined by $\divv^{-1} f := \nabla \Delta^{-1} f$, such that
\begin{equation*}
\divv (\divv^{-1} f) = f \text{ for all } f \in C^\infty_0(\T^d)
\end{equation*}
and
\begin{equation*}
\|\divv^{-1} f\|_{L^r} \leq C_r \|f\|_{L^r} \text{ for all } r \in [1,\infty],
\end{equation*}
where $C_r$ is a constant depending only on $r$. 
\end{lemma}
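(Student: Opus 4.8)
\textbf{Proof plan for Lemma \ref{lem_std_antidiv} (standard antidivergence operator).}

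The plan is to define the operator explicitly via the Fourier multiplier associated to $\nabla \Delta^{-1}$ and then verify the three claims (well-definedness on mean-zero functions, the right-inverse property, and the $L^r$ bound) in turn. First I would set up notation: for $f \in C^\infty_0(\T^d)$ write its Fourier series $f(x) = \sum_{k \in \Z^d \setminus \{0\}} \hat f(k) e^{2\pi i k \cdot x}$, noting that the $k=0$ mode is absent precisely because $f$ has mean zero. Then $\Delta^{-1} f$ is defined to be the unique mean-zero function with Fourier coefficients $\widehat{\Delta^{-1} f}(k) = -\frac{1}{4\pi^2 |k|^2}\hat f(k)$ for $k \neq 0$, and I set $\divv^{-1} f := \nabla \Delta^{-1} f$, so that componentwise $\widehat{(\divv^{-1} f)_j}(k) = -\frac{i k_j}{2\pi |k|^2}\hat f(k)$. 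Since $f$ is smooth its Fourier coefficients decay faster than any polynomial, and the multiplier $-ik_j/(2\pi|k|^2)$ grows at most like $|k|^{-1}$, so the resulting series converges to a $C^\infty(\T^d;\R^d)$ function; this gives the mapping property. The right-inverse identity $\divv(\divv^{-1} f) = f$ is then immediate on the Fourier side: $\sum_j \partial_{x_j}$ contributes a factor $\sum_j (2\pi i k_j)\cdot(-ik_j/(2\pi|k|^2)) = \sum_j k_j^2/|k|^2 = 1$ for every $k \neq 0$, and the $k = 0$ mode is zero on both sides.

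The only substantive point is the $L^r$ bound $\|\divv^{-1} f\|_{L^r} \le C_r \|f\|_{L^r}$ for all $r \in [1,\infty]$, including the endpoints $r = 1$ and $r = \infty$. Here I would \emph{not} invoke Calderón--Zygmund theory (which would only give $1 < r < \infty$), but instead represent $\divv^{-1}$ as convolution against a fixed kernel. Concretely, each component $(\divv^{-1} f)_j = K_j * f$ where $K_j$ is the periodic distribution with Fourier coefficients $-ik_j/(2\pi|k|^2)$ for $k \neq 0$ and $0$ for $k = 0$; equivalently $K_j = \partial_{x_j} G$ where $G$ is the (mean-zero) Green's function for the Laplacian on $\T^d$. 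The classical fact is that $\partial_{x_j} G \in L^1(\T^d)$: away from the origin $G$ is smooth, and near the origin $G(x)$ behaves like the Newtonian potential (e.g. $\sim c_d |x|^{2-d}$ for $d \ge 3$, $\sim c\log|x|$ for $d=2$, and is bounded for $d = 1$), so its first derivatives are $O(|x|^{1-d})$, which is integrable on a neighbourhood of $0$ in $\R^d$. Hence $K_j \in L^1(\T^d)$, and Young's convolution inequality gives $\|K_j * f\|_{L^r} \le \|K_j\|_{L^1}\|f\|_{L^r}$ for every $r \in [1,\infty]$; summing over $j$ yields the claim with $C_r = C = \sum_j \|K_j\|_{L^1}$, which in fact is independent of $r$.

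The main obstacle, such as it is, is precisely the endpoint $L^1$ and $L^\infty$ bound: one has to resist the temptation to use singular-integral boundedness and instead identify the convolution kernel explicitly and check its local integrability from the known singularity of the torus Green's function. Once that integrability is in hand, everything else is a short Fourier-side computation. Since this operator is completely standard, in the write-up I would state these facts and refer to a standard reference for the regularity of the periodic Green's function (or simply cite the corresponding lemma in \cite{MS18,MS19,MS20}, where the same operator is used), rather than reproving the potential-theoretic estimate.
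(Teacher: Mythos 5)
Your proof is correct. The paper itself does not give an argument but simply refers to Lemma 2.2 in \cite{MS18} (or Lemmas 3.3/3.4 in \cite{MS20}); the argument you spell out is precisely the standard one underlying those references. The decisive point — that the $L^r$ bound must hold at the endpoints $r=1,\infty$, so one cannot appeal to Calder\'on--Zygmund theory and must instead observe that $\nabla\Delta^{-1}$ is convolution with $\nabla G$, where $G$ is the periodic Green's function whose gradient has the locally integrable singularity $O(|x|^{1-d})$, after which Young's inequality gives $\|\divv^{-1} f\|_{L^r}\le \|\nabla G\|_{L^1}\|f\|_{L^r}$ for all $r\in[1,\infty]$ — is exactly the right one, and you flag it explicitly. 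The Fourier-side verifications of well-definedness and of $\divv\circ\divv^{-1}=\mathrm{Id}$ on $C^\infty_0$ are straightforward and correct (the mean-zero hypothesis removing the $k=0$ mode, so $\Delta^{-1}$ is well defined). No gaps.
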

\begin{proof}
We refer to Lemma 2.2 in \cite{MS18} or to Lemmas 3.3 and 3.4 in \cite{MS20}. 
\end{proof}

\begin{lemma}[Improved antidivergence operators]
	\label{lem:adiv-properties}
	Let $ N\in\N \setminus \{0\}$. There is an operator
	\begin{equation*}
	\mathcal{R}_N : C^\infty(\T^d) \times C^\infty_0(\T^d) \to C^\infty(\T^d; \R^d)
	\end{equation*}
	such that for all $ f\in C^\infty(\T^d) $ and $ g\in C_0^\infty(\T^d) $.
	\begin{enumerate}
		\item $ \mathcal R_N $ is an anitdivergence operator in the sense that
		\[ \divv \left(\mathcal R_N (f,g)\right) = fg-\fint_{\T^d}fg. \]
		\item $ \mathcal R_N $ satisfies the Leibniz rule:
		\begin{equation}\label{eq_Leibniz}
		 \partial_j \left(\mathcal R_N(f,g)\right) = \mathcal R_N(\partial_jf,g) + \mathcal R_N(f,\partial_jg).
		\end{equation}
		\item 	if $ r\in[1,\infty] $, $ \lambda,N\in\N $, then
	\begin{align}
		\left\| \adiv_N(f,g_\lambda) \right\|_{L^r} &\le C_{d,p,N} \|g\|_{L^r} \left( \sum_{k=0}^{N-1} \lambda^{-k-1} \|\nabla ^k f\|_{L^\infty} + \lambda^{-N} \|\nabla ^Nf\|_{L^\infty}\right),
		\label{eq:antidiv-in-infty} \\
		\left\| \adiv_N(f,g_\lambda) \right\|_{L^r} &\le C_{d,p,N} \|g\|_{L^\infty} \left( \sum_{k=0}^{N-1} \lambda^{-k-1} \|\nabla^k f\|_{L^r} + \lambda^{-N} \|\nabla^Nf\|_{L^r} \right).
		\label{eq:antidiv-in-p}
	\end{align}
	\end{enumerate}
\end{lemma}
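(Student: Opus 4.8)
The statement to prove is Lemma~\ref{lem:adiv-properties} on the improved antidivergence operators $\mathcal{R}_N$.

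\medskip

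\textbf{Plan of proof.} The construction proceeds by induction on $N$. For the base case $N=1$, I would set $\mathcal{R}_1(f,g) := f\,\divv^{-1}g - \divv^{-1}\bigl(\nabla f \cdot \divv^{-1}g\bigr)$, where $\divv^{-1}$ is the standard antidivergence operator from Lemma~\ref{lem_std_antidiv} (note $g\in C^\infty_0$, so $\divv^{-1}g$ makes sense, and $\nabla f\cdot\divv^{-1}g$ has mean zero after subtracting its average, which one must check before applying $\divv^{-1}$ again — this is a routine but necessary bookkeeping point). A direct computation using the Leibniz rule $\divv(f V) = \nabla f\cdot V + f\,\divv V$ gives $\divv\mathcal{R}_1(f,g) = fg - \fint_{\T^d}fg$, establishing property (1). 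For the inductive step, having $\mathcal{R}_{N}$, define $\mathcal{R}_{N+1}(f,g) := f\,\divv^{-1}g - \mathcal{R}_{N}\bigl(\nabla f, \divv^{-1}g\bigr)$, interpreting the second argument componentwise (so $\mathcal{R}_N$ is applied to each pair $(\partial_i f, (\divv^{-1}g)_i)$ and summed, after the usual mean-zero correction). Again $\divv$ of this equals $fg$ minus its mean, using property (1) at level $N$.

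\medskip

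\textbf{Leibniz rule (property (2)).} This follows by differentiating the explicit formula above and using (i) the ordinary product rule, (ii) the fact that $\divv^{-1}$ commutes with $\partial_j$ (since $\divv^{-1} = \nabla\Delta^{-1}$ and $\Delta^{-1}$ commutes with translations on the torus), and (iii) the inductive hypothesis that $\mathcal{R}_N$ satisfies the Leibniz rule. One again has to be slightly careful about the mean-zero corrections, but these are constants in $x$ and hence differentiate away, so they cause no trouble.

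\medskip

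\textbf{The estimates (property (3)) — the main obstacle.} This is where the real work is, and where the gain over a naive iterated $\divv^{-1}$ comes from. Applying the formula to $g_\lambda$ in place of $g$: since $\divv^{-1}(g_\lambda) = \lambda^{-1}(\divv^{-1}g)_\lambda$ (the rescaling picks up exactly one negative power of $\lambda$), each recursion step trades one derivative landing on $f$ for one factor of $\lambda^{-1}$. Unwinding the recursion $N$ times yields schematically
\begin{equation*}
\mathcal{R}_N(f, g_\lambda) = \sum_{k=0}^{N-1} \lambda^{-k-1} \bigl(\text{terms involving } \nabla^k f \text{ and } (\divv^{-1})^{k+1}g \text{ evaluated at } \lambda x\bigr) + \lambda^{-N}(\text{remainder with } \nabla^N f),
\end{equation*}
and one estimates each term by placing $g$ (and its antidivergences, which are bounded on every $L^r$ by Lemma~\ref{lem_std_antidiv}) in $L^r$ and $\nabla^k f$ in $L^\infty$ to get \eqref{eq:antidiv-in-infty}, or by placing $\nabla^k f$ in $L^r$ and the $g$-factors in $L^\infty$ to get \eqref{eq:antidiv-in-p}. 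The key points are that the $\divv^{-1}$ operator is bounded on $L^r$ for every $r\in[1,\infty]$ (uniformly, by Lemma~\ref{lem_std_antidiv}), that dilation is an isometry on every $L^r$ norm, and that the final remainder term, where we have exhausted the recursion, still carries $N$ full powers of $\lambda^{-1}$ but only $\nabla^N f$ and no further gain — hence the last term in each sum has a different structure (no decomposition over $k$). Tracking the combinatorial constants $C_{d,p,N}$ and verifying that the mean-zero corrections inserted at each stage can be absorbed (their $L^r$ norms are controlled by the same quantities, since $|\fint h| \le \|h\|_{L^1} \le \|h\|_{L^r}$) is the bulk of the remaining bookkeeping. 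I would either cite the analogous construction in \cite{MS20} (Lemmas~3.3–3.4 there) or in \cite{MS18}, since this operator is by now fairly standard, and merely indicate the recursion and the rescaling identity $\divv^{-1}(g_\lambda) = \lambda^{-1}(\divv^{-1}g)_\lambda$ as the two load-bearing facts.
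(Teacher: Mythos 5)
The paper does not prove this lemma itself; it only refers to Lemma 3.5 of \cite{MS20}, where $\mathcal{R}_N$ is constructed by exactly the iterated-antidivergence procedure you describe. Your reconstruction — the base operator $\mathcal{R}_1(f,g)=f\,\divv^{-1}g-\divv^{-1}\bigl(\nabla f\cdot\divv^{-1}g-\fint\nabla f\cdot\divv^{-1}g\bigr)$, the recursion trading one derivative on $f$ for one extra $\divv^{-1}$ applied to $g$, the scaling identity $\divv^{-1}(g_\lambda)=\lambda^{-1}(\divv^{-1}g)_\lambda$ supplying the factor $\lambda^{-1}$ per level, and the distribution of $L^\infty$ versus $L^r$ norms between the $f$- and $g$-factors to obtain \eqref{eq:antidiv-in-infty} and \eqref{eq:antidiv-in-p} — is correct and matches the cited construction.
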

\begin{proof}
We refer to Lemma 3.5. in \cite{MS20} and the subsequent remark in the same paper. 
\end{proof}

\newpage
\section{The main Proposition and the proof of the main Theorem}\label{sec_main_results}

We start here the proof of Theorem \ref{thm_main_SPDE}. Let $d, \mfp, p, \tilde p, \theta$ and the Brownian motion $B$ be as in the statement of Theorem \ref{thm_main_SPDE}. 

\subsection{Choice of the stopping time \texorpdfstring{$\tau_\mathfrak{p}$}{Tp}}
\label{ssec_stopping_times}

Let $s \in (1, \infty)$ be such that
\begin{equation*}
\frac{1}{p} + \frac{1}{\tilde p} > \frac{1}{s} + \frac{1}{\tilde p} > 1+ \frac{\theta}{d}
\end{equation*}
and
\begin{equation*}
s' < d,
\end{equation*}
where $s'$ is the dual H\"older exponent to $s$, $1/s + 1/s' = 1$. Notice also that $s>p \geq 1$. 
Such a choice is always possible because of \eqref{eq_cond_exp} and \eqref{eq_cond_exp_2}, as is illustrated in Fig. \ref{fig_choice_s}.

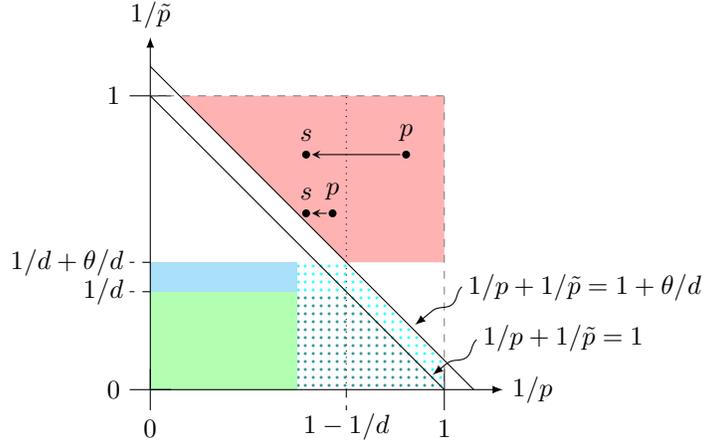
\begin{figure}[htbp]
  \centering
  \begin{tikzpicture}[scale=\textwidth/4.5cm]
    \begin{scope}
      \fill[green!30] (0,1/3) -- (1/2,1/3) -- (1/2,0) -- (0,0) -- cycle;
      \fill[pattern=dots, pattern color=teal!80] (1/2,1/3) -- (1-1/3,1/3) -- (1,0) -- (1/2,0) --  cycle;
      \fill[cyan!30] (0,1/3) --(0,1/3+0.3/3) -- (1/2,1/3+0.3/3) -- (1/2,1/3)  -- cycle;
      \fill[pattern=dots, pattern color=cyan!80] (1/2,1/3) --(1/2,1/3+0.3/3) -- (1-1/3,1/3+0.3/3) -- (1-1/3,1/3)  -- cycle;
      \fill[pattern=dots, pattern color=cyan!80] (1-1/3,1/3+0.3/3) -- (1-1/3,1/3) -- (1,0) -- (1,0.3/3)  -- cycle;

      \fill[red!30] (0.3/3,1) -- (1-1/3,1/3+0.3/3) -- (1,1/3+0.3/3) -- (1,1) -- cycle;
    \end{scope}
    
    \draw[-latex] (0,0) -- (1.2,0) node[right] {$1/p$};
    \draw[-latex] (0,0) -- (0,1.2) node[above] {$1/{\tilde{p}}$};
    
    \foreach \x/\xtext in {0,1}
      \draw (\x,2pt) -- (\x,-2pt) node[below] {\xtext};
      
    \foreach \y/\ytext in {0,1}
      \draw (2pt,\y) -- (-2pt,\y) node[left] {\ytext};
    
    \node[left] at (-0.06,1/3) {$1/d$};
    \draw[dashed] (0,1/3) -- (-2pt,1/3);
        
    \draw[dashed, gray] (0,1) -- (1,1);
    \draw[dashed, gray] (1,0) -- (1,1);
    
    \draw (0,1) -- (1,0);
    \draw (0,1+0.3/3) -- (1+0.3/3,0);
    
    \draw[dotted, black] (1-1/3,0) -- (1-1/3,1);
    
    \node[left] at (-0.06,1/3+0.3/3) {$1/d + \theta/d$};
    \draw[dashed] (0,1/3+0.3/3) -- (-2pt,1/3+0.3/3);
    
    \draw[-latex, black, decorate, decoration={snake,amplitude=1mm,segment length=8mm,post length=1mm}] (1.11,0.17) -- (0.96,0.05);
    \node[above right] at (1.1,0.1) {$1/p + 1/\tilde{p} = 1$};
    
    \draw[-latex, black, decorate, decoration={snake,amplitude=1mm,segment length=8mm,post length=1mm}] (1.06,0.25+0.3/3) -- (0.9,0.12+0.3/3);
    \node[above right] at (1.05,0.17+0.3/3) {$1/p + 1/\tilde{p} = 1 + \theta/d$};
    
    \node[below] at (1-1/3,-0.05) {$1-1/d$};
    \draw[dashed] (1-1/3,0) -- (1-1/3,-2pt);
    
    \node[circle,fill=black, inner sep=0pt,minimum size=3pt, label = above:{$p$}] at (0.87,0.8) {};
    \node[circle,fill=black, inner sep=0pt,minimum size=3pt, label = above:{$s$}] at (0.53,0.8) {};
    \draw[black, -stealth] (0.85,0.8) -- (0.55,0.8);
    
    \node[circle,fill=black, inner sep=0pt,minimum size=3pt, label = above:{$p$}] at (0.62,0.6) {};
    \node[circle,fill=black, inner sep=0pt,minimum size=3pt, label = above:{$s$}] at (0.53,0.6) {};
    \draw[black, -stealth] (0.60,0.6) -- (0.55,0.6);
    
  \end{tikzpicture}
  \caption{Illustration of the choice of the parameter $s$. If $p$ already corresponds to a point in the red triangle left of the $\frac{1}{p} = 1-\frac{1}{d}$ vertical line (dotted), we just need to choose $s$ within the triangle but to the left of $p$, see the lower pair of points. If $p$ corresponds to a point in the red area to the \textit{right} of the vertical line, we must choose $s$ to the left of $p$ in the red triangle to ensure $s' < d$, i.e. $\frac{1}{s} < 1 - \frac{1}{d}$, as illustrated for the upper pair of points.} \label{fig_choice_s} 
\end{figure}

Recall that $\kappa \in (0,1/2)$, describes the fixed deviation from $1/2$-H\"older continuity of Brownian motion. 
Let now $r : (0, 1/2) \to (1, \infty)$ be given by
\begin{align*}
    r(\kappa):= \frac{\frac{1}{2} + \kappa}{\frac{1}{2} - \kappa} > 1.
\end{align*}
Since $r$ is monotone increasing in $\kappa$ from 1 to $\infty$, we can choose $\kappa \in (0,1/2)$ such that
\begin{equation}
\label{eq_rkappa}
    r(\kappa) < \frac{d}{s'}.
\end{equation}
Let $N_{0} \subseteq \Omega$ be the set of paths of Brownian motion $t \mapsto B(t, \omega)$ which are $C^{1/2-\kappa}_{t}$-continuous. Clearly $\P(N_0) = 1$. Now, for any $L > 0$, we define the $(\mcF_{t})_{t}$-stopping time (cf. \cite[Lemma 3.5]{HZZ19})
\begin{equation}
\label{eq_stopping_time}
    \tau_{L}(\o) := \inf \{ t \geq 0 \mid [B(\o)]_{C^{1/2-\kappa}([0,t])} > L \} \wedge {1}, \quad \o \in \O,
\end{equation}
i.e. informally speaking, the first time $t \leq 1$ that the H\"older norm up to this time has surpassed $L$. By standard properties of the Brownian motion, it holds that $\tau_{L} > 0$, $\tau_L \to 1 $ almost surely as $L \to \infty$. Let thus $L = L(\mfp)$ be large enough such that
\begin{align*}
    \P \left( \tau_{L(\mfp)} = {1} \right) > \mfp.
\end{align*}
We thus define the stopping time $\tau : \Omega \to [0,1]$ by
\begin{equation*}
\tau = \tau_\mfp = \tau_{L(\mfp)}. 
\end{equation*}

\subsection{Space translations depending on the Brownian motion}
We introduce two $\omega$- and time dependent space translations of $\T^d$, due to translation originated by Brownian motion. More precisely, we define
\begin{equation}
\begin{aligned}
\Psi & : [0,1] \times \T^d \times \Omega \to [0,1] \times \T^d, &
\Psi(t,x; \omega) := (t, x + B(t; \omega))\\
\Psi^{-1} & :[0,1] \times \T^d \times \Omega \to [0,1] \times \T^d \times \Omega, &
\Psi^{-1}(t,x;\omega) := (t,x-B(t;\omega),\omega).
\end{aligned}
\end{equation}
Notice that $\Psi^{-1}$ is not really the inverse of $\Psi$, but nevertheless it holds
\begin{equation*}
\Psi \left( \Psi^{-1} (t,x;\omega) \right) = (t,x), \qquad \Psi^{-1} \left( \Psi(t,x; \omega); \omega\right) = (t,x;\omega) \quad \P-\text{a.s.}.
\end{equation*}
This motivates the (abuse of) notation. Notice also that a change of the values of $B$ on a null subset of $\Omega$ implies a change of the values of $\Psi$, $\Psi^{-1}$ on the same null subset of $\Omega$. Hence  $\Psi$ is well defined. Moreover, if $f \in C([0,1]; L^1(\T^d))$, the composition $f \circ \Psi$ (also denoted by $f(\Psi)$) is well defined. Similarly, if one takes $g \in L^\infty(\Omega; C([0,\tau]; L^1(\T^d))$, the composition $g \circ \Psi^{-1} = g(\Psi^{-1})$ is also well defined. 
\begin{remark}
It follows immediately from the definition that,
$\P$-a.s., for all $t \in [0,1]$, $\Psi$ and $\Psi^{-1}$ acts on the space variables as translations, in the sense that
\begin{equation*}
x \mapsto x+ B(t,\omega)  \text{ and } \ x \mapsto x - B(t,\omega) 
\end{equation*}
are translations.
\end{remark}

\begin{lemma}
\label{l_psi}
$\P$-a.s. the following hold.
\begin{enumerate}

\item If $b : [0,1] \times \T^d \to \R^d$ is a smooth vector field, then $\divv (b \circ \Psi) = (\divv b) \circ \Psi$.

\item If $f \in L^\infty([0,1]; L^r(\T^d))$ for some $r \in [1, \infty]$,  it holds that $\|(f \circ \Psi)(t)\|_{L^r(\T^d)}  = \|f(t)\|_{L^r(\T^d)}$ for all $t \in [0,1]$.

\item For the spatial derivatives it holds that, for all $t \in [0,1]$,  $\| \nabla \Psi(t, \cdot; \omega)\|_{C^k(\T^d)} \leq 1$ for all $k \in \N$, $k \geq 0$.

\item For the time-space H\"older norm on $[0, \tau(\omega)] \times \T^d$ it holds that
\begin{equation}
\label{eq_holder_psi}
[\Psi|_{[0, \tau(\omega)] \times \T^d}]_{1/2-\kappa} = \sup_{(t_1,x_1) \neq (t_2, x_2)} \frac{|\Psi(t_2, x_2; \omega) - \Psi(t_1, x_1; \omega)|}{ d_{\rm eucl} ((t_1, x_1), (t_2, x_2))^{ 1/2 - \kappa  } } 
 \leq L 
\end{equation}
where $d_{\rm eucl} ((t_1, x_1), (t_2, x_2)) = \sqrt{ |t_2 - t_1|^2 + |x_2 - x_1|^2 }$ is the Euclidean distance on $[0, \tau] \times  \R^d$ and  $L = L(\mfp)$ was fixed in Section \ref{ssec_stopping_times}. 
\end{enumerate}
\end{lemma}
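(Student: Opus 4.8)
The plan is to verify each of the four claims of Lemma~\ref{l_psi} separately, working $\P$-almost surely on the set $N_0 \cap \{$all translations by $B(t,\omega)$ are well defined$\}$, which has full measure.

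\textbf{Claim (1).} Since $\Psi(t,x;\omega) = (t, x + B(t;\omega))$, for fixed $t$ and $\omega$ the map $x \mapsto x + B(t;\omega)$ is a rigid translation $\tau_{-B(t;\omega)}$ of $\T^d$, so $(b \circ \Psi)(t,x) = b(t, x+B(t;\omega))$. First I would compute, by the chain rule in the spatial variables only, $\partial_{x_j}(b_i \circ \Psi)(t,x) = (\partial_{x_j} b_i)(t, x+B(t;\omega))$, because the Jacobian of $x \mapsto x + B(t;\omega)$ is the identity. Summing over $j=i$ gives $\divv(b\circ\Psi)(t,x) = \sum_i (\partial_{x_i} b_i)(t,x+B(t;\omega)) = (\divv b)(t,x+B(t;\omega)) = ((\divv b)\circ \Psi)(t,x)$, which is the assertion.

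\textbf{Claims (2) and (3).} Both follow from the fact that, for fixed $(t,\omega)$, $\Psi(t,\cdot;\omega)$ is a measure-preserving diffeomorphism of $\T^d$ with unit Jacobian. For (2), I would change variables $y = x + B(t;\omega)$ in $\int_{\T^d} |f(t, x+B(t;\omega))|^r\,dx$; since the translation preserves Lebesgue measure on $\T^d$ this equals $\int_{\T^d} |f(t,y)|^r\,dy$, and the $r=\infty$ case is immediate because translation does not change the essential supremum. For (3), since $\nabla_x \Psi(t,\cdot;\omega)$ is the $d\times d$ identity matrix (constant in $x$), all its spatial derivatives of order $\geq 1$ vanish and $\|\nabla\Psi(t,\cdot;\omega)\|_{C^k(\T^d)} = \|\Id\|_{C^0} = 1$ (here one uses the convention that $\Psi$, as a map into $[0,1]\times\T^d$, has spatial derivative recorded only in the torus components, so the time component contributes $0$ to $\nabla$).

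\textbf{Claim (4).} This is the one genuinely stochastic step and the main point. I would estimate the numerator in \eqref{eq_holder_psi} directly: for $(t_1,x_1),(t_2,x_2) \in [0,\tau(\omega)]\times\T^d$,
\begin{equation*}
|\Psi(t_2,x_2;\omega) - \Psi(t_1,x_1;\omega)| = \big| (t_2 - t_1,\, x_2 - x_1 + B(t_2;\omega) - B(t_1;\omega)) \big|.
\end{equation*}
Bounding the Euclidean norm of this vector by $|t_2-t_1| + |x_2 - x_1| + |B(t_2;\omega) - B(t_1;\omega)|$ (up to a harmless dimensional constant one can absorb, or by treating the components more carefully), the first two terms are controlled since $|t_2-t_1| \leq |t_2-t_1|^{1/2-\kappa}$ and $|x_2-x_1| \leq |x_2-x_1|^{1/2-\kappa}$ on the bounded domain (after possibly adjusting $L$; in fact the paper's convention that $L$ is chosen large enough absorbs such constants), while the Brownian increment is bounded by $[B(\omega)]_{C^{1/2-\kappa}([0,t_2])}|t_2-t_1|^{1/2-\kappa} \leq L |t_2-t_1|^{1/2-\kappa}$ by the very definition of the stopping time $\tau_{L(\mfp)}$ in \eqref{eq_stopping_time}, since $t_1,t_2 \leq \tau(\omega)$. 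Dividing by $d_{\rm eucl}((t_1,x_1),(t_2,x_2))^{1/2-\kappa}$ and using that each of $|t_2-t_1|^{1/2-\kappa}$ and $|x_2-x_1|^{1/2-\kappa}$ is at most $d_{\rm eucl}((t_1,x_1),(t_2,x_2))^{1/2-\kappa}$ gives the bound; the constant is absorbed into $L$ exactly as in the choice of $L(\mfp)$.

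The only subtlety, and the step I expect to need the most care, is Claim (4): one must be slightly careful that the Euclidean distance on the product space $[0,\tau]\times\T^d$ is used consistently (with the torus distance, not the $\R^d$ distance, in the spatial slot), and that the additive constants coming from comparing $|s|$ with $|s|^{1/2-\kappa}$ on a bounded interval are legitimately absorbed — this is fine because $L = L(\mfp)$ is only required to be ``large enough'' and the constant depends only on $\kappa$ and $d$, not on $\omega$. Everything else is routine change-of-variables and chain-rule bookkeeping, valid pathwise on a full-measure set.
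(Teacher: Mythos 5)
Your verifications of (1)--(3) are the same routine change-of-variables and constancy-of-the-Jacobian observations the paper leaves implicit (its ``proof'' is one line deferring to the preceding remark and to \eqref{eq_stopping_time}), so those parts match.

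On part (4), you have correctly identified the only genuinely nontrivial point, and your estimate is essentially right, but your final ``the constant is absorbed into $L$'' step does not actually close the gap as phrased. Your computation produces a bound of the form $[\Psi]_{1/2-\kappa} \leq C_0 + L$ (with $C_0$ a constant depending on $d$ and on the fact that $|t_2-t_1|\leq 1$ and $|x_2-x_1|$ is bounded on $\T^d$), not $\leq L$. One cannot erase $C_0$ by enlarging $L$: if you replace $L$ by a larger $L'$ to try to absorb it, the stopping time $\tau_{L'}$ changes accordingly and you are back to showing $[\Psi]_{1/2-\kappa}\leq L'$ on $[0,\tau_{L'}]$, where the same argument again gives $C_0 + L'$, so the additive constant never disappears. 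The honest conclusion of your argument (and what the paper should really be read as asserting) is $[\Psi|_{[0,\tau]\times\T^d}]_{1/2-\kappa} \leq C(1+L)$ for a constant $C=C(d,\kappa)$. This is a harmless imprecision in the lemma as stated, because the estimate is used only once, in Lemma \ref{l_comm}, where it enters an upper bound whose constant is unimportant; but you should state the bound as $C(1+L)$ rather than claim $\leq L$ via a constant-absorption argument that does not work. The rest of your reasoning in (4) -- reducing the Brownian increment to $[B]_{C^{1/2-\kappa}([0,\tau(\omega)])}\leq L$ via the stopping time, and controlling the remaining pieces by the Hölder power on a bounded domain -- is correct and is exactly what the paper intends.
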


\begin{proof}
The proof follows trivially from the previous remark and \eqref{eq_stopping_time}.
\end{proof}

\subsection{The continuity-defect equation}

Next we introduce the incompressible \textbf{continuity-defect equation}:
\begin{equation}\label{eq_cont_defect}
    \begin{cases}
                \partial_{t} {\rho}(t,x;\o) + {u}(t,x+B(t; \omega)) \cdot \nabla {\rho}(t,x;\o) & = - \divv  R(t, x; \omega), \\
                \divv {u} & = 0.
    \end{cases}
\end{equation}
Notice that the first equation can be also written using the map $\Psi$ above as
\begin{equation*}
\partial_t  \rho +  u( \Psi) \cdot \nabla  \rho = - \divv  R.
\end{equation*}
Observe that \eqref{eq_cont_defect} is not a stochastic PDE, but a \textit{random PDE}, in the sense that it is a deterministic PDE, where (some of) the unknowns (namely $\rho, R$) depend additionally on a parameter $\omega \in \Omega$. We will use the following notion of solution.

\begin{definition}
\label{def:cont-defect}
A solution to \eqref{eq_cont_defect} is a triple $(\rho, u, R)$ such that
\begin{equation*}
 \rho \in  L^\infty(\Omega; C^1([0, \tau] \times \T^d)), \quad
 u \in C^\infty([0,1] \times \T^d), \quad
 R \in L^\infty(\Omega; C^0([0, \tau]; L^1(\T^d)))
\end{equation*}
$\rho, R$ are $(\F_{t})$ adapted,  and, almost surely with respect to $\omega$, $(\rho, u, R)$ solves \eqref{eq_cont_defect} in the sense of distributions on $(0,1) \times \T^d$.
\end{definition}

\begin{remark}
The regularities we chose in Definition \ref{def:cont-defect} comes from the following observations:
\begin{enumerate}

\item for $\rho$, we will have to estimate the distance between $ \rho$ and its space-time mollification $ \rho_\e$ (see the proof of Lemma \ref{l_distance_rho} and of Lemma \ref{l_comm}), uniformly with respect to $\omega$: to do that, it is convenient to know that the $C^1_{\tau x}$-norm of $\rho$ is uniformly bounded with respect to $\omega$ (compare with Lemma \ref{l_mollification_2});
\item all vector fields constructed below will be deterministic and they will be always smooth by construction. Hence it is fine to require their smoothness in Definition \ref{def:cont-defect};
\item concerning the defect fields, we will always work with a (space-time) mollification of $ R$, and we will only need to know the the mollification can be estimated (in any $C^k$ norm) by a constant depending on $k$ times a norm of $ R$ that we can control uniformly with respect to $\omega$. Having thus a control of $ R$ in $C_t L^1_x$ is enough for this purpose (compare with Lemma \ref{l_mollification_1}). 

\end{enumerate}
\end{remark}

The following main proposition is the \textit{inductive step} in the proof of Theorem \ref{thm_main_SPDE}. 
  
\begin{proposition}\label{prop_iteration_stage}
    There exist an absolute constant $M > 0$ such that 
    the following holds. 
    For any $\delta> 0$ and any solution $({\rho}_{0}, u_{0}, R_{0})$ of the continuity-defect equation \eqref{eq_cont_defect} (in the sense of Definition \ref{def:cont-defect})
    there exists another solution $( \rho_{1},  u_{1}, R_{1})$ 
which satisfies the estimates
    \begin{align}
        \label{eq_rho1rho0Lp}\| {\rho}_{1} - {\rho}_{0} \|_{L^\infty_\omega C_t L^{p}_x} &\leq \delta, \\
        \label{eq_urho_L1} \| {\rho}_{1} {u}_{1} ( \Psi )- {\rho}_{0} {u}_{0} ( \Psi )\|_{L^\infty_\omega C_t L^{1}_x} &\leq M  \| R_{0} \|_{L^\infty_\omega C_t L^{1}_x} + \delta,  \\
        \label{eq_u1u0_Walpha} \| {u}_{1} - {u}_{0} \|_{L^\infty_\omega C_t W^{\theta,\tilde{p}}_x} &\leq \delta, \\
        \label{eq_R1_L1est} \| R_{1} \|_{L^\infty_\omega C_t L^{1}_x} &\leq \delta.  
    \end{align}
   Furthermore the following holds: 
    \begin{equation}
    \label{eq_fullsoln_prop}
    \begin{aligned}
        & \text{if there is $a \in (0,1]$ such that } 
        \rho_{0}, R_{0} = 0 \ \text{ $\P$-a.s.  on } [0,\min\{\tau(\omega),a\}] \times \T^d,  \\
     &   \text{then } \rho_{1}, R_{1} = 0 \ \text{ $\P$-a.s.  on } [0,\min\{\tau(\omega),a- \delta\}] \times \T^d  .
    \end{aligned}
    \end{equation}
\end{proposition}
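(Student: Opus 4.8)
The plan is to carry out one step of a convex integration scheme with a constraint: the drift $u$ must remain \emph{deterministic} even though the defect $R_0$ and the density $\rho_0$ are random. First I would choose the oscillation parameters: a large frequency $\lambda \in \N$ and a concentration parameter $\mu$, tuned at the end so that all error terms are below $\delta$ and so that $\lambda^{-1/r}$-type gains from the Improved Hölder Inequality (Lemma \ref{lem_Holder}) beat the losses coming from derivatives of the (space-time mollified) coefficients. Since $R_0$ is only $C^0_tL^1_x$ and $\rho_0$ only $C^1_{\tau x}$, the first move is to mollify in space \emph{and time} at a scale $\e = \e(\lambda)$, using Lemmas \ref{l_mollification_1} and \ref{l_mollification_2} to control $\|R_{0,\e}\|_{C^k}$ by $\e^{-k-d}\|R_0\|_{C_tL^1_x}$ and $\|\rho_0 - \rho_{0,\e}\|_{C^0}$ by $\e\|\rho_0\|_{C^1}$; the commutator between mollification and the transport operator is handled as in the DiPerna--Lions/\cite{MS18} analysis. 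Crucially, the time-mollification is needed because the defect inherits $C^{1/2-}_t$ regularity only, and the stopping time $\tau_\mfp$ together with Lemma \ref{l_psi}(4) is what keeps the Hölder norm of $\Psi$ bounded by $L$ on $[0,\tau]$ — this is what makes the composition with $\Psi$ harmless.

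Next I would define the perturbations using the Mikado densities/fields and "blobs" of \cite{MS18,MS19,MS20}: write a principal perturbation $\rho_1 - \rho_0 = \vartheta_{\mu,\lambda}$ carrying the amplitude $\asymp |R_{0,\e}|^{1/p}$ (so that $\|\rho_1 - \rho_0\|_{C_tL^p_x}$ is controlled, giving \eqref{eq_rho1rho0Lp}) paired with a companion field $w_{\mu,\lambda}$ carrying amplitude $\asymp |R_{0,\e}|^{1/p'}$, both oscillating at frequency $\lambda$ and built from divergence-free Mikado flows so that $\divv u_1 = 0$ is preserved. The product $\vartheta_{\mu,\lambda} w_{\mu,\lambda}$ is designed so that its low-frequency part cancels $\divv R_{0,\e}$ up to a new, smaller defect. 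The \emph{constraint} — $u_1$ must be deterministic — forces the amplitude of $w$ to be built \emph{not} from the random $R_{0,\e}$ directly but through a cutoff/averaging device (this is the point of Remark \ref{rem_cutoff}): one replaces the random amplitude by a deterministic one and absorbs the discrepancy into extra error terms and into the momentum estimate \eqref{eq_urho_L1}, which is why one only gets $\rho_1 u_1 \in L^1_x$ rather than convergence of $u_1$ in $L^{s'}_x$. The new defect $R_1$ is then assembled by applying the improved antidivergence operators $\mathcal R_N$ (Lemma \ref{lem:adiv-properties}) to the high-frequency leftover terms (a "linear" error from transporting $\rho_0$, a "corrector" error, a "quadratic/oscillation" error, and a "mollification" error), and estimated in $L^1_x$ using \eqref{eq:antidiv-in-infty}--\eqref{eq:antidiv-in-p}; each term gains a power of $\lambda^{-1}$ or $\mu^{-1}$, so choosing $\lambda,\mu$ large yields \eqref{eq_R1_L1est} and the $W^{\theta,\tilde p}$ bound \eqref{eq_u1u0_Walpha} (the latter via the interpolation inequality of Appendix \ref{sec_AppA}, interpolating the $L^{\tilde p}$ estimate against the $W^{1,\tilde p}$ estimate of $w_{\mu,\lambda}$, which is exactly where condition \eqref{eq_cond_exp} enters). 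The support property \eqref{eq_fullsoln_prop} follows because all new objects are supported, up to the time-mollification radius $\lesssim \delta$, where $R_{0,\e}$ is, hence vanish on $[0,\min\{\tau,a-\delta\}]$; adaptedness of $\rho_1,R_1$ is inherited from that of $\rho_0,R_0$ and of $B$ (the only $\o$-dependence enters through $B$ and through $R_{0,\e}$), while the deterministic, smooth character of $u_1$ is built in by construction.

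The main obstacle, and the genuinely new difficulty compared to \cite{MS18,MS20,KY22}, is reconciling the two competing demands: the perturbation of the density must carry the \emph{random} amplitude $|R_{0,\e}(\o)|^{1/p}$ (otherwise $\divv R_{0,\e}$ is not cancelled $\o$-by-$\o$), yet the companion velocity perturbation must be \emph{deterministic}. Resolving this forces the cutoff construction of Remark \ref{rem_cutoff}: one fixes a deterministic profile for $w$ and accepts that the product $\rho_1 u_1(\Psi)$ no longer telescopes cleanly, which is precisely why \eqref{eq_urho_L1} is stated with the $M\|R_0\|_{C_tL^1_x}$ term rather than with a $\delta$ on the right-hand side, and why only $L^1_x$-control of the momentum (not $L^{s'}_x$-convergence of $u_1$) survives in the limit. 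Making this quantitative — choosing the deterministic amplitude and the cutoff scales so that the momentum-discrepancy is genuinely $\lesssim \|R_0\|_{C_tL^1_x} + \delta$ while simultaneously keeping $R_1$ small and $u_1 \in W^{\theta,\tilde p}$ small — is the delicate part; it is carried out in Sections \ref{sec_perturbations}--\ref{sec_momentum} and assembled in Section \ref{sec_proof_of_prop}. A secondary technical point is that, because everything must be estimated \emph{uniformly in} $\o$ in $L^\infty_\o$-norms, one cannot use probabilistic averaging; all bounds must be pathwise, which is exactly what the stopping time $\tau_\mfp$ and Lemma \ref{l_psi} are engineered to provide.
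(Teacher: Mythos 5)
Your overall architecture (space--time mollification of $\rho_0,R_0$, Mikado perturbations oscillating at frequency $\lambda$ and concentrated at scale $\mu$, antidivergence operators for the leftover terms, parameters fixed last) matches the paper, but the central mechanism by which the drift is kept deterministic is not the one the paper uses, and as you describe it the step would fail. You keep the amplitudes $|R_{0,\e}|^{1/p}$ on $\vartheta$ and $|R_{0,\e}|^{1/p'}$ on $w$ and then propose to ``replace the random amplitude of $w$ by a deterministic one via a cutoff/averaging device'', absorbing the discrepancy into the error and into \eqref{eq_urho_L1}. This cannot work: cancelling the old defect requires the low-frequency part of $\vartheta\, w(\Psi)$ to equal $R_{0,\e}$ \emph{pathwise}, and if $w$ carries a deterministic amplitude $a(t,x)$ while $\vartheta$ carries only $|R_{0,\e}(\omega)|^{1/p}$, the product produces $|R_{0,\e}|^{1/p}a$, which is not $R_{0,\e}$ for any deterministic $a$; the discrepancy is of the same order as $R_0$ itself and cannot be pushed into $R_1\leq\delta$. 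The paper's actual solution is the opposite of a cutoff: it puts the \emph{entire} coefficient $R_\e^j$ (first power, no sign, no modulus) into $\vartheta=\sum_j R_\e^j\,\Theta^j(\Psi_\ell)$ and leaves $w=\sum_j W^j$ with unit amplitude --- Remark \ref{rem_cutoff}(b) states that cutoffs are thereby \emph{avoided}, not introduced. The price is that smallness of $\|\vartheta\|_{L^p}$ can no longer come from the amplitude and must come from concentrating $\Theta^j$ in an auxiliary space $L^s$ with $s>p$ and $s'<d$ (the exponent fixed in Section \ref{ssec_stopping_times}), and the term $M\|R_0\|_{L^\infty_\omega C_tL^1_x}$ in \eqref{eq_urho_L1} then arises from $\|R_\e\,\Theta\, W\|_{L^1}\leq\|R_\e\|_{L^1}\|\Theta\|_{L^s}\|W\|_{L^{s'}}$ via the improved H\"older inequality, not from any cutoff discrepancy.

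A second gap: you mollify $R_0$ in time but never address the fact that the Mikado density in $\vartheta$ must be composed with the Brownian shift for the product with $w(\Psi)$ to have the disjoint-support and cancellation structure, while $\Psi$ itself is not differentiable in time. The paper introduces a second mollification parameter $\ell$ for $B$ and evaluates $\Theta^j$ at $\Psi_\ell$; this generates the additional defect terms $R^{\mathrm{sto},1},\dots,R^{\mathrm{sto},5}$ of Sections \ref{ssec_quadratic} and \ref{ssec_linear_error}, whose estimates require the balance \eqref{eq_zeta} between $\ell$, $\nu$, $\sigma$ and the H\"older exponent $1/2-\kappa$ of $B$ up to the stopping time. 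Without this ingredient the time derivative of your $\vartheta$ does not exist and the iteration cannot close.
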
 

\subsection{Proof of Theorem \texorpdfstring{\ref{thm_main_SPDE}}{1.3}}
We will show that the main result of this work, Theorem \ref{thm_main_SPDE}, is easily implied by Proposition \ref{prop_iteration_stage}. As in this proof there will be both solutions to the random PDE \eqref{eq_cont_defect} as well as to the SPDE \eqref{eq_STE_proper}, we will denote the former by $\tilde{\rho}, \tilde{u}$ (resp. $\tilde{\rho}_n, \tilde{u}_n, \tilde{R}_n$) etc. and the latter by $\rho, u$ (resp. ${\rho}_n, {u}_n, {R}_n$).

Heuristically, what one wants to do after having solved equation \eqref{eq_cont_defect} would be to translate the equation to a stochastic transport equation \eqref{eq_STE} by applying It\^{o}'s formula as follows: Let $\tilde{\rho}, \tilde{u}$ be a suitable limit of solutions to the random PDE \eqref{eq_cont_defect} as $\tilde{R}_{n} \to 0$. Define the shifted solutions
\begin{equation*}
\rho := \tilde \rho(\Psi^{-1}), \quad u := \tilde u,
\end{equation*}
i.e.
\begin{align*}
    \rho(t,x,\o) := \tilde{\rho}(t,x-B(t,\o),\o), \quad u(t,x) := \tilde{u}(t,x).
\end{align*}
Note that $u$ is a deterministic function, whereas $\rho$ is random. Then by It\^{o}'s formula
\begin{align*}
    d\rho(t,x) &= d\tilde{\rho}(t,x-B(t)) \\
    &= \partial_{t} \tilde{\rho}(t,x-B(t))dt + \sum_{i=1}^{d} \partial_{x_{i}} \tilde{\rho}(t,x-B(t)) d(-B^{i}(t)) + \frac{1}{2} \sum_{i,j=1}^{d} \partial_{x_{i}}\partial_{x_{j}} \tilde{\rho}(t,x-B(t)) d[B^{i}, B^{j}](t) \\
    &\overset{\eqref{eq_cont_defect}}{=} -\tilde{u}(t,(x + B(t)) -B(t)) \cdot \nabla \tilde{\rho}(t,x-B(t)) dt - \sum_{i=1}^{d} \partial_{x_{i}} \tilde{\rho}(t,x-B(t)) dB^{i}(t) + \frac{1}{2} (\D \tilde{\rho})(t,x-B(t)) dt \\
    &= -u(t,x) \cdot (\nabla \tilde{\rho})(t,x-B(t)) dt - \nabla \tilde{\rho}(t,x-B(t)) \circ dB(t) \\
    &= - u(t,x) \cdot \nabla \rho(t,x) dt - \nabla \rho(t,x) \circ dB(t),
\end{align*}
and we see that $\rho$ solves Equation \eqref{eq_STE}.

There are, however, several problems with this heuristic approach:
\begin{enumerate}
 \item Most importantly, $\tilde{\rho}$ is not a $C_{t,x}^{1,2}$ function, as is required by It\^{o}'s formula. This is due to lacking spatial regularity, as the convergence guaranteed by Proposition \ref{prop_iteration_stage} is only in $C_t L^p_x$.
 \item $\tilde{\rho}, \tilde{u}$ are solutions to \eqref{eq_cont_defect} only \textit{in the weak sense}, not pointwise.
\end{enumerate}
To solve these problems, we do not apply It\^{o}'s formula to the limiting object $\tilde{\rho}$, but to the (smooth) iterates of the convex integration scheme, $\tilde{\rho}_{n}$. Furthermore, we don't apply it directly to the function $\rho_{n}(t,x) = \tilde{\rho}_{n}(t,x-B(t))$, but to the function tested against a suitable test function $\phi$.

\begin{proof}[Proof of Theorem \ref{thm_main_SPDE} assuming Proposition \ref{prop_iteration_stage}]

\textit{Initial stage.}
Let $\Phi \in C_{0}^{\infty}$ be a smooth, mean-zero function with $\| \Phi \|_{L^{p}} = 1$ and let $\chi \colon [0,{1}] \to \R_{+}$ be a smooth function such that
\begin{align*}
    \chi(t) \begin{cases}
                = 0, \quad t \in [0,{1}/3], \\
                \geq 0, \quad t \in ({1}/3, {2}/3), \\
                =1, \quad t \in [{2}/3,{1}].
            \end{cases}
\end{align*}
Define for $t \in [0,{1}], x \in \T^{d}$
\begin{equation}
    \tilde{\rho}_{0}(t,x; \omega) := \chi(t) \Phi(x), \quad \tilde{u}_{0}(t,x) := 0, \quad \tilde{R}_{0}(t,x; \omega) := \mathrm{div}^{-1} (\partial_{t} \rho_{0})(t,x; \omega) = \dot{\chi}(t) (\mathrm{div}^{-1} \Phi)(x).
\end{equation}
Then it is easy to check that the triple $(\tilde{\rho}_0, \tilde{u}_0, \tilde{R}_0)$ solves the continuity-defect equation \eqref{eq_cont_defect}, and that
\begin{equation}\label{eq_rho0_Lp}
    \tilde{\rho}_{0}(t,x) = \Phi(x) \text{ for } t \in [{2}/3,{1}], \text{ hence } \| \tilde{\rho}_{0}(t) \|_{L^{p}} = 1 \text{ for } t \in [{2}/3,{1}].
\end{equation}

\smallskip
\textit{Construction of the sequence by recursion.} 
We now let $(\delta_n)_n$ be a sequence of positive real numbers such that
\begin{equation*}
\sum_{n=1}^\infty \delta_n < \frac{1}{6}.
\end{equation*}
For any $n \geq 1$, we construct  $(\tilde{\rho}_{n}, \tilde{u}_{n}, \tilde{R}_{n})$ applying Proposition \ref{prop_iteration_stage} with parameter $\delta_n$ to $(\tilde{\rho}_{n-1}, \tilde{u}_{n-1}, \tilde{R}_{n-1})$.  
As $\delta_{n} \to 0$ and it is summable, we immediately find, using \eqref{eq_rho1rho0Lp}--\eqref{eq_fullsoln_prop} that the following convergences hold, for some $\tilde \rho, \tilde u$,
\begin{align}
    \label{eq_rhon_conv} \tilde{\rho}_{n} \to \tilde{\rho} \quad &\text{in } L^{\infty}\left(\O; C([0,\tau_{L}];L^{p}(\T^{d})) \right), \\
    \label{eq_un_conv} \tilde{u}_{n} \to \tilde{u} \quad &\text{in }C([0,{1}];W^{\alpha,\tilde{p}}(\T^{d};\R^{d})), \quad \text{and $\tilde{u}$ is deterministic,} \\
    \label{eq_rhon_un_conv} \tilde{\rho}_{n} \tilde{u}_{n}(\Psi) \to \tilde{\rho} \tilde{u}(\Psi) \quad &\text{in } L^{\infty}\left(\O; C([0,\tau_{L}];L^{1}(\T^{d};\R^{d})) \right), \\
    \label{eq_Rn_conv} \tilde{R}_{n} \to 0 \quad &\text{in } L^{\infty}\left(\O; C([0,\tau_{L}];L^{1}(\T^{d};\R^{d})) \right).
\end{align}
In addition, by \eqref{eq_rho1rho0Lp}, we observe explicitely that
\begin{equation}
\label{eq_distance_lp}
\|\tilde \rho_0 - \tilde \rho\|_{L^\infty_\omega C_t L^p_x} \leq \sum_{n=1}^\infty \delta_n < \frac{1}{6}.
\end{equation}
Moreover, observe that $\tilde \rho_0, \tilde R_0 = 0$ $\P$-a.s. on $[0, \min\{\tau(\omega), 1/3\}]$. Hence $\tilde \rho_1, \tilde R_1 = 0$ $\P$-a.s. on $[0, \min\{ \tau(\omega), 1/3 - \delta_1\}]$ and thus, recursively,
\begin{equation*}
\tilde \rho_n, \tilde R_n = 0 \quad \P-\text{a.s. on } \left[0, \min\left\{ \tau(\omega), \frac{1}{3} - \sum_{j=1}^n \delta_n \right\} \right] \supseteq \left[0, \min\left\{ \tau(\omega), \frac{1}{6}\right\} \right].
\end{equation*}
In particular 
\begin{equation*}
\tilde \rho|_{t=0} = 0 \quad  \P-\text{a.s.}. 
\end{equation*}
On the other hand, by \eqref{eq_distance_lp}, $\P$-a.s. on $\{ \tau =1\}$,
\begin{align*}
    \| \tilde \rho|_{t=1} \|_{L^{p}_{x}} \geq \| \tilde \rho_{0}|_{t=1} \|_{L^{p}_{x}} - \left\| (\tilde \rho_0 - \tilde \rho)|_{t=1} \right\|_{L^{p}_{x}} \geq 1 - \frac{1}{6} > 0.
\end{align*}
In particular (cf. the definition of $\tau$ in Sect. \ref{ssec_stopping_times}),
\begin{equation*}
\tilde \rho \neq 0 \text{ with probability } \geq \mfp.
\end{equation*}

\smallskip
\textit{Translated sequence.} 
We now define, for every $n = 0,1,2, \dots$
\begin{equation*}
\begin{aligned}
\rho_n & : = \tilde \rho_n (\Psi^{-1}), & u_n & := \tilde u_n, & R_n & := \tilde R_n(\Psi^{-1}), \\
\rho & : = \tilde \rho (\Psi^{-1}), & u & := \tilde u.
\end{aligned}
\end{equation*}
Note also that \eqref{eq_rhon_conv}-\eqref{eq_Rn_conv} imply that
\begin{align}
    \label{eq_rhon_conv_1} {\rho}_{n} \to {\rho} \quad &\text{in } L^{\infty}\left(\O; C([0,\tau_{L}];L^{p}(\T^{d})) \right), \\
    \label{eq_un_conv_1} {u}_{n} \to {u} \quad &\text{in }C([0,{1}];W^{\alpha,\tilde{p}}(\T^{d};\R^{d})), \quad \text{and $\tilde{u}$ is deterministic} \\
    \label{eq_rhon_un_conv_1} {\rho}_{n} {u}_{n} \to {\rho} {u} \quad &\text{in } L^{\infty}\left(\O; C([0,\tau_{L}];L^{1}(\T^{d};\R^{d})) \right), \\
    \label{eq_Rn_conv_1} {R}_{n} \to 0 \quad &\text{in } L^{\infty}\left(\O; C([0,\tau_{L}];L^{1}(\T^{d};\R^{d})) \right),
\end{align}
since the maps $\Psi$ and $\Psi^{-1}$ act only as a (random and time dependent) translation in the $x$-variable. In addition
\begin{equation*}
\rho|_{t=0} = 0 \quad  \P-\text{a.s.}
\end{equation*} 
and 
\begin{equation*}
\rho \neq 0 \text{ with probability } \geq \mfp.
\end{equation*}
If we can show that $\rho$ solves \eqref{eq_STE_proper} with velocity field $u$, the proof is concluded, as $\rho$ would be a nonzero solution to a linear equation, starting from zero initial data.

\smallskip
\textit{Weak solutions to the stochastic transport equation \eqref{eq_STE_proper}.} We now justify that our procedure produces a solution to the SPDE \eqref{eq_STE_proper} by applying It\^{o}'s formula along the iterations and using the convergences \eqref{eq_rhon_conv_1}--\eqref{eq_Rn_conv_1}.

Let $\phi \in C^{\infty}(\T^{d})$ be a test function and define
\begin{align*}
    F_{\phi,n}(t,y;\o) := \int_{\T^{d}} \tilde{\rho}_{n}(t,x+y;\o) \phi(x) dx.
\end{align*}
Note that the function $(t,x) \mapsto F_{\phi,n}(t,x)$ is in $C^{\infty}_{t,x}$, since $\tilde{\rho}_{n}$ is smooth. We then apply It\^{o}'s formula (cf. \cite[IV.$(3.12)$ Exercise, p. 152 f.]{RY99}) to find for each $n \in \N$ a $\P$-nullset $N_{n}$ such that (suppressing the $\o$-dependence again in most of what follows), for all $\o \in (N_{n})^{c}$ and $t \in [0,\tau(\o)]$
\begin{align*}
  &I_{1} + I_{2} := \int_{\T^{d}} {\rho}_{n}(t,x;\o) \phi(x) dx - \int_{\T^{d}} {\rho}_{n}(0,x;\o) \phi(x) dx = \int_{\T^{d}} \tilde{\rho}_{n}(t,x-B(t);\o) \phi(x) dx - \int_{\T^{d}} \tilde{\rho}_{n}(0,x;\o) \phi(x) dx  \\
  &\overset{\text{def.}}{=} F_{\phi,n}(t,-B(t);\o) - F_{\phi,n}(0,-B(0);\o) \\
  &\overset{\text{It\^{o}}}{=} \int_{0}^{t} (\partial_{t} F_{\phi,n})(s,-B(s))ds + \int_{0}^{t} \sum_{i=1}^{d} (\partial_{y^{i}} F_{\phi,n})(s,-B(s))d(-B^{i})(s) \\
  &\quad + \frac{1}{2} \sum_{i,j=1}^{d} \left( \partial_{y^{i}}\partial_{y^{j}} F_{\phi,n} \right)(s,-B(s)) d[-B^{i},-B^{j}](s) \\
  &\overset{\text{def.}}{=} \int_{0}^{t} \int_{\T^{d}} (\partial_{t} \tilde{\rho}_{n}(s,x-B(s)) \phi(x) dx ds - \int_{0}^{t} \int_{\T^{d}} \sum_{i=1}^{d} (\partial_{x^{i}} \tilde{\rho}_{n})(s,x-B(s)) \phi(x) dx ~dB^{i}(s) \\
  &\quad + \frac{1}{2} \int_{0}^{t} \int_{\T^{d}} (\D \tilde{\rho}_{n}(s,x-B(s)) \phi(x) dx ds \\
  &\overset{\eqref{eq_cont_defect}}{=} \int_{0}^{t} \int_{\T^{d}} \divv \left( \tilde{u}_{n}(s,(x+B(s))-B(s)) \tilde{\rho}_{n}(s,x-B(s)) \right) \phi(x) dx ds + \int_{0}^{t} \int_{\T^{d}} \divv \tilde{R}_{n}(s,x-B(s)) \phi(x) dx ds \\
  &\quad - \int_{0}^{t} \int_{\T^{d}} \sum_{i=1}^{d} (\partial_{x^{i}} \tilde{\rho}_{n})(s,x-B(s)) \phi(x) dx ~dB^{i}(s)  + \frac{1}{2} \int_{0}^{t} \int_{\T^{d}} ( \D \tilde{\rho}_{n})(s,x-B(s)) \phi(x) dx ds \\
  &\underset{\text{def.}}{\overset{\text{IBP,}}{=}} - \int_{0}^{t} \int_{\T^{d}}   \tilde{u}_{n}(s,x) \tilde{\rho}_{n}(s,x-B(s)) \cdot \nabla \phi(x) dx ds - \int_{0}^{t} \int_{\T^{d}} \tilde{R}_{n}(s,x-B(s)) \cdot \nabla \phi(x) dx ds \\
  &\quad + \int_{0}^{t} \int_{\T^{d}}   \tilde{\rho}_{n}(s,x-B(s)) \sum_{i=1}^{d} \partial_{x^{i}} \phi(x) dx ~dB^{i}(s)  + \frac{1}{2} \int_{0}^{t} \int_{\T^{d}} \tilde{\rho}_{n}(s,x-B(s)) \D \phi(x) dx ds \\
  &= - \int_{0}^{t} \int_{\T^{d}}   {u}_{n}(s,x) {\rho}_{n}(s,x) \cdot \nabla \phi(x) dx ds - \int_{0}^{t} \int_{\T^{d}} {R}_{n}(s,x) \cdot \nabla \phi(x) dx ds \\
  &\quad + \int_{0}^{t} \int_{\T^{d}}   {\rho}_{n}(s,x) \sum_{i=1}^{d} \partial_{x^{i}} \phi(x) dx ~dB^{i}(s)  + \frac{1}{2} \int_{0}^{t} \int_{\T^{d}} {\rho}_{n}(s,x) \D \phi(x) dx ds \\
  &=: I_{3} + I_{4} + I_{5} + I_{6}.
\end{align*}
Here, IBP means integration by parts. We deal with each term separately.

First, $I_{1},I_{2}$ are easy to deal with: using  \eqref{eq_rhon_conv_1}, 
\begin{align*}
    I_{1} &\to \int_{\T^{d}} \rho(t,x; \omega) \phi(x) dx, \\
    I_{2} &\to  \int_{\T^{d}} \rho(0,x; \omega) \phi(x) dx
\end{align*}
$\P$-a.s. In the same way, we find that, $\P$-a.s.,
\begin{align*}
    I_{6} \to 
    \frac{1}{2} \int_{0}^{t} \int_{\T^{d}} \rho(s,x) \D \phi(x) dx ds.
\end{align*}
For $I_{3}$, we use \eqref{eq_rhon_un_conv_1} to get, $\P$-a.s.
\begin{align*}
    I_{3} \to 
    - \int_{0}^{t} \int_{\T^{d}}   u(s,x) \rho(s,x) \cdot \nabla \phi(x) dx ds
\end{align*}
and, similarly, using \eqref{eq_Rn_conv_1} 
we find
\begin{align*}
    I_{4} \to 0,
\end{align*}
$\P$-a.s. The most interesting term is $I_{5}$. From Burkholder's inequality (e.g. \cite[Proposition D.0.1]{LR15}), we get
\begin{align*}
    &\E \left[ \sup_{t \in [0,\tau_{L}]} \left| \int_{0}^{t} \int_{\T^{d}} \left( {\rho}_{n}(s,x) - {\rho}(s,x) \right) \sum_{i=1}^{d} \partial_{x^{i}} \phi(x) dx ~dB^{i}(s) \right| \right] \\
    &\leq C\| \phi \|_{C^{1}} \E\left[ \int_{0}^{\tau_{L}}  \| {\rho}_{n}(s,\cdot) - {\rho}(s,\cdot) \|_{L^{1}}^{2} ds \right]^{1/2} \overset{n \to \infty}{\longrightarrow} 0.
\end{align*}
This means that we can find a subsequence $n_{k} \to \infty$ 
such that, $\P$-a.s.,
\begin{equation}\label{eq_BorelCantelli}
    I_{5}(n_{k}) \overset{k \to \infty}{\longrightarrow} \int_{0}^{t} \int_{\T^{d}} {\rho}(s,x) \sum_{i=1}^{d} \partial_{x^{i}} \phi(x) dx ~dB^{i}(s). 
\end{equation}
Combining the above convergence statements along that subsequence shows that $\rho, u$ solve the stochastic transport equation \eqref{eq_STE_proper} in the weak sense, and, as observed before, this concludes the proof of the theorem.
\end{proof}

The rest of the paper will now be devoted to the proof of the main iteration, Proposition \ref{prop_iteration_stage}.

\section{Definition of the Mikado densities and fields}\label{sec_Mikados}

In this section, we will define the (deterministic) building blocks used in the convex integration scheme. At the beginning of Section \ref{sec_main_results}, we have already fixed the exponent $s$ such that 
\begin{equation}\label{eq_main_scaling_cond_s}
    \frac{1}{s} + \frac{1}{\tilde{p}} > \frac{1}{s} + \frac{1}{\tilde{p}} > 1 + \frac{\theta}{d}.
\end{equation}
and
\begin{equation*}
s'<d. 
\end{equation*}
This will ensure that
\begin{align*}
    \rho \in L_{x}^{s} \subset L_{x}^{p}, \quad u \in L_{x}^{s'} \cap W_{x}^{\theta,\tilde{p}}.
\end{align*}
The choice has been illustrated in Fig. \ref{fig_choice_s} above.

\begin{remark}
    Let us remark that, differently than in the deterministic scheme of \cite{MS20}, the concentration of $\rho$ in $L^{p}_{x}$ seems to be not sufficient to make the convex integration scheme work in this paper. We will briefly comment on this. As the definition of the density perturbation $\vt^{\mathrm{det}}$ in the deterministic work \cite{MS20} contained the exactly right power of the modulus of $R_{0}$ (cf. Eq. \eqref{eq_def_vt_MS20} below), raising it to the $p$-th power did not give something that was arbitrarily small but roughly speaking, $\| \vt^{\mathrm{det}} \|_{L^{p}} \lesssim \| R_{0} \|_{L^{1}}$, and the latter quantity decreases \textit{along the iteration}. In our case, however, this will not be the case as we need to put all the $R_{0}$-dependence into $\vt$, therefore we would only get $\| \vt \|_{L^{p}} \lesssim \| R_{0} \|_{L^{\infty}}$, and we cannot control the latter quantity. 
    Therefore we needed to resort to demanding more integrability for $\rho$ at the expense of less integrability for $u$. In particular, it does not follow from H\"older's inequality anymore that $\rho u \in L_{x}^{1}$, a condition needed for the transport equation, which is in divergence form, to be well-defined. However, we will prove directly that $\rho u \in L_{x}^{1}$, cf. Eq. \eqref{eq_rhon_un_conv}.
\end{remark}
We take the blobs of \cite[Lemma 4.1, Lemma 4.3]{MS20}, i.e. we take suitable $\zeta_{j} \in \R^{d}$ to define periodic functions
\begin{align*}
    \varphi_{\mu}^{j} = \vphi_{\mu}(\cdot - \zeta_{j}) \colon \T^{d} \to \R, \quad \tilde{\varphi}_{\mu}^{j} = \tilde{\vphi}_{\mu}(\cdot - \zeta_{j}) \colon \T^{d} \to \R.
\end{align*}
To this end, we take $\vphi \colon \R^{d} \to \R$ smooth with 
\begin{equation}\label{eq_intphi}
    \supp \vphi \subset B(P,r) \subset (0,1)^{d}, \quad P = (1/2, \ldots, 1/2) \in (0,1)^{d}, \quad \int_{\R^{d}} \vphi^{2} dx = 1,
\end{equation}
and the concentrated functions 
\begin{align*}
    \vphi_{\mu}(x) := \mu^{d/s} \vphi(\mu x), \quad \tilde{\vphi}_{\mu}(x) := \mu^{d/s'} \vphi(\mu x).
\end{align*}
Periodising and shifting the functions then yields the above periodic functions, and we have the (obvious) scaling behaviour 
\begin{align}\label{eq_Mikado_scaling}
    \| \nabla^{k} \vphi^{j}_{\mu} \|_{L^{r}} = \mu^{\frac{d}{s} - \frac{d}{r} + k} \| \nabla^{k} \vphi \|_{L^{r}}, \quad \| \nabla^{k} \tilde{\vphi}^{j}_{\mu} \|_{L^{r}} = \mu^{\frac{d}{s'} - \frac{d}{r} + k} \| \nabla^{k} \vphi \|_{L^{r}},
\end{align}
and, moreover, 
\begin{equation}
\label{eq_varphi_supp_disj}
\varphi^i_\mu ( x - t e_i) \tilde \varphi^j_\mu ( x - t e_j) = 0
\end{equation}
for all $i \neq j$. The fact that such shifting is possible is proven in \cite[Lemma 4.3]{MS20}. Finally, we will need a stationary Mikado flow \cite{DS17, MS18,MS19,MS20}, i.e. a smooth, periodic function $\psi \colon \T^{d-1} \to \R$ satisfying 
\begin{equation}\label{eq_intpsi}
    \fint_{\T^{d-1}} \psi = 0, \quad \fint_{\T^{d-1}} \psi^{2} = 1.
\end{equation}
We then define
\begin{align*}
    \psi^{j} \colon  \T^{d} \to \R, \ \psi^{j}(x)  := \psi^{j}(x_{1},\ldots,x_{d}) := \psi(x_{1},\ldots, \hat{x}_{j}, \ldots, x_{d}), 
\end{align*}
where by $\hat{x}_{i}$ we denote the omission of the $i$-th component of $x$. Since $\psi^j$ does not depend on the $j$-th coordinate, we can also find smooth vector fields $b^j : \T^d \to \R^d$ such that
\begin{equation}\label{eq_def_potential}
\divv b^j = \psi^j \qquad \text{ and } \qquad \partial_j b^j = 0,
\end{equation}
i.e. $b^j$ does not depend on the $j$-th coordinate as well. 

As in \cite[p. 15]{MS20}, we let the functions oscillate with (high) frequency $\lambda \in \N$, $\lambda \ll \mu$, shift the blobs $\vphi^{j}_{\mu}(\lambda \cdot)$, $\tilde{\vphi}^{j}_{\mu}(\lambda \cdot)$ in time with phase speed $\sigma$ and slice them with the stationary Mikado oscillating at very high spatial frequency $\nu \in \lambda \N$, $\nu \gg \lambda$ to arrive at
\begin{alignat*}{4}
    &\text{Mikado density} \quad &\Theta^{j}(t,x) &:= \Theta^{j}_{\lambda, \mu,\sigma, \nu}(t,x) &&:= \vphi^{j}_{\mu}(\lambda(x - \sigma t e_{j})) \psi^{j}(\nu x) = (\varphi_\mu^j)_\lambda \circ \tau_{\sigma t e_j} \psi^j_\nu , \\
    &\text{Quadratic density corrector} \quad & Q^{j}(t,x) &:= Q^{j}_{\lambda,\mu,\sigma,\nu}(t,x) &&:= \frac{1}{\sigma} \left( \vphi^{j}_{\mu} \tilde{\vphi}^{j}_{\mu} \right)(\lambda(x - \sigma t e_{j}) \left( \psi^{j}(\nu x) \right)^{2} \\
    & & & &&= (\varphi_\mu^j \tilde \varphi_\mu^j)_\lambda \circ \tau_{\sigma t e_j} (\psi^j_\nu)^2 \\
    &\text{Mikado field} \quad &W^{j}(t,x) &:= W^{j}_{\lambda,\mu,\sigma,\nu}(t,x) &&:= \tilde{\vphi}^{j}_{\mu}(\lambda(x - \sigma t e_{j})) \psi^{j}(\nu x) e_{j} = (\tilde \varphi_\mu^j)_\lambda \circ \tau_{\sigma t e_j} \psi^j_\nu e_j, \\
    &\text{Mikado field corrector} \quad & W^{j, \textrm{corr}}(t,x) &:= W^{j, \textrm{corr}}_{\lambda,\mu,\sigma,\nu}(t,x) &&:=
    \frac{\lambda}{\nu} \left[ e^j \otimes b^j - b^j \otimes e^j \right](\nu x) \cdot (\nabla \tilde \varphi_\mu)^T_{\lambda} \circ \tau_{\sigma t e_j}.
\end{alignat*}
\newpage
Note that the ``Mikado density'' and the ``Mikado field'' are not stationary solutions of the incompressible transport equation, but one can use the time-dependence of the corrector $Q^{j}$ to cancel the quadratic term coming from multiplication of $u_{n}$ and $\rho_{n}$, cf. \eqref{eq_cont_eq_Q} in the following proposition. The exact relationships between the parameters $\lambda, \mu, \nu, \sigma$ will be given later in Section \ref{ssec_parameter_choice}. We also define the family of vector fields (depending on $N \in \N$ and on $\lambda, \mu, \sigma, \nu$)
\begin{alignat*}{4}
&\text{Potential density} \quad & A^{j}_N(t,x) &:= A^{j}_{N, \lambda, \mu,\sigma, \nu}(t,x) &&:=
 \lambda \sigma \mathcal{R}_N \left( (\partial_j \varphi_\mu^j)_\lambda \circ \tau_{\sigma t e_j}, \psi_\nu^j \right).
\end{alignat*}
Note that, at this stage, all of the building blocks are deterministic objects.

\begin{proposition}[Equations]
\label{prop_mikado_eqn}
The Mikado functions ``solve the continuity equation'' in the sense that for every $j \in \{1, \ldots, d\}$ and $N \in \N$,
\begin{equation}
\label{eq_cont_eq_Q}
\begin{aligned}
    \partial_{t} Q^{j}_{\lambda, \mu,\sigma, \nu} + \divv \left( \Theta^{j}_{\lambda, \mu,\sigma, \nu} W^{j}_{\lambda, \mu,\sigma, \nu} 
   \right) & = 0, \\
\partial_t \Theta^j & = - \divv A^j_N, \\
\divv \left(W^j + W^{j, \textrm{corr}} \right) & = 0.
\end{aligned}
\end{equation}
on $[0,1]\times \T^{d}$. 
\end{proposition}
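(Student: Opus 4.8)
The plan is to establish each of the three identities in \eqref{eq_cont_eq_Q} by a direct differentiation, using throughout the single structural feature that forces the cancellations: the slicing profile $\psi^j$ does not depend on its $j$-th argument, so that $\partial_j\psi^j=0$, and hence $\partial_j\psi^j_\nu=0$ and $\partial_j\big((\psi^j_\nu)^2\big)=0$, where I abbreviate $\psi^j_\nu:=\psi^j(\nu\,\cdot\,)$; together with the defining properties \eqref{eq_def_potential} of the potential, namely $\divv b^j=\psi^j$ and $\partial_j b^j=0$. For brevity I write $g_\lambda\circ\tau_{\sigma t e_j}$ for the periodic function $x\mapsto g(\lambda(x-\sigma t e_j))$, for which $\partial_j\big(g_\lambda\circ\tau_{\sigma t e_j}\big)=\lambda\,(\partial_j g)_\lambda\circ\tau_{\sigma t e_j}$ and $\partial_t\big(g_\lambda\circ\tau_{\sigma t e_j}\big)=-\lambda\sigma\,(\partial_j g)_\lambda\circ\tau_{\sigma t e_j}$.

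\emph{First identity.} I would compute $\partial_t Q^j$ and $\divv(\Theta^jW^j)$ separately. Differentiating $Q^j=\tfrac1\sigma\,(\varphi^j_\mu\tilde\varphi^j_\mu)_\lambda\circ\tau_{\sigma t e_j}\,(\psi^j_\nu)^2$ in time, the prefactor $\tfrac1\sigma$ absorbs the $\sigma$ of the chain rule and, since $(\psi^j_\nu)^2$ carries no $t$-dependence, the result can be repackaged as $\partial_t Q^j=-\partial_j\big[(\varphi^j_\mu\tilde\varphi^j_\mu)_\lambda\circ\tau_{\sigma t e_j}\big]\,(\psi^j_\nu)^2$. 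On the other hand $\Theta^jW^j=(\varphi^j_\mu\tilde\varphi^j_\mu)_\lambda\circ\tau_{\sigma t e_j}\,(\psi^j_\nu)^2\,e_j$ points in the $e_j$-direction with an $x_j$-independent second factor, so $\divv(\Theta^jW^j)=\partial_j\big[(\varphi^j_\mu\tilde\varphi^j_\mu)_\lambda\circ\tau_{\sigma t e_j}\big]\,(\psi^j_\nu)^2$; adding the two gives $0$.

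\emph{Second identity.} Differentiating $\Theta^j=(\varphi^j_\mu)_\lambda\circ\tau_{\sigma t e_j}\,\psi^j_\nu$ in time gives $\partial_t\Theta^j=-\lambda\sigma\,\big[(\partial_j\varphi^j_\mu)_\lambda\circ\tau_{\sigma t e_j}\big]\,\psi^j_\nu$, while Lemma \ref{lem:adiv-properties}(1) applied to $A^j_N=\lambda\sigma\,\mathcal R_N\big((\partial_j\varphi^j_\mu)_\lambda\circ\tau_{\sigma t e_j},\,\psi^j_\nu\big)$ yields $\divv A^j_N=\lambda\sigma\big[(\partial_j\varphi^j_\mu)_\lambda\circ\tau_{\sigma t e_j}\,\psi^j_\nu-\fint_{\T^d}(\partial_j\varphi^j_\mu)_\lambda\circ\tau_{\sigma t e_j}\,\psi^j_\nu\big]$. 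I expect the only mild subtlety of the whole proposition to lie in checking that this average term vanishes, and I would do so by writing $(\partial_j\varphi^j_\mu)_\lambda\circ\tau_{\sigma t e_j}=\tfrac1\lambda\,\partial_j\big[(\varphi^j_\mu)_\lambda\circ\tau_{\sigma t e_j}\big]$: this is a $\partial_j$-derivative of a periodic function, so integration by parts in $x_j$ against $\psi^j_\nu$ annihilates it because $\partial_j\psi^j_\nu=0$. (Equivalently, the product has Fourier support only on frequencies with non-zero $j$-th component; the same observation shows $\psi^j_\nu\in C^\infty_0(\T^d)$ via \eqref{eq_intpsi}, as required for $\mathcal R_N$ to be applicable.) Hence $\divv A^j_N=-\partial_t\Theta^j$.

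\emph{Third identity.} Here the plan is to recognise $W^{j,\mathrm{corr}}$ as the remainder of the standard antisymmetric-potential construction. The matrix field $\Omega^j:=e_j\otimes b^j-b^j\otimes e_j$ is antisymmetric, and \eqref{eq_def_potential} gives $(\divv\Omega^j)_k=\delta_{kj}\divv b^j-\partial_j b^j_k=\psi^j\,\delta_{kj}$, that is, $\divv\Omega^j=\psi^j e_j$, whence $M:=\Omega^j(\nu\,\cdot\,)$ satisfies $\divv M=\nu\,\psi^j_\nu\,e_j$. Setting $f:=(\tilde\varphi^j_\mu)_\lambda\circ\tau_{\sigma t e_j}$, so that $W^j=f\,\psi^j_\nu\,e_j$, the Leibniz rule for the divergence gives
\[
\divv(fM)=M\nabla f+f\,\divv M=M\nabla f+\nu\,f\,\psi^j_\nu\,e_j=M\nabla f+\nu\,W^j,
\]
so that $W^j=\tfrac1\nu\divv(fM)-\tfrac1\nu M\nabla f$. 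Since $f$ is scalar and $M$ antisymmetric, $fM$ is antisymmetric, hence its double divergence $\divv\divv(fM)$ vanishes identically; therefore $\divv\big(W^j+\tfrac1\nu M\nabla f\big)=\tfrac1\nu\divv\divv(fM)=0$. It then only remains to identify $\tfrac1\nu M\nabla f=\tfrac\lambda\nu\,[e_j\otimes b^j-b^j\otimes e_j](\nu x)\,(\nabla\tilde\varphi^j_\mu)_\lambda\circ\tau_{\sigma t e_j}$ with $W^{j,\mathrm{corr}}$, which holds up to the inconsequential $\zeta_j$-translation distinguishing $\tilde\varphi_\mu$ from $\tilde\varphi^j_\mu$ and the row/column convention behind the transpose in its definition. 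Apart from this bookkeeping with the chain-rule factors $\lambda,\sigma,\nu$, the only genuinely structural inputs of the proof are $\partial_j\psi^j=0$, the identity $\divv\Omega^j=\psi^j e_j$, and the vanishing of the double divergence of an antisymmetric matrix field.
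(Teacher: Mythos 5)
Your proof is correct and follows essentially the paper's approach: direct differentiation together with the structural fact $\partial_j\psi^j=0$, the defining property of $\mathcal R_N$, and $\divv\divv S=0$ for antisymmetric $S$. The one place where your argument diverges from the paper's is in disposing of the mean term for the second identity. You rewrite $(\partial_j\varphi^j_\mu)_\lambda\circ\tau_{\sigma t e_j}$ as $\tfrac1\lambda\,\partial_j\big[(\varphi^j_\mu)_\lambda\circ\tau_{\sigma t e_j}\big]$ and integrate by parts against $\psi^j_\nu$ (using $\partial_j\psi^j_\nu=0$); the paper instead observes that, again because $\partial_j\psi^j=0$, the whole function $\Theta^j(t,\cdot)$ is a pure time-translate $f(\cdot-\sigma t e_j)$ of a fixed spatial profile, so $\fint_{\T^d}\Theta^j$ is constant in time and $\fint_{\T^d}\partial_t\Theta^j=\partial_t\fint_{\T^d}\Theta^j=0$. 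Both hinge on the same fact and are comparably short. For the first identity the paper defers to \cite{MS20}, and your direct cancellation via the Leibniz rule and $\partial_j(\psi^j_\nu)^2=0$ is exactly the omitted computation; for the third identity both proofs reduce $W^j+W^{j,\mathrm{corr}}$ to $\tfrac1\nu\divv(fM)$ with $fM$ antisymmetric, with your version simply spelling out the intermediate identity $\divv\Omega^j=\psi^j e_j$ which the paper leaves implicit.
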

\begin{proof}
The first equality follows as in \cite{MS20}. Concerning the second equality, we have
\begin{equation*}
\partial_t \Theta^j (t,x) = - \lambda \sigma \partial_j \varphi_\mu^j (\lambda (x - \sigma t e_j)) \psi^j(\nu x),
\end{equation*}
whereas by definition of $\mathcal{R}_N$ we have
\begin{equation*}
\divv A_N (t,x) =  \lambda \sigma \partial_j \varphi_\mu^j( \lambda (x - \sigma t e_j)) \psi^j(\nu x) - \fint \lambda \sigma \partial_j \varphi_\mu^j( \lambda (x - \sigma t e_j)) \psi^j(\nu x) dx = - \partial_t  \Theta^j + \underbrace{\fint \partial_t \Theta^j}_{=0} = - \partial_t \Theta^j, 
\end{equation*}
where the last equality follows from the fact that $\Theta^j(t,x) = f(x - \sigma t e_j)$ for some smooth function $f$ and thus its spatial mean value is constant in time. Finally, the third equality in \eqref{eq_cont_eq_Q} follows from the observation that, by Eq. \eqref{eq_def_potential},
\begin{equation*}
(W^j + W^{j, \textrm{corr}})(t,x) = \divv \left(  \frac{1}{\nu} \tilde \varphi_\mu (\lambda (x - \sigma t e_j)) \left( e^j \otimes b^j(\nu x) - b^j(\nu x) \otimes e^j   \right) \right),
\end{equation*}
and the fact that $\divv \divv S = 0$ if $S$ is a matrix field taking values in the set of antisymmetric matrices.
\end{proof}

The following estimates will be used repeatedly in the sequel. 
\begin{proposition}[Estimates]
\label{prop_Mikado_est}
Assume that
\begin{equation}
\label{eq_lambdamunu}
\frac{\lambda \mu}{\nu} \leq \frac{1}{2}.
\end{equation}
For every $k \in \N$ and $r \in [1,\infty]$ there is a constant $M_{k,r}$ depending only on $k,r,\varphi, \tilde \varphi, \psi$ such that
\begin{align}
\label{eq_Theta}
\|\nabla^k  \Theta^j_{\lambda, \mu, \sigma, \nu}\|_{L^\infty_t L^r_x} & \leq M_{k,r} \mu^{\frac{d}{s} - \frac{d}{r}} \nu^k, \\
\label{eq_Q}
\|\nabla^k Q^j_{\lambda, \mu, \sigma, \nu}\|_{L^\infty_t L^r_x} & \leq M_{k,r} \frac{\mu^{d - \frac{d}{r}} \nu^k}{\sigma}, \\
\label{eq_W}
\|\nabla^k W^j_{\lambda, \mu, \sigma, \nu}\|_{L^\infty_t L^r_x} & \leq M_{k,r} \mu^{\frac{d}{s'} - \frac{d}{r}} \nu^k, \\
\label{eq_Wcorr}
\|\nabla^k W^{j, \textrm{corr}}_{\lambda, \mu, \sigma, \nu}\|_{L^\infty_t L^r_x} & \leq M_{k,r} \frac{\lambda \mu}{\nu} \mu^{\frac{d}{s'} - \frac{d}{r}} \nu^k.
\end{align}
Similarly, for every $k \in \N$, $r \in [1, \infty]$ and $N \in \N$, there is a constant $M_{k,r,N}$ depending only on $k,r,N,\varphi, \tilde \varphi, \psi$ such that
\begin{align}
\label{eq_A}
\|\nabla^k  A^j_{N,\lambda, \mu, \sigma, \nu}\|_{L^\infty_t L^r_x} & \leq M_{k,r,N} \frac{\lambda \mu \sigma}{\nu} \mu^{\frac{d}{s} - \frac{d}{r}} \nu^k \left( 1 + \frac{(\lambda \mu)^N}{\nu^{N-1}} \right).
\end{align}
\end{proposition}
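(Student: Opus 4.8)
The plan is to read off each of the six bounds directly from the explicit formulae, using only the scaling identities \eqref{eq_Mikado_scaling}, the elementary dilation rule $\|\nabla^{k}h_{\nu}\|_{L^{\infty}_{x}}=\nu^{k}\|\nabla^{k}h\|_{L^{\infty}_{x}}$, the Leibniz rules for products and for $\mathcal{R}_{N}$ (see \eqref{eq_Leibniz}), and the antidivergence estimate \eqref{eq:antidiv-in-p}. The first observation is that the $L^{\infty}_{t}$ norm costs nothing: in every building block the only $t$-dependence is the translation $\tau_{\sigma t e_{j}}$ applied to a factor depending on $\lambda(x-\sigma t e_{j})$, while the Mikado profiles $\psi^{j}_{\nu}$, $(\psi^{j}_{\nu})^{2}$ and $b^{j}(\nu\,\cdot\,)$ are independent of the $j$-th coordinate and hence invariant under $\tau_{\sigma t e_{j}}$; after the substitution $x\mapsto x+\sigma t e_{j}$ the relevant integrands are genuinely $t$-independent. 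For $A^{j}_{N}$ one argues slightly differently: the right-hand side of \eqref{eq:antidiv-in-p} involves only the translation-invariant norms $\|\nabla^{l}(\,\cdot\,)\|_{L^{r}}$ and $\|\,\cdot\,\|_{L^{\infty}}$, so it too is uniform in $t$. It therefore suffices to estimate the spatial norms at $t=0$.

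For $\Theta^{j}$, $W^{j}$, $Q^{j}$ and $W^{j,\mathrm{corr}}$ I would expand $\nabla^{k}$ by the Leibniz rule and, in every resulting term, put the $L^{r}_{x}$ norm on the \emph{concentrated blob factor} ($\varphi^{j}_{\mu}$, $\tilde\varphi^{j}_{\mu}$, $\varphi^{j}_{\mu}\tilde\varphi^{j}_{\mu}$, or $\nabla\tilde\varphi_{\mu}$, possibly dilated by $\lambda$) and the $L^{\infty}_{x}$ norm on the \emph{oscillatory Mikado profile} ($\psi^{j}_{\nu}$, $(\psi^{j}_{\nu})^{2}$, or the $b^{j}$-matrix). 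The blob norms follow from \eqref{eq_Mikado_scaling}; for $Q^{j}$ one additionally uses $\tfrac{d}{s}+\tfrac{d}{s'}=d$, i.e. $\tfrac1s+\tfrac1{s'}=1$. A term with $a$ derivatives on the blob and $b=k-a$ on the profile is then bounded by $(\lambda\mu)^{a}\nu^{b}$ times $\mu^{d/s-d/r}$ (for $\Theta^{j}$), $\mu^{d/s'-d/r}$ (for $W^{j}$), $\mu^{d-d/r}$ (for $Q^{j}$) or $\mu\cdot\mu^{d/s'-d/r}$ (for $W^{j,\mathrm{corr}}$), up to a constant depending only on $k,r,\varphi,\tilde\varphi,\psi$; since $\lambda\mu/\nu\le\tfrac12$ by \eqref{eq_lambdamunu} we have $(\lambda\mu)^{a}\nu^{b}\le\nu^{k}$, the term with $a=0$ dominates, the finite Leibniz sum is absorbed into $M_{k,r}$, and the prefactors $1/\sigma$ and $\lambda\mu/\nu$ are simply carried along. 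This yields \eqref{eq_Theta}, \eqref{eq_W}, \eqref{eq_Q} and \eqref{eq_Wcorr}. The one point that matters here is using this Hölder pairing and not the reverse one: pairing the $L^{\infty}$ norm with the blob would cost an extra factor $\mu^{d/r}$, since the blobs are $L^{r}$- rather than $L^{\infty}$-concentrated.

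For $A^{j}_{N}=\lambda\sigma\,\mathcal{R}_{N}\bigl((\partial_{j}\varphi^{j}_{\mu})_{\lambda}\circ\tau_{\sigma t e_{j}},\psi^{j}_{\nu}\bigr)$ I would first distribute $\nabla^{k}$ over the two arguments of $\mathcal{R}_{N}$ by the Leibniz rule \eqref{eq_Leibniz}, writing each derivative on $\psi^{j}_{\nu}$ as $\nu$ times a profile of the same type and frequency $\nu$. To each resulting term I apply \eqref{eq:antidiv-in-p} with the oscillatory factor ($\psi^{j}_{\nu}$ and its derivatives) in the second slot and the slow factor $\nabla^{a}[(\partial_{j}\varphi^{j}_{\mu})_{\lambda}]$ in the first — again the favourable pairing. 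By \eqref{eq_Mikado_scaling}, $\|\nabla^{l}\nabla^{a}[(\partial_{j}\varphi^{j}_{\mu})_{\lambda}]\|_{L^{r}}\lesssim(\lambda\mu)^{l+a}\mu^{1+d/s-d/r}$, so the bracket in \eqref{eq:antidiv-in-p} is a finite geometric series in $\lambda\mu/\nu\le\tfrac12$ with sum $\lesssim\nu^{-1}\bigl(1+(\lambda\mu)^{N}/\nu^{N-1}\bigr)$; multiplying by $\nu^{b}$ (the derivatives falling on $\psi^{j}_{\nu}$), summing over $a+b=k$ using $(\lambda\mu)^{a}\nu^{b}\le\nu^{k}$, and multiplying by $\lambda\sigma$ gives exactly \eqref{eq_A}.

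There is no genuine obstacle in this proof — it is bookkeeping — but three points need attention: consistently choosing the favourable Hölder pairing ($L^{r}$ on the concentrated blob, $L^{\infty}$ on the oscillatory profile) so as not to lose a power of $\mu$; using $\tfrac1s+\tfrac1{s'}=1$ to simplify the exponents for $Q^{j}$; and recognising that the scale separation $\lambda\mu/\nu\le\tfrac12$ in \eqref{eq_lambdamunu} is precisely what is needed to collapse the Leibniz sums into the stated $\nu^{k}$-powers and to truncate the geometric series produced by \eqref{eq:antidiv-in-p}.
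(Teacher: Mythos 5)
Your proposal is correct and follows essentially the same route as the paper's proof: the same Leibniz expansion, the same Hölder pairing (putting $L^r$ on the concentrated blob and $L^\infty$ on the oscillatory Mikado profile), the same use of \eqref{eq_lambdamunu} to absorb the $(\lambda\mu)^a\nu^b$ factors into $\nu^k$, and the same application of \eqref{eq:antidiv-in-p} for $A^j_N$. The paper writes out \eqref{eq_Theta} and the $k=0$ case of \eqref{eq_A} and declares the remaining estimates ``similar''; you merely spell out the higher-order case of $A^j_N$, using the Leibniz rule \eqref{eq_Leibniz} for $\mathcal{R}_N$ exactly as the paper indicates.
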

\begin{proof}
We prove \eqref{eq_Theta}. If $\nabla^k$ denotes a derivative of order $k$, we have, by the Leibniz rule,
\begin{equation*}
\begin{aligned}
\nabla^k \Theta^j(t,x) & = \sum_{\substack{k_1, k_2 \\ 0 \leq k_1 + k_2 \leq k}} \nabla^{k_1} \left( \varphi^j_\mu ( \lambda (x - \sigma t e_j)) \right) \nabla^{k_2} \left( \psi (\nu x) \right) \\ 
& = \sum_{\substack{k_1, k_2 \\ 0 \leq k_1 + k_2 \leq k}} \lambda^{k_1} \nabla^{k_1} (\varphi^j_\mu) ( \lambda (x - \sigma t e_j)) \nu^{k_2} (\nabla^{k_2} \psi)(\nu x).
\end{aligned}
\end{equation*}
Hence, computing the $L^r_x$ norm, 
\begin{equation*}
\begin{aligned}
\| \nabla^k \Theta^j(t, \cdot)\|_{L^r_x} 
& \leq \sum_{\substack{k_1, k_2 \\ 0 \leq k_1 + k_2 \leq k}} \lambda^{k_1} \| \nabla^{k_1} \varphi^j_\mu\|_{L^r_x} \nu^{k_2} \|\nabla^{k_2} \psi\|_{L^\infty} \\
& \leq M_{k,r} \sum_{\substack{k_1, k_2 \\ 0 \leq k_1 + k_2 \leq k}} (\lambda \mu)^{k_1} \mu^{d/s - d/r} \nu^{k_2}
 \\
\text{(since $\lambda \mu \leq \nu/2$)} & \leq M_{k,r} \nu^k \mu^{d/s-d/r}.
\end{aligned}
\end{equation*}
Estimates \eqref{eq_Q}-\eqref{eq_Wcorr} can be treated similarly. Concerning \eqref{eq_A}, for $k=0$, we apply \eqref{eq:antidiv-in-p} and we get
\begin{equation*}
\begin{aligned}
\|A_N^j(t, \cdot)\|_{L^r} 
& \leq M_{N} \lambda \sigma \left( \sum_{k=0}^{N-1} \frac{1}{\nu^{k+1}} \|\nabla^k (\partial_j \varphi^j_\mu)_\lambda\|_{L^r} \|\psi\|_{L^\infty} + \frac{1}{\nu^N} \|\nabla^N \partial_j \varphi_\mu^j\|_{L^r} \|\psi\|_{L^\infty} \right) \\
& \leq M_{r,N} \lambda \sigma \left( \sum_{k=0}^{N-1} \frac{(\lambda \mu)^{k}}{\nu^{k+1}}  \mu^{1 + d/s - d/r} + \frac{(\lambda \mu)^N}{\nu^N} \mu^{1 + d/s - d/r}  \right) \\
& \leq M_{r,N} \frac{\lambda \mu \sigma}{\nu} \mu^{d/s - d/r} \left( \underbrace{\sum_{k=0}^{N-1} \left(\frac{\lambda \mu}{\nu} \right)^k}_{\leq 2} + \frac{(\lambda \mu)^N}{\nu^{N-1}} \right) \\
\text{(by \eqref{eq_lambdamunu})} & \leq M_{r,N} \frac{\lambda \mu \sigma }{\nu} \mu^{d/s - d/r} \left( 1 + \frac{(\lambda \mu)^N}{\nu^{N-1}} \right).
\end{aligned}
\end{equation*}
Estimate \eqref{eq_Q} for higher order derivatives can be obtained similarly, using the fact that the operator $\mathcal{R}_N$ satisfies the Leibniz rule \eqref{eq_Leibniz}.
\end{proof}

\begin{proposition}[Interactions between different Mikados]
\label{prop_mikado_interactions}
For every $i \in \{1, \dots, d\}$ and $t \in [0,{1}]$
\begin{equation*}
\fint_{\T^d} \Theta^j(t,x) W^j(t,x) dx = e_j,
\end{equation*}
and for every $i \neq j$, $t \in [0,{1}]$, $x \in \T^{d}$
 \begin{equation}\label{eq_Mikado_supp_disjoint}
    \Theta^{i}_{\lambda, \mu,\sigma, \nu}(t,x) W^{j}_{\lambda, \mu,\sigma, \nu}(t,x) = 0.
 \end{equation}
\end{proposition}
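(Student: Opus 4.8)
The plan is to treat the two assertions separately, beginning with the disjointness statement. Recalling the definitions, for $i\neq j$ one has
\[
\Theta^i(t,x)\,W^j(t,x)=\varphi^i_\mu\bigl(\lambda(x-\sigma t e_i)\bigr)\psi^i(\nu x)\,\tilde\varphi^j_\mu\bigl(\lambda(x-\sigma t e_j)\bigr)\psi^j(\nu x)\,e_j ,
\]
so it suffices to show $\varphi^i_\mu(\lambda(x-\sigma t e_i))\,\tilde\varphi^j_\mu(\lambda(x-\sigma t e_j))=0$. Since $\tilde\varphi_\mu$ and $\varphi_\mu$ are positive constant multiples of one another, $\tilde\varphi^j_\mu$ and $\varphi^j_\mu$ have the same support, and after the substitution $y:=\lambda x$ the above product becomes the left-hand side of \eqref{eq_varphi_supp_disj} with the shift parameter $\lambda\sigma t$ in place of $t$. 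As the centres $\zeta_k$ are chosen in \cite[Lemma 4.3]{MS20} precisely so that the translated blobs sweep out pairwise disjoint tubes, \eqref{eq_varphi_supp_disj} holds for every value of that parameter; hence the product vanishes identically and \eqref{eq_Mikado_supp_disjoint} follows.

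For the first identity I would first simplify the pointwise product. Using $\tfrac1s+\tfrac1{s'}=1$, the two blob amplitudes combine into
\[
\varphi^j_\mu\,\tilde\varphi^j_\mu=\mu^{\,d/s+d/s'}\bigl(\varphi(\mu(\cdot-\zeta_j))\bigr)^{2}=\mu^{d}\bigl(\varphi(\mu(\cdot-\zeta_j))\bigr)^{2}\quad\text{on }\T^d ,
\]
so that $\int_{\T^d}\varphi^j_\mu\tilde\varphi^j_\mu\,dx=\int_{\R^d}\varphi^2\,dx=1$ by the normalisation in \eqref{eq_intphi}; since a dilation by $\lambda\in\N$ and a translation do not change this integral, also $\int_{\T^d}(\varphi^j_\mu\tilde\varphi^j_\mu)_\lambda(x-\sigma t e_j)\,dx=1$ for every $t$. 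Because $\Theta^j W^j=(\varphi^j_\mu\tilde\varphi^j_\mu)_\lambda(\,\cdot\,-\sigma t e_j)\,(\psi^j_\nu)^2\,e_j$, it remains to check that $\fint_{\T^d}(\varphi^j_\mu\tilde\varphi^j_\mu)_\lambda(x-\sigma t e_j)(\psi^j(\nu x))^2\,dx=1$. I would write $(\psi^j(\nu x))^2=1+h(\nu x)$, where $h:=(\psi^j)^2-1$ has vanishing mean in the $d-1$ variables transverse to $e_j$ (this is where \eqref{eq_intpsi} enters); the contribution of the constant $1$ equals $1$ by the previous line, and what remains is the decoupling identity
\[
\fint_{\T^d}\bigl(\varphi^j_\mu\tilde\varphi^j_\mu\bigr)_\lambda\bigl(x-\sigma t e_j\bigr)\,h(\nu x)\,dx=0 .
\]
To prove this I would expand both factors in Fourier series: since $\psi^j$ is independent of $x_j$ and $\nu\in\lambda\N$, the resonance condition leaves only contributions from the Fourier coefficients of $\varphi^j_\mu\tilde\varphi^j_\mu$ at nonzero multiples of $\nu/\lambda$ in the directions orthogonal to $e_j$; by the blob scaling these are values of the (smooth, rapidly decaying) Fourier transform of $\varphi^2$ at arguments of modulus at least $\nu/(\lambda\mu)\geq2$ (by \eqref{eq_lambdamunu}). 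Putting the two pieces together gives $\fint_{\T^d}\Theta^j W^j\,dx=1\cdot1\cdot e_j=e_j$.

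I expect the whole difficulty to be concentrated in this last decoupling identity — i.e.\ in verifying that the slowly varying blob averages against the square of the fast Mikado profile \emph{exactly} as if the two factors were independent, with no residual term. The support computation and the reduction of the average to a product of two single-factor averages are routine. The vanishing of the resonant terms is precisely what the design of $\varphi$, $\psi$ and of the scale separation $\nu\in\lambda\N$, $\nu\gg\lambda\mu$ in \cite[Sec.~4]{MS20} is meant to deliver, so — as was done for the ``equations'' in Proposition \ref{prop_mikado_eqn} — the most economical option may be to quote the corresponding statement from \cite{MS20} directly rather than to reprove the Fourier estimate from scratch.
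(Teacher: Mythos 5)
Your treatment of the disjoint-support claim \eqref{eq_Mikado_supp_disjoint} is correct and is exactly the paper's argument: after the substitution $y=\lambda x$ the product reduces to \eqref{eq_varphi_supp_disj} with shift parameter $\lambda\sigma t$ in place of $t$, that identity holds for every value of the shift by the choice of the $\zeta_k$ in \cite[Lemma 4.3]{MS20}, and since $\tilde\varphi_\mu$ is a constant multiple of $\varphi_\mu$ the supports coincide. No issues there.

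For the mean-value identity your reduction is also the natural one, and you have correctly located the only nontrivial point: the ``decoupling identity'' $\fint_{\T^d}(\varphi^j_\mu\tilde\varphi^j_\mu)_\lambda(x-\sigma t e_j)\,h(\nu x)\,dx=0$ with $h=(\psi^j)^2-1$. But the argument you sketch for it does not close, and in fact cannot: your own resonance computation shows that the surviving terms are values of the Fourier transform of the compactly supported function $\varphi^2$ at the nonzero points of the resonant lattice, i.e.\ at arguments of modulus at least $\nu/(\lambda\mu)$. These values are rapidly \emph{decaying}, not zero --- by Paley--Wiener the Fourier transform of a compactly supported $\varphi^2$ cannot vanish on the whole resonant lattice --- so the resonant sum is generically nonzero and there is no statement in \cite{MS20} one could quote to make it vanish exactly. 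The literal identity $\fint_{\T^d}\Theta^jW^j\,dx=e_j$ therefore holds only up to an error of size $O((\lambda\mu/\nu)^K)$ for every $K$. Note that the paper's own proof simply asserts the identity from the definitions, and that the exact value of $\fint\Theta^jW^j$ is never used downstream: the error analysis in Section \ref{ssec_quadratic} relies only on the algebraic decomposition of $\Theta^jW^j-e_j$ into products whose fast factors $((\psi^j)^2-1)_\nu$ and $(\varphi^j_\mu\tilde\varphi^j_\mu-1)_\lambda$ have mean zero (by \eqref{eq_intphi} and \eqref{eq_intpsi}), with the improved antidivergence subtracting the residual mean in any case. So your instinct that the whole difficulty sits in the decoupling identity is exactly right, but the resolution is to weaken the first claim to an approximate one (or to observe that only the decomposed, mean-zero form is ever needed), not to seek an exact Fourier cancellation.
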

\begin{proof}
The mean value property follows from the definition of $\Theta^j, W^j$, whereas \eqref{eq_Mikado_supp_disjoint} follows from \eqref{eq_varphi_supp_disj}.
\end{proof}

\newpage

\section{Defining the perturbations}\label{sec_perturbations}

In this section we start the proof of the main Proposition \ref{prop_iteration_stage}, defining the new density $\rho_1$ and the new velocity field $u_1$. We will employ a convex integration scheme similar to the one of \cite{MS18,MS19,MS20}. We stress again that we want our velocity field ${u}$ to be deterministic, since this will imply that we have nonuniqueness for the stochastic transport equation with a \textit{deterministic} drift, which is the setting where the strongest uniqueness results have been obtained to date. To this end, we need to make sure that all the parameters going into the definition of ${u}$ do not depend on $\o$.

\subsection{Differences from the deterministic scheme}

Our starting point is the auxiliary equation
\begin{align*}
    \partial_{t} {\rho} + {u}(\Psi) \cdot \nabla {\rho} &= -\divv R.
\end{align*}
Recall that $\Psi(t,x; \omega) = (t, x+ B(t; \omega))$. 
Here it is crucial that the vector field ${u}$ is deterministic and the only $\o$-dependence of it comes via the uniform translation by $B(t;\o)$ in the spatial argument.

We start our discussion from the ``usual'' convex integration scheme of \cite{MS20}, where $\rho_1 \approx \rho_0 + \vartheta$, $u_1 \approx u_0 + w$ and the error term is distributed between the perturbations according to
\begin{equation}
     \label{eq_def_vt_MS20} 
    \begin{split}
      \vt(t,x) &:= \sum_{j=1}^{d} \sign\left( R_{0}^{j}(t,x) \right) \left| R_{0}^{j}(t,x)\right|^{\frac{1}{p}} \Theta^{j}_{\lambda, \mu,\sigma, \nu}(t,x), \\
     w(t,x) &:= \sum_{j=1}^{d} \left| R_{0}^{j}(t,x)\right|^{\frac{1}{p'}} W^{j}_{\lambda,\mu,\sigma,\nu}(t,x),
    \end{split}
\end{equation}
where $
\lambda, \mu, \sigma, \nu
$
are parameters to be fixed later, which guarantee that estimates \eqref{eq_rho1rho0Lp}-\eqref{eq_R1_L1est} can be achieved. 
The sign function introduces a discontinuity which needs to be amended using suitable cutoff functions $\chi_{j}$ which we will omit at this stage for ease of presentation (and also because we won't need them in the end, cf. Remark \ref{rem_cutoff}).

We have three requirements on our perturbations, namely that
\begin{enumerate}[label = (\roman*)]
 \item $w$ needs be deterministic.
 \item\label{itm_req_pert_2} The density perturbation needs to be regular enough in time to take a derivative.
 \item\label{itm_req_pert_1} The product $\vt w$ needs to cancel the previous error term $R_{0}$.
\end{enumerate}
A definition like the one in \eqref{eq_def_vt_MS20} is thus not suitable because
\begin{enumerate}[label = (\roman*)]
\item $w$ depends on $R_0$, and $R_0$ is not deterministic (it depends on $\omega$).
\item $\vt$ is not differentiable in time, not only because of the $\sign$ and the modulus, but mostly because $R_0$ is assumed to be only  \textit{continuous in time with values in $L^1_x$} (and, indeed, the defect fields constructed along the iteration are not differentiable in time, because of the presence of the Browinian motion $B(t)$).  
Moreover, we only have (inductively) an estimate for $R_0$ in $L^\infty_\omega C_t L^1_x$, but we know nothing about e.g. estimates of $R_0$ in different integrability (e.g. $L^\infty_x$). In particular, even though by the assumed space-time continuity of $R_0$, we know that, $\P$-a.s., $\|R_0(\cdot; \omega)\|_{C_t L^\infty_x} < \infty$, we don't know if $\esssup_\omega \|R_0(\cdot; \omega)\|_{C_t L^\infty_x}$  is still bounded, and this prevents us from estimating e.g. the $L^\infty_\omega C_t L^1_x$-norm of $\rho_1$.

\item In the quadratic term of the continuity-defect equation we are considering, $u$ appears only composed with the translation $\Psi$. Hence, in order for $\vt w$ to cancel the previous error term,  the Mikado density $\Theta$ used in the definition of $\vt$ must also be computed  in $(t,x + B(t;\omega); \omega)$ and not in = $(t,x,\omega)$. This however would again make $\vt$ not differentiable in time. 
\end{enumerate}
These three issues are solved as follows.
\begin{enumerate}[label = (\roman*)]
\item All the $R_0$-dependence is put in $\vt$, and none in $w$.
\item We do not use $R_0$, but a space-time mollification of it, with mollification parameter $\e>0$. In this way the mollification of $R_0$ is smooth and every norm of the mollified object can be estimated in terms of  $\|R_0\|_{L^\infty_\omega C_t L^1_x}$ and (a negative power of) $\e$.

\item The Mikado density $\Theta$ is not computed in $(t,x + B(t;\omega); \omega)$, but in $(t,x + B_\ell(t;\omega); \omega)$, where $B_\ell$ is a mollification of the Browinian motion, with mollification parameter $\ell$. 
\end{enumerate}

We stress strongly that there are two mollification parameters: $\e$ and $\ell$. The first one, namely $\e$, is used in the mollification of $R_0$ and will be fixed essentially requiring that the distance in $L^1_x$ of $R_0$ and its mollification is small enough. In particular it will be fixed \textit{before} fixing $\ell$ and the parameters $\lambda, \mu, \sigma, \nu$ (see Section \ref{ssec_parameter_choice}).

The second one, namely $\ell$, is used in the mollification of the Brownian motion and will be chosen in such a way that the five parameters $\ell, \lambda, \mu, \sigma, \nu$ satisfy certain balances (see Section \ref{ssec_parameter_choice}). 

\subsection{\texorpdfstring{$\e$}{epsilon}-mollification of \texorpdfstring{$\rho_0$}{r0} and \texorpdfstring{$R_0$}{R0}}

Before finally defining $\rho_1$ and $u_1$, we mollify the various objects as announced. Since $\rho_0, R_0$ are defined only from time $0$  up to the stopping time $\tau = \tau(\omega)$, we extend them before time $0$ and after time $\tau(\omega)$ by simply setting 
\begin{equation*}
\rho_0(t, \cdot; \omega) =
\begin{cases}
\rho_0(\tau(\omega), \cdot; \omega) & \text{if } t \geq \tau(\omega), \\
\rho_0(0, \cdot; \omega) & \text{if } t \leq 0,
\end{cases}
\quad
R_0(t, \cdot; \omega) =
\begin{cases}
R_0(\tau(\omega), \cdot; \omega) & \text{if } t \geq \tau(\omega), \\
R(0, \cdot; \omega) & \text{if } t \leq 0.
\end{cases}
\end{equation*}
Clearly the extensions still belong respectively to
\begin{equation*}
\rho_0 \in  L^\infty(\Omega; C^1(\R \times \T^d)), \quad
R_0 \in L^\infty(\Omega; C(\R; L^1(\T^d)))
\end{equation*}
and the two fundamental estimates are preserved:
\begin{equation*}
\|\rho_0\|_{L^\infty(\Omega; C^1(\R \times \T^d))} \leq \|\rho_0\|_{L^\infty(\Omega; C^1([0, \tau] \times \T^d))}, \quad
\|R_0\|_{L^\infty(\Omega; C(\R; L^1(\T^d)))} \leq \|R_0\|_{L^\infty(\Omega; C([0, \tau]; L^1(\T^d)))}.
\end{equation*}
Let $(\eta_\e)_{\e}$ be a space-time mollifier that is a product of a space-mollifier $(\eta^{(1)}_{\e})_{\e}$ and a time-mollifier $(\eta^{(2)}_{\e})_{\e}$ with $\supp \eta^{(2)} \subseteq (0,1)$. This implies  
with $\supp \eta_\e \subseteq (0,1) \times \R^d$ (to ensure adaptedness). Let us now define
\begin{equation*}
\rho_\e := \rho_0 * \eta_\e, \qquad R_\e := R_0 * \eta_\e. 
\end{equation*}

\begin{lemma}
\label{l_mollified_density}
It holds that
\begin{equation*}
\|\rho_\e\|_{L^\infty_\omega C^k_{\tau x}} \leq \e^{-k} \|\rho_0\|_{L^\infty_\omega C_{\tau x}},
\end{equation*}
and
\begin{equation*}
\|\rho_\e - \rho_0\|_{L^\infty_\omega C_{\tau x}} \leq \e \| \rho\|_{L^\infty_\omega C^1_{\tau x}}.
\end{equation*}
\end{lemma}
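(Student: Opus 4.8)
The plan is to prove the two estimates by the standard mollification arguments, which are exactly Lemma \ref{l_mollification_1} and Lemma \ref{l_mollification_2} applied $\o$-wise, combined with taking an essential supremum over $\o$. Throughout, $\eta_\e = \eta^{(1)}_\e \otimes \eta^{(2)}_\e$ is the space-time mollifier with $\supp \eta_\e \subseteq (0,1) \times \R^d$ introduced above. The key point to keep in mind is that since $\supp \eta^{(2)}_\e \subseteq (0,\e)$ (after the usual rescaling of the mollifier, $\eta^{(2)}_\e(t) = \e^{-1}\eta^{(2)}(t/\e)$), the value $\rho_\e(t,x;\o)$ for $t \in [0,\tau(\o)]$ depends only on the values of $\rho_0(s,\cdot;\o)$ for $s \in [t-\e, t] \subseteq [-\e, \tau(\o)]$, which is precisely where the prior extension of $\rho_0$ to negative times (by $\rho_0(0,\cdot;\o)$) is used so that $\rho_\e$ is well-defined, still $(\mcF_t)$-adapted, and continuous up to time $\tau(\o)$. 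Indeed, this one-sided support is what guarantees adaptedness and is the reason it was imposed.

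First I would prove the first inequality. Fix $\o$ outside a $\P$-null set so that $\rho_0(\cdot;\o) \in C^1(\R\times\T^d)$ with $\|\rho_0(\cdot;\o)\|_{C_{\tau x}} < \infty$. For any multi-index of order $k$ in $(t,x)$, differentiating under the convolution integral gives $\nabla^k_{(t,x)} \rho_\e = \rho_0 * \nabla^k_{(t,x)} \eta_\e$, and since $\nabla^k_{(t,x)}\eta_\e$ has $L^1$-norm bounded by $C_k \e^{-k}$ (with $C_k$ depending only on the fixed kernel, and which we absorb, as in Lemma \ref{l_mollification_1}, noting the statement is written with implicit kernel-dependent constants suppressed), Young's inequality in the $x$-variable together with the sup in time over the relevant window yields
\begin{equation*}
\|\nabla^k_{(t,x)} \rho_\e(\cdot;\o)\|_{C([0,\tau(\o)]\times\T^d)} \leq \e^{-k} \sup_{s \in [-\e,\tau(\o)]} \|\rho_0(s,\cdot;\o)\|_{C^0(\T^d)} = \e^{-k}\|\rho_0(\cdot;\o)\|_{C([0,\tau(\o)]\times\T^d)},
\end{equation*}
where the last equality uses that the extension to $[-\e,0]$ is constant in time equal to $\rho_0(0,\cdot;\o)$. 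Summing over $j \leq k$ and recalling the definition of the $C^k$-norm, then taking $\esssup_\o$, gives $\|\rho_\e\|_{L^\infty_\o C^k_{\tau x}} \leq \e^{-k}\|\rho_0\|_{L^\infty_\o C_{\tau x}}$ (after reindexing the kernel constants into the suppressed constant, exactly as in the cited lemmas).

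For the second inequality I would argue pointwise in $\o$ as well: since $\rho_0(\cdot;\o) \in C^1(\R\times\T^d)$, it is in particular Lipschitz, hence $[\rho_0(\cdot;\o)]_{C^1} \leq \|\rho_0(\cdot;\o)\|_{C^1_{\tau x}}$ on the relevant set, and Lemma \ref{l_mollification_2} (with $\theta = 1$) applied on a time interval containing $[0,\tau(\o)]$ gives $\|\rho_\e(\cdot;\o) - \rho_0(\cdot;\o)\|_{C^0([0,\tau(\o)]\times\T^d)} \leq \e\,[\rho_0(\cdot;\o)]_{C^1} \leq \e \|\rho_0(\cdot;\o)\|_{C^1_{\tau x}}$; alternatively one writes $\rho_\e - \rho_0 = \int (\rho_0(\cdot - z) - \rho_0)\,\eta_\e(z)\,dz$ and bounds the increment by $|z|\,\|\nabla_{(t,x)}\rho_0\|_{C^0}$ with $|z| \leq \e$ on $\supp\eta_\e$. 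Taking $\esssup_\o$ yields $\|\rho_\e - \rho_0\|_{L^\infty_\o C_{\tau x}} \leq \e \|\rho_0\|_{L^\infty_\o C^1_{\tau x}}$, which is the claim (written as $\|\rho\|_{L^\infty_\o C^1_{\tau x}}$ in the statement). The only mild technical point — and the closest thing to an obstacle — is bookkeeping the one-sided time support of the mollifier so that the formula $\rho_\e(t,\cdot;\o)$ depends only on times $\leq \tau(\o)$ (and $\geq -\e$), ensuring both that $\rho_\e$ is genuinely defined on $[0,\tau]\times\T^d$ with the stated norms and that the constant-in-time extension to negative times does not spoil the $C^0$- and $C^k$-bounds; once this is noted, everything reduces to the deterministic Lemmas \ref{l_mollification_1} and \ref{l_mollification_2} applied $\o$-by-$\o$ followed by $\esssup_\o$.
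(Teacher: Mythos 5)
Your proof is correct and follows exactly the (omitted, standard) argument the paper intends: apply Lemmas \ref{l_mollification_1}--\ref{l_mollification_2} pathwise in $\o$, using the one-sided time support of $\eta_\e$ and the constant-in-time extension of $\rho_0$ so that only values on $[0,\tau(\o)]$ enter, then take $\esssup_\o$. Your remark that the stated bounds implicitly absorb the kernel-dependent constants $C_k$ (and that the right-hand side of the second estimate should read $\rho_0$) is accurate.
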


\begin{lemma}
\label{l_mollified_error}
It holds that
\begin{equation*}
\|R_\e\|_{L^\infty(\Omega; C^k([0,\tau] \times \T^d))} \leq C_k \e^{-k - d} \|R_0\|_{L^\infty(\Omega; C([0,\tau]; L^1(\T^d)))}. 
\end{equation*}
Furthermore, we have the $L_{x}^{1}$-contraction property
\begin{equation*}
    \| R_{\eps} \|_{L^{\infty}(\O; C([0,\tau]; L^{1}(\T) ))} \leq \| R_{0} \|_{L^{\infty}(\O; C([0,\tau]; L^{1}(\T) ))}.
\end{equation*}
\end{lemma}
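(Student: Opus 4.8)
The plan is to derive both bounds from the elementary mollification estimate in Lemma~\ref{l_mollification_1}, applied path by path to the extended field $R_0(\cdot;\omega)\in C(\R;L^1(\T^d))$, and then to pass to the essential supremum over $\omega$.

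For the $C^k$-bound, fix an $\omega$ outside the $\P$-null set on which $R_0(\cdot;\omega)$ fails to be continuous with values in $L^1$. Because $\supp\eta^{(2)}_\e\subseteq(0,\e)$, the value $R_\e(t,\cdot;\omega)$ only involves $R_0(t',\cdot;\omega)$ for $t'\in[t-\e,t]$, and the constant-in-time extension before $0$ and after $\tau(\omega)$ gives $\sup_{t'\in[-\e,\tau(\omega)]}\|R_0(t',\cdot;\omega)\|_{L^1_x}=\|R_0(\cdot;\omega)\|_{C([0,\tau(\omega)];L^1_x)}$. Writing a mixed derivative of order $j\le k$ as $\partial_t^a\nabla_x^b R_\e = R_0 * (\partial_t^a\nabla_x^b\eta_\e)$, estimating by Young's inequality in the spatial variable (using only the $L^1_x$-control of $R_0$) and rescaling the mollifier, one gets $|\partial_t^a\nabla_x^b R_\e(t,x;\omega)|\le C_j\,\e^{-j-d}\|R_0(\cdot;\omega)\|_{C([0,\tau(\omega)];L^1_x)}$ for $(t,x)\in[0,\tau(\omega)]\times\T^d$ --- which is precisely Lemma~\ref{l_mollification_1} applied to the path $R_0(\cdot;\omega)$ with $r=1$. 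Summing over $j\le k$ and taking the essential supremum over $\omega$ yields the claim, with $C_k$ depending only on $\eta$.

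For the $L^1_x$-contraction, again fix a good $\omega$ and bound, for $t\in[0,\tau(\omega)]$,
\[
\|R_\e(t,\cdot;\omega)\|_{L^1_x}\le \iint \|R_0(t-s,\cdot-y;\omega)\|_{L^1_x}\,\eta_\e(s,y)\,dy\,ds=\iint \|R_0(t-s,\cdot;\omega)\|_{L^1_x}\,\eta_\e(s,y)\,dy\,ds,
\]
using Minkowski's integral inequality and translation invariance of $\|\cdot\|_{L^1_x}$. Since $\eta_\e\ge 0$, $\iint\eta_\e=1$, and $s\in(0,\e)$ forces $t-s\in[-\e,\tau(\omega)]$, the right-hand side is at most $\sup_{t'\in[-\e,\tau(\omega)]}\|R_0(t',\cdot;\omega)\|_{L^1_x}=\|R_0(\cdot;\omega)\|_{C([0,\tau(\omega)];L^1_x)}$ by the constant extension. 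Taking the supremum over $t$ and then the essential supremum over $\omega$ gives the stated contraction.

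Both estimates are essentially routine; the only points needing care are bookkeeping ones. First, one must ensure the time-mollification never reaches times where $R_0$ is uncontrolled, which is exactly what the constant extension together with $\supp\eta^{(2)}\subseteq(0,\e)$ guarantee (this choice also keeps $R_\e$ adapted, although adaptedness is not asserted in this lemma). Second, one must check that the constant in Lemma~\ref{l_mollification_1} is independent of the path, so that the essential supremum can be taken on both sides of the pathwise inequality --- and it is, since that constant depends only on the fixed kernel. I do not expect any genuine obstacle here.
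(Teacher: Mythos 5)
Your proof is correct and takes essentially the same route as the paper: the paper also derives the $C^k$-bound directly from Lemma~\ref{l_mollification_1} and proves the $L^1_x$-contraction by estimating the convolution with the kernel (the paper invokes Young's inequality, you invoke Minkowski's integral inequality, but for an $L^1$-mollifier these amount to the same computation). Your extra remarks on the support of $\eta^{(2)}_\eps$ and the constant-in-time extension are implicit in the paper but make the argument cleaner.
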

\begin{proof}
The proof of the first estimate follows immediately from Lemma \ref{l_mollification_1}. The second estimate is proven using Young's convolution inequality: $\P$-a.s., for every $t \in [0,\tau(\o)]$ it follows that
\begin{align*}
    |R_{\eps}(t,x;\o)| &= \left| \int\int R_{0}(s,y;\o) \eta^{(1)}_{\eps}(x-y) \eta^{(2)}_{\eps}(t-s) dy ds \right| \leq \int \| R_{0}(s;\o) \|_{L^{1}_{x}} \| \eta^{(1)}_{\eps} \|_{L^{1}_{x}} \eta^{(2)}_{\eps}(t-s) ds \\
    (\eta^{(i)} \text{ mollifiers})&\leq \sup_{r \in [0,\tau(\o)]} \| R_{0}(r;\o) \|_{L_{x}^{1}} \leq \| R_{0} \|_{L_{\o}^{\infty} C_{\tau} L^{1}_{x}}.
\end{align*}
Integrating over $x$, taking the supremum over $t \in [0,\tau(\o)]$ and the essential supremum over $\o \in \O$ implies the claim.
\end{proof}

\subsection{\texorpdfstring{$\ell$}{l}-mollification of the Brownian motion}
\label{ssec_BM_mollif}

We extend the Brownian motion $B$ to the left by setting $B(-t) := B(0) = 0$ for all $t > 0$.
Let $\chi \colon \R \to \R$ be a time-mollifier with $\supp \chi \subset (0,{1})$ and define for $\ell > 0$, $\chi_{\ell}(s) :=  \frac{1}{\ell} \chi \left( \frac{s}{\ell} \right)$. Consider the mollified Brownian motion:
\begin{align*}
    B_{\ell}(t) := (B *_{t} \chi_{\ell})(t) := \int_{\R} B(t-s) \chi_{\ell}(s) ds = \int_{0}^{\ell} B(t-s) \chi_{\ell}(s) ds.
\end{align*}

\begin{lemma}
\label{l_brownian}
It holds that
\begin{align}
    \label{eq_BM_C0} \|B_{\ell} - B\|_{L^\infty(\Omega; C^0([0,\tau]))} &\leq L \ell^{1/2 - \kappa},
\end{align}
and, for $k \in \{0,1,\dots\}$,
\begin{align}
    \label{eq_BM_C1} \| \dot B_{\ell} \|_{L^\infty(\Omega; C^{k}([0,\tau]))} &\leq C_k L \ell^{(1/2 - \kappa) - (k+1)}, 
\end{align}
where $C_k$ is a constant depending only on the $C^k$ norm of the mollifier $\chi$ and $L$ is the bound on the H\"older norm of almost each trajectory of the Brownian motion up to the stopping time, as fixed at the beginning of Section \ref{sec_main_results}. 
\end{lemma}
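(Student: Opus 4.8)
The plan is to reduce both estimates to the single deterministic fact that, by the definition of the stopping time $\tau = \tau_{L(\mfp)}$ in \eqref{eq_stopping_time}, $\P$-a.s.\ the trajectory $t \mapsto B(t,\o)$ satisfies $[B(\o)]_{C^{1/2-\kappa}([0,\tau(\o)])} \le L$, where $L = L(\mfp)$ is a \emph{deterministic} constant, fixed before $\ell$ (and before $\lambda,\mu,\sigma,\nu$) is chosen. First I would check that the left-extension $B(-t) := 0$ for $t > 0$ does not increase this seminorm on $(-\infty,\tau(\o)]$: for $t_1 < 0 \le t_2 \le \tau(\o)$ one has $|B(t_1)-B(t_2)| = |B(0)-B(t_2)| \le L|t_2|^{1/2-\kappa} \le L|t_2-t_1|^{1/2-\kappa}$, while for $t_1,t_2 < 0$ the difference vanishes; hence $[B(\o)]_{C^{1/2-\kappa}((-\infty,\tau(\o)])} \le L$ as well. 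This is exactly what is needed, because for $t \in [0,\tau(\o)]$ and $\supp\chi_\ell \subset (0,\ell)$ the argument $t-s$ of $B$ inside the convolution ranges over $[t-\ell,t]\subseteq[-\ell,\tau(\o)]$.

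For \eqref{eq_BM_C0}, using $\int_\R\chi_\ell = 1$ I would write $B_\ell(t)-B(t) = \int_0^\ell\bigl(B(t-s)-B(t)\bigr)\chi_\ell(s)\,ds$, bound $|B(t-s)-B(t)| \le L s^{1/2-\kappa} \le L\ell^{1/2-\kappa}$ for $s \in [0,\ell]$ and $t \in [0,\tau(\o)]$, and conclude $|B_\ell(t)-B(t)| \le L\ell^{1/2-\kappa}\int_0^\ell\chi_\ell = L\ell^{1/2-\kappa}$. Taking the supremum over $t \in [0,\tau(\o)]$ and then $\esssup_{\o}$ gives the first estimate.

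For \eqref{eq_BM_C1}, differentiating under the integral sign gives $\frac{d^m}{dt^m}B_\ell(t) = \int_\R \chi_\ell^{(m)}(s)\,B(t-s)\,ds$ for every $m \ge 1$, which is legitimate since $B$ is continuous (hence locally bounded) $\P$-a.s.\ and $\chi_\ell^{(m)}$ is bounded with compact support; because $\int_\R\chi_\ell^{(m)} = 0$ (as $\chi_\ell$ is smooth and compactly supported and $m \ge 1$), this equals $\int_0^\ell\chi_\ell^{(m)}(s)\bigl(B(t-s)-B(t)\bigr)\,ds$. With the scaling $\|\chi_\ell^{(m)}\|_{L^\infty} = \ell^{-(m+1)}\|\chi^{(m)}\|_{L^\infty}$, support in $[0,\ell]$, and the Hölder bound, one gets $\bigl|\frac{d^m}{dt^m}B_\ell(t)\bigr| \le \ell\cdot\ell^{-(m+1)}\|\chi^{(m)}\|_{L^\infty}\cdot L\ell^{1/2-\kappa} = \|\chi^{(m)}\|_{L^\infty}\,L\,\ell^{(1/2-\kappa)-m}$, uniformly in $t \in [0,\tau(\o)]$. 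For $m = 1$ this is precisely the claimed bound on $\dot B_\ell$; summing over $m = 1,\dots,k+1$, with the highest-order term $m = k+1$ dominating all the others since $\ell < 1$, bounds $\|\dot B_\ell\|_{C^k([0,\tau(\o)])}$ by $C_k L\,\ell^{(1/2-\kappa)-(k+1)}$ with $C_k$ depending only on $\chi$. Taking $\esssup_{\o}$ concludes the proof.

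I do not expect a genuine obstacle here: these are the classical mollification estimates for a Hölder function, and the only two points requiring care are (i) that all the bounds are uniform in $\o$, which is automatic because $L = L(\mfp)$ and the kernel-dependent constants $\|\chi^{(m)}\|_{L^\infty}$ are deterministic, and (ii) the verification above that the constant extension of $B$ to negative times does not spoil the Hölder seminorm that governs the whole argument.
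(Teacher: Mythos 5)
Your proof is correct and follows precisely the standard argument that the paper chooses to omit (its ``proof'' is one sentence saying the estimates are standard and rest on the $C^{1/2-\kappa}$ control of almost every trajectory up to the stopping time). You supply the two details that make this ``standard'' claim airtight: the check that the constant extension $B(-t)=0$ does not increase the $C^{1/2-\kappa}$ seminorm on $(-\infty,\tau(\omega)]$, and the observation that the kernel-dependent constants and $L=L(\mfp)$ are deterministic, so the pointwise-in-$\omega$ bounds pass to the $\esssup_\omega$ without change.
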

\begin{proof}
The proof is standard, and thus it is omitted, but it heavily relies on the fact that we consider trajectories of the Brownian motion only up to the stopping time $\tau$ and we can thus control the $C^{1/2-\kappa}$ norm of almost each $t \mapsto B(t;\omega)$ by means of the constant $L$. 
\end{proof}

In analogy with the definition of $\Psi$, we define also
\begin{equation*}
\Psi_\ell : [0,{1}] \times \T^d \times \Omega \to [0,{1}] \times \T^d, \qquad 
\Psi_\ell(t,x; \omega) := (t, x+ B_\ell(t; \omega)).
\end{equation*}

A lemma analog to Lemma \ref{l_psi} holds for $\Psi_\ell$ as well. The only difference with respect to Lemma \ref{l_psi} is in the last point, where - thanks to the mollification - higher order time derivatives can be estimated. 
\begin{lemma}
\label{l_psi_ell}
$\P$-a.s. the following hold.
\begin{enumerate}
\item For all $t \in [0,{1}]$, $\Psi_{\ell}$ and $\Psi_{\ell}^{-1}$ act on the space variables as translations, in the sense that
\begin{equation*}
x \mapsto x+ B_{\ell}(t,\omega)  \text{ and } \ x \mapsto x - B_{\ell}(t,\omega) 
\end{equation*}
are translations.

\item If $b : [0,{1}] \times \T^d \to \R^d$ is a smooth vector field, then $\divv (b \circ \Psi_\ell) = (\divv b) \circ \Psi_\ell$.

\item If $f \in C([0,{1}]; L^r(\T^d))$ for some $r \in [1, \infty]$,  it holds $\|(f \circ \Psi_\ell)(t)\|_{L^r(\T^d)}  = \|f(t)\|_{L^r(\T^d)}$ for all $t \in [0,{1}]$.

\item For the spatial derivatives it holds, for all $t \in [0,{1}]$,  $\| \nabla \Psi_\ell(t, \cdot; \omega)\|_{C^k(\T^d)} \leq 1$ for all $k \in \N$, $k \geq 0$.

\item For the time-space derivatives it holds
\begin{equation}
\label{eq_Psi_ell}
\|\nabla_{(t,x)}^{k} \Psi_\ell\|_{L^\infty(\Omega; C([0,\tau] \times \T^d))} \leq C(k, \ell),
\end{equation}
\end{enumerate}
where $\nabla^k_{(t,x)} \Psi_\ell$ denotes any derivative with respect to time and/or space variables of order $k$. 
\end{lemma}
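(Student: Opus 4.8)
The plan is to follow the proof of Lemma \ref{l_psi} essentially verbatim, since $\Psi_\ell$ differs from $\Psi$ only in that the shift $B(t;\omega)$ is replaced by its temporal mollification $B_\ell(t;\omega)$; the single point that requires an extra ingredient is the last assertion \eqref{eq_Psi_ell}, for which I would invoke the bounds on the derivatives of $B_\ell$ supplied by Lemma \ref{l_brownian}.

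For points (1)--(4) I would argue exactly as before. Fixing $t$ and $\omega$, the spatial component of $\Psi_\ell(t,\cdot;\omega)$ is the rigid translation $x \mapsto x + B_\ell(t;\omega)$ of $\T^d$, which gives (1) at once; (3) then follows from translation invariance of the Lebesgue measure on $\T^d$, and (4) from the fact that the spatial Jacobian $\nabla_x \Psi_\ell(t,\cdot;\omega)$ is the constant matrix $\Id$, so that all its higher spatial derivatives vanish and $\|\nabla_x \Psi_\ell(t,\cdot;\omega)\|_{C^k(\T^d)} \le 1$. For (2), since $b \circ \Psi_\ell(t,x;\omega) = b(t, x + B_\ell(t;\omega))$, the chain rule gives $\partial_{x_i}(b_i \circ \Psi_\ell) = (\partial_{x_i} b_i) \circ \Psi_\ell$, and summation over $i$ produces $\divv(b \circ \Psi_\ell) = (\divv b) \circ \Psi_\ell$.

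The only real content is (5). Writing $\Psi_\ell = (\Psi_\ell^0, \Psi_\ell^1, \dots, \Psi_\ell^d)$ with $\Psi_\ell^0(t,x) = t$ and $\Psi_\ell^i(t,x;\omega) = x_i + B_\ell^i(t;\omega)$, I would simply note that $\partial_{x_i} \Psi_\ell = (0, e_i)$, $\partial_t \Psi_\ell = (1, \dot B_\ell(t;\omega))$, $\partial_t^m \Psi_\ell = (0, B_\ell^{(m)}(t;\omega))$ for $m \geq 2$, and that every pure spatial derivative of order $\geq 2$, as well as every mixed space--time derivative of total order $\geq 2$, vanishes identically. Hence, for $k \geq 1$, a bound for $\nabla^k_{(t,x)} \Psi_\ell$ on $[0,\tau] \times \T^d$ is $\max\{1, \sup_\omega \|B_\ell^{(k)}(\cdot;\omega)\|_{C^0([0,\tau(\omega)])}\}$, which by \eqref{eq_BM_C1} (applied with $k$ there equal to $k-1$) is at most $1 + C_{k-1} L \ell^{(1/2-\kappa)-k} =: C(k,\ell)$; for $k=0$ one uses instead \eqref{eq_BM_C0}. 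Since $\ell$ is fixed and positive, $C(k,\ell)$ is finite, which is all that \eqref{eq_Psi_ell} claims.

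I do not anticipate any genuine obstacle: the statement is a routine transcription of Lemma \ref{l_psi}, and the only subtlety worth flagging is that the finiteness of the time-derivative bounds in (5) hinges on two facts together --- the mollification of $B$ (which makes $B_\ell$ smooth in time) and the restriction to $t \leq \tau$ (so that Lemma \ref{l_brownian}, and hence the deterministic constant $L$, is available).
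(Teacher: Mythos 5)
Your proposal is correct and follows exactly the paper's approach: the paper's own proof is a one-liner declaring the lemma trivial and citing Lemma \ref{l_brownian} for point (5), which is precisely the argument you spell out in detail. The only trivial imprecision is that your reference to \eqref{eq_BM_C0} for the $k=0$ case bounds $\|B_\ell - B\|_{C^0}$ rather than $\|B_\ell\|_{C^0}$ directly, but combined with $\|B\|_{C^0([0,\tau])}\leq L$ (which follows from $B(0)=0$ and the H\"older bound at the stopping time), it gives exactly what is needed.
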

\begin{proof}
The proof of the Lemma is trivial. Notice, in particular, that \eqref{eq_Psi_ell} follows immediately from Lemma \ref{l_brownian}, and \eqref{eq_Psi_ell} holds only up the stopping time. 
\end{proof}

\subsection{Definition of the perturbations}

Consider the decomposition of $R_\e$ along the coordinate basis
\begin{equation*}
R_\e(t,x;\omega) = \sum_{j=1}^d R_\e^j(t,x; \omega) e_j.
\end{equation*}
We can finally define
\begin{align}
    \label{eq_def_vt}\vt(t,x; \omega) &:= \sum_{j=1}^{d} R_{\e}^{j}(t,x; \omega) \Theta^{j}_{\lambda, \mu,\sigma, \nu}(t,x + B_{\ell}(t; \omega)), \\
    \nonumber w(t,x) &:= \sum_{j=1}^{d} W^{j}_{\lambda,\mu,\sigma,\nu}(t,x).
\end{align}
The extra shift $B_{\ell}(t)$ is introduced into $\vt$ to have a chance of using the disjoint support property \eqref{eq_Mikado_supp_disjoint} of $\Theta, W$ in the product $\rho(t,x) \cdot u(t,x+B(t))$, but it will, of course, create its own complications manifest in an additional defect term $R^{\mathrm{sto,1}}$ below, cf. Section \ref{ssec_quadratic}.

Furthermore, we will define the quadratic corrector term as
\begin{align*}
    q(t,x; \omega) := \sum_{j=1}^{d} R_{\e}^{j}(t,x; \omega) Q^{j}_{\lambda, \mu,\sigma, \nu}(t,x + B_{\ell}(t; \omega)).
\end{align*}

In short 
\begin{equation*}
\vt = \sum_{j=1}^d R_\e^j \Theta^j(\Psi_\ell), \qquad w = \sum_{j=1}^d W^j, \qquad q = \sum_{j=1}^d R_\e^j Q^j (\Psi_\ell). 
\end{equation*}
\begin{remark}\label{rem_cutoff}
    \begin{enumerate}[label = (\alph*)]
    \item Let us comment briefly on the problems with extending our result to all times. First, our velocity field $u$ is given and converges for \textit{all} times, as it is a purely deterministic object that does not depend on the Brownian motion and its stopping time, so this is not a problem. Once we have solved the equation up to $t = \tau(\o)$, what prevents us from using standard existence results to go beyond that time? The problem is that due to the constraint of having deterministic $u$ we needed to put all $R_{\e}$-dependence into the density field, which causes convergence problems that were then avoided by concentration of the building block $\Theta$ in a higher-order space $L_{x}^{s}$, $s > p$. However, the convergence of $\rho$ is only in $L^{p}$ (or, with a small extension, using a sequence $s_{n} \downarrow p$, in $L^{p+}$). Similarly, as the building blocks $\Theta, W$ need to be in conjugate H\"older spaces, this implies that $W \in L_{x}^{s'}$, or again with a bit more work, $W \in L_{x}^{p'-}$. Note that we cannot conclude that the product $\rho u \in L_{x}^{1}$ from H\"older's inequality, as the integrabilities "just miss" each other. For times $t \in [0,\tau(\o)]$, this was not a problem, as by construction $\rho u \in C_{\tau} L_{x}^{1}$. When we want to extend the solution past $\tau(\o)$, this does not hold anymore. This is the main reason why we only get local-in-time solutions and hence only local nonuniqueness with our approach.
    
    \item A nice side-effect of this approach is that we do not get a sign function anymore in the definition of the perturbations and hence do not need the cutoff functions with respect to the stress term, cf. \cite[p. 17]{MS20}, or \cite[p. 18]{KY22} which simplifies the following discussions. Note, however, that this leads to many new problems, e.g., since we need to put all the $R_{\e}$-dependence into the density terms, we do not have the property that $R_{0}(t) = 0$ for some $t$ implies that $u_{0}(t) = u_{1}(t)$ and that also $R_{1}(t) = 0$. This is contrary to other works such as \cite{MS20, KY22}.

    \item Note also that $w$ itself is smooth in time and space. Non-smoothness is only introduced on the level of the equation where we have to consider $ u(\Psi) \rho$, and $\Psi$ is not smooth (in time). 
    \end{enumerate}
\end{remark}

Each of the scalar terms gets a corrector to ensure they have mean zero:
\begin{align*}
    \vt_{c}(t; \omega) &:= - \fint_{\T^{d}} \vt(t,x; \omega) dx, \\
    q_{c}(t; \omega) &:= - \fint_{\T^{d}} q(t,x; \omega) dx,
\end{align*}
whereas the vector-valued velocity will be corrected to be incompressible by the incompressibility corrector
\begin{align}
    w_{c}(t,x) := \sum_{j=1}^{d}  W^{j, \textrm{corr}} (t,x).
\end{align}
Observe that, by Proposition \ref{prop_mikado_eqn}, 
\begin{equation*}
\divv (w + w^c) = 0. 
\end{equation*}
We then set
\begin{align}\label{eq_def_rho1}
    \rho_{1} &:= \rho_\e + \vt + \vt_{c} + q + q_{c}, \\
    \label{eq_def_u1} u_{1} &:= u_{0} + w + w_{c}.
\end{align}
In particular it holds that
\begin{equation*}
\divv u_1 = 0. 
\end{equation*}
We conclude this section by observing that we will have to fix the following six parameters:
\begin{equation*}
\e, \ell, \lambda, \mu, \sigma, \nu, N. 
\end{equation*}
This will be done in Section \ref{ssec_parameter_choice}.

\subsection{Estimates for the perturbation}

We estimate in this section $\vartheta, q,w$ in the various norms we need.  We want to note that, as the previous stage $R_{\ell}$ in the application of the iteration proposition depends also on $B(t)$ (though in a mollified way), we can only properly control it up to time $[0,\tau_{L}(\o)]$. The fact that we can absorb norms of the old error term $R_{\ell}$ into the constant $C$ only works up to time $\tau_{L}(\o)$, and the constant $C$ along the iteration will therefore depend on $L$, but it will still be a deterministic, finite constant.

\begin{remark}
In this and in the following sections, we will have several times claims of the form
\begin{equation*}
\|f\|_{L^\infty_\omega C_\tau L^r_x} \leq C
\end{equation*}
for various functions $f(t, x; \omega)$, integrability exponents $r$ and constant $C$. We observe explicitly that this amounts to show 
\begin{itemize}
\item not only that the spatial $L^r$ norm of $f$ is uniformly bounded with respect to $\omega$ and $t$ by $C$,
\item but also that the map $[0,\tau(\omega)] \ni t \mapsto f(t, \cdot; \omega) \in L^r(\T^d)$ is continuous (as a function from time into $L^r$), $\P$-a.s. 
\end{itemize}
We will, however, never show explicitly the time continuity, because this follows easily from the fact that all objects we are considering are continuous (in the classical sense) in space and time ($\P$-a.s.). We will thus focus just on estimating $\esssup_\omega \max_t \|f(t, \cdot; \omega)\|_{L^r}$. 
\end{remark}

\begin{lemma}[$\vt$ small in $L^p$ and $L^{1}$]
\label{l_vt_small_lp}
It holds that
    \begin{align}
    \label{eq_vt_Lp}
        \| \vt\|_{L^\infty_\omega C_\tau L^{p}_{x}} & \leq C( \|R_0\|_{L^\infty_\omega C_{\tau} L^{1}_{x}}, \e) \mu^{\frac{d}{s} - \frac{d}{p}}, \\
    \label{eq_vt_L1}
        \| \vt \|_{L^\infty_\omega C_\tau L^{1}_{x}} & \leq C(  \|R_0\|_{L^\infty_\omega C_{\tau} L^{1}_{x}}, \e) \mu^{-\frac{d}{s'}}, \\
    \label{eq_vt_Ls}
        \| \vt \|_{L^\infty_\omega C_\tau L^{s}_{x}} & \leq C(  \|R_0\|_{L^\infty_\omega C_{\tau} L^{1}_{x}}, \e).
    \end{align}
    Moreover
    \begin{equation}
    \label{eq_vt_smooth}
            \| \vt \|_{L^\infty_\omega C^1_{\tau x}} \leq C(\|R_0\|_{L^\infty_\omega C_{\tau} L^{1}_{x}}, \e, \ell, \lambda, \mu, \sigma, \nu) < \infty. 
    \end{equation}
\end{lemma}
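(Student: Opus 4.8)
The plan is to reduce all four bounds to the crude inequality $\|fg\|_{L^r_x}\le\|f\|_{L^\infty_x}\|g\|_{L^r_x}$ combined with three facts already available: the building-block estimates of Proposition~\ref{prop_Mikado_est}, the mollification bounds of Lemma~\ref{l_mollified_error}, and the observation (Lemma~\ref{l_psi_ell}(3)) that for each fixed $t,\omega$ composition with $\Psi_\ell$ is merely a spatial translation and hence preserves every $L^r_x$-norm. As noted in the remark preceding the lemma, time-continuity of $t\mapsto\vt(t,\cdot;\omega)$ into $L^r$ follows automatically from the classical joint continuity of all quantities involved, so it is enough to bound $\esssup_\omega\sup_{t\in[0,\tau(\omega)]}\|\vt(t,\cdot;\omega)\|_{L^r_x}$ (resp. the $C^1_{\tau x}$-norm). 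Throughout I use the standing parameter balance \eqref{eq_lambdamunu}, to be arranged in Section~\ref{ssec_parameter_choice}.

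For \eqref{eq_vt_Lp}, \eqref{eq_vt_L1} and \eqref{eq_vt_Ls}, fix $\omega$ and $t\in[0,\tau(\omega)]$. From the definition \eqref{eq_def_vt} and Lemma~\ref{l_psi_ell}(3),
\begin{align*}
\|\vt(t,\cdot;\omega)\|_{L^r_x}
&\le\sum_{j=1}^d\|R_\e^j(t,\cdot;\omega)\|_{L^\infty_x}\,\bigl\|\Theta^j_{\lambda,\mu,\sigma,\nu}(t,\cdot+B_\ell(t;\omega))\bigr\|_{L^r_x}\\
&=\sum_{j=1}^d\|R_\e^j(t,\cdot;\omega)\|_{L^\infty_x}\,\bigl\|\Theta^j_{\lambda,\mu,\sigma,\nu}(t,\cdot)\bigr\|_{L^r_x}.
\end{align*}
By Lemma~\ref{l_mollified_error} with $k=0$ one has $\|R_\e\|_{L^\infty_\omega C_{\tau x}}\le C_0\e^{-d}\|R_0\|_{L^\infty_\omega C_\tau L^1_x}$, and by \eqref{eq_Theta} one has $\|\Theta^j_{\lambda,\mu,\sigma,\nu}(t,\cdot)\|_{L^r_x}\le M_{0,r}\mu^{d/s-d/r}$. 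Inserting $r=p$ gives \eqref{eq_vt_Lp} directly; inserting $r=1$ and using $d/s-d=-d/s'$ (which follows from $1/s+1/s'=1$) gives \eqref{eq_vt_L1}; inserting $r=s$ gives the exponent $\mu^{d/s-d/s}=1$, i.e. \eqref{eq_vt_Ls}. In each case the constant has the announced form $C(\|R_0\|_{L^\infty_\omega C_\tau L^1_x},\e)$, and taking suprema over $t$ and essential suprema over $\omega$ completes these three bounds. Note that the improved H\"older inequality of Lemma~\ref{lem_Holder} is \emph{not} needed here, since we are content with a constant depending on $\e$.

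For \eqref{eq_vt_smooth}, first observe that $\vt$ is a finite product of $C^\infty$ functions: $R_\e$ is smooth in $(t,x)$ by Lemma~\ref{l_mollified_error}, $\Theta^j_{\lambda,\mu,\sigma,\nu}$ is smooth by construction, and $t\mapsto B_\ell(t;\omega)$ is smooth, so $(t,x)\mapsto\Theta^j(t,x+B_\ell(t;\omega))$ is smooth; hence $\vt$ is $C^1$ (indeed $C^\infty$) on the compact set $[0,\tau(\omega)]\times\T^d$, with finite $C^1_{\tau x}$-norm. To see this norm is uniform in $\omega$, expand $\nabla_{(t,x)}\vt$ by the product and chain rules: since $x\mapsto x+B_\ell(t;\omega)$ has $x$-derivative the identity (Lemma~\ref{l_psi_ell}(4)), every resulting term is a product of one factor among $\{\partial_t R_\e^j,\nabla_x R_\e^j,R_\e^j\}$, one factor among $\{\Theta^j,\nabla_x\Theta^j,\partial_t\Theta^j\}$ evaluated at $\Psi_\ell$, and at most one factor $\dot B_\ell$ (from differentiating the inner time argument). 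The first family is bounded by $C_1\e^{-1-d}\|R_0\|_{L^\infty_\omega C_\tau L^1_x}$ by Lemma~\ref{l_mollified_error}; the second by constants depending only on $\lambda,\mu,\sigma,\nu$ and the fixed profiles (use \eqref{eq_Theta} with $r=\infty$ for $\Theta^j$ and $\nabla_x\Theta^j$, and the explicit formula $\partial_t\Theta^j=-\lambda\sigma(\partial_j\varphi^j_\mu)(\lambda(x-\sigma t e_j))\psi^j(\nu x)$ from the proof of Proposition~\ref{prop_mikado_eqn}); and $\|\dot B_\ell\|_{L^\infty(\Omega;C^0([0,\tau]))}\le C_0 L\ell^{-1/2-\kappa}$ by \eqref{eq_BM_C1}. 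Multiplying these bounds yields \eqref{eq_vt_smooth} with a finite constant $C(\|R_0\|_{L^\infty_\omega C_\tau L^1_x},\e,\ell,\lambda,\mu,\sigma,\nu)$.

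The only genuinely delicate point is the $C^1$ bound in \emph{time}: differentiating $\Theta^j(t,x+B_\ell(t;\omega))$ in $t$ necessarily produces a factor $\dot B_\ell$, which is controlled only \emph{up to the stopping time} $\tau$ via Lemma~\ref{l_brownian}, hence ultimately via the uniform $C^{1/2-\kappa}$-bound $L$ on the trajectories. This is exactly why $\vt$, and hence the whole scheme, is estimated only on $[0,\tau]$, and why the Brownian motion is mollified (so that $\Psi_\ell$, unlike $\Psi$, is differentiable in time) before being inserted into $\vt$. Everything else is a routine application of H\"older's inequality together with the already-established lemmas.
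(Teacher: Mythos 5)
Your proof is correct and follows essentially the same route as the paper: bound $\vt$ by $\|R_\e\|_{L^\infty_x}$ times $\|\Theta^j\|_{L^r_x}$ (using translation invariance of $\Psi_\ell$, Lemma~\ref{l_mollified_error}, and \eqref{eq_Theta}), then specialise $r=p,1,s$, and for \eqref{eq_vt_smooth} factor the $C^1_{\tau x}$-norm of the product and control $\Theta^j(\Psi_\ell)$ via \eqref{eq_Psi_ell}. Your more explicit chain-rule expansion and the identification of the $\dot B_\ell$ factor make the same point the paper compresses into the single estimate $\|\Theta(\Psi_\ell)\|_{L^\infty_\omega C^1_{\tau x}}\le\|\Theta\|_{C^0_{tx}}+\|\Theta\|_{C^1_{tx}}\|\nabla\Psi_\ell\|_{L^\infty_\omega C^0_{\tau x}}$.
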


\begin{remark}
The reason we want estimates like \eqref{eq_vt_smooth}  is that we want to be sure that $\vartheta$ is bounded in $C^1_{\tau x}$ \textit{uniformly (in the sense of $L^\infty$ bound) in $\omega$}. Hence, we do not care how big this $C^1_{\tau x}$ norm is, but only that the $\esssup$ over all $\omega \in \Omega$ of the $C^1_{\tau x}$ norm is still bounded. We need this information, because the definition of solutions to the continuity-defect equation \eqref{eq_cont_defect} we are using (Definition \ref{def:cont-defect}) requires such bound. 
\end{remark}

\begin{proof}
The fact that, $\P$-a.s, the map $[0,\tau(\omega)] \ni t \mapsto \vartheta(t, \cdot; \omega) \in L^p(\T^d)$ is continuous follows immediately from the continuity of the objects involved in the definition of $\vartheta$. Concerning the estimate \eqref{eq_vt_Lp}, we find 
\begin{align*}
    \| \vt \|_{L^\infty_\omega C_\tau L^{p}_{x}} \leq \sum_{j=1}^{d} \| R_{\e} \|_{L^\infty_\omega C_{\tau x}} \| \Theta^{j}(\Psi_\ell) \|_{L^{\infty}_{\o} C_\tau L^{p}_{x}}
    \leq \sum_{j=1}^{d} \| R_{\e} \|_{L^\infty_\omega C_{\tau x}} \| \Theta^{j} \|_{C_t L^{p}_{x}}
    {\leq} C(  \|R_0\|_{L^\infty_\omega C_{\tau} L^{1}_{x}}, \e)  \mu^{\frac{d}{s} - \frac{d}{p}},
\end{align*}
where, in the last inequality, we used Lemma \ref{l_mollified_error}, \eqref{eq_Theta} and the fact that $\Psi_\ell$ acts on the space variable just as a translation. 
A similar argument holds for the continuity in $L^1$ (resp. $L^s$) and the corresponding estimate \eqref{eq_vt_L1} (resp. \eqref{eq_vt_Ls}). Concerning \eqref{eq_vt_smooth} we have, $\P$-a.s.,
\begin{equation*}
\|\vartheta\|_{L^\infty_\omega C^1_{\tau  x}} \leq \sum_{j=1}^{d} \|R_\e\|_{L^\infty_\omega C^1_{\tau x}} \|\Theta^{j} (\Psi_\ell)\|_{L^\infty_\omega C^1_{\tau x}}.
\end{equation*}
Now, by Lemma \ref{l_mollified_error}, 
\begin{equation*}
\|R_\e\|_{L^\infty_\omega C^1_{\tau x}} \leq C(\|R_0\|_{L^\infty_\omega C_{\tau} L^{1}_{x}}, \e) < \infty,
\end{equation*}
whereas, combining the definition of $\Theta$ with \eqref{eq_Psi_ell},
\begin{equation*}
\begin{aligned}
\|\Theta (\Psi_\ell)\|_{L^\infty_\omega C^1_{\tau x}} \leq \|\Theta\|_{C^0_{tx}} + \|\Theta\|_{C^1_{tx}} \|\nabla \Psi\|_{L^{\infty}_{\o} C^0_{\tau x}} \leq  C( \ell, \lambda, \mu, \sigma, \nu) < \infty,
\end{aligned}
\end{equation*}
so that, after all, \eqref{eq_vt_smooth} holds true. 
\end{proof}

\begin{lemma}[$q$ small in $L^{1}$ and $L^s$] \label{l_q_small_lp}
It holds that
	    \begin{equation}\label{eq_q_L1}
         \| q \|_{L^\infty_\omega C_\tau L^{1}_{x}} \leq C( \|R_0\|_{L^\infty_\omega C_{\tau} L^{1}_{x}}, \e)\frac{1}{\sigma},
    \end{equation}
    and
        \begin{equation}\label{eq_q_Ls}
         \| q \|_{L^\infty_\omega C_\tau L^{s}_{x}} \leq C(  \|R_0\|_{L^\infty_\omega C_{\tau} L^{1}_{x}}, \e) \frac{ \mu^{\frac{d}{s'}} }{\sigma}.
    \end{equation}
        Moreover
    \begin{equation}
    \label{eq_q_smooth}
            \| q \|_{L^\infty_\omega C^1_{\tau x}} \leq C( \|R_0\|_{L^\infty_\omega C_{\tau} L^{1}_{x}}, \e, \ell, \lambda, \mu, \sigma, \nu) < \infty. 
    \end{equation}
\end{lemma}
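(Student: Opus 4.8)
The proof runs in close parallel to that of Lemma~\ref{l_vt_small_lp}, with the Mikado density $\Theta^j$ replaced everywhere by the quadratic corrector $Q^j$ and the bound~\eqref{eq_Theta} replaced by~\eqref{eq_Q}. As there, the $\P$-a.s.\ continuity of the map $[0,\tau(\omega)] \ni t \mapsto q(t,\cdot;\omega) \in L^1(\T^d)$ (resp. $L^s(\T^d)$) is immediate from the joint continuity in space and time of $R_\e$, of the $Q^j$ and of $\Psi_\ell$, so I would only estimate the relevant norms.

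For \eqref{eq_q_L1} I would use the decomposition $q = \sum_{j=1}^d R_\e^j\, Q^j(\Psi_\ell)$, the pointwise bound $|R_\e^j(t,x;\omega)| \le \|R_\e\|_{L^\infty_\omega C_{\tau x}}$, and the fact (Lemma~\ref{l_psi_ell}(3)) that $\Psi_\ell$ acts on the spatial variable as a translation and hence preserves $L^r_x$-norms, to get
\begin{equation*}
\|q\|_{L^\infty_\omega C_\tau L^1_x} \le \sum_{j=1}^d \|R_\e\|_{L^\infty_\omega C_{\tau x}}\, \|Q^j(\Psi_\ell)\|_{L^\infty_\omega C_\tau L^1_x} = \sum_{j=1}^d \|R_\e\|_{L^\infty_\omega C_{\tau x}}\, \|Q^j\|_{C_t L^1_x}.
\end{equation*}
By Lemma~\ref{l_mollified_error} the first factor is bounded by $C(\|R_0\|_{L^\infty_\omega C_{\tau} L^{1}_{x}},\e)$, and by \eqref{eq_Q} with $k=0$, $r=1$ the second is bounded by $M_{0,1}/\sigma$; this gives \eqref{eq_q_L1}. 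Running the same chain with $r=s$ instead of $r=1$ gives $\|Q^j\|_{C_t L^s_x} \le M_{0,s}\,\mu^{d - d/s}/\sigma = M_{0,s}\,\mu^{d/s'}/\sigma$, where I used the exponent identity $d - d/s = d(1 - 1/s) = d/s'$; this yields \eqref{eq_q_Ls}.

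For the qualitative bound \eqref{eq_q_smooth} I would argue as in the proof of \eqref{eq_vt_smooth}: by the product rule,
\begin{equation*}
\|q\|_{L^\infty_\omega C^1_{\tau x}} \le \sum_{j=1}^d \|R_\e\|_{L^\infty_\omega C^1_{\tau x}}\, \|Q^j(\Psi_\ell)\|_{L^\infty_\omega C^1_{\tau x}},
\end{equation*}
where Lemma~\ref{l_mollified_error} bounds $\|R_\e\|_{L^\infty_\omega C^1_{\tau x}}$ by $C(\|R_0\|_{L^\infty_\omega C_{\tau} L^{1}_{x}},\e) < \infty$, while the chain rule together with the smoothness of $Q^j$ (with bounds depending on $\lambda,\mu,\sigma,\nu$) and the estimate $\|\nabla_{(t,x)}^k \Psi_\ell\|_{L^\infty(\Omega;C([0,\tau]\times\T^d))} \le C(k,\ell)$ from Lemma~\ref{l_psi_ell}(5) bounds $\|Q^j(\Psi_\ell)\|_{L^\infty_\omega C^1_{\tau x}}$ by $C(\ell,\lambda,\mu,\sigma,\nu) < \infty$. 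Multiplying the two gives \eqref{eq_q_smooth}.

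I expect no genuine obstacle here: the argument is routine bookkeeping modelled on Lemma~\ref{l_vt_small_lp}. The only two points that merit a moment of care are the exponent identity $d - d/s = d/s'$ used in \eqref{eq_q_Ls}, and the observation that precomposition with $\Psi_\ell$ is isometric for the $L^r_x$-norms (for \eqref{eq_q_L1} and \eqref{eq_q_Ls}) but costs a finite, $\ell$-dependent constant for the $C^1_{\tau x}$-norm (for \eqref{eq_q_smooth}), exactly as in the treatment of $\vartheta$.
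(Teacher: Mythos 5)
Your argument is correct and is exactly the one the paper intends: the paper's proof simply states that it is analogous to Lemma \ref{l_vt_small_lp} with \eqref{eq_Q} replacing \eqref{eq_Theta}, and you have spelled out that analogy in full, including the exponent identity $d - d/s = d/s'$ and the use of Lemma \ref{l_psi_ell} for the translation-invariance of $L^r_x$-norms under $\Psi_\ell$ and the $C(k,\ell)$-bound on $\nabla_{(t,x)}^k \Psi_\ell$.
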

\begin{proof}
The proof works analogously to the one of Lemma \ref{l_vt_small_lp}, using \eqref{eq_Q} instead of \eqref{eq_Theta}. 
\end{proof}

\begin{lemma}[$\vt_{c}, q_{c}$ small as numbers]
It holds that
    \begin{align}
        \label{eq_vtc} \| \vt_{c}\|_{L^\infty_\omega C_\tau} &\leq C(  \|R_0\|_{L^\infty_\omega C_{\tau} L^{1}_{x}}, \e)  \mu^{-d/s'}, \\
        \label{eq_qc} \| q_{c}\|_{L^\infty_\omega C_\tau} &\leq \frac{C(  \|R_0\|_{L^\infty_\omega C_{\tau} L^{1}_{x}}, \e) }{\sigma}.
    \end{align}
        Moreover
    \begin{equation}
    \label{eq_vc_smooth}
    \begin{aligned}
     \| \vartheta_c \|_{L^\infty_\omega C^1_{\tau}} & \leq C( \|R_0\|_{L^\infty_\omega C_{\tau} L^{1}_{x}}, \e, \ell, \lambda, \mu, \sigma, \nu) < \infty, \\
	\|q_c\| _{L^\infty_\omega C^1_{\tau}} & \leq C( \|R_0\|_{L^\infty_\omega C_{\tau} L^{1}_{x}}, \e, \ell, \lambda, \mu, \sigma, \nu) < \infty.
    \end{aligned}
    \end{equation}
\end{lemma}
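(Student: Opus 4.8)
The plan is to obtain everything as a direct consequence of Lemmas \ref{l_vt_small_lp} and \ref{l_q_small_lp}, since $\vt_c$ and $q_c$ are nothing but the (negative) spatial averages of $\vt$ and $q$, and on the unit-volume torus one has the trivial bound $\bigl|\fint_{\T^d} f\,dx\bigr| \le \int_{\T^d}|f|\,dx = \|f\|_{L^1(\T^d)}$ for every $f$. So the gains $\mu^{-d/s'}$ and $1/\sigma$ that were already extracted in the $L^1_x$-estimates for $\vt$ and $q$ will transfer verbatim, with no new $\mu$- or $\sigma$-dependence created by the averaging.

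Concretely, for \eqref{eq_vtc} I would argue that, $\P$-a.s. and for every $t\in[0,\tau(\o)]$, by the definition $\vt_c(t;\o)=-\fint_{\T^d}\vt(t,x;\o)\,dx$ one has $|\vt_c(t;\o)|\le\|\vt(t,\cdot;\o)\|_{L^1_x}$; taking the supremum over $t\in[0,\tau(\o)]$ and then the essential supremum over $\o$, and invoking \eqref{eq_vt_L1}, gives $\|\vt_c\|_{L^\infty_\omega C_\tau}\le C(\|R_0\|_{L^\infty_\omega C_\tau L^1_x},\e)\,\mu^{-d/s'}$. The time-continuity of $t\mapsto\vt_c(t;\o)$ is inherited from the classical continuity of $(t,x)\mapsto\vt(t,x;\o)$, and (as in the remark preceding Lemma \ref{l_vt_small_lp}) I would not spell this out. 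The bound \eqref{eq_qc} for $q_c$ is obtained in exactly the same way, replacing \eqref{eq_vt_L1} by \eqref{eq_q_L1}.

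For the $C^1_\tau$-bounds \eqref{eq_vc_smooth} I would use that $\vt$ is classically smooth in $(t,x)$ off a $\P$-null set, which is precisely the content of \eqref{eq_vt_smooth}: this licenses differentiation under the integral sign, so that $\frac{d}{dt}\vt_c(t;\o)=-\fint_{\T^d}\partial_t\vt(t,x;\o)\,dx$, whence $\bigl|\frac{d}{dt}\vt_c(t;\o)\bigr|\le\|\partial_t\vt(t,\cdot;\o)\|_{L^\infty_x}\le\|\vt\|_{L^\infty_\omega C^1_{\tau x}}$. Adding the $C^0_\tau$-bound from the previous step and using \eqref{eq_vt_smooth} yields $\|\vt_c\|_{L^\infty_\omega C^1_\tau}\le C(\|R_0\|_{L^\infty_\omega C_\tau L^1_x},\e,\ell,\lambda,\mu,\sigma,\nu)<\infty$; the same computation, now relying on \eqref{eq_q_smooth}, gives the companion bound for $q_c$.

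I do not expect a genuine obstacle here — the statement is a corollary of the two preceding lemmas, and the only points needing (routine) attention are the justification of differentiation under the integral sign, which is immediate from the classical $(t,x)$-smoothness of $\vt$ and $q$, and the observation that passing to the spatial mean can only decrease norms, so that the dependence on $\mu$ and $\sigma$ is exactly the one already present in the $L^1_x$-estimates.
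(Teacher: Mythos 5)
Your argument is correct and coincides with the paper's own (which simply cites \eqref{eq_vt_L1}, \eqref{eq_q_L1} and the definitions of $\vt_c, q_c$ for the first two bounds, and refers to the $C^1$ part of the proof of Lemma \ref{l_vt_small_lp} for \eqref{eq_vc_smooth}). You have merely spelled out the routine details — the bound of a spatial mean by the $L^1_x$-norm and differentiation under the integral sign — that the paper leaves implicit.
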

\begin{proof}
The proof of \eqref{eq_vtc} and \eqref{eq_qc} follows easily from \eqref{eq_vt_L1}, \eqref{eq_q_L1} and the definitions of $\vt_c, q_c$, whereas the proof of \eqref{eq_vc_smooth} is analogous to the corresponding part of the proof of Lemma \ref{l_vt_small_lp}. 
\end{proof}

Combining the previous estimates, we get the following
\begin{lemma}
\label{l_distance_rho}
It holds that
\begin{equation}
\label{eq_distance_rho}
\|\rho_1 - \rho_0\|_{L^\infty_\omega C_\tau L^p_x} \leq C(\|\rho_0\|_{L^\infty_\omega C^1_{\tau x}}) \e + C(\|\rho_0\|_{L^\infty_\omega C^1_{\tau x}}, \|R_0\|_{L^\infty_\omega C_\tau L^1_x},\e) \left(      \mu^{\frac{d}{s} - \frac{d}{p}} + \frac{\mu^{\frac{d}{s'}}}{\sigma} + \mu^{-\frac{d}{s'}} + \frac{1}{\sigma} \right).
\end{equation}
\end{lemma}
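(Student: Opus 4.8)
The plan is to start from the definition \eqref{eq_def_rho1} of $\rho_1$, which gives the decomposition
\begin{equation*}
\rho_1 - \rho_0 = (\rho_\e - \rho_0) + \vt + \vt_c + q + q_c,
\end{equation*}
and then to estimate each of the five summands separately in $L^\infty_\omega C_\tau L^p_x$, finally combining them by the triangle inequality. Throughout I would use the elementary fact that $\T^d$ has unit measure, so that $\|g\|_{L^p(\T^d)} \leq \|g\|_{L^q(\T^d)}$ whenever $p \leq q$ (in particular $\|g\|_{L^p_x} \leq \|g\|_{L^\infty_x}$), applied uniformly in $t$ and $\omega$.

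First I would handle the mollification error: since $\|\rho_\e - \rho_0\|_{L^\infty_\omega C_\tau L^p_x} \leq \|\rho_\e - \rho_0\|_{L^\infty_\omega C_{\tau x}}$, the second estimate of Lemma \ref{l_mollified_density} bounds this by $\e \|\rho_0\|_{L^\infty_\omega C^1_{\tau x}}$, which is the term $C(\|\rho_0\|_{L^\infty_\omega C^1_{\tau x}})\e$ in \eqref{eq_distance_rho}. Next, the main perturbation $\vt$ is controlled directly in $L^p$ by \eqref{eq_vt_Lp}, contributing $C(\|R_0\|_{L^\infty_\omega C_\tau L^1_x},\e)\,\mu^{\frac d s - \frac d p}$. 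For the quadratic corrector $q$ there is no $L^p$ estimate available, but since $s > p$ one has $\|q\|_{L^\infty_\omega C_\tau L^p_x} \leq \|q\|_{L^\infty_\omega C_\tau L^s_x}$, and \eqref{eq_q_Ls} then gives the contribution $C(\|R_0\|_{L^\infty_\omega C_\tau L^1_x},\e)\,\mu^{\frac{d}{s'}}/\sigma$. Finally, the mean-zero correctors $\vt_c$ and $q_c$ are constant in the space variable, so their $L^p(\T^d)$-norms coincide with their absolute values; hence \eqref{eq_vtc} and \eqref{eq_qc} yield the remaining two contributions $\mu^{-d/s'}$ and $1/\sigma$. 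Summing the five bounds and absorbing all constants (each depending at most on $\|\rho_0\|_{L^\infty_\omega C^1_{\tau x}}$, $\|R_0\|_{L^\infty_\omega C_\tau L^1_x}$ and $\e$) produces exactly \eqref{eq_distance_rho}.

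I do not expect any genuine obstacle here: this lemma is purely a bookkeeping step that assembles the perturbation estimates established just above. The only points that require a little care are (i) passing from the $L^\infty_x$ bounds on $\rho_\e - \rho_0$, $\vt_c$, $q_c$ to $L^p_x$ bounds, which is immediate from $|\T^d| = 1$, and (ii) the fact that $q$ must be controlled in $L^p$ through its $L^s$ bound \eqref{eq_q_Ls} rather than through its sharper-in-$\mu$ $L^1$ bound \eqref{eq_q_L1}, because here $s > p \geq 1$. Time continuity of $t \mapsto (\rho_1 - \rho_0)(t,\cdot;\omega)$ into $L^p(\T^d)$ is inherited, $\P$-a.s., from the space-time continuity of all objects entering the definition of $\rho_1$, exactly as in the preceding lemmas.
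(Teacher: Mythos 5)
Your proposal is correct and follows essentially the same route as the paper: triangle inequality over the five summands, Lemma \ref{l_mollified_density} for $\rho_\e - \rho_0$, estimate \eqref{eq_vt_Lp} for $\vt$, the embedding $L^s_x \hookrightarrow L^p_x$ (valid since $s>p$ and $|\T^d|=1$) combined with \eqref{eq_q_Ls} for $q$, and \eqref{eq_vtc}, \eqref{eq_qc} for the constant correctors. No gaps.
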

\begin{proof}
We combine the previous estimates with  Lemma \ref{l_mollified_density} to get
\begin{equation*}
\begin{aligned}
\|\rho_1 - \rho_0\|_{L^\infty_\omega C_\tau L^p_x} 
& \leq \|\rho_\e - \rho_0\|_{L^\infty_\omega C_\tau L^p_x} 
 + \|\vartheta\|_{L^\infty_\omega C_\tau L^p_x}  + \|q\|_{L^\infty_\omega C_\tau L^p_x}  + \|\vartheta_c\|_{L^\infty_\omega C_\tau L^p_x}  + \|q_c\|_{L^\infty_\omega C_\tau L^p_x} \\
& \leq \|\rho_\e - \rho_0\|_{L^\infty_\omega C_{\tau x}}  + \|\vartheta\|_{L^\infty_\omega C_\tau L^p_x}  + \|q\|_{L^\infty_\omega C_\tau L^s_x}  + \|\vartheta_c\|_{L^\infty_\omega C_\tau L^p_x}  + \|q_c\|_{L^\infty_\omega C_\tau L^p_x} \\
&\leq C(\|\rho_0\|_{L^\infty_\omega C^1_{\tau x}}) \e + C(\|\rho_0\|_{L^\infty_\omega C^1_{\tau x}}, \|R_0\|_{L^\infty_\omega C_\tau L^1_x},\e) \left(      \mu^{\frac{d}{s} - \frac{d}{p}} + \frac{\mu^{\frac{d}{s'}}}{\sigma} + \mu^{-\frac{d}{s'}} + \frac{1}{\sigma} \right),
\end{aligned}
\end{equation*}
which concludes the proof.
\end{proof}

\begin{lemma}[$w$ small in $L^1$ and in $W^{\theta,\tilde{p}}$] 
\label{l_w_sob_theta}
It holds that
    \begin{equation}\label{eq_wperturb_Sobolev_alpha}
        \| w \|_{C([0,{1}]; L^1(\T^d))} \leq C \mu^{-\frac{d}{s}}, \qquad 
        \| w \|_{C([0,{1}]; L^{s'}(\T^d))} \leq C, \qquad 
        \| w \|_{C([0,{1}]; W^{\theta,\tilde{p}}(\T^d))} \leq C \mu^{\frac{d}{s'} -\frac{d}{ \tilde p}} \nu^{\theta}.
    \end{equation}
where the constant $C$ depends only on the constants $M_{k,r}$ (with $k=0,1$ and $r = 1, \tilde p, s'$) appearing in Proposition \ref{prop_Mikado_est} and on the constant appearing in the statement of Proposition \ref{prop_interpolation}. 
\end{lemma}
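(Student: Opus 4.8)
The plan is to reduce everything to the single-Mikado bounds of Proposition \ref{prop_Mikado_est} (assuming, as will be arranged in Section \ref{ssec_parameter_choice}, that the parameters satisfy \eqref{eq_lambdamunu}) together with the interpolation inequality of Proposition \ref{prop_interpolation}, and then sum over $j \in \{1,\dots,d\}$. Since $w = \sum_{j=1}^d W^j_{\lambda,\mu,\sigma,\nu}$, the triangle inequality gives $\|w\|_{C_t X} \leq \sum_{j=1}^d \|W^j\|_{C_t X}$ for any Banach space $X$ of functions on $\T^d$, so it suffices to estimate a single $W^j$. Time continuity is clear because $W^j$ is smooth in $(t,x)$, as noted in the remark preceding the lemma, so I will only track the spatial norms.

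First, the $L^1$ bound: applying \eqref{eq_W} with $k=0$, $r=1$ gives $\|W^j(t,\cdot)\|_{L^1_x} \leq M_{0,1}\, \mu^{\frac{d}{s'} - d}$, and then the H\"older conjugacy $\frac{1}{s}+\frac{1}{s'}=1$ lets me rewrite $\frac{d}{s'}-d = -\frac{d}{s}$; hence $\|w\|_{C_t L^1_x} \leq C\mu^{-d/s}$. The $L^{s'}$ bound is even more immediate: \eqref{eq_W} with $k=0$, $r=s'$ gives $\|W^j(t,\cdot)\|_{L^{s'}_x} \leq M_{0,s'}\,\mu^{\frac{d}{s'}-\frac{d}{s'}} = M_{0,s'}$, so $\|w\|_{C_t L^{s'}_x} \leq C$.

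For the fractional Sobolev bound I would interpolate between $L^{\tilde p}$ and $W^{1,\tilde p}$. From \eqref{eq_W} with $r = \tilde p$ and $k=0,1$ respectively, $\|W^j(t,\cdot)\|_{L^{\tilde p}_x} \leq M_{0,\tilde p}\,\mu^{\frac{d}{s'}-\frac{d}{\tilde p}}$ and $\|\nabla W^j(t,\cdot)\|_{L^{\tilde p}_x} \leq M_{1,\tilde p}\,\mu^{\frac{d}{s'}-\frac{d}{\tilde p}}\nu$, and since $\nu \geq \lambda \geq 1$ this yields $\|W^j(t,\cdot)\|_{W^{1,\tilde p}_x} \leq C\mu^{\frac{d}{s'}-\frac{d}{\tilde p}}\nu$. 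Proposition \ref{prop_interpolation} then gives, for each fixed $t$,
\begin{equation*}
\|W^j(t,\cdot)\|_{W^{\theta,\tilde p}_x} \leq C\|W^j(t,\cdot)\|_{L^{\tilde p}_x}^{1-\theta}\|W^j(t,\cdot)\|_{W^{1,\tilde p}_x}^{\theta} \leq C\big(\mu^{\frac{d}{s'}-\frac{d}{\tilde p}}\big)^{1-\theta}\big(\mu^{\frac{d}{s'}-\frac{d}{\tilde p}}\nu\big)^{\theta} = C\mu^{\frac{d}{s'}-\frac{d}{\tilde p}}\nu^{\theta},
\end{equation*}
and summing over $j$ and taking the supremum over $t \in [0,1]$ produces the last claimed estimate. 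Tracking the constants through these steps shows they depend only on the $M_{k,r}$ with $k\in\{0,1\}$, $r \in \{1,\tilde p, s'\}$, and the constant of Proposition \ref{prop_interpolation}, as asserted.

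This argument is essentially bookkeeping; the only points requiring any care are the algebraic simplification $\frac{d}{s'} - d = -\frac{d}{s}$ (which is precisely where the H\"older conjugacy of the two concentration scales of $\varphi_\mu$ and $\tilde\varphi_\mu$ enters) and checking that the endpoint exponents $1$ and $s'$ used above are admissible in Proposition \ref{prop_Mikado_est} — they are, since those estimates are stated for all $r \in [1,\infty]$. I do not expect any genuine obstacle here.
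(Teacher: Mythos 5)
Your proof is correct and follows essentially the same route as the paper: apply the single-Mikado bound \eqref{eq_W} with $(k,r)=(0,1)$ and $(0,s')$ for the first two estimates, then with $(k,r)=(0,\tilde p)$ and $(1,\tilde p)$ together with the interpolation inequality of Proposition \ref{prop_interpolation} for the $W^{\theta,\tilde p}$ estimate, summing over $j$ at the end. The algebraic simplification $\tfrac{d}{s'}-d=-\tfrac{d}{s}$ that you spell out is implicit in the paper but is exactly the right bookkeeping.
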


\begin{proof}
The first estimate is an immediate consequence of \eqref{eq_W} with $k=0$ and $r =1$, whereas the second follows from \eqref{eq_W} with $k=0$ and $r = s'$. Concerning the $W^{\theta, \tilde p}$ estimate, we observe that, again by \eqref{eq_W}, for $t \in [0,1]$,
\begin{equation*}
\|w(t, \cdot)\|_{L^{\tilde p}} \leq M_{0,\tilde p} \mu^{d/s' - d/\tilde p}, \qquad
\|w(t, \cdot)\|_{W^{1, \tilde p}} \leq M_{1, \tilde p} \mu^{d/s' - d/\tilde p} \nu.
\end{equation*}
This, combined with Proposition \ref{prop_interpolation}, yields the conclusion.
\end{proof}

\begin{lemma}[$w_{c}$ small in $W^{\theta,\tilde{p}}$] 
\label{l_wc_sob_theta}
It holds that
    \begin{equation}\label{eq_wcorr_Sobolev_alpha}
            \| w_{c} \|_{C([0,{1}]; L^{s'}(\T^d))} \leq C \frac{\lambda \mu}{\nu}, \qquad 
     \| w_{c} \|_{C([0,{1}];W^{\theta, \tilde{p}}_{x})} \leq C \frac{\lambda \mu}{\nu} \mu^{\frac{d}{s'} - \frac{d}{\tilde p}} \nu^{\theta},
    \end{equation}
where the constant $C$ depends only on the constants $M_{k,r}$ (with $k=0,1$ and $r = 1, \tilde p, s'$) appearing in Proposition \ref{prop_Mikado_est} and on the constant appearing in the statement of Proposition \ref{prop_interpolation}. 
\end{lemma}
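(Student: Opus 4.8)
The plan is to follow the proof of Lemma~\ref{l_w_sob_theta} almost verbatim, using the corrector estimate \eqref{eq_Wcorr} in place of \eqref{eq_W}, and again invoking the interpolation inequality of Proposition~\ref{prop_interpolation} to upgrade the endpoint $L^{\tilde p}$ and $W^{1,\tilde p}$ bounds to a $W^{\theta,\tilde p}$ bound. Since $w_c = \sum_{j=1}^d W^{j,\textrm{corr}}_{\lambda,\mu,\sigma,\nu}$ is a finite sum of smooth functions, continuity in time with values in $L^{s'}$ and in $W^{\theta,\tilde p}$ is automatic, so it suffices to bound the relevant spatial norms uniformly in $t\in[0,1]$.

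For the first inequality in \eqref{eq_wcorr_Sobolev_alpha}, one applies \eqref{eq_Wcorr} with $k=0$ and $r=s'$: each summand obeys $\|W^{j,\textrm{corr}}(t,\cdot)\|_{L^{s'}}\le M_{0,s'}\,\tfrac{\lambda\mu}{\nu}\,\mu^{d/s'-d/s'}=M_{0,s'}\,\tfrac{\lambda\mu}{\nu}$, and summing over the $d$ values of $j$ gives the claimed $L^{s'}$ estimate with $C$ depending only on $M_{0,s'}$.

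For the $W^{\theta,\tilde p}$ estimate, one applies \eqref{eq_Wcorr} at $r=\tilde p$ with $k=0$ and with $k=1$ to obtain, for every $t\in[0,1]$, the two endpoint bounds $\|W^{j,\textrm{corr}}(t,\cdot)\|_{L^{\tilde p}}\le M_{0,\tilde p}\,\tfrac{\lambda\mu}{\nu}\,\mu^{d/s'-d/\tilde p}$ and $\|W^{j,\textrm{corr}}(t,\cdot)\|_{W^{1,\tilde p}}\le M_{1,\tilde p}\,\tfrac{\lambda\mu}{\nu}\,\mu^{d/s'-d/\tilde p}\,\nu$. Feeding these into Proposition~\ref{prop_interpolation} with interpolation parameter $\theta$ yields $\|W^{j,\textrm{corr}}(t,\cdot)\|_{W^{\theta,\tilde p}}\le C\,\tfrac{\lambda\mu}{\nu}\,\mu^{d/s'-d/\tilde p}\,\nu^{\theta}$, and summing over $j$ completes the proof. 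There is no genuine obstacle here; the only point worth keeping in mind is the extra factor $\tfrac{\lambda\mu}{\nu}$ carried by the corrector (as opposed to the raw Mikado field $W^j$), which under the standing assumption \eqref{eq_lambdamunu} is $\le\tfrac12$ and hence does not interfere with any smallness requirement in the parameter balancing of Section~\ref{ssec_parameter_choice}.
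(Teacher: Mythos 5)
Your proposal is correct and matches the paper's argument: the paper proves Lemma \ref{l_wc_sob_theta} exactly by repeating the proof of Lemma \ref{l_w_sob_theta} with estimate \eqref{eq_Wcorr} in place of \eqref{eq_W}, i.e. the $k=0$, $r=s'$ bound for the $L^{s'}$ estimate and the $k=0,1$, $r=\tilde p$ bounds fed into Proposition \ref{prop_interpolation} for the $W^{\theta,\tilde p}$ estimate. No gaps.
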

\begin{proof}
The proof works analogously to the proof of Lemma \ref{l_w_sob_theta}, using \eqref{eq_Wcorr} instead of \eqref{eq_W}. 
\end{proof}

\section{Definition and estimates of the defect terms}\label{sec_defect_estimates}

In this section we define and estimate the new defect field $R_1$. We have already defined $\rho_1, u_1$. Let us thus see what equation they satisfy. We get
\begin{equation}\label{eq_error_decomp}
 \begin{split}
  \partial_{t} {\rho}_{1} + \divv ({u}_{1}(\Psi) {\rho}_{1}) 
	& =  \divv \left( \underbrace{(u_0(\Psi) \rho_\e - (u_0(\Psi) \rho)_\e}_{=: - R^{\mathrm{com}}} \right) \\
	& \quad + \underbrace{\partial_t \rho_\e + \divv((u_0(\Psi) \rho)_\e)}_{=-\divv R_\e} + \partial_{t}(q + q_{c}) + \divv(w(\Psi) \vartheta) \\
    &\quad + \partial_{t}(\vt + \vt_{c}) + \divv \left( {\rho}_{\e} w(\Psi) + \vt {u}_{0}(\Psi) \right) \\
    &\quad + \underbrace{\divv \left( q({u}_{0}(\Psi) + w(\Psi)) \right)}_{=: -\divv R^{q}} \\
    &\quad + \underbrace{\divv \left( ({\rho}_{\e} + \vt + q) w_{c}(\Psi) \right)}_{=: -\divv R^{\mathrm{corr}}} \\
    &\quad + (\vt_{c} + q_{c}) \underbrace{\divv \left( {u}_{0} + w + w_{c} \right)}_{=0}(\Psi).
\end{split}
\end{equation}
We now analyse each line of the right-hand side separately.  The second and the third line will be decomposed further in the next sections, whereas lines 1, 4 and 5 can be estimated directly. 

\subsection{Analysis of the first line in \texorpdfstring{\eqref{eq_error_decomp}}{(75)}}
\label{ssec_comm_error}

\begin{lemma}
\label{l_comm}
It holds that
\begin{equation}
\label{eq_comm}
\|R^{\mathrm{com}}\|_{L^\infty_\omega C_{\tau} L^{1}_{x}} \leq \|R^{\mathrm{com}}\|_{L^\infty_\omega C_{\tau x}} \leq C(\|u_0\|_{C^1_{tx}}, \|\rho_0\|_{L^\infty_\omega C^1_{\tau x}}) \e^{1/2 - \kappa}.
\end{equation}
\end{lemma}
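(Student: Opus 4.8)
The plan is to recognise $R^{\mathrm{com}}$ as the classical DiPerna--Lions commutator and to convert the mollification scale $\e$ into the rate $\e^{1/2-\kappa}$ using the time-Hölder regularity of the Brownian shift. First, since $|\T^d|=1$ one has $\|R^{\mathrm{com}}\|_{L^\infty_\omega C_\tau L^1_x}\leq\|R^{\mathrm{com}}\|_{L^\infty_\omega C_{\tau x}}$, so it suffices to estimate the sup-norm. Recalling from \eqref{eq_error_decomp} that $R^{\mathrm{com}}=(u_0(\Psi)\rho_0)_\e-u_0(\Psi)\rho_\e$ (with $\rho_0$ the density from the previous stage), and using that $\eta_\e=\eta^{(1)}_\e\otimes\eta^{(2)}_\e$ has unit mass to insert $u_0(\Psi(t,x;\omega))$ inside the convolution, one writes, for $\P$-a.e.\ $\omega$, all $t\in[0,\tau(\omega)]$ and $x\in\T^d$,
\begin{equation*}
R^{\mathrm{com}}(t,x;\omega)=\int\!\!\int\big[u_0(\Psi(t-\sigma,x-y;\omega))-u_0(\Psi(t,x;\omega))\big]\,\rho_0(t-\sigma,x-y;\omega)\,\eta^{(1)}_\e(y)\,\eta^{(2)}_\e(\sigma)\,dy\,d\sigma,
\end{equation*}
where $\rho_0$ and $\Psi$ are read through their trivial extensions to times $\leq 0$ (on which $B\equiv 0$), and $|\sigma|,|y|\lesssim\e$ on $\supp\eta_\e$.

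The key point is the Hölder regularity of the composition $u_0\circ\Psi$. By Lemma \ref{l_holder2} applied with $f=u_0\in C^1$ and $\Phi=\Psi$, together with the bound $[\Psi|_{[0,\tau(\omega)]\times\T^d}]_{1/2-\kappa}\leq L$ from Lemma \ref{l_psi} (see \eqref{eq_holder_psi}), which is preserved under the extension to negative times since $B(0)=0$, the map $u_0\circ\Psi$ is $(1/2-\kappa)$-Hölder jointly in $(t,x)$ on $[0,\tau(\omega)]\times\T^d$, with $[u_0\circ\Psi]_{1/2-\kappa}\leq\|u_0\|_{C^1_{tx}}L$. Hence, for $(\sigma,y)\in\supp\eta_\e$,
\begin{equation*}
\big|u_0(\Psi(t-\sigma,x-y;\omega))-u_0(\Psi(t,x;\omega))\big|\leq\|u_0\|_{C^1_{tx}}L\,\big(|\sigma|^2+|y|^2\big)^{\frac{1/2-\kappa}{2}}\leq C(\|u_0\|_{C^1_{tx}})\,\e^{1/2-\kappa},
\end{equation*}
uniformly in $\omega,t,x$ (the fixed constant $L$ being absorbed). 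Plugging this back in and using $\|\rho_0\|_{L^\infty_\omega C_{\tau x}}\leq\|\rho_0\|_{L^\infty_\omega C^1_{\tau x}}$ together with $\int\eta_\e=1$ gives $|R^{\mathrm{com}}(t,x;\omega)|\leq C(\|u_0\|_{C^1_{tx}},\|\rho_0\|_{L^\infty_\omega C^1_{\tau x}})\,\e^{1/2-\kappa}$; taking the supremum over $x$, over $t\in[0,\tau(\omega)]$ and the essential supremum over $\omega$ yields \eqref{eq_comm}.

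The only subtle point — and the reason the exponent is $1/2-\kappa$ rather than $1$ — is that the shift $\Psi$ is the \emph{un-mollified} Brownian translation, so its modulus of continuity in time is controlled merely by the fixed constant $L$ up to the stopping time; the other directions ($\sigma$ and $y$) would each give a full power of $\e$, so the rough time-shift of $B$ dominates. Everything else is the routine commutator computation together with the bookkeeping of the backward mollifier's support.
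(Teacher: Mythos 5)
Your proof is correct, and it takes a slightly different route from the paper's. The paper does not write $R^{\mathrm{com}}$ as a single commutator integral; instead it adds and subtracts $u_0(\Psi)\rho$ and splits
\begin{equation*}
R^{\rm com} = \big( u_0(\Psi) (\rho_\e - \rho) \big) + \big( u_0(\Psi) \rho - (u_0(\Psi) \rho)_\e \big),
\end{equation*}
estimating the first piece by $\|u_0\|_{C_{tx}}\|\rho\|_{L^\infty_\omega C^1_{\tau x}}\,\e$ via Lemma \ref{l_mollification_2} applied to $\rho$, and the second by $[u_0(\Psi)\rho]_{1/2-\kappa}\,\e^{1/2-\kappa}$ via Lemma \ref{l_mollification_2} applied to the product, with the seminorm of the product controlled through Lemmas \ref{l_holder1} and \ref{l_holder2} and \eqref{eq_holder_psi}. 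Your version instead uses the exact DiPerna--Lions representation of the commutator and estimates the integrand directly; the key ingredient is identical in both arguments, namely the bound $[u_0\circ\Psi]_{1/2-\kappa}\leq \|u_0\|_{C^1_{tx}}L$ up to the stopping time. What your route buys is a marginally cleaner bookkeeping and a slightly weaker hypothesis on the density: you only need $\|\rho_0\|_{L^\infty_\omega C_{\tau x}}$, whereas the paper's first piece genuinely uses $\|\rho_0\|_{L^\infty_\omega C^1_{\tau x}}$ (both are available here, so this makes no difference to the lemma as stated). Your remarks on the backward support of $\eta^{(2)}_\e$, on the constant extension of $\rho_0$ and of $B$ to negative times, and on the fact that the rate $\e^{1/2-\kappa}$ is dictated solely by the time increment of the un-mollified Brownian shift, are all accurate and consistent with the conventions of Section \ref{sec_perturbations}.
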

\begin{proof}
We have
\begin{equation*}
R^{\rm com} = u_0(\Psi) \rho_\e - (u_0(\Psi) \rho)_\e = \big( u_0(\Psi) (\rho_\e - \rho) \big) + \big( u_0(\Psi) \rho - (u_0(\Psi) \rho)_\e \big). 
\end{equation*}
We now estimate the two terms separately. For the first one we have
\begin{equation*}
\| u_0(\Psi) (\rho_\e - \rho)\|_{L^\infty_\omega C_{\tau x}} \leq \|u_0\|_{C_{tx}} \|\rho_\e - \rho\|_{L^\infty_\omega C_{\tau x}} 
\leq \|u_0\|_{C_{tx}} \|\rho\|_{L^\infty_\omega C^1_{\tau x}} \e,
\end{equation*}
where we used Lemma \ref{l_mollification_2}. For the second term, using again the same lemma together with Lemmas \ref{l_holder1} and \ref{l_holder2} and estimate \eqref{eq_holder_psi},  we have
\begin{equation*}
\begin{aligned}
\| u_0(\Psi) \rho - (u_0(\Psi) \rho)_\e\|_{L^\infty_\omega C_{\tau x}} 
& \leq [u_0(\Psi) \rho]_{1/2 - \kappa} \e^{1/2-\kappa} \\
& \leq \|u_0 \circ \Psi\|_{L^\infty_\omega C^{1/2-\kappa}_{\tau x}} \|\rho\|_{L^\infty_\omega C^{1/2-\kappa}_{\tau x}} \e^{1/2-\kappa} \\
& \leq \Big(\|u_0\|_{C^0_{tx}} + \|u_0\|_{C^1_{tx}}\esssup_{\omega \in \Omega} [\Psi|_{[0, \tau(\omega)] \times \T^d}]_{1/2-\kappa} \Big ) \|\rho\|_{L^\infty_\omega C^{1/2-\kappa}_{\tau x}} \e^{1/2-\kappa} \\
& \leq C(\|u_0\|_{C^1_{tx}}, \|\rho\|_{L^\infty_\omega C^{1/2-\kappa}_{\tau x}}) \e^{1/2 - \kappa}.
\end{aligned}
\end{equation*}
Putting together the two estimates we get the conclusion. 
\end{proof}

\subsection{Quadratic defects} \label{ssec_quadratic}
We analyse now the second line in \eqref{eq_error_decomp}, namely what is usually called the quadratic part of the error. It turns out that one additional term with respect to the deterministic scheme appears due to the fact that we are dealing with a random PDE, but want to produce a purely deterministic vector field. 

We thus compute (note that unless the two factors of the product are evaluated in the same point, we cannot make sure that they have disjoint support!) 
\begin{align*}
    \divv (\vt w(\Psi) - R_\e) &= \divv \left( \sum_{j,k=1}^{d} R_{\e}^{j} \Theta^{j}(\Psi_{\ell}) W^{k}(\Psi) - \sum_{j=1}^{d} R_{\e}^{j} e_{j} \right) \\
    &= \divv  \sum_{j=1}^{d} R_{\e}^{j} \left( \Theta^{j}(\Psi_{\ell}) W^{j}(\Psi_{\ell}) - e_{j} \right) \\
    &\quad + \divv  \sum_{j,k=1}^{d} R_{\e}^{j} \Theta^{j}(\Psi_{\ell}) \left[ W^{k}(\Psi) - W^{k}(\Psi_{\ell}) \right]  \\
    &= \sum_{j=1}^{d} (\nabla R_{\e}^{j}) \cdot \left[ \Theta^{j}(\Psi_{\ell}) W^{j}(\Psi_{\ell}) - e_{j} \right] + \sum_{j=1}^{d} R_{\e}^{j} \divv \left( \Theta^{j}(\Psi_{\ell}) W^{j}(\Psi_{\ell}) \right) \\
    &\quad + \divv  \sum_{j,k=1}^{d} R_{\e}^{j} \Theta^{j}(\Psi_{\ell}) \left[ W^{k}(\Psi) - W^{k}(\Psi_{\ell}) \right] .
\end{align*}
On the other hand, 
\begin{align*}
    \partial_{t} (q + q_{c}) 
    &= \sum_{j=1}^{d} (\partial_{t} R_{\e}^{j}) Q^{j}(\Psi_{\ell}) + \sum_{j=1}^{d}  R_{\e}^{j} \partial_{t} \left(Q^{j} \circ \Psi_{\ell} \right)  + \dot{q}_{c}\\
    &= \sum_{j=1}^{d} (\partial_{t} R_{\e}^{j}) Q^{j}(\Psi_{\ell}) + \sum_{j=1}^{d}  R_{\e}^{j} \partial_{t} Q^j(\Psi_\ell) + R_\e^j \nabla Q_j(\Psi_\ell) \dot B_\ell + \dot{q}_{c}\\
   &= \sum_{j=1}^{d} (\partial_{t} R_{\e}^{j}) Q^{j}(\Psi_{\ell}) + \sum_{j=1}^{d}  R_{\e}^{j} \partial_{t} Q^j(\Psi_\ell) + \divv (R_\e^j Q_j(\Psi_\ell) \dot B_\ell) - \nabla R_\e^j \cdot \dot B_\ell Q_j(\Psi_\ell) + \dot{q}_{c}\\ 
   &= \sum_{j=1}^{d} (\partial_{t} R_{\e}^{j}) Q^{j}(\Psi_{\ell}) - \sum_{j=1}^{d}  R_{\e}^{j} \divv (\Theta^j (\Psi_\ell) W^j(\Psi_\ell)) + \divv (R_\e^j Q_j(\Psi_\ell) \dot B_\ell) - \nabla R_\e^j \cdot \dot B_\ell Q_j(\Psi_\ell) + \dot{q}_{c},
\end{align*}
where in the last line we used \eqref{eq_cont_eq_Q}. Addition of both equations finally yields
\begin{align*}
    &\divv (\vt w(\Psi_\ell) - R_{\e}) + \partial_{t} (q+ q_{c}) \\
    &= \sum_{j=1}^{d} \left( (\nabla R_{\e}^{j}) \cdot \left[ \Theta^{j}(\Psi_{\ell}) W^{j}(\Psi_{\ell}) - e_{j} \right] - \fint_{\T^{d}} (\nabla R_{\e}^{j}) \cdot [\Theta^{j}(\Psi_{\ell}) W^{j}(\Psi_{\ell}) - e_{j}] dx \right) \\
    &\quad + \divv \left(  \sum_{j,k=1}^{d} R_{\e}^{j} \Theta^{j}(\Psi_{\ell}) \left[ W^{k}(\Psi) - W^{k}(\Psi_{\ell}) \right] \right) \\
    &\quad + \sum_{j=1}^{d} \left( (\partial_{t} R_{\e}^{j}) Q^{j}(\Psi_{\ell}) - \fint_{\T^{d}} (\partial_{t} R_{\e}^{j}) Q^{j}(\Psi_{\ell}) dx  \right) \\
    & \quad + \divv \left( \sum_{j=1}^d R_\e^j Q_j(\Psi_\ell) \dot B_\ell \right) \\
    & \quad - \sum_{j=1}^d \left( \nabla R_\e \cdot \dot B_\ell Q_j(\Psi_\ell) - \fint_{\T^d}  \nabla R_\e \cdot \dot B_\ell Q_j(\Psi_\ell) \right) \\
    & \quad + \underbrace{\dot q_c +  \fint_{\T^{d}} (\nabla R_{\e}^{j}) \cdot [\Theta^{j}(\Psi_{\ell}) W^{j}(\Psi_{\ell}) - e_{j}] dx  
    + \fint_{\T^{d}} (\partial_{t} R_{\e}^{j}) Q^{j}(\Psi_{\ell}) dx  
    + \fint_{\T^d}  \nabla R_\e \cdot \dot B_\ell Q_j(\Psi_\ell)}_{=0}. 
\end{align*}
The last line equals zero since all the other lines of the equation have mean zero. We treat and name each line separately as follows:
\begin{align*}
    -\divv R^{\mathrm{quadr}} &:= \sum_{j=1}^{d}\left( (\nabla R_{\e}^{j}) \cdot \left[ \Theta^{j}(\Psi_{\ell}) W^{j}(\Psi_{\ell}) - e_{j} \right] - \fint_{\T^{d}} (\nabla R_{\e}^{j}) \cdot [\Theta^{j}(\Psi_{\ell}) W^{j}(\Psi_{\ell}) - e_{j}] dx \right), \\
    -R^{\mathrm{sto,1}} &:= \sum_{j,k=1}^{d} R_{\e}^{j} \Theta^{j}(\Psi_{\ell}) \left[ W^{k}(\Psi) - W^{k}(\Psi_{\ell}) \right], \\
    -R^{\mathrm{time,1}} &:= \sum_{j=1}^{d} \left\{ \divv^{-1} 
     \left( (\partial_{t} R_{\e}^{j}) Q^{j}(\Psi_{\ell}) - \fint_{\T^{d}} (\partial_{t} R_{\e}^{j}) Q^{j}(\Psi_{\ell}) dx  \right) \right\}, \\
    -R^{\mathrm{sto,2}} &:= \sum_{j=1}^d R_\e^j Q_j(\Psi_\ell) \dot B_\ell, \\
    -R^{\mathrm{sto,3}} &:= -\sum_{j=1}^d \left\{ \divv^{-1} \left(  \nabla R_\e \cdot \dot B_\ell Q_j(\Psi_\ell) - \fint_{\T^d}  \nabla R_\e \cdot \dot B_\ell Q_j(\Psi_\ell)    \right) \right\}. 
\end{align*}
The defects $R^{\mathrm{sto,1}}, R^{\mathrm{sto,2}}, R^{\mathrm{sto,3}}$ are particular to the stochastic case. We will focus on estimating these terms while referring to \cite{MS20} for estimates on the terms $R^{\mathrm{quadr}}, R^{\mathrm{time,1}}$.

The quadratic error is further decomposed as in \cite{MS20}:
\begin{align*}
    (\nabla R_{\e}^{j}) \cdot \left[ \Theta^{j}(\Psi_{\ell}) W^{j}(\Psi_{\ell}) - e_{j} \right] &=  (\nabla R_{\e}^{j}) \cdot \left[ \left( \vphi_{\mu}^{j} \tilde{\vphi}_{\mu}^{j} \right)_{\lambda} \circ \tau_{\sigma t e_{j} - B_{\ell}(t)} \left( \psi^{j}_{\nu} \right)^{2} - 1 \right]e_{j} \\
    &=  (\nabla R_{\e}^{j}) \cdot \left[ \left( \vphi_{\mu}^{j} \tilde{\vphi}_{\mu}^{j} \right)_{\lambda} \circ \tau_{\sigma t e_{j} - B_{\ell}(t)} \left( \left( \psi^{j} \right)^{2} - 1 \right)_{\nu} + \left( \vphi_{\mu}^{j} \tilde{\vphi}_{\mu}^{j} - 1 \right)_{\lambda} \circ \tau_{\sigma t e_{j}- B_\ell(t)} \right]e_{j} \\
    &= (\partial_{x^{j}} R_{\e}^{j})  \left[ \left( \vphi_{\mu}^{j} \tilde{\vphi}_{\mu}^{j} \right)_{\lambda} \circ \tau_{\sigma t e_{j}- B_{\ell}(t)} \left( \left( \psi^{j} \right)^{2} - 1 \right)_{\nu} + \left( \vphi_{\mu}^{j} \tilde{\vphi}_{\mu}^{j} - 1 \right)_{\lambda} \circ \tau_{\sigma t e_{j}- B_{\ell}(t)} \right]e_{j},
\end{align*}
which gives rise to the two terms
\begin{align*}
    -R^{\mathrm{quadr,1}} &:= \sum_{j=1}^{d} \mathcal{R}_{1} \left( (\partial_{x^{j}} R_{\e}^{j})  \left( \vphi_{\mu}^{j} \tilde{\vphi}_{\mu}^{j} \right)_{\lambda} \circ \tau_{\sigma t e_{j}- B_{\ell}(t)}, \left( \left( \psi^{j} \right)^{2} - 1 \right)_{\nu} \right), \\
    -R^{\mathrm{quadr,2}} &:=  \sum_{j=1}^{d} \mathcal{R}_{1} \left( (\partial_{x^{j}} R_{\e}^{j}) , \left( \vphi_{\mu}^{j} \tilde{\vphi}_{\mu}^{j} - 1 \right)_{\lambda} \circ \tau_{\sigma t e_{j} - B_{\ell}(t)} \right),
\end{align*}
and to the decomposition
\begin{align*}
    R^{\mathrm{quadr}} = R^{\mathrm{quadr,1}} + R^{\mathrm{quadr,2}}.
\end{align*}
We remark that both antidivergence operators are well-defined since by Eq. \eqref{eq_intpsi} and Eq. \eqref{eq_intphi}
\begin{align*}
    \fint_{\T^{d}} \left( \left( \psi^{j} \right)^{2} - 1 \right)_{\nu} dx = 0, \quad \fint_{\T^{d}} \left( \vphi_{\mu}^{j} \tilde{\vphi}_{\mu}^{j} - 1 \right)_{\lambda} \circ \tau_{\sigma t e_{j} + B_\ell(t)} dx = 0.
\end{align*}

\begin{lemma}[Bound on $R^{\mathrm{quadr}}$]\label{lem_Rquadr}
It holds that
    \begin{align*}
        \left\| R^{\mathrm{quadr}} \right\|_{L^\infty_\omega C_\tau L^{1}_{x}} \leq C (\e, \|R_0\|_{L^\infty_\omega C_\tau L^1_x}   ) \left( \frac{\lambda \mu}{\nu} + \frac{1}{\lambda} \right).
    \end{align*}
\end{lemma}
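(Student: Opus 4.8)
The plan is to estimate $R^{\mathrm{quadr},1}$ and $R^{\mathrm{quadr},2}$ separately, using the improved antidivergence operator $\mathcal{R}_1$ from Lemma \ref{lem:adiv-properties} together with the scaling estimates \eqref{eq_Mikado_scaling} for the blobs and the standard fact that composition with the translation $\tau_{\sigma t e_j - B_\ell(t)}$ leaves all spatial $L^r$ norms invariant. For $R^{\mathrm{quadr},1}$, apply \eqref{eq:antidiv-in-p} with $N=1$ to the pair $\big((\partial_{x^j} R_\e^j)(\vphi_\mu^j \tilde\vphi_\mu^j)_\lambda \circ \tau, \; ((\psi^j)^2-1)_\nu\big)$: the $L^1_x$ norm is bounded by $C\nu^{-1}$ times $\|((\psi^j)^2-1)_\nu\|_{L^\infty}$ (which is $O(1)$ since $\psi$ is a fixed function) times a $C^1_x$-bound on $(\partial_{x^j} R_\e^j)(\vphi_\mu^j\tilde\vphi_\mu^j)_\lambda\circ\tau$. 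The latter $C^1_x$ norm costs a factor $\lambda\mu$ from differentiating the concentrated blob $(\vphi_\mu^j\tilde\vphi_\mu^j)_\lambda$ (each $\nabla$ brings down $\lambda$, and $\|\vphi_\mu^j\tilde\vphi_\mu^j\|_{C^1} = O(\mu)$ by \eqref{eq_Mikado_scaling} with $r=\infty$, since $d/s + d/s' = d$), while the $C^2_x$ norm of $R_\e$ is absorbed into $C(\e, \|R_0\|_{L^\infty_\omega C_\tau L^1_x})$ by Lemma \ref{l_mollified_error}. This yields the $\tfrac{\lambda\mu}{\nu}$ term.

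For $R^{\mathrm{quadr},2}$, apply instead \eqref{eq:antidiv-in-p} (or the first two terms of the sum in \eqref{eq:antidiv-in-infty}) to the pair $\big(\partial_{x^j} R_\e^j,\; (\vphi_\mu^j\tilde\vphi_\mu^j - 1)_\lambda\circ\tau\big)$, now using that $(\vphi_\mu^j\tilde\vphi_\mu^j-1)_\lambda$ oscillates at frequency $\lambda$. The antidivergence gains a factor $\lambda^{-1}$, and $\|(\vphi_\mu^j\tilde\vphi_\mu^j-1)_\lambda\|_{L^1_x} = \|\vphi_\mu^j\tilde\vphi_\mu^j - 1\|_{L^1_x} \leq \|\vphi_\mu^j\tilde\vphi_\mu^j\|_{L^1_x} + 1 = O(1)$ (using $d/s+d/s'=d$ again for the product of the two concentrated blobs), multiplied by the $C^k_x$ norms of $\partial_{x^j} R_\e^j$, which are again controlled by $C(\e,\|R_0\|_{L^\infty_\omega C_\tau L^1_x})$ via Lemma \ref{l_mollified_error}. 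This gives the $\tfrac{1}{\lambda}$ term. Throughout one uses the hypothesis \eqref{eq_lambdamunu}, i.e. $\lambda\mu/\nu \leq 1/2$, to ensure the lower-order terms in the antidivergence sums are dominated. Summing the two contributions and using $\|R^{\mathrm{quadr}}\|_{L^\infty_\omega C_\tau L^1_x} \leq \|R^{\mathrm{quadr},1}\|_{L^\infty_\omega C_\tau L^1_x} + \|R^{\mathrm{quadr},2}\|_{L^\infty_\omega C_\tau L^1_x}$ yields the claimed bound; the time-continuity of $t\mapsto R^{\mathrm{quadr}}(t,\cdot;\omega)$ in $L^1$ and the uniformity in $\omega$ follow from the continuity and uniform-in-$\omega$ bounds of all objects involved (in particular of $B_\ell$ up to the stopping time $\tau$, via Lemma \ref{l_brownian}), as in the remark preceding Lemma \ref{l_vt_small_lp}.

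The main subtlety — rather than a deep obstacle — is bookkeeping the exponents of $\mu$ correctly: one must check that the concentration gains and losses of the two blobs $\vphi_\mu^j, \tilde\vphi_\mu^j$ exactly cancel in the relevant norms, which is precisely the identity $\tfrac{d}{s} + \tfrac{d}{s'} = d$ built into the definitions in \eqref{eq_Mikado_scaling}. Since $R_\e$ and all its spatial derivatives are controlled with $\mu$-independent constants (depending only on $\e$, $\|R_0\|_{L^\infty_\omega C_\tau L^1_x}$, and the mollifier), no power of $\mu$ survives except through the blob products, which contribute only $O(1)$ in the $L^1_x$ norms needed here. The only $\mu$-dependence left is inside the factor $\lambda\mu/\nu$, exactly as in the deterministic estimate of \cite{MS20}, so the argument is genuinely a transcription of that one, with the extra translation $\tau_{-B_\ell(t)}$ harmlessly absorbed since it acts as an isometry on every $L^r(\T^d)$.
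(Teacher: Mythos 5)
The paper does not spell this proof out (it defers to \cite[Lemma 5.3]{MS20}), and your overall strategy --- splitting into $R^{\mathrm{quadr},1}$ and $R^{\mathrm{quadr},2}$ and applying the improved antidivergence estimates of Lemma \ref{lem:adiv-properties} --- is exactly the intended one. Your treatment of $R^{\mathrm{quadr},2}$ is correct: there you rightly keep the concentrated factor $(\vphi_\mu^j\tilde\vphi_\mu^j-1)_\lambda$ in $L^1_x$, where $\|\vphi_\mu^j\tilde\vphi_\mu^j\|_{L^1}=\int\vphi^2=1$ thanks to $\tfrac{d}{s}+\tfrac{d}{s'}=d$, and put $\partial_{x^j}R_\e^j$ in $C^k_x$.

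However, the estimate of $R^{\mathrm{quadr},1}$ as written contains a genuine error. You pass from the $L^1_x$ norms of $f:=(\partial_{x^j}R_\e^j)(\vphi_\mu^j\tilde\vphi_\mu^j)_\lambda\circ\tau$ and $\nabla f$ (which is what \eqref{eq:antidiv-in-p} with $r=1$ actually produces) to a $C^1_x$ bound on $f$, and you claim $\|\vphi_\mu^j\tilde\vphi_\mu^j\|_{C^1}=O(\mu)$. By \eqref{eq_Mikado_scaling} with $r=\infty$ one has $\|\vphi_\mu^j\|_{L^\infty}\|\tilde\vphi_\mu^j\|_{L^\infty}\sim\mu^{d/s}\mu^{d/s'}=\mu^{d}$, so $\|\vphi_\mu^j\tilde\vphi_\mu^j\|_{C^1}=O(\mu^{d+1})$, not $O(\mu)$; the identity $\tfrac{d}{s}+\tfrac{d}{s'}=d$ cancels the concentration only in $L^1$ (where the $-d/r$ with $r=1$ contributes the compensating $\mu^{-d}$), not in $L^\infty$. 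Carried through honestly, your route gives $\lambda\mu^{d+1}/\nu$ for this term, which is not covered by the parameter hierarchy of Section \ref{ssec_parameter_choice} (one only has $\gamma>1+\alpha$, not $\gamma>1+(d+1)\alpha$), so the slip is not cosmetic. The fix is to stay in $L^1_x$: by H\"older, $\|f\|_{L^1}\le\|\partial_{x^j}R_\e^j\|_{L^\infty}\|\vphi_\mu^j\tilde\vphi_\mu^j\|_{L^1}=C(\e,\|R_0\|_{L^\infty_\omega C_\tau L^1_x})$ and $\|\nabla f\|_{L^1}\le C(\e,\|R_0\|_{L^\infty_\omega C_\tau L^1_x})\bigl(1+\lambda\|\nabla(\vphi_\mu^j\tilde\vphi_\mu^j)\|_{L^1}\bigr)\le C(\e,\|R_0\|_{L^\infty_\omega C_\tau L^1_x})(1+\lambda\mu)$, since $\|\nabla\vphi_\mu^j\|_{L^s}\|\tilde\vphi_\mu^j\|_{L^{s'}}+\|\vphi_\mu^j\|_{L^s}\|\nabla\tilde\vphi_\mu^j\|_{L^{s'}}=O(\mu)$ by \eqref{eq_Mikado_scaling}. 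Then \eqref{eq:antidiv-in-p} with $N=1$ and oscillation frequency $\nu$ gives $\|R^{\mathrm{quadr},1}\|_{L^1_x}\le C\nu^{-1}(1+\lambda\mu)\le C\lambda\mu/\nu$, as claimed. Your concluding paragraph actually states the correct heuristic (the blob products contribute $O(1)$ only in $L^1_x$), but the displayed computation contradicts it; make the $L^1$ route explicit.
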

\begin{proof}
    The proof is very similar to that of \cite[Lemma 5.3]{MS20} and is hence omitted. Cf. \cite[Lemma 5.1]{MS18} for the precise dependence of the constant on $R_{0}$.
\end{proof}

\begin{lemma}[Bound on $R^{\mathrm{time,1}}$]
\label{lem_Rtime1}
It holds that
\begin{equation*}
            \left\| R^{\mathrm{time,1}} \right\|_{L^\infty_\omega C_\tau L^{1}_x} \leq \frac{C( \e, \|R_0\|_{L^\infty_\omega C_\tau L^1_x})}{\sigma}. 
\end{equation*}
\end{lemma}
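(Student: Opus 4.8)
The key structural fact is that the corrector $Q^j$ carries an explicit factor $1/\sigma$: by \eqref{eq_Q} with $k=0$ and $r=1$ one has $\|Q^j_{\lambda,\mu,\sigma,\nu}\|_{L^\infty_t L^1_x} \leq M_{0,1}/\sigma$, since the $\mu$-power $\mu^{d-d/r}$ equals $1$ for $r=1$. Everything else in the definition of $R^{\mathrm{time,1}}$ can be bounded by a constant depending only on $\e$ and $\|R_0\|_{L^\infty_\omega C_\tau L^1_x}$, uniformly in the remaining parameters, which gives the claimed $1/\sigma$ decay.

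First I would apply the standard antidivergence bound of Lemma \ref{lem_std_antidiv} with $r=1$ to each summand, obtaining
\begin{equation*}
\big\| R^{\mathrm{time,1}} \big\|_{L^1_x} \leq C_1 \sum_{j=1}^{d} \Big\| (\partial_t R_\e^j)\, Q^j(\Psi_\ell) - \fint_{\T^d} (\partial_t R_\e^j)\, Q^j(\Psi_\ell)\, dx \Big\|_{L^1_x} \leq 2 C_1 \sum_{j=1}^d \big\| (\partial_t R_\e^j)\, Q^j(\Psi_\ell) \big\|_{L^1_x},
\end{equation*}
where the second inequality uses $\|g - \fint g\|_{L^1(\T^d)} \leq 2\|g\|_{L^1(\T^d)}$ (recall $|\T^d| = 1$). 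Next I would use Hölder's inequality in $x$ to split $\|(\partial_t R_\e^j)\, Q^j(\Psi_\ell)\|_{L^1_x} \leq \|\partial_t R_\e^j\|_{L^\infty_x}\, \|Q^j(\Psi_\ell)\|_{L^1_x}$. For the first factor, since $\partial_t$ is one of the first-order derivatives counted in the $C^1_{\tau x}$ norm, Lemma \ref{l_mollified_error} gives $\|\partial_t R_\e\|_{L^\infty_\omega C_{\tau x}} \leq \|R_\e\|_{L^\infty_\omega C^1_{\tau x}} \leq C(\e)\, \|R_0\|_{L^\infty_\omega C_\tau L^1_x}$, uniformly in $\omega$ and $t$. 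For the second factor, point (3) of Lemma \ref{l_psi_ell} says $\Psi_\ell$ acts on the spatial variable as a translation, so $\|Q^j(\Psi_\ell)(t)\|_{L^1_x} = \|Q^j(t)\|_{L^1_x} \leq M_{0,1}/\sigma$ by \eqref{eq_Q} as noted above. Combining these bounds and summing over $j$ yields
\begin{equation*}
\big\| R^{\mathrm{time,1}} \big\|_{L^\infty_\omega C_\tau L^1_x} \leq 2 C_1 d\, M_{0,1}\, \frac{C(\e)\, \|R_0\|_{L^\infty_\omega C_\tau L^1_x}}{\sigma} = \frac{C(\e, \|R_0\|_{L^\infty_\omega C_\tau L^1_x})}{\sigma},
\end{equation*}
which is the assertion. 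As noted in the remark preceding Lemma \ref{l_vt_small_lp}, the time-continuity of $t \mapsto R^{\mathrm{time,1}}(t,\cdot;\omega) \in L^1(\T^d)$ (as well as $(\F_t)$-adaptedness) follows immediately from the fact that all the objects involved — $R_\e$, $\partial_t R_\e$, $Q^j$, $\Psi_\ell$ — are continuous in $(t,x)$ ($\P$-a.s.) and built from adapted ingredients, and I would not spell this out.

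I do not expect a genuine obstacle here: this is a routine estimate once one observes the $1/\sigma$ factor in $Q^j$. The only points requiring mild care are (i) that the antidivergence $\divv^{-1}$ is used with the $r=1$ bound from Lemma \ref{lem_std_antidiv}, and (ii) that $\partial_t R_\e$ must be controlled in $L^\infty$ \emph{uniformly in} $\omega$, which is exactly what the $\e$-dependent estimate in Lemma \ref{l_mollified_error} provides — this is why the mollification parameter $\e$ enters the constant.
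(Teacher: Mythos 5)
Your proof is correct and follows the same route as the paper's: apply Lemma \ref{lem_std_antidiv} with $r=1$, split the product $(\partial_t R_\e^j)\,Q^j(\Psi_\ell)$ by Hölder in $x$, control $\|\partial_t R_\e\|_{L^\infty_x}$ uniformly in $\omega$ via Lemma \ref{l_mollified_error}, use that $\Psi_\ell$ is a translation so $\|Q^j(\Psi_\ell)\|_{L^1_x} = \|Q^j\|_{L^1_x}$, and conclude with \eqref{eq_Q} (noting that the $\mu$-power vanishes for $r=1$). The only stylistic difference is that you make explicit the intermediate step $\|g - \fint g\|_{L^1} \leq 2\|g\|_{L^1}$, which the paper silently absorbs into the constant.
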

\begin{proof}
    We use Lemma \ref{lem_std_antidiv} in the case $r=1$: $\P$-almost surely, for every $t \in [0, \tau]$, we use H\"older's inequality to find 
    \begin{align*}
        \left\| R^{\mathrm{time,1}}(t; \omega) \right\|_{L^{1}_x} &\leq 2\sum_{j=1}^{d}  \| (\partial_{t} R_{\e}^{j}) (t; \omega) Q^{j}(\Psi_{\ell})\|_{L^{1}_{x}} \leq 2 \sum_{j=1}^{d}  \|  R_{\e}^{j}  \|_{L^\infty_\omega C^1_{\tau x} } \| Q^{j} \|_{L^{1}_{x}} \\
        \text{(by Lemma \ref{l_mollified_error})}
        &\leq C(\e, \|R_0\|_{L^\infty_\omega C_{\tau} L^1_x}) \| Q^{j} \|_{L^{1}_{x}}  \overset{\eqref{eq_Q}}{\leq} \frac{C(\e, \|R_0\|_{L^\infty_\omega C_{\tau} L^1_x})}{\sigma}. 
         \end{align*}
        Taking the maximum for $t \in [0,\tau(\omega)]$ and the essential supremum with respect to $\omega$, we get the conclusion.
\end{proof}
\begin{lemma}[Bound on $R^{\mathrm{sto,1}}$]\label{lem_Rsto1} 
It holds that
    \begin{equation}
            \left\| R^{\mathrm{sto,1}} \right\|_{L^\infty_\omega C_\tau L^{1}_{x}} \leq C (\e, \|R_0\|_{L^\infty_\omega C_{\tau} L^1_x}) \nu  \ell^{\frac{1}{2} - \kappa}.
    \end{equation}
\end{lemma}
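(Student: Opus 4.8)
The plan is to estimate the three factors in
\[
-R^{\mathrm{sto,1}}=\sum_{j,k=1}^d R_\e^j\,\Theta^j(\Psi_\ell)\,\bigl[W^k(\Psi)-W^k(\Psi_\ell)\bigr]
\]
separately and pointwise in $(t,\omega)$, placing $R_\e^j$ and the increment $W^k(\Psi)-W^k(\Psi_\ell)$ in $L^\infty_x$ and keeping $\Theta^j(\Psi_\ell)$ in $L^1_x$, so that H\"older's inequality gives $\|R^{\mathrm{sto,1}}(t;\omega)\|_{L^1_x}\le\sum_{j,k}\|R_\e^j\|_{L^\infty_x}\,\|\Theta^j(\Psi_\ell)(t)\|_{L^1_x}\,\|W^k(\Psi)(t)-W^k(\Psi_\ell)(t)\|_{L^\infty_x}$. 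For the first factor I would invoke Lemma~\ref{l_mollified_error}, which bounds $\|R_\e\|_{L^\infty_\omega C_{\tau x}}$ by $C(\e,\|R_0\|_{L^\infty_\omega C_\tau L^1_x})$ (the negative power of $\e$ appearing there is harmless, since the final constant is allowed to depend on $\e$). For the second factor I would use that $\Psi_\ell$ acts on the $x$-variable as a translation, hence preserves the spatial $L^1$-norm (Lemma~\ref{l_psi_ell}), so that $\|\Theta^j(\Psi_\ell)(t)\|_{L^1_x}=\|\Theta^j(t)\|_{L^1_x}\le M_{0,1}\mu^{d/s-d}=M_{0,1}\mu^{-d/s'}$ by \eqref{eq_Theta}.

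The heart of the argument is the third factor, the increment of the fast-oscillating field $W^k$ under the perturbation of the shift. Since $\Psi(t,x)=(t,x+B(t))$ and $\Psi_\ell(t,x)=(t,x+B_\ell(t))$, I would apply the mean value theorem along the segment joining $x+B_\ell(t)$ to $x+B(t)$, obtaining the pointwise bound $|W^k(t,x+B(t))-W^k(t,x+B_\ell(t))|\le\|\nabla W^k(t,\cdot)\|_{L^\infty_x}\,|B(t)-B_\ell(t)|$. Then \eqref{eq_W} (with $k=1$, $r=\infty$) gives $\|\nabla W^k\|_{L^\infty_t L^\infty_x}\le M_{1,\infty}\mu^{d/s'}\nu$, and, crucially, for $t\le\tau(\omega)$ the Brownian regularity bound \eqref{eq_BM_C0} gives $|B(t)-B_\ell(t)|\le L\,\ell^{1/2-\kappa}$ $\P$-a.s. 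Hence $\|W^k(\Psi)(t)-W^k(\Psi_\ell)(t)\|_{L^\infty_x}\le M_{1,\infty}L\,\mu^{d/s'}\nu\,\ell^{1/2-\kappa}$. Multiplying the three factors, the $\mu^{-d/s'}$ coming from $\Theta^j$ cancels the $\mu^{d/s'}$ coming from $\nabla W^k$, leaving $\|R^{\mathrm{sto,1}}(t;\omega)\|_{L^1_x}\le C(\e,\|R_0\|_{L^\infty_\omega C_\tau L^1_x})\,\nu\,\ell^{1/2-\kappa}$, uniformly in $t\in[0,\tau(\omega)]$ and in $\omega$; taking $\max_t$ and $\esssup_\omega$ yields the claim. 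As in the remark preceding the lemma, time-continuity of $t\mapsto R^{\mathrm{sto,1}}(t,\cdot;\omega)\in L^1_x$ is immediate from the joint continuity in $(t,x)$ of all the building blocks and of $R_\e$ and $\Psi_\ell$, so it would not be spelled out.

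The main (and essentially only) obstacle is that an increment of $W^k$ under a spatial shift of size $|B-B_\ell|$ costs a full factor of the fast spatial frequency $\nu$; the regularity gain $\ell^{1/2-\kappa}$ obtained by mollifying the Brownian motion at scale $\ell$ must absorb this, which is precisely why the whole construction is restricted to times $t\le\tau$, where $[B(\cdot;\omega)]_{C^{1/2-\kappa}([0,\tau])}\le L$. The resulting factor $\nu$ (which is not present for the analogous deterministic errors of \cite{MS20}) will then have to be balanced against the remaining error bounds in the parameter choice of Section~\ref{ssec_parameter_choice}.
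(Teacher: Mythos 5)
Your proof is correct and follows essentially the same route as the paper's: H\"older to separate the three factors, Lemma~\ref{l_mollified_error} for $R_\e^j$, the mean value theorem for the increment of $W^k$ together with \eqref{eq_BM_C0} for $|B-B_\ell|$, and the Mikado estimates of Proposition~\ref{prop_Mikado_est}. The only difference is cosmetic: you split the product $\Theta^j(\Psi_\ell)\cdot[W^k(\Psi)-W^k(\Psi_\ell)]$ using exponents $(1,\infty)$ (so $\|\Theta^j\|_{L^1_x}\le M\mu^{-d/s'}$ and $\|\nabla W^k\|_{L^\infty_x}\le M\mu^{d/s'}\nu$, with the $\mu$-powers cancelling), whereas the paper uses the conjugate pair $(s,s')$ (so both $\|\Theta^j\|_{L^s_x}$ and the $\mu$-part of $\|\nabla W^k\|_{L^{s'}_x}$ are already $O(1)$); either choice gives the identical final bound $C\,\nu\,\ell^{1/2-\kappa}$.
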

\begin{proof}
$\P$-a.s, for all $t \in [0, \tau]$, using the H\"older inequality, we find 
    \begin{align*}
        \left\| R^{\mathrm{sto,1}}(t; \omega) \right\|_{L^{1}_{x}} &\leq \sum_{j,k} \| R^{j}_{\e}(t; \omega) \|_{L^{\infty}_x} \| \Theta^{j}(\Psi_{\ell}) \left[ W^{k}(\Psi) - W^{k}(\Psi_{\ell}) \right] \|_{L^{1}_x} \\
        &\leq \sum_{j,k} \| R^{j}_{\e} \|_{L^\infty_\omega C_{\tau x}} \| \Theta^{j} \|_{L^{s}_x} \| \nabla W^{k} \|_{L^{s'}_x} |B(t, \omega) - B_{\ell}(t, \omega)| \\
\text{(by Lemma \ref{l_mollified_error}, Proposition \ref{prop_Mikado_est} and \eqref{eq_BM_C0})}        
        &\leq C(\e, \| R_0 \|_{L^\infty_\omega C_{\tau} L^1_x}) \nu \ell^{\frac{1}{2} - \kappa}.
    \end{align*}
        Taking the maximum for $t \in [0,\tau(\omega)]$ and the essential supremum with respect to $\omega$, we get the conclusion.
\end{proof}

\begin{lemma}[Bound on $R^{\mathrm{sto,2}}$]\label{lem_sto2} 
It holds that
    \begin{equation}
        \| R^{\mathrm{sto,2}} \|_{L^\infty_\omega C_\tau L^{1}_{x}} \leq C (\e, \|R_0\|_{L^\infty_\omega C_{\tau} L^1_x})   \frac{\ell^{-1/2 - \kappa}}{\sigma}. 
    \end{equation}
\end{lemma}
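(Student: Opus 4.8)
The plan is to follow the same template as in the proof of Lemma \ref{lem_Rsto1} (and Lemma \ref{lem_Rtime1}): bound the $L^1_x$-norm pointwise in $(t,\omega)$ by a product of three factors --- an $L^\infty_x$ bound on $R_\e^j$, an $L^1_x$ bound on $Q^j$, and a sup bound on $\dot B_\ell$ --- and then take the maximum over $t \in [0,\tau(\omega)]$ and the essential supremum over $\omega$.

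Concretely, since $\Psi_\ell$ acts on the spatial variable as a pure translation (Lemma \ref{l_psi_ell}), we have $\|Q^j(\Psi_\ell)(t;\omega)\|_{L^1_x} = \|Q^j(t)\|_{L^1_x}$. Thus, $\P$-a.s., for every $t \in [0,\tau(\omega)]$, H\"older's inequality gives
\begin{equation*}
\|R^{\mathrm{sto,2}}(t;\omega)\|_{L^1_x} \leq \sum_{j=1}^d \|R_\e^j(t;\omega)\|_{L^\infty_x}\,\|Q^j(t)\|_{L^1_x}\,|\dot B_\ell(t;\omega)| \leq \sum_{j=1}^d \|R_\e\|_{L^\infty_\omega C_{\tau x}}\,\|Q^j\|_{C_t L^1_x}\,\|\dot B_\ell\|_{L^\infty_\omega C_\tau}.
\end{equation*}
Now I invoke the three estimates at my disposal: Lemma \ref{l_mollified_error} gives $\|R_\e\|_{L^\infty_\omega C_{\tau x}} \leq C(\e,\|R_0\|_{L^\infty_\omega C_\tau L^1_x})$; estimate \eqref{eq_Q} with $k=0$, $r=1$ gives $\|Q^j\|_{C_t L^1_x} \leq M_{0,1}/\sigma$; and estimate \eqref{eq_BM_C1} with $k=0$ gives $\|\dot B_\ell\|_{L^\infty_\omega C_\tau} \leq C_0 L\,\ell^{(1/2-\kappa)-1} = C_0 L\,\ell^{-1/2-\kappa}$. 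Multiplying these and taking the maximum over $t$ and the essential supremum over $\omega$ yields
\begin{equation*}
\|R^{\mathrm{sto,2}}\|_{L^\infty_\omega C_\tau L^1_x} \leq C(\e,\|R_0\|_{L^\infty_\omega C_\tau L^1_x})\,\frac{\ell^{-1/2-\kappa}}{\sigma},
\end{equation*}
as claimed. (As remarked before Lemma \ref{l_vt_small_lp}, the time-continuity of $t \mapsto R^{\mathrm{sto,2}}(t;\omega)$ into $L^1$ follows directly from the classical space-time continuity of all objects involved and need not be checked separately.)

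There is essentially no obstacle here: this is a routine Hölder-plus-collect-the-estimates argument, and the only mild point of care is tracking that $Q^j$ contributes the decaying factor $1/\sigma$ (its defining $1/\sigma$ prefactor) while $\dot B_\ell$ contributes the growing factor $\ell^{-1/2-\kappa}$, so that the net bound $\ell^{-1/2-\kappa}/\sigma$ can later be made small by choosing $\sigma$ large relative to the (eventually fixed) $\ell$ in the parameter-balancing step of Section \ref{ssec_parameter_choice}. As in the other defect lemmas, the constant is deterministic and finite because all norms of $R_0$ are controlled only up to the stopping time $\tau$, where the Hölder norm of the Brownian path is bounded by $L$.
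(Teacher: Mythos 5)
Your proof is correct and follows essentially the same route as the paper's: pointwise H\"older, translation-invariance of $\|Q^j(\Psi_\ell)\|_{L^1_x}$, then invoking Lemma \ref{l_mollified_error}, \eqref{eq_Q} with $k=0$, $r=1$, and \eqref{eq_BM_C1} with $k=0$ before taking $\max_t$ and $\esssup_\omega$. In fact you are slightly more careful than the paper's displayed line, which omits the sum over $j$.
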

\begin{proof}
$\P$-a.s., for all $t \in [0,\tau]$, we have
\begin{equation*}
\begin{aligned}
\|R^{\mathrm{sto,2}}(t; \omega)\|_{L^1_x}
& \leq \|R_\e(t; \omega)\|_{L^\infty_x} \|Q_j (\Psi_\ell (t; \omega)) \|_{L^1_x} |\dot B_\ell(t; \omega)| \\
\text{(by Lemma \ref{l_mollified_error}, \eqref{eq_Q} and \eqref{eq_BM_C1})}
& \leq C(\e, \|R_0\|_{L^\infty_\omega C_{\tau} L^1_x}) \frac{ \ell^{-1/2-\kappa}}{\sigma}.
\end{aligned}
\end{equation*}
        Taking the maximum for $t \in [0,\tau(\omega)]$ and the essential supremum with respect to $\omega$, we get the conclusion.
\end{proof}

\begin{lemma}[Bound on $R^{\mathrm{sto,3}}$] 
\label{lem_sto3}
It holds that
    \begin{equation}
        \| R^{\mathrm{sto,3}} \|_{L^\infty_\omega C_\tau L^{1}_{x}} \leq C(\e,  \|R_0\|_{L^\infty_\omega C_{\tau} L^1_x}) \frac{ \ell^{-1/2-\kappa}}{\sigma}. 
    \end{equation}
\end{lemma}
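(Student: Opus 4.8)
The plan is to proceed exactly as in the proofs of Lemma~\ref{lem_Rtime1} and Lemma~\ref{lem_sto2}, since $R^{\mathrm{sto,3}}$ has the same shape: a standard antidivergence applied to the mean-zero part of a product of a first derivative of the mollified defect $R_\e$, the mollified Brownian velocity $\dot B_\ell$, and a factor $Q^j$ of the quadratic corrector composed with $\Psi_\ell$. No interpolation or improved H\"older inequality is needed, so this is one of the easiest of the defect estimates.

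First I would recall that, by construction, each of the $d$ summands defining $R^{\mathrm{sto,3}}$ is $\divv^{-1}$ applied to a function with zero spatial mean, so Lemma~\ref{lem_std_antidiv} applies with $r=1$. Working $\P$-a.s.\ and for a fixed $t\in[0,\tau(\omega)]$, I would combine the bound $\|\divv^{-1} f\|_{L^1_x}\le C_1\|f\|_{L^1_x}$, the elementary inequality $\|f-\fint_{\T^d} f\|_{L^1_x}\le 2\|f\|_{L^1_x}$, and H\"older's inequality to obtain
\begin{equation*}
\|R^{\mathrm{sto,3}}(t;\omega)\|_{L^1_x}\le 2C_1\sum_{j=1}^d \|\nabla R_\e^j(t;\omega)\|_{L^\infty_x}\,|\dot B_\ell(t;\omega)|\,\|Q^j(\Psi_\ell(t;\omega))\|_{L^1_x}.
\end{equation*}

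The three factors are then estimated one at a time. By Lemma~\ref{l_mollified_error} (with $k=1$) one has $\|\nabla R_\e\|_{L^\infty_\omega C_{\tau x}}\le\|R_\e\|_{L^\infty_\omega C^1_{\tau x}}\le C(\e,\|R_0\|_{L^\infty_\omega C_\tau L^1_x})$. By \eqref{eq_BM_C1} with $k=0$ one has $\|\dot B_\ell\|_{L^\infty_\omega C_\tau}\le C_0 L\,\ell^{-1/2-\kappa}$. Finally, since $\Psi_\ell$ acts on the space variable as a pure translation (Lemma~\ref{l_psi_ell}), one has $\|Q^j(\Psi_\ell(t;\omega))\|_{L^1_x}=\|Q^j(t)\|_{L^1_x}\le M_{0,1}/\sigma$ by \eqref{eq_Q} with $k=0$, $r=1$. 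Multiplying these bounds, absorbing $L$, $C_0$, $C_1$ and $M_{0,1}$ into the constant, and then taking the maximum over $t\in[0,\tau(\omega)]$ and the essential supremum over $\omega$, one arrives at
\begin{equation*}
\|R^{\mathrm{sto,3}}\|_{L^\infty_\omega C_\tau L^1_x}\le C(\e,\|R_0\|_{L^\infty_\omega C_\tau L^1_x})\,\frac{\ell^{-1/2-\kappa}}{\sigma},
\end{equation*}
which is the claimed estimate.

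The only point that needs any care — and it is common to all the ``$\mathrm{sto}$'' lemmas — is that $\dot B_\ell$ must be controlled uniformly in $\omega$; this is legitimate precisely because we work on the random interval $[0,\tau(\omega)]$, on which the $C^{1/2-\kappa}$-seminorm of almost every Brownian path is bounded by the fixed constant $L$ chosen in Section~\ref{ssec_stopping_times}. I do not anticipate any genuine obstacle beyond this bookkeeping; the estimate is strictly easier than, e.g., the one for $R^{\mathrm{quadr}}$.
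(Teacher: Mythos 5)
Your proof is correct and follows essentially the same route as the paper's: Lemma \ref{lem_std_antidiv} with $r=1$, H\"older's inequality to separate $\|\nabla R_\e\|_{L^\infty_x}$, $|\dot B_\ell|$ and $\|Q^j(\Psi_\ell)\|_{L^1_x}$, then Lemma \ref{l_mollified_error}, \eqref{eq_BM_C1} and \eqref{eq_Q} for the three factors. Your bookkeeping (the factor $2$ from the mean-zero correction, the translation invariance of the $L^1_x$ norm under $\Psi_\ell$, and the role of the stopping time in controlling $\dot B_\ell$ uniformly in $\omega$) matches the paper exactly.
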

\begin{proof}
$\P$-a.s., for all $t \in [0,\tau]$ we have, by Lemma \ref{lem_std_antidiv} with $r=1$,
\begin{equation*}
\begin{aligned}
\|     R^{\mathrm{sto,3}} (t; \omega) \|_{L^1_x}
& \leq 2 \sum_{j=1}^d \|    \nabla R_\e(t; \omega)\|_{L^\infty_x} |B_\ell(t; \omega)| \| Q_j(\Psi_\ell(t; \omega))\|_{L^1_x} \\
\text{(by Lemma \ref{l_mollified_error}, \eqref{eq_Q} and \eqref{eq_BM_C1})}
& \leq C(\e,  \|R_0\|_{L^\infty_\omega C_{\tau} L^1_x}) \frac{ \ell^{-1/2-\kappa}}{\sigma}.
\end{aligned}
\end{equation*}
Taking the maximum for $t \in [0,\tau(\omega)]$ and the essential supremum with respect to $\omega$, we get the conclusion.
\end{proof}

\newpage
\subsection{The linear error terms}
\label{ssec_linear_error}
We now consider the third line of the error decomposition \eqref{eq_error_decomp}. Recall that, by Proposition \ref{prop_mikado_eqn}, $\partial_t \Theta^j = -\divv A_N^j$  (where $N$ is a large natural number to be fixed in Section \ref{ssec_parameter_choice}), and thus also $(\partial_t \Theta^j) (\Psi_\ell) = -(\divv A_N) (\Psi_\ell) = - \divv (A_N(\Psi_\ell))$, since $\Psi_\ell$ acts on space variable just as a translation. We thus compute, using the definition of $\vartheta$:
\begin{equation}
\label{eq_linear_error}
\begin{aligned}
    &\partial_{t}(\vt + \vt_{c}) + \divv \left( {\rho}_{\e} w(\Psi) + \vt {u}_{0}(\Psi) \right) \\
    &= \sum_{j=1}^{d}  \partial_{t} R_{\e}^{j}  \Theta^{j}(\Psi_\ell) + R_{\e}^{j} (\partial_{t} \Theta^{j})(\Psi_\ell) + R_\e^j \nabla \Theta^j(\Psi_\ell) \cdot \dot B_\ell + \dot{\vt}_{c} + \divv \left( {\rho}_{\e} w(\Psi) + \vt {u}_{0}(\Psi)  \right) \\
    &= \sum_{j=1}^{d}  \partial_{t} R_{\e}^{j}  \Theta^{j}(\Psi_\ell) - R_{\e}^{j} \divv A_N(\Psi_\ell) + R_\e^j \nabla \Theta^j(\Psi_\ell) \cdot \dot B_\ell + \dot{\vt}_{c} + \divv \left( {\rho}_{\e} w(\Psi) + \vt {u}_{0}(\Psi)  \right) \\
    &= \sum_{j=1}^{d}  \partial_{t} R_{\e}^{j}  \Theta^{j}(\Psi_\ell) - \divv (R_{\e}^{j} A_N(\Psi_\ell)) + \nabla R_{\e}^{j} \cdot A_N(\Psi_\ell) + \divv( R_\e^j \Theta^j(\Psi_\ell) \cdot \dot B_\ell ) - \nabla R_\e^j \cdot \dot B_\ell \Theta^j(\Psi_\ell) + \dot{\vt}_{c} \\
    & \quad + \divv \left( {\rho}_{\e} w(\Psi) + \vt {u}_{0}(\Psi)  \right) \\
    &= \sum_{j} \left[ \left( \partial_{t} R_{\e}^{j} \right) \Theta^{j}(\Psi_\ell) - \fint_{\T^{d}} \left( \partial_{t} R_{\e}^{j} \right) \Theta^{j} (\Psi_\ell) dx \right]
    + \divv \left( {\rho}_{\e} w(\Psi) + \vt {u}_{0}(\Psi) \right) \\
	& \quad - \sum_j \divv( R_\e^j A_N(\Psi_\ell)) \\
    &\quad + \sum_{j} \left[ \nabla R_{\e}^{j} \cdot A_N(\Psi_\ell)  - \fint_{\T^{d}} \nabla R_{\e}^{j} \cdot A_N(\Psi_\ell) dx \right] \\
    & \quad + \sum_j \divv \left( R_\e^j \Theta^j(\Psi_\ell) \cdot \dot B_\ell \right) \\
    & \quad - \sum_j \left[ \nabla R_\e^j \cdot \dot B_\ell \Theta^j(\Psi_\ell) - \fint_{T^d}   \nabla R_\e^j \cdot \dot B_\ell \Theta^j(\Psi_\ell) dx  \right]   \\
    &\quad +\underbrace{\sum_j \left( \fint_{\T^{d}} \left( \partial_{t} R_{\e}^{j} \right) \Theta^{j} (\Psi_\ell) dx + \fint_{\T^{d}} \nabla R_{\e}^{j} \cdot A_N (\Psi_\ell)  dx 
    - \fint_{T^d}   \nabla R_\e^j \cdot \dot B_\ell \Theta^j(\Psi_\ell) dx \right)
    + \dot{\vt}_{c}}_{= 0, ~ \text{as l.h.s. and all other terms have mean zero}}. \\
\end{aligned}
\end{equation}
We can now treat and name each term on the right-hand side of \eqref{eq_linear_error} separately as follows:
\begin{equation*}
\begin{aligned}
-R^{\mathrm{lin}} 
& := \sum_{j=1}^{d} 
\divv^{-1} \left( \left( \partial_{t} R_{\e}^{j} \right) \Theta^{j}(\Psi_\ell) - \fint_{\T^{d}} \left( \partial_{t} R_{\e}^{j} \right) \Theta^{j} (\Psi_\ell) dx \right)
+{\rho}_{\e} w(\Psi) + \vt {u}_{0}(\Psi), \\
-R^{\mathrm{time},2} & := -\sum_{j=1}^{d}  R_\e^j A_N(\Psi_\ell), \\
-R^{\mathrm{time},3} & := \sum_{j=1}^{d}  \divv^{-1} \left(    \nabla R_{\e}^{j} \cdot A_N(\Psi_\ell)  - \fint_{\T^{d}} \nabla R_{\e}^{j} \cdot A_N(\Psi_\ell) dx \right), \\ 
-R^{\mathrm{sto},4} 
& := \sum_{j=1}^{d}  R_\e^j \Theta^j(\Psi_\ell) \cdot \dot B_\ell, \\
-R^{\mathrm{sto},5}
& := \sum_{j=1}^{d}  \divv^{-1} \left(   \nabla R_\e^j \cdot \dot B_\ell \Theta^j(\Psi_\ell) - \fint_{T^d}   \nabla R_\e^j \cdot \dot B_\ell \Theta^j(\Psi_\ell) dx  \right).
\end{aligned}
\end{equation*}
The defects $R^{\mathrm{sto},4}, R^{\mathrm{sto},5}$ are again particular to the stochastic case.

\begin{lemma}[Bound on $R^{\mathrm{lin}}$]\label{lem_Rlin} 
It holds that
    \begin{equation}
         \| R^{\mathrm{lin}} \|_{L^\infty_\omega C_\tau L^{1}_{x}} \leq 
C(\e, \|R_0\|_{L^\infty_\omega C_{\tau} L^1_x},  \|\rho_0\|_{L^\infty_\omega C_{\tau x}}, \|u_0\|_{C_{tx}}) \left( \mu^{-d/s} + \mu^{-d/s'} \right).
    \end{equation}
\end{lemma}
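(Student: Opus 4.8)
The plan is to split $R^{\mathrm{lin}}$ into its three constituent pieces,
\[
T_1 := \sum_{j=1}^d \divv^{-1}\!\Big( (\partial_t R_\e^j)\,\Theta^j(\Psi_\ell) - \fint_{\T^d}(\partial_t R_\e^j)\,\Theta^j(\Psi_\ell)\,dx \Big), \qquad T_2 := \rho_\e\, w(\Psi), \qquad T_3 := \vt\, u_0(\Psi),
\]
estimate each of them in $L^\infty_\omega C_\tau L^1_x$, and then add the bounds. Throughout, I would use that $\Psi$ and $\Psi_\ell$ act on the spatial variable as pure translations (Lemma \ref{l_psi}, Lemma \ref{l_psi_ell}), so that for any $f$ and any $r$ one has $\|f(\Psi)(t,\cdot;\omega)\|_{L^r_x} = \|f(t,\cdot)\|_{L^r_x}$; the remaining time-continuity and $\omega$-uniformity of the bounds follow, as in the previous lemmas, from the space-time continuity of all objects involved and will not be written out.

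For $T_1$ I would first control the argument of $\divv^{-1}$ in $L^1_x$: by H\"older and the fact that $\Psi_\ell$ only translates,
\[
\|(\partial_t R_\e^j)\,\Theta^j(\Psi_\ell)(t,\cdot;\omega)\|_{L^1_x} \leq \|\partial_t R_\e(t,\cdot;\omega)\|_{L^\infty_x}\,\|\Theta^j(t,\cdot)\|_{L^1_x} \leq C(\e,\|R_0\|_{L^\infty_\omega C_\tau L^1_x})\,\mu^{-d/s'},
\]
where the first factor is bounded via Lemma \ref{l_mollified_error} (the $C^1$-estimate for $R_\e$) and the second via \eqref{eq_Theta} with $k=0$, $r=1$, using $d/s - d = -d/s'$. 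The subtracted spatial mean obeys the same bound (since $|\T^d|=1$), so the mean-zero integrand is bounded in $L^1_x$ by $2C(\e,\|R_0\|)\mu^{-d/s'}$; applying the $L^1\!\to\!L^1$ estimate for the standard antidivergence (Lemma \ref{lem_std_antidiv}) yields $\|T_1\|_{L^\infty_\omega C_\tau L^1_x} \leq C(\e,\|R_0\|_{L^\infty_\omega C_\tau L^1_x})\,\mu^{-d/s'}$. Note there is no need for the improved antidivergence or for any oscillation gain here: the smallness comes entirely from the concentration of $\Theta^j$ in $L^1_x$.

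For $T_2$, H\"older gives $\|\rho_\e\, w(\Psi)\|_{L^1_x} \leq \|\rho_\e\|_{L^\infty_x}\|w\|_{L^1_x}$, and then $\|\rho_\e\|_{L^\infty_\omega C_{\tau x}} \leq \|\rho_0\|_{L^\infty_\omega C_{\tau x}}$ (Lemma \ref{l_mollified_density} with $k=0$) together with the $L^1$-estimate $\|w\|_{C_t L^1_x} \leq C\mu^{-d/s}$ from Lemma \ref{l_w_sob_theta} gives $\|T_2\|_{L^\infty_\omega C_\tau L^1_x} \leq C(\|\rho_0\|_{L^\infty_\omega C_{\tau x}})\,\mu^{-d/s}$. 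For $T_3$, H\"older gives $\|\vt\, u_0(\Psi)\|_{L^1_x} \leq \|u_0\|_{L^\infty_x}\|\vt\|_{L^1_x}$, and \eqref{eq_vt_L1} in Lemma \ref{l_vt_small_lp} bounds $\|\vt\|_{L^\infty_\omega C_\tau L^1_x} \leq C(\|R_0\|_{L^\infty_\omega C_\tau L^1_x},\e)\,\mu^{-d/s'}$, so $\|T_3\|_{L^\infty_\omega C_\tau L^1_x} \leq C(\e,\|R_0\|_{L^\infty_\omega C_\tau L^1_x},\|u_0\|_{C_{tx}})\,\mu^{-d/s'}$. Summing $T_1+T_2+T_3$ produces exactly the claimed $\mu^{-d/s}+\mu^{-d/s'}$. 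The argument is routine; the only point requiring care is bookkeeping of the constant dependencies — in particular checking that no $\nu$- or $\sigma$-dependence enters, which is the case because in this term the time derivative falls only on $R_\e$ and $\Theta$ is never differentiated.
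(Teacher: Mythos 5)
Your proof is correct and follows essentially the same route as the paper's: the same three-way split of $R^{\mathrm{lin}}$, the same use of the standard antidivergence bound, Lemma \ref{l_mollified_error} and \eqref{eq_Theta} for the time-derivative term, and Lemmas \ref{l_mollified_density}, \ref{l_vt_small_lp}, \ref{l_w_sob_theta} for the two products. The only cosmetic difference is that the paper handles $\rho_\e w(\Psi)$ and $\vt\, u_0(\Psi)$ in a single display rather than as separate pieces.
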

\begin{proof} 
$\P$-a.s. for every $t \in [0,\tau]$ we have, by Lemma \ref{lem_std_antidiv} with $r=1$,
\begin{equation*}
\begin{aligned}
\left\|\divv^{-1} \left( \left( \partial_{t} R_{\e}^{j} \right) \Theta^{j}(\Psi_\ell) - \fint_{\T^{d}} \left( \partial_{t} R_{\e}^{j} \right) \Theta^{j} (\Psi_\ell) dx \right) \right\|_{L^1_x}
& \leq 2 \sum_j \|\partial_t R_\e^j\|_{L^\infty_x} \|\Theta^j(\Psi_\ell)\|_{L^1_x} \\
\text{(by Lemma \ref{l_mollified_error} and \eqref{eq_Theta})}
& \leq C(\e, \|R_0\|_{L^\infty_\omega C_{\tau} L^1_x}) \mu^{-d/s'}.
\end{aligned}
\end{equation*}
We also have, $\P$-a.s. for every $t \in [0,\tau]$,
\begin{equation*}
\begin{aligned}
\|\rho_\e w(\Psi) + \vartheta u_0(\Psi)\|_{L^1_x}
& \leq \|\rho_\e\|_{L^\infty_x} \|w(\Psi)\|_{L^1_x} + \|\vartheta\|_{L^1_x} \|u_0(\Psi)\|_{L^\infty_x} \\
\text{(by Lemmas \ref{l_mollified_density}, \ref{l_vt_small_lp} and \ref{l_w_sob_theta})}
& \leq C(\|\rho_0\|_{L^\infty_\omega C_{\tau x}}, \|u_0\|_{C_{tx}}) \left( \mu^{-d/s'} + \mu^{-d/s} \right). 
\end{aligned}
\end{equation*}
Taking the maximum for $t \in [0,\tau(\omega)]$ and the essential supremum with respect to $\omega$, we get the desired estimate.
\end{proof}

\begin{lemma}[Bound on $R^{\mathrm{time,2}}$]\label{lem_Rtime2} 
It holds that
    \begin{equation}
        \| R^{\mathrm{time,2}} \|_{L^\infty_\omega C_\tau L^{1}_{x}} \leq C(\e, \|R_0\|_{L^\infty_\omega C_\tau L^1_x})   \frac{\lambda \mu \sigma}{\nu} \mu^{-\frac{d}{s'}}  \left( 1 + \frac{(\lambda \mu)^N}{\nu^{N-1}} \right).
    \end{equation}
\end{lemma}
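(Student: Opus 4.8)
The plan is to estimate $R^{\mathrm{time,2}}$ pointwise in $\omega$ and $t$, in the same spirit as the proofs of Lemmas \ref{lem_Rtime1} through \ref{lem_sto3}. Since $R^{\mathrm{time,2}} = -\sum_{j=1}^{d} R_\e^j A_N^j(\Psi_\ell)$ is a sum of products of a scalar $R_\e^j$ with a vector field $A_N^j(\Psi_\ell)$, I would first apply H\"older's inequality in $x$ to get, $\P$-a.s. and for every $t \in [0,\tau(\omega)]$,
\[
\| R^{\mathrm{time,2}}(t;\omega) \|_{L^1_x} \leq \sum_{j=1}^{d} \| R_\e^j(t;\omega) \|_{L^\infty_x} \, \| A_N^j(\Psi_\ell(t;\omega)) \|_{L^1_x}.
\]

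Next I would bound each of the two factors. For $\| R_\e^j(t;\omega) \|_{L^\infty_x}$, Lemma \ref{l_mollified_error} gives $\| R_\e \|_{L^\infty_\omega C_{\tau x}} \leq C(\e, \|R_0\|_{L^\infty_\omega C_\tau L^1_x})$, with a constant blowing up polynomially in $\e^{-1}$ which is harmless since $\e$ is fixed before $\lambda,\mu,\sigma,\nu$. For $\| A_N^j(\Psi_\ell(t;\omega)) \|_{L^1_x}$, I would use that, by Lemma \ref{l_psi_ell}, $\Psi_\ell(t;\omega)$ acts on the space variable purely as a translation, hence $\| A_N^j(\Psi_\ell(t;\omega)) \|_{L^1_x} = \| A_N^j(t,\cdot) \|_{L^1_x}$, and then apply estimate \eqref{eq_A} of Proposition \ref{prop_Mikado_est} with $k=0$, $r=1$ (valid under the balance \eqref{eq_lambdamunu}, which will be guaranteed by the parameter choice of Section \ref{ssec_parameter_choice}), which yields
\[
\| A_N^j(t,\cdot) \|_{L^1_x} \leq M_{0,1,N} \, \frac{\lambda\mu\sigma}{\nu} \, \mu^{\frac{d}{s}-d} \Big( 1 + \frac{(\lambda\mu)^N}{\nu^{N-1}} \Big) = M_{0,1,N} \, \frac{\lambda\mu\sigma}{\nu} \, \mu^{-\frac{d}{s'}} \Big( 1 + \frac{(\lambda\mu)^N}{\nu^{N-1}} \Big),
\]
where I used the H\"older conjugacy $\tfrac{1}{s}+\tfrac{1}{s'}=1$, so that $\tfrac{d}{s}-d = -\tfrac{d}{s'}$; the $N$-dependence of $M_{0,1,N}$ is irrelevant since $N$ is an absolute constant fixed in Section \ref{ssec_parameter_choice}.

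Combining the two bounds, summing over $j$, taking the maximum over $t \in [0,\tau(\omega)]$ and then the essential supremum over $\omega$ gives the claimed estimate; the time continuity of $t \mapsto R^{\mathrm{time,2}}(t,\cdot;\omega) \in L^1(\T^d)$ needed to make sense of the $C_\tau L^1_x$ norm is immediate, since all the objects entering the definition of $R^{\mathrm{time,2}}$ are continuous in space and time $\P$-a.s. There is no genuine obstacle here; the only mildly delicate point is the same bookkeeping issue present throughout this section, namely keeping the large $\e$-dependent constant separated from the small parameter-dependent factor $\tfrac{\lambda\mu\sigma}{\nu}\mu^{-d/s'}(1+(\lambda\mu)^N/\nu^{N-1})$, so that the latter can be driven to zero in Section \ref{ssec_parameter_choice} with $\e$ already frozen.
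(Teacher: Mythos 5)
Your proof is correct and takes exactly the approach the paper intends: the paper's proof of this lemma is just a one-line reference to the definition of $R^{\mathrm{time},2}$, Lemma \ref{l_mollified_error}, estimate \eqref{eq_A}, and the translation-invariance of the $L^1_x$ norm under $\Psi_\ell$, which is precisely the chain of steps you spelled out (including the exponent bookkeeping $\tfrac{d}{s}-d=-\tfrac{d}{s'}$).
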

\begin{proof}
    The proof follows immediately from the definition of $R^{\mathrm{time},2}$, Lemma \ref{l_mollified_error}, \eqref{eq_A} and the fact that $\Psi_\ell$ acts only as a spatial translation. 
\end{proof}

\begin{lemma}[Bound on $R^{\mathrm{time,3}}$]\label{lem_Rtime3} 
It holds that
    \begin{equation}
        \| R^{\mathrm{time,3}} \|_{L^\infty_\omega C_\tau L^{1}_{x}} \leq C(\e, \|R_0\|_{L^\infty_\omega C_\tau L^1_x})   \frac{\lambda \mu \sigma}{\nu} \mu^{-\frac{d}{s'}}  \left( 1 + \frac{(\lambda \mu)^N}{\nu^{N-1}} \right).
    \end{equation}
\end{lemma}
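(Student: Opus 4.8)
The plan is to proceed exactly as in the proof of Lemma~\ref{lem_Rtime2} (the bound on $R^{\mathrm{time},2}$), the only extra ingredient being that here the quantity in question carries an additional standard antidivergence operator $\divv^{-1}$, which at the level of the $L^1_x$ norm costs nothing thanks to Lemma~\ref{lem_std_antidiv}. (Equivalently, one may note that $R^{\mathrm{time},3}$ arises from a Leibniz-rule rearrangement analogous to the one producing $R^{\mathrm{time},1}$, and argue as in Lemma~\ref{lem_Rtime1}.)

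First I would observe that the argument of $\divv^{-1}$ in the definition of $R^{\mathrm{time},3}$ is, $\P$-a.s., a smooth map with zero spatial mean by construction, so $\divv^{-1}$ is well defined, and by Lemma~\ref{lem_std_antidiv} with $r=1$, for every $t \in [0,\tau(\omega)]$,
\[
\left\| \divv^{-1}\!\left( \nabla R_\e^j \cdot A_N(\Psi_\ell) - \fint_{\T^d} \nabla R_\e^j \cdot A_N(\Psi_\ell)\,dx \right) \right\|_{L^1_x} \leq C \left\| \nabla R_\e^j \cdot A_N(\Psi_\ell) \right\|_{L^1_x},
\]
where the factor coming from the mean subtraction has been absorbed into $C$.

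Next I would apply H\"older's inequality to get $\|\nabla R_\e^j \cdot A_N(\Psi_\ell)\|_{L^1_x} \leq \|\nabla R_\e^j\|_{L^\infty_x} \|A_N(\Psi_\ell)\|_{L^1_x}$, bound $\|\nabla R_\e^j\|_{L^\infty_x} \leq \|R_\e\|_{L^\infty_\omega C^1_{\tau x}} \leq C(\e, \|R_0\|_{L^\infty_\omega C_\tau L^1_x})$ by Lemma~\ref{l_mollified_error}, and use that $\Psi_\ell$ acts on the spatial variable only as a translation, so $\|A_N(\Psi_\ell)(t,\cdot;\omega)\|_{L^1_x} = \|A_N^j(t,\cdot)\|_{L^1_x}$, which is estimated by \eqref{eq_A} with $k=0$, $r=1$:
\[
\|A_N^j\|_{L^\infty_t L^1_x} \leq M_{0,1,N}\, \frac{\lambda\mu\sigma}{\nu}\, \mu^{\frac{d}{s} - d}\left(1 + \frac{(\lambda\mu)^N}{\nu^{N-1}}\right).
\]
Since $\tfrac1s + \tfrac1{s'} = 1$ we have $\tfrac{d}{s} - d = -\tfrac{d}{s'}$, which is exactly the claimed exponent of $\mu$. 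Collecting these bounds, summing over $j$, taking the maximum over $t \in [0,\tau(\omega)]$ and the essential supremum over $\omega \in \Omega$ gives the desired estimate.

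There is no real obstacle here: the estimate is structurally identical to that of $R^{\mathrm{time},2}$, with the harmless $\divv^{-1}$ handled by the $r=1$ bound of Lemma~\ref{lem_std_antidiv}; the only mild subtlety worth recording is the algebraic identity $\tfrac{d}{s}-d = -\tfrac{d}{s'}$ that matches the $\mu$-power in \eqref{eq_A} to the form in which the estimate is stated.
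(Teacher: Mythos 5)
Your proposal is correct and follows exactly the route the paper takes: the paper's proof of this lemma is precisely to bound $\|R^{\mathrm{time},3}(t,\cdot;\omega)\|_{L^1_x}$ by $\sum_j \|\nabla R_\e\|_{L^\infty_x}\|A_N(\Psi_\ell)\|_{L^1_x}$ via Lemma~\ref{lem_std_antidiv}, then argue as in Lemma~\ref{lem_Rtime2}. You have simply written out the details (translation invariance of the $L^1_x$ norm under $\Psi_\ell$, the application of \eqref{eq_A}, and the identity $\tfrac{d}{s}-d=-\tfrac{d}{s'}$) that the paper leaves implicit.
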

\begin{proof}
One first estimates $\|R^{\mathrm{time}, 3}(t, \cdot; \omega)\|_{L^1_x}$ in terms of $\sum_j \| \nabla R_\e(t, \cdot; \omega)\|_{L^\infty_x} \|A_N(t, \cdot)\|_{L^1_x}$ using Lemma \ref{lem_std_antidiv} and then one argues as in the proof of the previous lemma. 
\end{proof}

\begin{lemma}[Bound on $R^{\mathrm{sto,4}}$]\label{lem_Rsto4}
It holds that
    \begin{equation}
        \| R^{\mathrm{sto,4}}\|_{ L^\infty_\omega C_\tau L^{1}_{x}} \leq C(\|R_0\|_{L^\infty_\omega C_{\tau} L^1_x}) \mu^{-d/s'} \ell^{-1/2 - \kappa}. 
    \end{equation}
\end{lemma}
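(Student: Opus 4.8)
The plan is to estimate $R^{\mathrm{sto},4}$ pointwise in $(t,x;\omega)$ by a crude H\"older inequality, placing the mollified defect $R_\e$ in $L^\infty_x$, the Mikado density $\Theta^j(\Psi_\ell)$ in $L^1_x$, and keeping the scalar factor $\dot B_\ell$ in absolute value. Working $\P$-a.s. and for each fixed $t \in [0,\tau(\omega)]$, the definition $-R^{\mathrm{sto},4} = \sum_{j=1}^d R_\e^j \,\Theta^j(\Psi_\ell)\cdot \dot B_\ell$ gives
\[
\|R^{\mathrm{sto},4}(t;\omega)\|_{L^1_x} \le \sum_{j=1}^d \|R_\e^j(t;\omega)\|_{L^\infty_x}\,\|\Theta^j(\Psi_\ell(t;\omega))\|_{L^1_x}\,|\dot B_\ell(t;\omega)|.
\]

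First I would control $\|R_\e^j(t;\omega)\|_{L^\infty_x}$ by Lemma~\ref{l_mollified_error} with $k=0$, which bounds it by $C(\e,\|R_0\|_{L^\infty_\omega C_\tau L^1_x})$, uniformly in $t$ and $\omega$. For the middle factor I would use that, by Lemma~\ref{l_psi_ell}(3), composition with $\Psi_\ell$ is an $L^r_x$-isometry, so $\|\Theta^j(\Psi_\ell(t;\omega))\|_{L^1_x} = \|\Theta^j(t)\|_{L^1_x}$, and then apply estimate~\eqref{eq_Theta} with $k=0$, $r=1$ (the standing hypothesis~\eqref{eq_lambdamunu} on the parameters is in force throughout Section~\ref{sec_perturbations}), which yields $\|\Theta^j\|_{L^\infty_t L^1_x} \le M_{0,1}\,\mu^{\frac d s - d} = M_{0,1}\,\mu^{-d/s'}$ since $\tfrac ds - d = -\tfrac{d}{s'}$. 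Finally, for $|\dot B_\ell(t;\omega)|$ I would invoke \eqref{eq_BM_C1} with $k=0$, giving $\|\dot B_\ell\|_{L^\infty_\omega C([0,\tau])} \le C_0 L\,\ell^{(1/2-\kappa)-1} = C_0 L\,\ell^{-1/2-\kappa}$; this is the one step that genuinely uses that we look at trajectories only up to the stopping time $\tau$, so that $[B]_{C^{1/2-\kappa}([0,\tau])} \le L = L(\mfp)$ holds $\P$-a.s.

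Multiplying the three bounds, taking the maximum over $t \in [0,\tau(\omega)]$ and the essential supremum over $\omega\in\Omega$ (and absorbing the $d$-fold sum and the fixed geometric constants $M_{0,1},C_0,L$ into the constant), one obtains
\[
\|R^{\mathrm{sto},4}\|_{L^\infty_\omega C_\tau L^1_x} \le C\big(\e,\|R_0\|_{L^\infty_\omega C_\tau L^1_x}\big)\,\mu^{-d/s'}\,\ell^{-1/2-\kappa},
\]
which is the asserted estimate. As always, the continuity of $t\mapsto R^{\mathrm{sto},4}(t,\cdot;\omega)$ into $L^1_x$ follows immediately from the joint space-time continuity of all building blocks together with $R_\e$ and $B_\ell$, so only the norm bound requires checking. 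I do not anticipate any real obstacle: this is one of the routine defect estimates, and the only point needing care is keeping all bounds uniform in $\omega$ by restricting to $[0,\tau]$, so that the mollified Brownian velocity $\dot B_\ell$ is controlled through the fixed constant $L$ via \eqref{eq_BM_C1}.
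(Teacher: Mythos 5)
Your proof is correct and takes essentially the same route as the paper: factor the sum by H\"older, put $R_\e$ in $L^\infty_x$ via Lemma~\ref{l_mollified_error}, use the translation invariance of $\Psi_\ell$ plus \eqref{eq_Theta} (with $k=0$, $r=1$) to bound $\|\Theta^j(\Psi_\ell)\|_{L^1_x}\le C\mu^{-d/s'}$, and control $|\dot B_\ell|$ via \eqref{eq_BM_C1} with $k=0$ up to the stopping time $\tau$. One minor observation: the constant in the lemma's displayed statement omits the $\e$-dependence, but the paper's own proof (like yours) yields $C(\e,\|R_0\|_{L^\infty_\omega C_\tau L^1_x})$, so your version is in fact the accurate one.
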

\begin{proof}
$\P$-a.s. for all $t \in [0,\tau]$ we have
\begin{equation*}
\begin{aligned}
\left\|\sum_j R_\e^j \Theta^j(\Psi_\ell) \dot B_\ell \right\|_{L^1_x}
& \leq \sum_j \|R_\e^j\|_{L^\infty_x} \|\Theta^j(\Psi_\ell)\|_{L^1_x} |\dot B_\ell| \\
& \leq C(\e, \|R_0\|_{L^\infty_\omega C_{\tau} L^1_x}) \mu^{-d/s'} \ell^{-1/2 - \kappa}. 
\end{aligned}
\end{equation*}
Taking the maximum for $t \in [0,\tau(\omega)]$ and the essential supremum with respect to $\omega$, we get the desired estimate.
\end{proof}

\begin{lemma}[Bound on $R^{\mathrm{sto,5}}$]\label{lem_Rsto5}
It holds that
    \begin{equation}
        \| R^{\mathrm{sto,5}} \|_{L^\infty_\omega C_\tau L^{1}_{x}} \leq C(\e, \|R_0\|_{L^\infty_\omega C_{\tau} L^1_x})  \mu^{-d/s'} \ell^{-1/2 - \kappa}.
    \end{equation}
\end{lemma}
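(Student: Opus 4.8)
The plan is to mimic, almost verbatim, the proofs of Lemmas \ref{lem_sto3} and \ref{lem_Rsto4}, since $R^{\mathrm{sto,5}}$ has exactly the same structure as $R^{\mathrm{sto,3}}$ with $Q^j$ replaced by $\Theta^j$. First I would note that the argument of $\divv^{-1}$ in the definition of $R^{\mathrm{sto,5}}$ has zero spatial mean by construction, so Lemma \ref{lem_std_antidiv} with $r=1$ applies and yields, $\P$-a.s.\ and for every $t \in [0,\tau(\omega)]$,
\begin{equation*}
\|R^{\mathrm{sto,5}}(t;\omega)\|_{L^1_x} \leq 2\sum_{j=1}^d \big\| \nabla R_\e^j(t;\omega)\cdot \dot B_\ell(t;\omega)\,\Theta^j(\Psi_\ell(t;\omega)) \big\|_{L^1_x}.
\end{equation*}

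Next I would apply H\"older's inequality in the form $\|fgh\|_{L^1_x} \leq \|f\|_{L^\infty_x}\,|h|\,\|g\|_{L^1_x}$ with $f = \nabla R_\e^j$, the scalar $h = \dot B_\ell$ (which is $x$-independent), and $g = \Theta^j(\Psi_\ell)$, so that
\begin{equation*}
\|R^{\mathrm{sto,5}}(t;\omega)\|_{L^1_x} \leq 2\sum_{j=1}^d \|\nabla R_\e^j\|_{L^\infty_\omega C_{\tau x}}\; |\dot B_\ell(t;\omega)|\; \|\Theta^j(\Psi_\ell(t;\omega))\|_{L^1_x}.
\end{equation*}
The three factors are then controlled by tools already in place: $\|\nabla R_\e^j\|_{L^\infty_\omega C_{\tau x}} \leq C(\e, \|R_0\|_{L^\infty_\omega C_\tau L^1_x})$ by Lemma \ref{l_mollified_error}; $\|\Theta^j(\Psi_\ell(t;\omega))\|_{L^1_x} = \|\Theta^j\|_{L^1_x} \leq M_{0,1}\mu^{d/s - d} = M_{0,1}\mu^{-d/s'}$, where the equality uses that $\Psi_\ell$ acts on the spatial variable as a translation (Lemma \ref{l_psi_ell}) and the inequality is \eqref{eq_Theta} with $k=0$, $r=1$; and $|\dot B_\ell(t;\omega)| \leq C_0 L\,\ell^{(1/2-\kappa)-1} = C_0 L\,\ell^{-1/2-\kappa}$ by \eqref{eq_BM_C1} with $k=0$. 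Multiplying, taking the maximum over $t \in [0,\tau(\omega)]$ and the essential supremum over $\omega$, gives the claimed bound $\|R^{\mathrm{sto,5}}\|_{L^\infty_\omega C_\tau L^1_x} \leq C(\e,\|R_0\|_{L^\infty_\omega C_\tau L^1_x})\,\mu^{-d/s'}\ell^{-1/2-\kappa}$.

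There is no genuine obstacle here: the estimate is routine once the preceding lemmas are available, and one could legitimately just write ``the proof is analogous to that of Lemma \ref{lem_sto3}''. The only two points deserving a moment's attention are that the antidivergence operator is cost-free in the $L^1 \to L^1$ scale (this is precisely the bound in Lemma \ref{lem_std_antidiv}), and that the $\ell^{-1/2-\kappa}$ blow-up of $\dot B_\ell$ is uniform in $\omega$ only because we work up to the stopping time $\tau$, on which the constant $L$ bounds the $C^{1/2-\kappa}$-seminorm of almost every Brownian trajectory (Lemma \ref{l_brownian}); both facts are already established.
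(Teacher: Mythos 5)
Your proof is correct and follows essentially the same route as the paper's: apply Lemma \ref{lem_std_antidiv} with $r=1$, bound $\nabla R_\e$ in $L^\infty_x$ via Lemma \ref{l_mollified_error}, use translation invariance and \eqref{eq_Theta} for $\|\Theta^j(\Psi_\ell)\|_{L^1_x} \leq M_{0,1}\mu^{-d/s'}$, and \eqref{eq_BM_C1} for $|\dot B_\ell| \lesssim L\ell^{-1/2-\kappa}$. The paper's version is just a terser rendering of the same argument.
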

\begin{proof}
We have, $\P$-a.s., for all $t \in [0,\tau]$, by Lemma \ref{lem_std_antidiv},
\begin{equation*}
\begin{aligned}
\left\| \sum_j \divv^{-1} \left(   \nabla R_\e^j \cdot \dot B_\ell \Theta^j(\Psi_\ell) - \fint_{T^d}   \nabla R_\e^j \cdot \dot B_\ell \Theta^j(\Psi_\ell) dx  \right) \right \|_{L^1_x} 
& \leq C(\e, \|R_0\|_{L^\infty_\omega C_{\tau} L^1_x}) |\dot B_\ell| \|\Theta^j(\Psi_\ell)\|_{L^1_x} \\
& \leq C(\e, \|R_0\|_{L^\infty_\omega C_{\tau} L^1_x}) \ell^{-1/2 - \kappa} \mu^{-d/s'}.
\end{aligned}
\end{equation*}
Taking the maximum for $t \in [0,\tau(\omega)]$ and the essential supremum with respect to $\omega$, we conclude the proof.
\end{proof}

\subsection{Analysis of the fourth line}
\label{ssec_q_error}
Here we will estimate the fourth line of \eqref{eq_error_decomp}.
\begin{lemma}[${R}^{q}$ is small]\label{lem_Rq}
It holds that
    \begin{equation}
     \| {R}^{q} \|_{L^\infty_\omega C_\tau L^{1}_{x}} \leq C(\|R_0\|_{L^\infty_\omega C_{\tau} L^1_x}, \|u_0\|_{C_{tx}}) \frac{\mu^{d/s'}}{\sigma}.
    \end{equation}
\end{lemma}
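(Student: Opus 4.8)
The plan is to unravel the definition of $R^q$ from the error decomposition \eqref{eq_error_decomp} and then to apply H\"older's inequality together with the perturbation estimates already at our disposal. First, from \eqref{eq_error_decomp} one reads off that $-\divv R^q = \divv\big( q ( u_0(\Psi) + w(\Psi)) \big)$, so one may simply take
\begin{equation*}
R^q = - q\, u_0(\Psi) - q\, w(\Psi).
\end{equation*}
As in the previous lemmas, the ($\P$-a.s.) continuity of $t \mapsto R^q(t, \cdot; \omega) \in L^1(\T^d)$ is immediate from the continuity in space and time of $q$, $u_0$, $w$ and $\Psi$, so it suffices to bound $\esssup_{\omega} \max_{t \in [0,\tau(\omega)]} \| R^q(t, \cdot; \omega)\|_{L^1_x}$.

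For the first summand I would use that, by Lemma \ref{l_psi}, $\Psi$ acts on the spatial variable just as a translation, so that $\|u_0(\Psi)(t)\|_{L^\infty_x} = \|u_0(t)\|_{L^\infty_x} \leq \|u_0\|_{C_{tx}}$; combined with \eqref{eq_q_L1} this gives, $\P$-a.s. for all $t \in [0,\tau(\omega)]$,
\begin{equation*}
\| q(t)\, u_0(\Psi)(t) \|_{L^1_x} \leq \|q(t)\|_{L^1_x}\, \|u_0\|_{C_{tx}} \leq C\big(\|R_0\|_{L^\infty_\omega C_\tau L^1_x}, \e\big) \frac{\|u_0\|_{C_{tx}}}{\sigma}.
\end{equation*}
For the second summand I would exploit that $s$ and $s'$ are H\"older-conjugate: applying H\"older's inequality and again using that $\Psi$ acts as a spatial translation,
\begin{equation*}
\| q(t)\, w(\Psi)(t)\|_{L^1_x} \leq \|q(t)\|_{L^s_x}\, \|w(\Psi)(t)\|_{L^{s'}_x} = \|q(t)\|_{L^s_x}\, \|w(t)\|_{L^{s'}_x} \leq C\big(\|R_0\|_{L^\infty_\omega C_\tau L^1_x}, \e\big) \frac{\mu^{d/s'}}{\sigma},
\end{equation*}
where the last inequality uses \eqref{eq_q_Ls} together with the second estimate in \eqref{eq_wperturb_Sobolev_alpha}. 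Adding the two bounds, taking the maximum over $t \in [0, \tau(\omega)]$ and then the essential supremum over $\omega$, and using $\mu \geq 1$ to absorb the $1/\sigma$ term into $\mu^{d/s'}/\sigma$, would yield the asserted estimate (the $\e$-dependence being suppressed in the statement, since $\e$ is fixed before the parameters $\lambda, \mu, \sigma, \nu, \ell$ in Section \ref{ssec_parameter_choice}).

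There is no genuine obstacle in this argument; the only point worth stressing is that one must split the drift $u_0(\Psi) + w(\Psi)$ into the part $u_0$, which is a genuinely bounded field and is therefore placed in $L^\infty_x$, and the perturbation $w$, whose $L^\infty_x$-norm is \emph{not} bounded uniformly in the parameters but whose $L^{s'}_x$-norm is, and which must hence be paired against $q \in L^s_x$ through the conjugate exponents $(s, s')$. This is precisely the mechanism --- already discussed in Remark \ref{rem_cutoff} --- that forces the use of the integrability pair $(s, s')$ in place of the pair $(p, p')$ of the deterministic scheme of \cite{MS20}.
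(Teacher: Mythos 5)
Your proof is correct and is exactly the argument the paper intends (the paper omits the proof, referring to \cite[Lemma 5.6]{MS20}): split $q\,(u_0(\Psi)+w(\Psi))$, put $u_0$ in $L^\infty_x$ against $q \in L^1_x$ via \eqref{eq_q_L1}, and put $w$ in $L^{s'}_x$ against $q \in L^s_x$ via \eqref{eq_q_Ls} and \eqref{eq_wperturb_Sobolev_alpha}, using that $\Psi$ is a spatial translation throughout. The only cosmetic point is that, as you note, the constant implicitly carries the $\e$-dependence inherited from Lemma \ref{l_q_small_lp}, consistent with the remark in Section \ref{ssec_proofprop_est_error}.
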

\begin{proof}
    This proof works very similarly to \cite[Lemma 5.6]{MS20} and thus it is omitted.
\end{proof}

\subsection{The corrector error}
\label{ssec_corr_error}
The final error we must control is the corrector error of the fifth line of \eqref{eq_error_decomp}.
\begin{lemma}[Bound on $R^{\mathrm{corr}}$]\label{lem_Rcorr} 
It holds that
    \begin{equation}
         \| R^{\mathrm{corr}}\|_{ L^\infty_\omega C_\tau L^{1}_{x}} \leq 
         C( \|\rho_0\|_{L^\infty_\omega C_{\tau x}}, \|R_0\|_{L^\infty_\omega C_\tau L^1_x},\e) \left[ 1 + \frac{\mu^{d/s'}}{\sigma} \right] \frac{\lambda \mu}{\nu}.
    \end{equation}
\end{lemma}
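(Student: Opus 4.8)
The plan is to unwind the definition of $R^{\mathrm{corr}}$, namely $R^{\mathrm{corr}} = -(\rho_\e + \vt + q)\, w_c(\Psi)$, and to estimate the three products $\rho_\e\, w_c(\Psi)$, $\vt\, w_c(\Psi)$ and $q\, w_c(\Psi)$ in $L^1_x$ separately, $\P$-a.s. and pointwise in $t \in [0,\tau(\omega)]$. Since $\Psi$ acts on the spatial variable merely as a translation (Lemma \ref{l_psi}), one may freely replace $\|w_c(\Psi)(t,\cdot)\|_{L^r_x}$ by $\|w_c(t,\cdot)\|_{L^r_x}$ throughout.

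For the first term I would use H\"older with exponents $\infty$ and $1$: $\|\rho_\e\, w_c(\Psi)\|_{L^1_x} \le \|\rho_\e\|_{L^\infty_x}\|w_c\|_{L^1_x}$. The sup-norm of $\rho_\e$ is controlled by $\|\rho_0\|_{L^\infty_\omega C_{\tau x}}$ via Lemma \ref{l_mollified_density} (with $k=0$), and $\|w_c\|_{L^1_x} \le C\frac{\lambda\mu}{\nu}\mu^{-d/s} \le C\frac{\lambda\mu}{\nu}$ by \eqref{eq_Wcorr} with $k=0$, $r=1$ (using $\mu \ge 1$). For the second and third terms I would instead use H\"older with the conjugate pair $(s,s')$: $\|\vt\, w_c(\Psi)\|_{L^1_x} \le \|\vt\|_{L^s_x}\|w_c\|_{L^{s'}_x}$ and likewise for $q$. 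Here \eqref{eq_vt_Ls} and \eqref{eq_q_Ls} give $\|\vt\|_{L^s_x} \le C(\|R_0\|_{L^\infty_\omega C_\tau L^1_x},\e)$ and $\|q\|_{L^s_x} \le C(\|R_0\|_{L^\infty_\omega C_\tau L^1_x},\e)\frac{\mu^{d/s'}}{\sigma}$, while the first estimate of \eqref{eq_wcorr_Sobolev_alpha} gives $\|w_c\|_{L^{s'}_x} \le C\frac{\lambda\mu}{\nu}$.

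Collecting the three bounds yields, $\P$-a.s. and for every $t \in [0,\tau(\omega)]$,
\[
\|R^{\mathrm{corr}}(t;\omega)\|_{L^1_x} \le C\bigl(\|\rho_0\|_{L^\infty_\omega C_{\tau x}}, \|R_0\|_{L^\infty_\omega C_\tau L^1_x},\e\bigr)\left(\frac{\lambda\mu}{\nu} + \frac{\mu^{d/s'}}{\sigma}\,\frac{\lambda\mu}{\nu}\right),
\]
and taking the maximum over $t \in [0,\tau(\omega)]$ and the essential supremum over $\omega$ gives the claim after writing the bracket as $1 + \frac{\mu^{d/s'}}{\sigma}$. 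I do not expect a genuine obstacle here: the estimate is a direct application of H\"older's inequality together with the building-block bounds already established. The only point requiring a little care — and the reason the lemma is not purely cosmetic — is that $w_c$ is available only in $L^{s'}_x$ (not in $L^{p'}_x$), so one must pair it against $\vt, q \in L^s_x$ rather than against the weaker $L^p_x$ bounds; this is precisely the integrability trade-off explained in the remark at the beginning of Section \ref{sec_Mikados}. A secondary bookkeeping issue is to verify that all constants depend only on the quantities displayed (in particular not on $\omega$, nor on $\lambda,\mu,\sigma,\nu$), which holds because the uniform-in-$\omega$ bounds on $\rho_\e$ and $R_\e$ from Lemmas \ref{l_mollified_density}--\ref{l_mollified_error} are what enter the estimates.
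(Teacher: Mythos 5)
Your argument is correct and essentially identical to the paper's proof: the paper also estimates $\|R^{\mathrm{corr}}\|_{L^1_x}\le(\|\rho_\e\|_{L^s_x}+\|\vt\|_{L^s_x}+\|q\|_{L^s_x})\|w_c(\Psi)\|_{L^{s'}_x}$ and invokes \eqref{eq_vt_Ls}, \eqref{eq_q_Ls} and \eqref{eq_wcorr_Sobolev_alpha}. The only (immaterial) difference is that you treat the $\rho_\e$ term with the $(\infty,1)$ H\"older pairing and the $L^1_x$ bound on $w_c$, whereas the paper bounds $\|\rho_\e\|_{L^s_x}$ by its sup-norm and keeps the $(s,s')$ pairing throughout.
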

\begin{proof} $\P$-a.s., for all $t \in [0,\tau]$, using \eqref{eq_vt_Ls}, \eqref{eq_q_Ls} and \eqref{eq_wcorr_Sobolev_alpha}, 
    \begin{align*}
        \| R^{\mathrm{corr}} \|_{L^{1}_{x}} &\leq \| ({\rho}_{\e} + \vt + q) w_{c}(\Psi) \|_{L^{1}_{x}} \\
        &\leq \left( \| {\rho}_{\e} \|_{L^{s}_{x}} + \| \vt \|_{L^{s}_{x}}+ \| q \|_{L^{s}_{x}} \right) \| w_{c}(\Psi) \|_{L^{s'}_{x}} \\
        &\leq \left( \| {\rho}_{\e} \|_{C_{tx} } + \| \vt \|_{L^{s}_{x}}+ \| q \|_{L^{s}_{x}} \right) \| w_{c} \|_{L^{s'}_{x}} \\
		& \leq C( \|\rho_0\|_{L^\infty_\omega C_{\tau x}}, \|R_0\|_{L^\infty_\omega C_\tau L^1_x},\e) \left[ 1 + \frac{\mu^{d/s'}}{\sigma} \right] \frac{\lambda \mu}{\nu}. 
    \end{align*}
Taking the maximum for $t \in [0,\tau(\omega)]$ and the essential supremum with respect to $\omega$, we conclude the proof.
\end{proof}

\section{Estimates on the momentum difference}
\label{sec_momentum}

\begin{lemma}
\label{l_momentum}
It holds that
\begin{equation}
\label{eq_momentum}
\begin{aligned}
&\|\rho_1 u_1(\Psi)  - \rho_0 u_0(\Psi)\|_{L^\infty_\omega C_\tau L^1_x} \\
&  \leq M \|R_0\|_{L^\infty_\omega C_\tau L^1_x} + C(\|\rho_0\|_{L^\infty_\omega C^1_{\tau x}}, \|u_0\|_{C_{tx}}) \e \\
& \quad + C ( \e, \|\rho_0\|_{L^\infty_\omega C_{\tau x}}, \|u_0\|_{C_{tx}}, \|R_0\|_{L^\infty_\omega C_{\tau} L^1_x})
\left[\frac{1}{\lambda} + \nu  \ell^{\frac{1}{2} - \kappa} +  \mu^{-\frac{d}{s}} + \mu^{-\frac{d}{s'}} + \frac{1}{\sigma} + \frac{\mu^{\frac{d}{s'} }}{\sigma} 
+ \left( 1 + \frac{\mu^{d/s'}}{\sigma} \right)\frac{\lambda \mu}{\nu}   \right],
\end{aligned}
\end{equation}
where $M$ is a constant depending only on the constants $M_{0,r}$ appearing in Proposition \ref{prop_Mikado_est}, with $r=s,s'$.
\end{lemma}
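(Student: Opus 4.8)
The plan is to compute the difference $\rho_1 u_1(\Psi) - \rho_0 u_0(\Psi)$ explicitly using the definitions \eqref{eq_def_rho1}, \eqref{eq_def_u1} of $\rho_1, u_1$, then split it into a ``main'' term which carries the cancellation of $R_0$ and a collection of ``error'' terms, each of which is estimated by the Mikado bounds of Proposition \ref{prop_Mikado_est} and the perturbation estimates of Section \ref{sec_perturbations}. First I would write
\begin{equation*}
\rho_1 u_1(\Psi) = (\rho_\e + \vartheta + \vartheta_c + q + q_c)(u_0 + w + w_c)(\Psi),
\end{equation*}
and expand. The key observation is the identity $\vartheta \, w(\Psi)$: since $\vartheta = \sum_j R_\e^j \Theta^j(\Psi_\ell)$ and $w = \sum_k W^k$, using the disjoint-support property \eqref{eq_Mikado_supp_disjoint} of $\Theta^i$ and $W^j$ for $i \neq j$ \emph{up to the shift mismatch between $\Psi$ and $\Psi_\ell$}, the product reduces to $\sum_j R_\e^j \Theta^j(\Psi_\ell) W^j(\Psi)$, and after replacing $W^j(\Psi)$ by $W^j(\Psi_\ell)$ (at the cost of an $R^{\mathrm{sto,1}}$-type error, controlled exactly as in Lemma \ref{lem_Rsto1} by $C\nu\ell^{1/2-\kappa}$), Proposition \ref{prop_mikado_interactions} gives $\fint_{\T^d} \Theta^j W^j = e_j$, hence the leading contribution is $\sum_j R_\e^j e_j = R_\e$, which is bounded by $\|R_0\|_{L^\infty_\omega C_\tau L^1_x}$ (Lemma \ref{l_mollified_error}); this is the source of the term $M \|R_0\|_{L^\infty_\omega C_\tau L^1_x}$ in \eqref{eq_momentum}. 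More precisely, one writes $\vartheta w(\Psi) = R_\e + (\text{oscillatory remainder}) + (\text{shift-mismatch error})$, where the oscillatory remainder has zero spatial mean at each fixed time and is $\lesssim \frac{1}{\lambda}$ in $L^1_x$ by the improved H\"older inequality Lemma \ref{lem_Holder} applied with the fast frequency $\nu$, or more directly by an antidivergence argument (but here we only need the $L^1$ bound, so a direct Lemma \ref{lem_Holder} estimate on $R_\e^j(\varphi_\mu^j \tilde\varphi_\mu^j)_\lambda(\psi^j_\nu)^2$ composed with a translation suffices).

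Next I would collect all the genuinely small terms. These are: $(\rho_\e - \rho_0)u_0(\Psi)$, bounded by $C(\|\rho_0\|_{C^1_{\tau x}}, \|u_0\|_{C_{tx}})\e$ via Lemma \ref{l_mollified_density}; the corrector-density times $u_0$, i.e. $(\vartheta_c + q_c) u_0(\Psi)$, bounded by \eqref{eq_vtc}, \eqref{eq_qc} times $\|u_0\|_{C_{tx}}$, giving $\mu^{-d/s'} + \frac{1}{\sigma}$ contributions; the terms $\rho_\e w(\Psi)$ and $\rho_\e w_c(\Psi)$, bounded using $\rho_\e \in C_{tx}$ and \eqref{eq_wperturb_Sobolev_alpha}, \eqref{eq_wcorr_Sobolev_alpha} (the $L^1$ and $L^{s'}$ bounds for $w, w_c$), giving $\mu^{-d/s}$ and $\frac{\lambda\mu}{\nu}$; the terms $q\,u_0(\Psi)$, $q\,w(\Psi)$, $q\,w_c(\Psi)$ and $(\vartheta + \vartheta_c + q_c)(w + w_c)(\Psi)$, all estimated by H\"older in the pair $(s, s')$ using \eqref{eq_vt_Ls}, \eqref{eq_q_L1}, \eqref{eq_q_Ls}, \eqref{eq_vtc}, \eqref{eq_qc} for the density factors and \eqref{eq_wperturb_Sobolev_alpha}, \eqref{eq_wcorr_Sobolev_alpha} for the field factors; these contribute $\frac{\mu^{d/s'}}{\sigma}$ and $(1 + \frac{\mu^{d/s'}}{\sigma})\frac{\lambda\mu}{\nu}$. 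Throughout, one uses repeatedly that $\Psi, \Psi_\ell$ act on the spatial variable as pure translations (Lemmas \ref{l_psi}, \ref{l_psi_ell}), so composition leaves all $L^r_x$-norms unchanged, and that all constants absorbing $\|R_0\|_{L^\infty_\omega C_\tau L^1_x}$ via Lemma \ref{l_mollified_error} are finite, $\o$-uniform, and depend on $\e$.

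Finally I would verify the time-continuity and $\o$-uniformity: each term is a product of objects that are continuous in $(t,x)$ $\P$-a.s. and whose $L^\infty_\omega$-bounds have been established, so $[0,\tau(\o)] \ni t \mapsto (\rho_1 u_1(\Psi) - \rho_0 u_0(\Psi))(t) \in L^1(\T^d)$ is continuous $\P$-a.s., and taking $\max_t$ then $\esssup_\o$ yields \eqref{eq_momentum}. The main obstacle — and the only place where genuinely new care beyond \cite{MS20} is needed — is the treatment of $\vartheta\, w(\Psi)$: one must deal with the fact that $\vartheta$ is built from $\Theta^j(\Psi_\ell)$ while $w$ is composed with $\Psi$, so the disjoint-support structure is only approximately available, and the mismatch $W^k(\Psi) - W^k(\Psi_\ell)$ must be controlled using $|B(t) - B_\ell(t)| \le L\ell^{1/2-\kappa}$ from \eqref{eq_BM_C0} together with the $L^{s'}$-bound on $\nabla W^k$, exactly producing the $\nu\ell^{1/2-\kappa}$ term; the remaining oscillatory part of $\vartheta w(\Psi_\ell)$ after subtracting $R_\e$ then needs the improved H\"older inequality to beat it down to $O(1/\lambda)$.
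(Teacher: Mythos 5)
Your overall strategy matches the paper's proof: expand $\rho_1 u_1(\Psi) - \rho_0 u_0(\Psi)$ into elementary products, isolate $\vartheta w(\Psi)$ as the term carrying the $M\|R_0\|_{L^\infty_\omega C_\tau L^1_x}$ contribution, replace $W^k(\Psi)$ by $W^k(\Psi_\ell)$ at the cost of an $R^{\mathrm{sto},1}$-type term, and estimate the remaining products by pairing $L^s_x$ and $L^{s'}_x$ norms. However, the treatment of the main term $\vartheta w(\Psi_\ell)$ contains a genuine error. You propose writing $\vartheta w(\Psi_\ell) = R_\e + (\text{oscillatory remainder})$ where the remainder $\sum_j R_\e^j \big[ (\Theta^j W^j)(\Psi_\ell) - e_j \big]$ is claimed to be $\lesssim 1/\lambda$ in $L^1_x$ by the improved H\"older inequality. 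This is false: Lemma \ref{lem_Holder} applied to $R_\e^j$ and the $1/\lambda$-periodic function $\Theta^j W^j - e_j$ gives
\begin{equation*}
\big\| R_\e^j \big[ (\Theta^j W^j)(\Psi_\ell) - e_j \big] \big\|_{L^1_x} \leq \|R_\e\|_{L^1_x} \|\Theta^j W^j - e_j\|_{L^1_x} + C\lambda^{-1}\|R_\e\|_{C^1_x}\|\Theta^j W^j - e_j\|_{L^1_x},
\end{equation*}
and $\|\Theta^j W^j - e_j\|_{L^1_x}$ is of order $1$ (not small), since the blob $\vphi_\mu\tilde\vphi_\mu$ has small support and thus $\Theta^j W^j$ is nearly zero on most of $\T^d$. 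The first term is therefore $O(\|R_0\|_{L^\infty_\omega C_\tau L^1_x})$, not $O(1/\lambda)$. (Note also the oscillatory remainder does not have zero spatial mean, since $R_\e^j$ is $x$-dependent.) Taken at face value, your decomposition would give $M = 1$ and a spurious term that does not vanish as $\lambda \to \infty$.

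The paper's proof avoids this issue entirely: it does not subtract $R_\e$ at all. It bounds $\|\sum_j R_\e^j \Theta^j(\Psi_\ell) W^j(\Psi_\ell)\|_{L^1_x}$ directly by the improved H\"older inequality applied to $R_\e^j$ and the $1/\lambda$-periodic product $\Theta^j W^j$, obtaining
\begin{equation*}
\Big( \|R_\e\|_{L^1_x} + \tfrac{\|R_\e\|_{C^1_x}}{\lambda} \Big) \|\Theta^j W^j\|_{L^1_x} \leq \Big( \|R_0\|_{L^\infty_\omega C_\tau L^1_x} + \tfrac{C(\e,\|R_0\|)}{\lambda} \Big) \|\Theta^j\|_{L^s_x}\|W^j\|_{L^{s'}_x}.
\end{equation*}
This is the true origin of the universal constant $M$: it is $d\,M_{0,s}M_{0,s'}$ (as the statement says), coming from the full $L^1$-norm of $\Theta^j W^j$ and not from its mean $e_j$. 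Once you adopt this direct bound instead of extracting $R_\e$, your remaining estimates coincide with the paper's.

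Two minor bookkeeping remarks. First, the off-diagonal terms $\Theta^j(\Psi_\ell) W^k(\Psi)$ with $j \neq k$ do \emph{not} vanish by \eqref{eq_Mikado_supp_disjoint} as you suggest, since the two factors are evaluated at different space shifts; one must first add and subtract $W^k(\Psi_\ell)$ over the full double sum (generating $R^{\mathrm{sto},1}$) and only then apply disjoint supports to kill $j \neq k$. Second, your enumeration of the ``small'' products omits $\vartheta u_0(\Psi)$ while counting $\vartheta w(\Psi)$ twice (once as the main term and once inside $(\vartheta + \vartheta_c + q_c)(w+w_c)(\Psi)$); the paper's grouping $\|\vartheta w(\Psi)\| + \|(\rho_\e - \rho_0)u_0(\Psi)\| + \|(\vartheta + q + \vartheta_c + q_c)u_0(\Psi)\| + \|(\rho_\e + q + \vartheta_c + q_c)w(\Psi)\| + \|(\rho_\e + \vartheta + q + \vartheta_c + q_c)w_c(\Psi)\|$ is exhaustive with no repetitions and uses only the estimates you already cite.
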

\begin{proof}
$\P$-a.s., for all $t \in [0, \tau]$, we get
\begin{equation}
\label{eq_rho1u1}
\begin{aligned}
\|\rho_1 u_1(\Psi) - \rho_0 u_0(\Psi)\|_{L^1_x}
& = \|(\rho_\e + \vartheta + q + \vartheta_c + q_c) (u_0(\Psi) + w(\Psi) + w^c(\Psi)) - \rho_0 u_0(\Psi)\|_{L^1_x} \\
& \leq \|\vartheta w(\Psi)\|_{L^1_x} + \|(\rho_\e - \rho_0) u_0(\Psi)\|_{L^1_x}  +
\|(\vartheta + q + \vartheta_c + q_c) u_0(\Psi)\|_{L^1_x} \\
& \qquad + \|(\rho_\e + q + \vartheta_c + q_c) w(\Psi)\|_{L^1_x} + \|(\rho_\e + \vartheta + q + \vartheta_c + q_c) w_c(\Psi)\|_{L^1_x}.
\end{aligned}
\end{equation}
We now estimate the various terms separately. We have
\begin{equation*}
\begin{aligned}
\vartheta w(\Psi) & = \sum_{j,k} R_\e^j \Theta^j(\Psi_\ell) W^k (\Psi) \\
& = \sum_{j} R_\e \Theta^j (\Psi_\ell) W^j(\Psi_\ell) + \sum_{j,k} R_\e \Theta^j(\Psi_\ell) (W^k (\Psi) - W^k(\Psi_\ell)) \\
& = \sum_{j} R_\e \Theta^j (\Psi_\ell) W^j(\Psi_\ell) + R^{\mathrm{sto}, 1}. 
\end{aligned}
\end{equation*}
Now for the first term on the right-hand side after the last equal sign, we use the improved H\"older inequality of Lemma \ref{lem_Holder}:
\begin{equation*}
\begin{aligned}
\sum_{j} \|R_\e^j \Theta^j(\Psi_\ell) W^j(\Psi_\ell)\|_{L^1_x} 
& \leq \left( \|R_\e\|_{L^1_x} + \frac{ \|R_\e \|_{C_{x}^{1}}}{\lambda}  \right) \|\Theta^j(\Psi_{\ell}) W^j(\Psi_{\ell})\|_{L^1_x}  \\
& \leq \left( \|R_\e\|_{L^\infty_\omega C_\tau L^1_x} + \frac{C(\e, \|R_0\|_{L^\infty_\omega C_\tau L^1_x})}{\lambda}  \right) \|\Theta^j \|_{L^s_x} \|W^j\|_{L^{s'}_x} \\
\text{(by Lemma \ref{l_mollified_error} and Proposition \ref{prop_Mikado_est})}
& \leq M \|R_0\|_{L^\infty_\omega C_\tau L^1_x} + \frac{C(\e, \|R_0\|_{L^\infty_\omega C_\tau L^1_x})}{\lambda},
\end{aligned}
\end{equation*}
where $M$ is a constant depending only on the constants $M_{0,r}$ appearing in Proposition \ref{prop_Mikado_est}, with $r=s,s'$ respectively. 
For $R^{\mathrm{sto},1}$ we use the estimate in Lemma \ref{lem_Rsto1}.  For the second term in \eqref{eq_rho1u1} we have, by Lemma \ref{l_mollified_density},
\begin{equation*}
 \|(\rho_\e - \rho_0) u_0(\Psi)\|_{C_\tau L^1_x} \leq \|(\rho_\e - \rho_0)\|_{C_{\tau x}} \|u_0\|_{C_{tx}} \leq C(\|\rho_0\|_{L^\infty_\omega C^1_{\tau x}}, \|u_0\|_{C_{tx}}) \e. 
\end{equation*}
For the third term we have, using the $L^1_x$-estimates for the perturbations proven in Section \ref{sec_perturbations},
\begin{equation*}
\|(\vartheta + q + \vartheta_c + q_c) u_0(\Psi)\|_{L^1_x} \leq \|u_0\|_{L^\infty_x} \| \vartheta + q + \vartheta_c + q_c\|_{L^1_x} \leq 
C(\e, \|u_0\|_{C_{tx}}, \|R_0\|_{L^\infty_\omega C_\tau L^1_x}) \left( \mu^{- \frac{d}{s'}} + \frac{1}{\sigma} \right).
\end{equation*}
Similarly, for the fourth term in \eqref{eq_rho1u1}, 
\begin{equation*}
\begin{aligned}
\| (\rho_\e + q + \vartheta_c + q_c) w(\Psi)\|_{L^1_x}
& \leq \| \rho_\e \|_{L^{\infty}_{x}} \| w \|_{L^{1}_{x}} + \| q + \vartheta_c + q_c\|_{L^s_x} \|w\|_{L^{s'}_x} \\
&
\leq C ( \e, \|\rho_0\|_{L^\infty_\omega C_{\tau x}}, \|R_0\|_{L^\infty_\omega C_{\tau} L^1_x}) \left( \mu^{-\frac{d}{s}} +  \frac{\mu^{  \frac{d}{s'} }}{\sigma} + \mu^{- \frac{d}{s'} } + \frac{1}{\sigma} \right).
\end{aligned}
\end{equation*}
Finally, for the fifth term in \eqref{eq_rho1u1}, the same estimates as for $R^{\mathrm{corr}}$ hold, yielding
\begin{equation*}
 \|(\rho_\e + \vartheta + q + \vartheta_c + q_c) w_c(\Psi)\|_{L^1_x} \leq C( \|\rho_0\|_{L^\infty_\omega C_{\tau x}}, \|R_0\|_{L^\infty_\omega C_\tau L^1_x}) \left[ 1 + \frac{\mu^{d/s'}}{\sigma} \right] \frac{\lambda \mu}{\nu}. 
\end{equation*}
Taking the maximum for $t \in [0,\tau(\omega)]$ and the essential supremum with respect to $\omega$, we obtain the desired result
\end{proof}

\section{Proof of Proposition \ref{prop_iteration_stage}}\texorpdfstring{\label{sec_proof_of_prop}}{3.5}

In this section, we conclude the proof of Proposition \ref{prop_iteration_stage}. Let $\delta>0$ be given in the statement of the Proposition. 

\subsection{Definition of the new solution}
The universal constant $M$ is the constant appearing in the statement of Lemma \ref{l_momentum}. We have already defined $(\rho_{1},u_{1})$ in Section \ref{sec_perturbations}, Eq. \eqref{eq_def_rho1}, \eqref{eq_def_u1}. We define $R_{1}$ as the sum 
of all error terms defined in Section \ref{sec_defect_estimates}:
\begin{equation}
\label{eq_new_error}
\begin{aligned}
R_1 & := \underbrace{R^{\rm com}}_{\text{Section \ref{ssec_comm_error}}} \\
& \quad + \underbrace{R^{\rm quadr} + R^{\mathrm{sto},1} + R^{\mathrm{time},1} + R^{\mathrm{sto},2} + R^{\mathrm{sto},3}}_{\text{Section \ref{ssec_quadratic}}} \\
& \quad + \underbrace{R^{\rm lin} + R^{\mathrm{time},2} + R^{\mathrm{time},3} + R^{\mathrm{sto},4} + R^{\mathrm{sto},5}}_{\text{Section \ref{ssec_linear_error}}} \\
& \quad + \underbrace{R^q}_{\text{Section \ref{ssec_q_error}} } + \underbrace{R^{\rm corr}}_{\text{Section \ref{ssec_corr_error}}}.
\end{aligned}
\end{equation}
By construction $(\rho_1, u_1, R_1)$ solves the continuity-defect equation \eqref{eq_cont_defect} in the sense of distributions. Moreover, by Lemma \ref{l_mollified_density} and estimates \eqref{eq_vt_smooth}, \eqref{eq_q_smooth}, \eqref{eq_vc_smooth}, $\rho_1 \in L^\infty(\Omega; C^1([0,\tau] \times \T^d))$. The velocity field $u_1$ is smooth by construction. By the analysis in Section \ref{sec_defect_estimates}, the defect field $R_1 \in L^\infty(\Omega; C([0,\tau]; L^1(\T^d)))$. Hence $(\rho_1, u_1, R_1)$ is a solution to the continuity-defect equation in the sense of Definition \ref{def:cont-defect}. Note that $u_1$ is deterministic
and $\rho_{1}, R_{1}$ are each $(\mcF_{t})_{t}$-adapted, as we mollified in time with mollifiers whose support is contained in $\R_{+}$.

\subsection{Proof of \texorpdfstring{\eqref{eq_fullsoln_prop}}{(27)}}
\label{ssec_fullson}

Let us now show the property \eqref{eq_fullsoln_prop}. Assume that there is $a>0$ such that, $\P$-a.s., $\rho_0, R_0 = 0$ on $[0,\min\{\tau(\omega), a\}] \times \T^d$. Then, choosing the mollification parameter $\e>0$ small enough, we can ensure that, $\P$-a.s.,
\begin{equation*}
\rho_\e, R_\e = 0 \text{ on } [0, \min\{\tau(\omega), a - \delta\}] \times \T^d
\end{equation*}

By definition of the perturbation $\vt$ \eqref{eq_def_vt} as well as the defect terms $R_{1}$, which are all homogeneous in $R_{\e}$ or $\rho_{\e}$, we see that, $\P$-a.s.,
\begin{align*}
    \rho_{1}(t), R_{1}(t) = 0  \text{ on } [0, \min\{\tau(\omega), a - \delta\}] \times \T^d.
\end{align*}

\subsection{Choice of parameters}\label{ssec_parameter_choice}
In order to conclude the proof of Proposition \ref{prop_iteration_stage}, we need to show \eqref{eq_rho1rho0Lp}--\eqref{eq_R1_L1est}. This will be done using the estimate proven in Sections \ref{sec_perturbations}, \ref{sec_defect_estimates}, \ref{sec_momentum}. All those estimates depend on the seven parameters
\begin{equation*}
\e, \lambda, \mu, \sigma, \nu, \ell, N.
\end{equation*}
We now show that it is possible to choose these parameters in such a way that \eqref{eq_rho1rho0Lp}--\eqref{eq_R1_L1est} are satisfied. We first fix $\e>0$ such that (a first condition on the smallness of $\e$ was already introduced in Section \ref{ssec_fullson}):
\begin{enumerate}
\item in Lemma \ref{l_distance_rho}, the term $C \e$ appearing in \eqref{eq_distance_rho} is $\leq \delta/2$,
\item in Lemma \ref{l_comm}, the term $C \e$ appearing in \eqref{eq_comm} is $\leq \delta/2$,
\item in Lemma \ref{l_momentum}, the term $C\e$ appearing in \eqref{eq_momentum} is $\leq \delta/2$.
\end{enumerate}
Now  $\e>0$ is fixed. Hence all constants appearing in Sections \ref{sec_perturbations}, \ref{sec_defect_estimates}, \ref{sec_momentum} and depending on $\e$ can be very large, but they will be ``compensated'' by smallness of negative powers of $\lambda, \mu, \sigma$, or positive powers of $\ell$.

A quick look into the estimates obtained in Sections \ref{sec_perturbations}, \ref{sec_defect_estimates}, \ref{sec_momentum} shows that some terms are estimates in terms of negative powers of one of the parameters $\lambda, \mu, \sigma, \nu, \ell$ (and these terms will be then easily made arbitrarily small by picking the parameters large enough), whereas other parameters are estimated in terms of products of positive and negative powers of the parameters $\lambda, \mu, \sigma, \nu, \ell$. In this case we need to seek a balance among these five parameters (and the additional parameter $N$ appearing in some estimates). 

Similar to \cite{MS20}, we thus let
\begin{align*}
    \mu = \lambda^{\alpha}, \quad \nu = \lambda^{\gamma}, \quad \sigma = \lambda^{\beta}, \quad \ell = \lambda^{-\zeta},
\end{align*}
and we need to find the conditions on $\alpha, \beta, \gamma, \zeta$. 

We choose the constants in the following order:
\begin{enumerate}
 \item\label{itm_par_choice1} Choose $\alpha$ large enough such that
 \begin{equation*}
 \frac{d}{\theta} \underbrace{\left[ \frac{1}{s} + \frac{1}{\tilde p} - 1 - \frac{\theta}{d}\right]}_{>0 \text{ by \eqref{eq_cond_exp}}} \alpha  > 2
 \quad \text{and} \quad
 \underbrace{\left[ \frac{\frac{1}{2} - \kappa }{ \frac{1}{2} + \kappa} \frac{d}{s'} -1 \right]}_{>0 \text{ by \eqref{eq_rkappa}}} \alpha > 2.
 \end{equation*}
 This is possible thanks to \eqref{eq_cond_exp} and \eqref{eq_rkappa}. As a consequence 
 \begin{equation*}
 \frac{d}{\theta} \left( \frac{1}{s} + \frac{1}{\tilde p} - 1 \right) \alpha - (1 + \alpha) > 1
 \quad \text{and} \quad
\frac{\frac{1}{2} - \kappa }{ \frac{1}{2} + \kappa} \frac{d}{s'} \alpha - (1+\alpha) > 1.
  \end{equation*}
 \item\label{itm_par_choice2} We can thus find $\gamma \in \N$ such that
 \begin{align*}
    \alpha + 1 < \gamma < \min \left\{ \frac{d}{\theta} \left( \frac{1}{s} + \frac{1}{\tilde p} -1 \right),  \frac{\frac{1}{2} - \kappa }{ \frac{1}{2} + \kappa} \frac{d}{s'} \right\}\alpha,
 \end{align*}
 which we can do by the choice of $\alpha$ as the difference between the leftmost number and the rightmost number is strictly greater than $1$.

 \item\label{itm_par_choice4} Choose $\beta > 1$ such that
 \begin{align*}
\frac{d}{s'} \alpha < \beta < \frac{d}{s'}\alpha + \underbrace{( \gamma - \alpha - 1)}_{>0 \text{ by  \eqref{itm_par_choice2}}}
 \end{align*}
    enabled by condition \eqref{itm_par_choice2}.

 \item\label{itm_par_choice3} Choose a natural number $N \in \N$ large enough such that
 \begin{align*}
    \frac{N}{N-1} < \frac{\gamma}{1+\alpha},
 \end{align*}
 possible again by condition \eqref{itm_par_choice2}.

 \item\label{itm_par_choice5} Finally choose $\zeta > 1$ such that 
 \begin{equation}\label{eq_zeta}
\frac{\gamma}{\frac{1}{2} - \kappa} < \zeta < \frac{\frac{d}{s'} \alpha}{\frac{1}{2} + \kappa},
\end{equation} 
 enabled again by condition \eqref{itm_par_choice2}.
\end{enumerate}

\subsection{Estimates on the perturbations}\label{ssec_proofprop_est_pert}

We first prove \eqref{eq_rho1rho0Lp}. By Lemma \ref{l_distance_rho}, 
\begin{equation*}
\begin{aligned}
\|\rho_1 - \rho_0\|_{L^\infty_\omega C_\tau L^p_x} 
& \leq C(\|\rho_0\|_{L^\infty_\omega C^1_{\tau x}}) \eps + C(\|\rho_0\|_{L^\infty_\omega C^1_{\tau x}}, \|R_0\|_{L^\infty_\omega C_\tau L^1_x},\e) \left(      \mu^{\frac{d}{s} - \frac{d}{p}} + \frac{\mu^{\frac{d}{s'}}}{\sigma} + \mu^{-\frac{d}{s'}} + \frac{1}{\sigma} \right) \\
& \leq \frac{\delta}{2} + C(\|\rho_0\|_{L^\infty_\omega C^1_{\tau x}}, \|R_0\|_{L^\infty_\omega C_\tau L^1_x}) \left(\lambda^{\alpha \left(\frac{d}{s} - \frac{d}{p} \right)} + \lambda^{\frac{d}{s'} \alpha - \beta}  + \lambda^{-\frac{d}{s'} \alpha} + \lambda^{-\beta} \right).
\end{aligned}
\end{equation*}
The term $C\e$ is estimated by $\delta/2$ by the choice of $\e$ made at the beginning of Section \ref{ssec_parameter_choice}. Now, all powers of $\lambda$ are strictly smaller than zero (in particular the first power of $\lambda$ is $<0$ since $s>p$ and the second one because of the parameter choice \eqref{itm_par_choice4} in Section \ref{ssec_parameter_choice}). Hence, for $\lambda$ large enough, \eqref{eq_rho1rho0Lp} is achieved. 

Let us now show \eqref{eq_urho_L1}. By Lemma \ref{l_momentum}, we have
\begin{equation*}
\begin{aligned}
&\|\rho_1 u_1(\Psi)  - \rho_0 u_0(\Psi)\|_{L^\infty_\omega C_\tau L^1_x} \\
&  \leq M \|R_0\|_{L^\infty_\omega C_\tau L^1_x} + C(\|\rho_0\|_{L^\infty_\omega C^1_{\tau x}}, \|u_0\|_{C_{tx}}) \e \\
& \qquad + C ( \e, \|\rho_0\|_{L^\infty_\omega C_{\tau x}}, \|u_0\|_{C_{tx}}, \|R_0\|_{L^\infty_\omega C_{\tau} L^1_x})
\left[\frac{1}{\lambda} + \nu  \ell^{\frac{1}{2} - \kappa} +  \mu^{-\frac{d}{s'}} + \frac{1}{\sigma} + \frac{\mu^{\frac{d}{s'} }}{\sigma} 
+ \left( 1 + \frac{\mu^{d/s'}}{\sigma} \right)\frac{\lambda \mu}{\nu}   \right] \\
& \text{(by the choice of $\e$ in Section \ref{ssec_parameter_choice})} \\
&  \leq M \|R_0\|_{L^\infty_\omega C_\tau L^1_x} + \frac{\delta}{2} \\
&  + C ( \e, \|\rho_0\|_{L^\infty_\omega C_{\tau x}}, \|u_0\|_{C_{tx}}, \|R_0\|_{L^\infty_\omega C_{\tau} L^1_x})
\left[\frac{1}{\lambda} + \lambda^{\gamma - \left(\frac{1}{2} - \kappa \right) \zeta} +  \lambda^{-\frac{d}{s'} \alpha} + \lambda^{-\beta} + \lambda^{\frac{d}{s'} \alpha - \beta} 
+ \left( 1 + \lambda^{\frac{d}{s'} \alpha - \beta} \right)  \lambda^{1 + \alpha - \gamma}   \right],\end{aligned}.
\end{equation*}
As before, all powers of $\lambda$ are strictly inegative: in particular, the second power due to the parameter choice \eqref{itm_par_choice5}, the fifth one because of parameter choice \eqref{itm_par_choice4} and the last one because of parameter choices \eqref{itm_par_choice2} and \eqref{itm_par_choice4}. Hence, again picking $\lambda$ sufficiently large (depending on $\e>0$ because the constant in front of the powers of $\lambda$ depends on $\e$, but this is not a problem, as $\e$ has already been fixed),  \eqref{eq_urho_L1} is proven.

\smallskip

Let us show \eqref{eq_u1u0_Walpha} next. Combining Lemmas \ref{l_w_sob_theta} and \ref{l_wc_sob_theta}, we get
\begin{equation*}
\begin{aligned}
\|u_1 - u_0\|_{C_t W^{\theta, \tilde p}_x} 
& \leq \|w\|_{C_t W^{\theta, \tilde p}_x}+ \|w_c\|_{C_t W^{\theta, \tilde p}_x} \\
& \leq C \left( 1 + \frac{\lambda \mu}{\nu} \right) \mu^{\frac{d}{s'} -\frac{d}{ \tilde p}} \nu^{\theta} \\
\text{(using that, by assumption, $\lambda \mu/ \nu < 1$)}
& \leq C \lambda^{\left( \frac{d}{s'} - \frac{d}{\tilde p} \right) \alpha + \theta \gamma},
\end{aligned}
\end{equation*}
and, again, the power of $\lambda$ is $<0$ because of parameter choice \eqref{itm_par_choice2} in Section \ref{ssec_parameter_choice}, since
\begin{equation*}
\left( \frac{d}{s'} - \frac{d}{\tilde p} \right) \alpha + \theta \gamma < 0 \quad \Longleftrightarrow \quad
\gamma - \frac{d}{\theta} \left( \frac{1}{s} + \frac{1}{\tilde p} - 1 \right) \alpha < 0.
\end{equation*}

\newpage
\subsection{Estimates on the error terms}\label{ssec_proofprop_est_error}

The new error term $R_1$ was defined in \eqref{eq_new_error}. The term $R^{\rm com}$ satisfies, by Lemma \ref{l_comm} and the choice of $\e$ in Section \ref{ssec_parameter_choice},
\begin{equation*}
\|R^{\mathrm{com}}\|_{L^\infty_\omega C_{tx}} \leq C(\|u_0\|_{C^1}, \|\rho_0\|_{L^\infty_\omega C^1_{tx}}) \e^{1/2 - \kappa} \leq \frac{\delta}{2}.
\end{equation*}
All other terms in the definition \eqref{eq_new_error} of $R_1$ are estimated by a constant (possibly depending on $\e$) times a power of $\lambda$. If we can show that all those powers of $\lambda$ are $<0$, then, by picking $\lambda$ large enough, we achieve \eqref{eq_R1_L1est} and the proof of Proposition \ref{prop_iteration_stage} (and thus also the proof of the main Theorem \ref{thm_main_SPDE}) is concluded. 

We summarise in the following table the estimates for the various terms and, for each of them, the parameter choice in Section \ref{ssec_parameter_choice} which guarantees that the corresponding power of $\lambda$ is $<0$, thus proving \eqref{eq_R1_L1est}. 

\bigskip

{\tabulinesep=1.2mm
   \begin{tabu}[h]{l|l|l|l|l}
Term & Lemma & Estimate & Powers of $\lambda$ & Parameter choice \\
\hline
$R^{\rm quadr}$   &   \ref{lem_Rquadr}  			& 		$\left( \frac{\lambda \mu}{\nu} + \frac{1}{\lambda} \right)$	& $1 + \alpha - \gamma$ \text{ and } $-1$  & \eqref{itm_par_choice2} \\[.5em]\hline 
$R^{{\rm time}, 1}$   & \ref{lem_Rtime1}  			&	$\frac{1}{\sigma}$  & $- \beta$		& obvious	\\[.5em]\hline 
$R^{{\rm sto}, 1}$	 & \ref{lem_Rsto1}		 		&   $\nu \ell^{\frac{1}{2} - \kappa}$ & $\gamma  - \zeta \left( \frac{1}{2} - \kappa \right)$ & \eqref{itm_par_choice5}		\\[.5em]\hline 
$R^{{\rm sto},2}$ &  \ref{lem_sto2} 					&		$\frac{\ell^{-\frac{1}{2} - \kappa}	}{\sigma}$ & $\zeta \left( \frac{1}{2} + \kappa \right) - \beta$ & \eqref{itm_par_choice4} and \eqref{itm_par_choice5}\\[.5em]\hline 
$R^{{\rm sto}, 3}$ &  \ref{lem_sto3} 					&		same as $R^{{\rm sto},2}$ & 		\\[.5em]\hline 
$R^{{\rm lin}}$  & \ref{lem_Rlin}  						&		$\mu^{-d/s} + \mu^{-d/s'}$ & $-\frac{d}{s} \alpha$ and $- \frac{d}{s'} \alpha$ & 	obvious	\\[.5em]\hline 
$R^{{\rm time}, 2}$ &  \ref{lem_Rtime2} 			&		$ \frac{\lambda \mu \sigma}{\nu} \mu^{-\frac{d}{s'}}  \left( 1 + \frac{(\lambda \mu)^N}{\nu^{N-1}} \right)$  & $1 + \left( 1 - \frac{d}{s'} \right) \alpha + \beta - \gamma$  and & \eqref{itm_par_choice4}  \\
&&& $1 + \left( 1 - \frac{d}{s'} \right) \alpha + \beta - \gamma   + (1 + \alpha) N - \gamma (N-1)$	& \eqref{itm_par_choice4} and \eqref{itm_par_choice3}\\[.5em]\hline 
$R^{{\rm time}, 3}$ & \ref{lem_Rtime3}  			&	same as $R^{{\rm time}, 2}$ & 			\\[.5em]\hline 
$R^{{\rm sto}, 4}$    & \ref{lem_Rsto4} 				&	$\mu^{-d/s'} \ell^{-1/2 - \kappa}$ & $-\frac{d}{s'} \alpha + \left( \frac{1}{2} + \kappa \right) \zeta$		& \eqref{itm_par_choice5}   \\[.5em]\hline 
$R^{{\rm sto}, 5}$   & \ref{lem_Rsto5} 				&	same as $R^{{\rm sto}, 4}$	&	 \\[.5em]\hline 
$R^{q}$   & \ref{lem_Rq} 									&		$\frac{\mu^{d/s'}}{\sigma}$ & $\frac{d}{s'} \alpha - \beta$	& \eqref{itm_par_choice4}\\[.5em]\hline 
$R^{{\rm corr}}$   & \ref{lem_Rcorr} 					&		$ \left[ 1 + \frac{\mu^{d/s'}}{\sigma} \right] \frac{\lambda \mu}{\nu}$ & $1 + \alpha - \gamma$ and $ \left( \frac{d}{s'} \alpha - \beta \right) + ( 1 + \alpha - \gamma)$ & \eqref{itm_par_choice2} and \eqref{itm_par_choice4}	
\end{tabu}}

\vspace{0.3cm}

\section{Sketch of the proof of Theorem \texorpdfstring{\ref{thm:spde-transp-diff}}{1.4}}
\label{s:diffusion}

The proof of Theorem \ref{thm:spde-transp-diff} is completely similar to the one of Theorem \ref{thm_main_SPDE} and, as in that case, it is based on a \textit{Main Proposition}. One considers the following version of the continuity-defect equation \eqref{eq_cont_defect}: 
\begin{equation}
\label{eq_cont_defect_diff}
    \begin{cases}
                \partial_{t} {\rho}(t,x;\o) + {u}(t,x+B(t; \omega)) \cdot \nabla {\rho}(t,x;\o)  - \Delta \rho(t,x; \o) & = - \divv  R(t, x; \omega), \\
                \divv {u} & = 0,
    \end{cases}
\end{equation}
for which distributional solutions as in in Definition \ref{def:cont-defect} are considered. The inductive step needed in the proof of Theorem \ref{thm:spde-transp-diff} is contained in the following proposition, which corresponds to  Proposition \ref{prop_iteration_stage} in the no-diffusion case. 
  
\begin{proposition}\label{prop_iteration_stage_diff}
    There exists an absolute constant $M > 0$ such that 
    the following holds. 
    For any $\delta> 0$ and any solution $({\rho}_{0}, u_{0}, R_{0})$ of \eqref{eq_cont_defect_diff} 
        there exists another solution $( \rho_{1},  u_{1}, R_{1})$ of \eqref{eq_cont_defect_diff}
which satisfies the estimates
    \begin{align}
        \| {\rho}_{1} - {\rho}_{0} \|_{L^\infty_\omega C_t L^{p}_x} &\leq \delta, \\
        \| {\rho}_{1} {u}_{1} ( \Psi )- {\rho}_{0} {u}_{0} ( \Psi )\|_{L^\infty_\omega C_t L^{1}_x} &\leq M  \| R_{0} \|_{L^\infty_\omega C_t L^{1}_x} + \delta,  \\
        \| {u}_{1} - {u}_{0} \|_{L^\infty_\omega C_t W^{\theta,\tilde{p}}_x} &\leq \delta, \\
        \| R_{1} \|_{L^\infty_\omega C_t L^{1}_x} &\leq \delta.  
    \end{align}
   Furthermore the following holds:  
    \begin{equation}
    \begin{aligned}
        & \text{if there is $a \in (0,1]$ such that } 
        \rho_{0}, R_{0} = 0 \ \text{ $\P$-a.s.  on } [0,\min\{\tau(\omega),a\}] \times \T^d,  \\
     &   \text{then } \rho_{1}, R_{1} = 0 \ \text{ $\P$-a.s.  on } [0,\min\{\tau(\omega),a- \delta\}] \times \T^d  .
    \end{aligned}
    \end{equation}
\end{proposition}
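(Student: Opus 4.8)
The plan is to mimic, line by line, the proof of Proposition~\ref{prop_iteration_stage}, using the \emph{identical} perturbations $\rho_1 = \rho_\e + \vt + \vt_{c} + q + q_{c}$ and $u_1 = u_0 + w + w_{c}$ from Section~\ref{sec_perturbations} (with the same $\e$- and $\ell$-mollifications and the same Mikado building blocks), and then adding to the new defect two extra terms produced by the diffusion operator. Since $(\rho_0, u_0, R_0)$ solves \eqref{eq_cont_defect_diff} and mollification commutes with $\partial_t$, $\divv$ and $\Delta$, the mollified density satisfies $\partial_t \rho_\e + \divv\left((u_0(\Psi)\rho_0)_\e\right) - \Delta \rho_\e = -\divv R_\e$; hence in the error decomposition \eqref{eq_error_decomp} the identity $\partial_t\rho_\e + \divv((u_0(\Psi)\rho)_\e) = -\divv R_\e$ is replaced by $\partial_t\rho_\e + \divv((u_0(\Psi)\rho)_\e) = -\divv R_\e + \Delta\rho_\e$. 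Subtracting $\Delta\rho_1$ from the equation satisfied by $\rho_1$, the term $\Delta\rho_\e$ cancels its counterpart in $\Delta\rho_1 = \Delta\rho_\e + \Delta\vt + \Delta q$ (observe $\Delta\vt_{c} = \Delta q_{c} = 0$, as $\vt_{c}, q_{c}$ depend on $t$ only), and one is left with the single new contribution $-\Delta\vt - \Delta q$ on the right-hand side. Writing $-\Delta\vt = -\divv(\nabla\vt)$ and $-\Delta q = -\divv(\nabla q)$, it suffices to set
\begin{equation*}
R^{\mathrm{diff},1} := \nabla\vt, \qquad R^{\mathrm{diff},2} := \nabla q,
\end{equation*}
and to define the new defect by adding $R^{\mathrm{diff},1} + R^{\mathrm{diff},2}$ to the field \eqref{eq_new_error} of the pure transport case. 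No antidivergence operator is needed here, since these two terms are already in divergence form; and the smoothness/measurability properties required by Definition~\ref{def:cont-defect} hold since $\vt, q$ are in fact $C^\infty$ in $(t,x)$ (the mollified objects $R_\e$, $B_\ell$ being smooth), with $\omega$-uniform $C^1_{\tau x}$ bounds as proven in Section~\ref{sec_perturbations}.

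It remains only to estimate the two new terms, all others being bounded verbatim as in Sections~\ref{sec_perturbations}--\ref{sec_momentum}. By the Leibniz rule, and recalling that $\Psi_\ell$ acts on the space variable as a pure translation (so $\nabla_x(\Theta^j(\Psi_\ell)) = (\nabla_x\Theta^j)(\Psi_\ell)$),
\begin{equation*}
\nabla\vt = \sum_{j=1}^{d} (\nabla R_\e^j)\,\Theta^j(\Psi_\ell) + \sum_{j=1}^{d} R_\e^j\,(\nabla\Theta^j)(\Psi_\ell),
\end{equation*}
so that, by Lemma~\ref{l_mollified_error} and \eqref{eq_Theta} with $r=1$ (for $k=0$ and $k=1$),
\begin{equation*}
\|R^{\mathrm{diff},1}\|_{L^\infty_\omega C_\tau L^1_x} \leq C(\e, \|R_0\|_{L^\infty_\omega C_\tau L^1_x})\, \mu^{-d/s'}\nu,
\end{equation*}
and, in the same way, using \eqref{eq_Q} in place of \eqref{eq_Theta},
\begin{equation*}
\|R^{\mathrm{diff},2}\|_{L^\infty_\omega C_\tau L^1_x} \leq C(\e, \|R_0\|_{L^\infty_\omega C_\tau L^1_x})\, \frac{\nu}{\sigma}.
\end{equation*}
Under the substitutions $\mu = \lambda^\alpha$, $\nu = \lambda^\gamma$, $\sigma = \lambda^\beta$ of Section~\ref{ssec_parameter_choice}, these read $\lambda^{\gamma - \frac{d}{s'}\alpha}$ and $\lambda^{\gamma - \beta}$; both exponents are strictly negative, because the parameter choices \eqref{itm_par_choice2} and \eqref{itm_par_choice4} already force $\gamma < \frac{\frac{1}{2} - \kappa}{\frac{1}{2} + \kappa}\,\frac{d}{s'}\alpha < \frac{d}{s'}\alpha < \beta$. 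Hence, for $\lambda$ large enough, $\|R^{\mathrm{diff},1}\|_{L^\infty_\omega C_\tau L^1_x} + \|R^{\mathrm{diff},2}\|_{L^\infty_\omega C_\tau L^1_x} \leq \delta$, so the bound on $R_1$ is preserved; the estimates on $\|\rho_1 - \rho_0\|$, $\|\rho_1 u_1(\Psi) - \rho_0 u_0(\Psi)\|$ and $\|u_1 - u_0\|$ are literally unchanged, since the perturbations are unchanged. Finally, $R^{\mathrm{diff},1}$ and $R^{\mathrm{diff},2}$ are linear in $R_\e$, hence vanish on $[0, \min\{\tau(\omega), a - \delta\}] \times \T^d$ whenever $R_\e$ does, so the vanishing property is inherited exactly as in Section~\ref{ssec_fullson}.

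The one point requiring care — and the obvious place where one could go wrong — is that $\Delta$ must be split as $\divv\nabla$ and only \emph{one} derivative's worth of frequency loss placed into the defect: estimating $\|\Delta\vt\|_{L^1_x}$ directly would produce $\mu^{-d/s'}\nu^2 = \lambda^{2\gamma - \frac{d}{s'}\alpha}$, whose exponent need not be negative under the above choices. Exploiting the divergence structure of $\Delta$ is therefore essential, and it is exactly this slack — already available after treating the stochastic correctors — that explains why \emph{no} condition on $p, \tilde p, \theta$ beyond \eqref{eq_cond_exp} and \eqref{eq_cond_exp_2} is needed in the transport--diffusion case. The passage from Proposition~\ref{prop_iteration_stage_diff} to Theorem~\ref{thm:spde-transp-diff} then follows the proof of Theorem~\ref{thm_main_SPDE} line by line, the only change being that the It\^o expansion along the iterates now also contributes the deterministic term $\int_0^t\int_{\T^d}\tilde\rho_n\,\Delta\phi\,dx\,ds$, which converges by \eqref{eq_rhon_conv_1}.
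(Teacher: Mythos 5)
Your proposal matches the paper's own proof essentially verbatim: the same perturbations $\rho_1, u_1$, the additional defect term $R^{\rm diff}=\nabla\vartheta+\nabla q$ absorbing the diffusion, the same $L^1_x$ estimates giving $\mu^{-d/s'}\nu$ and $\nu/\sigma$, and the same parameter choices (\ref{itm_par_choice2}) and (\ref{itm_par_choice4}) ensuring the exponents of $\lambda$ are negative. Your aside about why $\Delta$ must be treated as $\divv\nabla$ rather than estimated directly (avoiding the factor $\nu^2$) is a correct observation that the paper leaves implicit.
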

Theorem \ref{thm:spde-transp-diff} follows from Proposition \ref{prop_iteration_stage_diff} in the very same way as Theorem \ref{thm_main_SPDE} follows from Proposition \ref{prop_iteration_stage}. The proof is Proposition \ref{prop_iteration_stage_diff} is carried out in the same way as the one of Proposition \ref{prop_iteration_stage}, with the only difference that, compared to the error decomposition \eqref{eq_new_error}, there is an additional error term in the definition of $R_1$ given by
\begin{equation*}
R^{\rm diff} := \nabla \vartheta + \nabla q, 
\end{equation*}
which can be made arbitrarily small by choosing $\lambda$ large enough, as shown in the following lemma.
\begin{lemma}
It holds that
\begin{equation*}
\|R^{\rm diff}\|_{L^\infty_\omega C_\tau L^1_x} 
\leq C \left(\frac{\nu}{\mu^{d/s'}}  + \frac{\nu}{\sigma} \right) 
\leq C \frac{\nu}{\mu^{d/s'}}
\leq C  \lambda^{\gamma - \frac{d}{s'} \alpha},
\end{equation*}
where $C = C \left(\e,  \|R_0\|_{L^\infty_\omega C_\tau L^1_x} \right)$ and the exponent $\gamma - \frac{d}{s'} \alpha < 0$. 
\end{lemma}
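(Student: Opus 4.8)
The plan is to reduce the bound to the Mikado estimates of Proposition~\ref{prop_Mikado_est} together with the mollified-error estimates of Lemma~\ref{l_mollified_error}, exactly in the style of the defect estimates of Section~\ref{sec_defect_estimates}. First I would use that, by Lemma~\ref{l_psi_ell}, the map $\Psi_\ell$ acts on the spatial variable only as the translation $x \mapsto x + B_\ell(t;\omega)$; consequently $\nabla_x(\Theta^j(\Psi_\ell)) = (\nabla\Theta^j)(\Psi_\ell)$ and $\nabla_x(Q^j(\Psi_\ell)) = (\nabla Q^j)(\Psi_\ell)$, and for any $r \in [1,\infty]$ the $L^r_x$-norm of such a composition coincides with that of the un-translated function. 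Applying the Leibniz rule to $\vartheta = \sum_{j=1}^d R_\e^j\,\Theta^j(\Psi_\ell)$ and $q = \sum_{j=1}^d R_\e^j\,Q^j(\Psi_\ell)$ then gives
\begin{equation*}
\nabla\vartheta = \sum_{j=1}^d (\nabla R_\e^j)\,\Theta^j(\Psi_\ell) + \sum_{j=1}^d R_\e^j\,(\nabla\Theta^j)(\Psi_\ell), \qquad \nabla q = \sum_{j=1}^d (\nabla R_\e^j)\,Q^j(\Psi_\ell) + \sum_{j=1}^d R_\e^j\,(\nabla Q^j)(\Psi_\ell).
\end{equation*}
Since $R^{\rm diff}$ involves only spatial gradients, no factor $\dot B_\ell$ appears, so there is no $\ell$-dependence to track here.

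Next I would estimate each summand in $L^1_x$ by H\"older's inequality, placing $R_\e$ (resp.\ $\nabla R_\e$) in $L^\infty_x$ and the Mikado factor in $L^1_x$. By Lemma~\ref{l_mollified_error}, $\|R_\e\|_{L^\infty_\omega C^1_{\tau x}} \leq C(\e, \|R_0\|_{L^\infty_\omega C_\tau L^1_x})$, so both $\|R_\e(t;\omega)\|_{L^\infty_x}$ and $\|\nabla R_\e(t;\omega)\|_{L^\infty_x}$ are bounded by this constant; while \eqref{eq_Theta} and \eqref{eq_Q} with $r=1$, $k=0,1$ give $\|\Theta^j\|_{L^\infty_t L^1_x} \leq C\mu^{-d/s'}$, $\|\nabla\Theta^j\|_{L^\infty_t L^1_x} \leq C\mu^{-d/s'}\nu$, $\|Q^j\|_{L^\infty_t L^1_x} \leq C/\sigma$ and $\|\nabla Q^j\|_{L^\infty_t L^1_x} \leq C\nu/\sigma$. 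Using $\nu \geq 1$ to absorb the lower-order terms, this yields $\|\nabla\vartheta(t;\omega)\|_{L^1_x} \leq C(\e,\|R_0\|_{L^\infty_\omega C_\tau L^1_x})\,\nu\mu^{-d/s'}$ and $\|\nabla q(t;\omega)\|_{L^1_x} \leq C(\e,\|R_0\|_{L^\infty_\omega C_\tau L^1_x})\,\nu/\sigma$ for all $t \in [0,\tau(\omega)]$, $\P$-a.s.; taking the maximum over $t$ and the essential supremum over $\omega$ gives the first inequality, $\|R^{\rm diff}\|_{L^\infty_\omega C_\tau L^1_x} \leq C(\nu\mu^{-d/s'} + \nu/\sigma)$.

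Finally I would convert to powers of $\lambda$ via $\mu = \lambda^\alpha$, $\nu = \lambda^\gamma$, $\sigma = \lambda^\beta$: one has $\nu/\sigma = \lambda^{\gamma-\beta}$ and $\nu\mu^{-d/s'} = \lambda^{\gamma - \frac{d}{s'}\alpha}$, and parameter choice~\eqref{itm_par_choice4} gives $\beta > \frac{d}{s'}\alpha$, hence $\nu/\sigma \leq \nu\mu^{-d/s'}$, which is the second inequality. For the third, parameter choice~\eqref{itm_par_choice2} forces $\gamma < \frac{\frac{1}{2}-\kappa}{\frac{1}{2}+\kappa}\frac{d}{s'}\alpha < \frac{d}{s'}\alpha$, so the exponent $\gamma - \frac{d}{s'}\alpha$ is strictly negative. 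There is no serious obstacle here: the only point demanding a little care is the bookkeeping that separates the (already fixed, possibly large) $\e$-dependent constants from the negative powers of $\lambda$, together with the observation that $\Psi_\ell$ being a pure spatial translation lets every translated Mikado factor be estimated by its un-translated $L^1_x$ norm, with no residual Brownian contribution.
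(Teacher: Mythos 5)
Your proof is correct and is essentially the same as the paper's, just written more explicitly: where you unfold $\nabla\vartheta$ and $\nabla q$ via Leibniz and then apply H\"older term by term, the paper bounds $\|\nabla\vartheta(t,\cdot;\omega)\|_{L^1_x}$ directly by $\sum_j\|R_\e^j\|_{C^1_x}\|\Theta^j\|_{W^{1,1}_x}$ (using that $\Psi_\ell$ is a spatial translation), which encodes exactly the same computation in one line; the reductions of the second and third inequalities to parameter choices \eqref{itm_par_choice4} and \eqref{itm_par_choice2} are identical.
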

\begin{proof}
We have
\begin{equation*}
\|\nabla \vartheta(t, \cdot; \omega)\|_{L^1_x} \leq \sum_j \|R_\e^j (t, \cdot; \omega)\|_{C^1_x} \|\Theta^j (\Psi_\ell(t, \cdot; \omega))\|_{W^{1,1}_x}
=  \sum_j \|R_\e^j (t, \cdot; \omega)\|_{C^1_x} \|\Theta^j \|_{W^{1,1}_x}.
\end{equation*}
Hence, using Lemma \ref{l_mollified_error} for $R_\e^j$ and \eqref{eq_Theta} for $\Theta^j$, we get
\begin{equation*}
\|\nabla \vartheta\|_{L^\infty_\omega C_\tau L^1_x} \leq \sum_j \|R_\e^j\|_{L^\infty_\omega C_\tau C^1_x} \|\Theta^j\|_{W^{1,1}_{x}} \leq C \left(\e,  \|R_0\|_{L^\infty_\omega C_\tau L^1_x} \right) \mu^{-\frac{d}{s'}} \nu. 
\end{equation*}
A similar argument gives
\begin{equation*}
\|\nabla q\|_{L^\infty_\omega C_\tau L^1_x} \leq C \left(\e,  \|R_0\|_{L^\infty_\omega C_\tau L^1_x} \right) \frac{\nu}{\sigma},
\end{equation*}
thus proving the first inequality in the statement. The second inequality in the statement follows from the fact that $\mu^{d/s'} \leq \sigma$ by parameter choice \eqref{itm_par_choice4}. The last inequality follows from the definition of $\mu = \lambda^\alpha, \nu = \lambda^\gamma$. Finally, we have, by parameter choice \eqref{itm_par_choice2}, 
\begin{equation*}
\gamma < \underbrace{\frac{\frac{1}{2} - \kappa}{\frac{1}{2} + \kappa}}_{<1} \frac{d}{s'} \alpha < \frac{d}{s'} \alpha,
\end{equation*}
thus showing that $\gamma - \frac{d}{s'} \alpha < 0$ . This concludes the proof of the lemma, and hence also the proof of Proposition \ref{prop_iteration_stage_diff} and Theorem \ref{thm:spde-transp-diff}.  
\end{proof} 

\appendix
\section{An Interpolation Inequality for Fractional Sobolev Spaces}\label{sec_AppA}

We will need the following interpolation inequality which is most likely well-known but we could not find a precise reference for it. For the convenience of the reader, we give a short argument for it.

\begin{proposition}\label{prop_interpolation}
    Let $q \in (1,\infty)$ and $f \in W^{1,q}(\T^{d})$. Then for any $\theta \in (0,1)$,
    \begin{equation}
    \label{eq_interpolation}
        \| f \|_{W^{\theta, q}(\T^{d})} \leq C \| f \|_{L^{q}(\T^{d})}^{1-\theta} \| f \|_{W^{1,q}(\T^{d})}^{\theta}.
    \end{equation}
\end{proposition}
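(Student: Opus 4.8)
The plan is to prove the inequality by Littlewood--Paley (or equivalently Fourier-multiplier / Bessel-potential) interpolation, keeping track of the fact that on the torus the norm $\|f\|_{W^{\theta,q}}$ is defined via the Bessel potential $(\mathrm{Id}-\Delta)^{\theta/2}$. The cleanest route is to realise $\|f\|_{W^{\theta,q}}$ as a complex (or real) interpolation norm between $L^q=W^{0,q}$ and $W^{1,q}$: one has $[L^q(\T^d),W^{1,q}(\T^d)]_\theta = W^{\theta,q}(\T^d)$ with equivalence of norms for $q\in(1,\infty)$, since the Bessel potential operators $(\mathrm{Id}-\Delta)^{s/2}$ are bounded on $L^q(\T^d)$ for all $s\in\R$ by the Mikhlin--H\"ormander multiplier theorem, and complex interpolation of the scale $\{(\mathrm{Id}-\Delta)^{-s/2}L^q\}_s$ behaves as expected. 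Once this identification is in hand, the desired estimate \eqref{eq_interpolation} is exactly the defining interpolation inequality $\|f\|_{[X_0,X_1]_\theta}\le C\|f\|_{X_0}^{1-\theta}\|f\|_{X_1}^\theta$ for the complex interpolation functor.

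\textbf{Key steps, in order.} First I would recall the multiplier fact: for every $s\in\R$, the operator $J^s:=(\mathrm{Id}-\Delta)^{s/2}$, acting on $\T^d$ via $\widehat{J^sf}(k)=(1+4\pi^2|k|^2)^{s/2}\hat f(k)$, is an isomorphism $W^{\sigma,q}\to W^{\sigma-s,q}$ for all $\sigma$ and all $q\in(1,\infty)$; in particular $\|f\|_{W^{\theta,q}} \simeq \|J^\theta f\|_{L^q}$, which is the definition used in the excerpt. Second, identify $W^{\theta,q}$ as the complex interpolation space: since $J^{-1}$ maps $L^q$ isometrically (up to constants) onto $W^{1,q}$ and $J^{-\theta}$ maps $L^q$ onto $W^{\theta,q}$, the claim $[L^q,W^{1,q}]_\theta=W^{\theta,q}$ follows from the retract/lifting property of complex interpolation together with the elementary fact $[L^q,L^q]_\theta=L^q$. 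Third, invoke the general interpolation inequality for $[\cdot,\cdot]_\theta$ to conclude \eqref{eq_interpolation}. Alternatively, if one prefers to avoid quoting abstract interpolation theory, I would instead give a direct Littlewood--Paley proof: write $f=\sum_j \Delta_j f$ with $\Delta_j$ the (periodic) dyadic blocks, use $\|\Delta_jf\|_{L^q}\le \min(1,2^{-j})\cdot$(appropriate norm), and estimate the $W^{\theta,q}$-norm via the square-function characterisation $\|f\|_{W^{\theta,q}}\simeq \|(\sum_j 2^{2j\theta}|\Delta_jf|^2)^{1/2}\|_{L^q}$, splitting the sum over $j$ at the threshold where $2^{j}$ crosses $\|f\|_{W^{1,q}}/\|f\|_{L^q}$ and optimising — this reproduces the product $\|f\|_{L^q}^{1-\theta}\|f\|_{W^{1,q}}^\theta$.

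\textbf{Main obstacle.} The only genuinely delicate point is the boundedness of the Bessel-potential multipliers and of the Littlewood--Paley square function on $L^q(\T^d)$ for $q\ne 2$; this is where the hypothesis $q\in(1,\infty)$ is used, and it rests on the Mikhlin--H\"ormander multiplier theorem (or the Calder\'on--Zygmund theory behind it), which also explains why the endpoint $q=1$ is excluded — exactly the restriction flagged in the introduction of the paper. Everything else (the lifting property of complex interpolation, the elementary optimisation in the Littlewood--Paley argument, transferring results from $\R^d$ to $\T^d$ via periodisation or de Leeuw-type transference) is routine. I would therefore present the short proof as: (i) state the multiplier boundedness, (ii) identify $W^{\theta,q}=[L^q,W^{1,q}]_\theta$ via lifting, (iii) apply the interpolation inequality; and relegate the square-function alternative to a remark.
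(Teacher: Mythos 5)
Your proposal is correct and follows essentially the same route as the paper: identify $W^{\theta,q}(\T^d) = [L^q(\T^d), W^{1,q}(\T^d)]_\theta$ by complex interpolation (which the paper does by citing Taylor's PDE book, while you sketch the underlying Bessel-potential lifting argument), and then apply the standard interpolation inequality for the complex interpolation functor (which the paper cites from Lunardi). The extra detail you give about Mikhlin–H\"ormander and the Littlewood--Paley alternative is fine but not needed; the paper outsources all of that to the two references.
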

\begin{proof}
    Note that we have $W^{1,q} \subset W^{0,q} = L^{q}$. We then apply \cite[Ch. 13, Eq. $(6.6)$, p. 23]{Taylor3} with $M = \T^{d}$, $\theta = \theta$, $\sigma = 0$, $s = 1$, $p = q > 1$ to get
    \begin{align*}
        [W^{0,q}(\T^{d}),W^{1,q}(\T^{d})]_{\theta} = W^{\theta,q}(\T^{d}).
    \end{align*}
    We combine this with \cite[Corollary 2.8, p. 53]{Lunardi18} with $X = L^{q}(\T^{d};\C)$, $Y = W^{1,q}(\T^{d};\C)$ to get 
    \begin{align*}
        \| f \|_{W^{\theta, q}(\T^{d};\C)} \leq C \| f \|_{[W^{0,q}(\T^{d}),W^{1,q}(\T^{d})]_{\theta}} \leq C \| f \|_{L^{q}(\T^{d};\C)}^{1-\theta} \| f \|_{W^{1, q}(\T^{d};\C)}^{\theta}.
    \end{align*}
    But our function $f$ is real-valued, so the norms in the above inequality are all over real-valued $W^{\theta,q}$-spaces, which concludes the proof. 
\end{proof}

\section{Uniqueness of the Stochastic Transport Equation}\label{sec_AppB}
In this section, we want to sketch how to modify the uniqueness proof of \cite{BFGM19} in order to include the whole 
parameter range below the lines $\frac{1}{p} + \frac{1}{\tilde{p}} = 1 + \frac{\theta}{d}$ and $\frac{1}{\tilde p} = \frac{1}{d} + \frac{\theta}{d}$ of Fig. \ref{fig_main_result} (i.e., the (dotted) green and blue areas), without assuming $p \geq 2$.

The uniqueness proof of \cite{BFGM19} uses two essential ingredients: 
\begin{enumerate}
 \item A proof of regularity for the final value problem for the dual backwards transport equation if the final value is sufficiently regular, cf. \cite[Corollary 3.7, after Corollary 2.8]{BFGM19}, and Proposition \ref{prop_AppB_approx} below.
 \item A duality approach, essentially testing the solution to the stochastic transport equation \eqref{eq_STE_proper} with the (time-dependent) test function that is the solution of the dual backwards transport equation, cf. \cite[Proposition 3.12]{BFGM19}, and Proposition \ref{prop_AppB_duality} below.
\end{enumerate}
\begin{remark}
 Both proofs work in exactly the same way in the case we are considering in this Appendix, i.e. the (dotted) green and blue areas of Figure \ref{fig_main_result},  and are actually much simplified (but still rather lengthy), as in our case, many of the complications of that paper do not arise. Let us name a few examples:  in \cite[Condition 2.2]{BFGM19}, in our case, in their notation, $b^{(2)} = 0$ and $c^{(1)} = c^{(2)} = 0$. Furthermore, in the present paper, we only deal with the much easier case of the torus $\T^{d}$, so a number of weights, cutoff functions and additional integrability conditions are not necessary or automatically satisfied, respectively. For example, in \cite[Theorem 2.7, Corollary 2.8]{BFGM19}, $\chi \equiv 1$ is sufficient for us, in \cite[Lemma 3.10]{BFGM19}, we immediately get the global result for $R = \infty$, all local integrability conditions are global, all functions immediately have compact support as $\T^{d}$ is compact. Furthermore, in the crucial uniqueness theorem, \cite[Theorem 3.14]{BFGM19}, we can choose the exponent $\alpha = 0$.
\end{remark}
  In the following, we state the versions of the main ingredients as they should hold in our case, and only sketch how they can be combined to show the actual uniqueness of Eq. \eqref{eq_STE_proper}.

We assume the following conditions on our coefficients
\begin{align*}
    u \colon [0,{1}] \times \T^{d} \to \R^{d}, \quad \text{\textit{deterministic}}, \quad \divv u = 0, \quad u \in L_{t}^{\infty} W_{x}^{\theta,\tilde{p}}.
\end{align*}
We will assume that the parameters $p,\tilde{p},\theta$ satisfy $p \in (1,\infty)$, $\tilde{p} \in (1,\infty)$, $\theta \in [0,1]$ such that
\begin{align}\label{eq_main_scaling_cond_app_1}
    \frac{1}{p} + \frac{1}{\tilde{p}} < 1 + \frac{\theta}{d}, \quad \frac{d}{\tilde{p}} < 1 + \theta,
\end{align}
i.e. that  the pair $(1/p, 1/\tilde p)$ belongs to the (dotted) green or blue area of Figure \ref{fig_main_result}. By the Sobolev embedding, we have
\begin{align*}
    u \in L_{t}^{\infty}W_{x}^{\theta,\tilde{p}} \subset L_{t}^{\infty} L_{x}^{q},
\end{align*}
where $q$ now satisfies
the conditions
\begin{equation}
 \label{eq_main_scaling_cond_app}
    \frac{1}{p} + \frac{1}{q} < 1, \quad \frac{d}{q} < 1.
\end{equation}
For the greatest technical simplicity, we assume strict inequalities in \eqref{eq_main_scaling_cond_app_1} (and thus also in \eqref{eq_main_scaling_cond_app}). Beck \textit{et al.} \cite{BFGM19} also treat the critical case of the LPS condition, which corresponds to $d/q = 1$ in \eqref{eq_main_scaling_cond_app}, but this needs an additional smallness condition, cf. \cite[Condition 2.2 c)]{BFGM19}. 
We expect that, under a similar smallness condition, the argument we outline below should yield uniqueness also in the case $d/q = 1$, as the smallness condition  seems to be needed only in the proof of the \textit{a priori} estimates in Proposition \ref{prop_AppB_approx} below, cf. \cite[Theorem 2.7]{BFGM19}.

We will also exclude the case of exact equality in the first condition in \eqref{eq_main_scaling_cond_app_1}, which corresponds to exact duality of the parameters $p,q$ (cf. \eqref{eq_main_scaling_cond_app}, after Sobolev embedding). {\color{red} Note also that we assumed $p > 1$.} 
The argument sketched below, in its current form, can not accommodate either of these cases, cf. Eq.  \eqref{eq_uniq_prime_est}, where it is important that $p'r' < \infty$ and hence {\color{red}$r = q/p' > 1$ and $p > 1$.}

Our notion of solution is still the one of Definition \ref{def_local_weak_soln} for $\rho$ of class $L^{\infty}_{\o} L_{t}^{\infty} L_{x}^{p}$. Note, however, that the condition $\rho u \in L_{\o}^{\infty} L_{t}^{\infty} L_{x}^{1}$ is now automatically satisfied by Eq. \eqref{eq_main_scaling_cond_app} and H\"older's inequality, as it was for the solutions of \cite{BFGM19}. Furthermore, the results we obtain in this appendix will hold for \textit{global-in-time} solutions, so the stopping time of Definition \ref{def_local_weak_soln} can be taken as $\tau = 1$.

We will work, similar to the rest of the paper, with the shifted equation, for which we define a notion of solution as in \cite[Definition 3.3]{BFGM19}. Furthermore, we will need this also for the backward equation to apply the duality method.

\newpage
\begin{definition}[Weak solutions to the forward and backward shifted equations]\label{def_weak_soln_shifted} 
Let $p \geq 1$, $u \in L^{1}([0,1] \times \T^{d};\R^{d})$ be a deterministic vector field and let $\o \in \O$ be fixed. Define $\tilde{u}(t,x;\o) := u(t, x - B(t;\o))$. We will write $\tilde{u}(\o)$ for the function $(t,x) \mapsto \tilde{u}(t,x;\o)$ and use similar notations for other random space-time functions.
\begin{enumerate}
 \item Let $\rho_{0} \in L^{p}(\T^{d})$. A function $\tilde{\rho}(\o) \colon  [0,1] \times \T^{d} \to \R$ is a \textbf{weak solution of class ${L_{t}^{\infty}}L^{p}_{x}$ to the  shifted (forwards) equation}
\begin{equation}\label{eq_forward_shifted}
    \begin{cases}
    \partial_{t} \tilde{\rho}(\o) + \tilde{u}(\o) \cdot \nabla \tilde{\rho}(\o) &= 0, \qquad t \in [0,1],  \\
    \tilde{\rho}(0;\o) &= \rho_{0},
    \end{cases}
\end{equation}
if
\begin{enumerate}[label = (\roman*)]
 \item \label{itm_def_fwd_shifted_soln1} $\tilde{\rho}(\o)$ is in ${L^{\infty}}([0,1];L^{p}(\T^{d}))$ and $\tilde{\rho}(\o) \tilde{u}(\o)$ is in ${L^{\infty}}([0,1]; L^{1}(\T^{d};\R^{d}))$.
 \item  \label{itm_def_fwd_shifted_soln2} For every $\phi \in C^{1}([0,1];C^{\infty}(\T^{d}))$\footnote{For a function $f = f(t,x)$, by $f \in C^1([0,1]; C^\infty(\T^d))$ we mean that both $f$ and its (distributional) time derivative $\partial_t f$ belong to $C([0,1]; C^k(\T^d))$ for all $k \in \N$.} ,  $t \mapsto  \int_{\T^{d}} \tilde{\rho}(t,x;\o) \phi(t,x) dx$ is continuous; more precisely, the map has a continuous representative, where by  representative we mean a function which coincides with $t \mapsto \int \tilde{\rho}(t,x;\o) \phi(t,x) dx$ for a.e. $t \in [0,1]$.
 \item \label{itm_def_fwd_shifted_soln3} For every $\phi \in C^{1}([0,1];C^{\infty}(\T^{d}))$, for this continuous modification of $\int_{\T^{d}} \tilde{\rho}(t,x;\o) \phi(t,x) dx$, it holds that, for all $t \in [0,1]$,
 \begin{equation}
\int_{\T^d} \tilde{\rho}(t,x; \omega) \phi(t,x) dx =  \int_{\T^d} {\rho}_0(x) \phi(0,x) dx + \int_0^t  \int_{\T^d} \tilde{\rho}(s,x; \omega) \tilde{u}(s,x;\o) \cdot \nabla \phi(s,x) dx ds.
\end{equation}
\end{enumerate}
 \item Let $\psi_{0} \in L^{p}(\T^{d})$ and $t_{f} \in [0,1]$. 
A function $\tilde{\psi}(\o) \colon  [0,t_{f}] \times \T^{d} \to \R$ is a \textbf{weak solution of class ${L_{t}^{\infty}}L^{p}_{x}$ to the dual shifted backwards equation}
\begin{equation}\label{eq_backward_shifted}
    \begin{cases}
    \partial_{t} \tilde{\psi}(\o) + \tilde{u}(\o) \cdot \nabla \tilde{\psi}(\o) &= 0, \qquad t \in [0,t_{f}],  \\
    \tilde{\psi}(t_{f};\o) &= \psi_{0}(x - B(t_{f};\o)),
    \end{cases}
\end{equation}
if 
\begin{enumerate}[label = (\roman*)]
 \item \label{itm_def_shifted_soln1} $\tilde{\psi}(\o)$ is in ${L^{\infty}}([0,t_{f}];L^{p}(\T^{d}))$ and $\tilde{\psi}(\o) \tilde{u}(\o)$ is in ${L^{\infty}}([0,t_{f}]; L^{1}(\T^{d};\R^{d}))$.
 \item  \label{itm_def_shifted_soln2} For every $\phi \in C^{1}([0,t_{f}];C^{\infty}(\T^{d}))$,  $t \mapsto  \int_{\T^{d}} \tilde{\psi}(t,x;\o) \phi(t,x) dx$ is continuous; more precisely, the map has a continuous representative, where by  representative we mean a function which coincides with $t \mapsto \int \tilde{\psi}(t,x;\o) \phi(t,x) dx$ for a.e. $t \in [0,t_{f}]$.
 \item \label{itm_def_shifted_soln3} For every $\phi \in C^{1}([0,t_{f}];C^{\infty}(\T^{d}))$, for this continuous modification of $\int_{\T^{d}} \tilde{\psi}(t,x;\o) \phi(t,x) dx$, it holds that, for all $t \in [0,t_{f}]$,
 \begin{equation}
\int_{\T^d} \tilde{\psi}(t,x; \omega) \phi(t,x) dx =  \int_{\T^d} {\psi}_0(x-B(t_{f};\o)) \phi(t_{f},x) dx - \int_t^{t_f}  \int_{\T^d} \tilde{\psi}(s,x; \omega) \tilde{u}(s,x;\o) \cdot \nabla \phi(s,x) dx ds.
\end{equation}
\end{enumerate}
\end{enumerate}

\end{definition}

The first main ingredient is the following.
\begin{proposition}[Analogue of  \cite{BFGM19}, Corollary 3.7]\label{prop_AppB_approx}
    Let $m$ be an even integer and $t_{f} \in [0,1]$. 
    Let $W(t) := B(t) - B(t_{f})$ be the associated backwards Brownian motion. Assume that $u \in L_{t}^{\infty} L_{x}^{q}$ and that the conditions \eqref{eq_main_scaling_cond_app} hold. Further assume that there are $C^{\infty}([0,1]\times \T^{d})$ functions $(u_{\eps})_{\eps > 0}$  satisfying 
    \begin{equation}\label{eq_uniq_approx_ass}
        u_{\eps} \overset{\eps \downarrow 0}{\to} u \ \text{a.e. in }[0,1] \times \T^{d} \text{ and } \| u_{\eps} \|_{L^{\infty}_{t} L_{x}^{q}} \leq 2 \| u \|_{L^{\infty}_{t} L_{x}^{q}}.
    \end{equation}
    Then there exists a constant $C$, independent of $\eps$, such that, for every $\psi_{0} \in C^{\infty}(\T^{d})$, the solution $\psi_{\eps}$ of 
    \begin{equation}\label{eq_backward_STE}
        \begin{cases}
                 d\psi_{\eps} + u_{\eps} \cdot \nabla \psi_{\eps} dt + \nabla \psi_{\eps} \circ dW(t) &= 0, \quad t \in [0,t_{f}] \\
                 \psi_{\eps}(t_{f}) &= \psi_{0},
        \end{cases}
    \end{equation}
    is smooth and satisfies, for all $\eps \in (0,1)$
    \begin{align*}
        \sup_{t \in [0,1]} \E \left[ \| \psi_{\eps}(t,\cdot) \|_{W^{1,m}(\T^{d})}^{m} \right] \leq C \| \psi_{0} \|_{W^{1,2m}(\T^{d})}^{m}.
    \end{align*}
\end{proposition}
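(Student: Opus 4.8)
The plan is to derive an $\eps$-uniform a priori estimate for the smooth solutions $\psi_\eps$ of the (backward) stochastic transport equation \eqref{eq_backward_STE} by testing the equation against $|\nabla \psi_\eps|^{m-2} \nabla \psi_\eps$ (more precisely, by applying It\^o's formula to the functional $\o \mapsto \int_{\T^d} |\nabla \psi_\eps(t,x)|^m \, dx$, after differentiating \eqref{eq_backward_STE} in space). Since $u_\eps$ is smooth, all manipulations are classical and only the final bound needs to be tracked carefully. First I would note that the transport noise $\nabla \psi_\eps \circ dW(t)$, being a Stratonovich transport term with spatially constant coefficient, commutes with spatial differentiation and, crucially, is \emph{measure preserving}: after converting to It\^o form one picks up $\tfrac12 \Delta \psi_\eps \, dt$, and testing the equation for $v_\eps := \nabla \psi_\eps$ against $|v_\eps|^{m-2} v_\eps$ the It\^o correction together with the quadratic variation of the martingale term cancel exactly (this is the standard Flandoli--Gubinelli--Priola cancellation), so the noise contributes nothing to $\tfrac{d}{dt}\E[\|v_\eps(t)\|_{L^m}^m]$ beyond a genuine martingale with zero expectation. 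This reduces the problem to the deterministic transport--diffusion inequality
\[
\frac{d}{dt} \E\big[ \|v_\eps(t)\|_{L^m}^m \big] \ \lesssim\ \E\Big[ \int_{\T^d} |v_\eps|^{m-2} v_\eps \cdot (\nabla u_\eps)^T v_\eps \, dx \Big] + (\text{terms vanishing by } \divv u_\eps = 0),
\]
running backward in time from $t = t_f$ where $\|v_\eps(t_f)\|_{L^m}^m = \|\nabla \psi_0\|_{L^m}^m$.

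Next I would estimate the drift commutator term $\int |v_\eps|^{m-2} v_\eps \cdot \nabla u_\eps \cdot v_\eps\,dx$. By H\"older with exponents $(q, \text{something})$ one bounds it by $\|\nabla u_\eps\|_{L^q}\, \||v_\eps|^m\|_{L^{q'}}$ — but $\nabla u_\eps$ is not controlled in $L^q$, only $u_\eps$ is. The remedy, exactly as in \cite[Theorem~2.7]{BFGM19}, is to integrate by parts once more to move the derivative off $u_\eps$ onto $|v_\eps|^{m-2}v_\eps$, producing a term of the shape $\int u_\eps \cdot \nabla(|v_\eps|^{m-2} v_\eps) \cdot v_\eps$, i.e.\ $\sim \int u_\eps |v_\eps|^{m-1} |\nabla v_\eps|$, which one then controls by splitting $|v_\eps|^{m-1} = |v_\eps|^{(m-2)/2} \cdot |v_\eps|^{m/2}$ so as to expose $\nabla(|v_\eps|^{m/2}) \sim |v_\eps|^{(m-2)/2}\nabla v_\eps$; applying H\"older $\|u_\eps\|_{L^q} \|\, |v_\eps|^{m/2}\|_{L^{r_1}} \|\nabla(|v_\eps|^{m/2})\|_{L^2}$ with $\tfrac1q + \tfrac1{r_1} + \tfrac12 = 1$, then Gagliardo--Nirenberg/Sobolev to interpolate $\|\,|v_\eps|^{m/2}\|_{L^{r_1}}$ between $L^2$ and $\dot H^1$ (this is where the subcriticality $\tfrac dq < 1$ of \eqref{eq_main_scaling_cond_app} is used, guaranteeing a strictly sub-quadratic power of $\|\nabla(|v_\eps|^{m/2})\|_{L^2}$), and absorbing the gradient term via Young's inequality into a coercive $-c\|\nabla(|v_\eps|^{m/2})\|_{L^2}^2$ coming from the $\tfrac12\Delta$ diffusion. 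This closes a Gronwall inequality for $y_\eps(t) := \E[\|v_\eps(t)\|_{L^m}^m]$ with a constant depending only on $m$, $d$, $\|u\|_{L^\infty_t L^q_x}$ (via \eqref{eq_uniq_approx_ass}), and the Sobolev/GNS constants — crucially \emph{not} on $\eps$. Integrating Gronwall backward from $t_f$ yields $\sup_{t\in[0,1]} y_\eps(t) \le C \|\nabla\psi_0\|_{L^m}^m \le C\|\psi_0\|_{W^{1,2m}}^m$, which is the claim (the passage $L^m \leftrightarrow W^{1,m}$ and $\|\nabla\psi_0\|_{L^m} \le \|\psi_0\|_{W^{1,2m}}$ being trivial on the torus).

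The main obstacle — and the only genuinely delicate point — is the drift estimate: one must integrate by parts in exactly the right way to never see $\nabla u_\eps$, and then balance the resulting trilinear term against the diffusive coercivity. This is the heart of \cite[Theorem~2.7, Corollary~2.8]{BFGM19}; in the critical case $\tfrac dq = 1$ it fails without a smallness hypothesis on $\|u\|_{L^\infty_t L^q_x}$ (needed to absorb a borderline-quadratic term), which is precisely why the proposition assumes the strict inequalities in \eqref{eq_main_scaling_cond_app}. A secondary technical point is justifying the It\^o-functional computation itself: since $\psi_\eps$ is smooth in $x$ but only a semimartingale in $t$, one should first apply It\^o to $\int |\nabla\psi_\eps|^m \phi\,dx$ for $\phi \in C^\infty$, or work with the finite-dimensional It\^o formula applied to Fourier truncations and pass to the limit — but given the smoothness of $u_\eps$ this is routine and I would merely cite \cite{FGP10} or \cite{Kunita97} for the justification. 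Everything else (the Stratonovich-to-It\^o conversion, the noise cancellation, the $\divv u_\eps = 0$ simplifications) is standard and I would present it compactly, deferring the detailed trilinear estimate to a reference to \cite{BFGM19}.
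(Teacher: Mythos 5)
Your sketch is internally inconsistent at the decisive step, and the inconsistency is fatal. You correctly observe that, after differentiating \eqref{eq_backward_STE} in space and applying It\^o's formula to $\int_{\T^d}|v_\eps|^m\,dx$ with $v_\eps=\nabla\psi_\eps$, the $\tfrac12\Delta v_\eps$ produced by the Stratonovich-to-It\^o conversion and the quadratic-variation correction from the martingale part of $dv_\eps$ \emph{cancel exactly} (indeed, on $\T^d$ the martingale part of $d\int|v_\eps|^m dx$ also vanishes identically, being the integral of a full gradient). This cancellation is precisely the statement that transport noise is measure-preserving and does not dissipate $L^m$-norms: after it the identity reads
\begin{equation*}
\frac{d}{dt}\,\E\!\left[\|\nabla\psi_\eps(t)\|_{L^m}^m\right] \;=\; -\,m\,\E\!\int_{\T^d}|\nabla\psi_\eps|^{m-2}\,\nabla\psi_\eps\cdot(\nabla u_\eps)^T\nabla\psi_\eps\,dx,
\end{equation*}
with \emph{no} coercive term on the left — consistent with the displayed inequality in your own write-up, which has only the commutator on the right. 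But two sentences later you propose to close the Gronwall loop by absorbing the commutator ``via Young's inequality into a coercive $-c\|\nabla(|v_\eps|^{m/2})\|_{L^2}^2$ coming from the $\tfrac12\Delta$ diffusion.'' There is no such diffusion available: it is exactly what you cancelled. Without it, the H\"older/Gagliardo--Nirenberg step that is supposed to exploit $d/q<1$ has nothing to absorb the $\|\nabla(|v_\eps|^{m/2})\|_{L^2}^2$ into, and the commutator cannot be controlled by $\|u_\eps\|_{L^\infty_t L^q_x}$ alone. So the argument does not close; this is not a cosmetic gap but the central difficulty of regularisation by transport noise.

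For comparison, the paper does not attempt a direct energy estimate at all: it refers the proof entirely to \cite[Corollary~3.7]{BFGM19} (built on their Theorem~2.7 and Corollary~2.8), noting only that on the torus several weights and cutoffs can be taken trivial. In that reference the parabolic coercivity enters at a completely different level: one solves an auxiliary \emph{deterministic} backward Kolmogorov equation with drift $u_\eps$ — whose Laplacian is genuine — and uses its $C^{1,\alpha}$ regularity (a Zvonkin-type corrector) to control moments of the Jacobian $\nabla_x\Phi_{t,t_f}$ of the stochastic flow $dX=u_\eps\,dt+dW$; one then transfers to $\psi_\eps$ via the representation $\psi_\eps(t,x)=\psi_0(\Phi_{t,t_f}(x))$, a Cauchy--Schwarz in $\omega$, and the fact that the flow is measure-preserving. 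That Cauchy--Schwarz step is exactly what produces the $\|\psi_0\|_{W^{1,2m}}$ (rather than $\|\psi_0\|_{W^{1,m}}$) on the right-hand side of the claim — a fingerprint of the flow route which a direct energy estimate would not generate. If you want to retain a genuine proof sketch here rather than a bare citation, you should follow that structure: state the moment bound on $\nabla\Phi$, explain how the flow representation and translation invariance yield the estimate, and cite \cite{BFGM19} for the flow bound; a direct $L^m$ energy estimate on $\psi_\eps$ itself cannot succeed.
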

As mentioned before, the proof of this result should be the same as that of \cite[Corollary 3.7]{BFGM19}.

The second main ingredient is a duality formula which follows from relatively abstract arguments, all of which should be valid in our case as well.

\begin{proposition}[Analogue of \cite{BFGM19}, Proposition 3.12]\label{prop_AppB_duality}
    Given $t_{f} \in [0,1]$, $\psi_{0} \in C^{\infty}(\T^{d})$ and $\eps > 0$, and $\psi_{\eps}$ be the smooth solution of the backward stochastic transport equation \eqref{eq_backward_STE}. For some $\o \in \O$, assume that {$\psi_{\eps}(\o) \in C^{1}([0,1];C^{\infty}(\T^{d}))$} and that the identity \eqref{eq_backward_shifted} holds for $\tilde{\psi}_{\eps}(\o) := \psi_{\eps}(t,x-B(t;\o))$ under the assumption \eqref{eq_uniq_approx_ass}. If $\tilde{\rho}(\o)$ is any weak solution of Eq. \eqref{eq_forward_shifted} of class $L^{\infty}_{t} L^{p}_{x}$ corresponding to that $\o$, then we have 
    \begin{equation}\label{eq_duality_formula}
        \int_{\T^{d}} \tilde{\rho}(t_{f};\o) \psi_{0}(\cdot - B(t_{f};\o)) dx = \int_{\T^{d}} \rho_{0} \tilde{\psi}_{\eps}(0;\o) dx + \int_{0}^{t_{f}} \int_{\T^{d}} \tilde{\rho}(s;\o) (\tilde{u}(s;\o) - \tilde{u}_{\eps}(s;\o)) \cdot \nabla \tilde{\psi}_{\eps}(s;\o) dx ds.
    \end{equation}

\end{proposition}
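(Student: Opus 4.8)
The plan is to obtain \eqref{eq_duality_formula} by pairing the weak formulation of the forward shifted equation \eqref{eq_forward_shifted} satisfied by $\tilde{\rho}(\o)$ against the smooth field $\tilde{\psi}_{\eps}(\o)$ solving the backward shifted equation, exactly along the lines of the proof of \cite[Proposition 3.12]{BFGM19}. The conceptual point that makes the argument work despite $\tilde{\rho}$ being merely of class $L^{\infty}_{t}L^{p}_{x}$ is that one never differentiates a product of two low-regularity factors: by hypothesis $\tilde{\psi}_{\eps}(\o)$ lies in $C^{1}([0,1];C^{\infty}(\T^{d}))$, which is precisely the class of admissible test functions in Definition \ref{def_weak_soln_shifted}, so it can simply be plugged into the forward weak formulation, and the DiPerna--Lions commutator that would otherwise appear (if one tried to pair two rough solutions) is entirely bypassed. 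All the analytic work of the uniqueness statement is therefore concentrated in the two auxiliary inputs, Proposition \ref{prop_AppB_approx} and the verification that $\tilde{\psi}_{\eps}$ solves the shifted backward equation, both of which are taken as given in the hypotheses of this proposition.

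Concretely, I would proceed as follows. First, insert $\phi := \tilde{\psi}_{\eps}(\o)$ into item (iii) of part (1) of Definition \ref{def_weak_soln_shifted} for $\tilde{\rho}(\o)$, evaluated at $t=t_{f}$. On the left-hand side, the terminal condition $\psi_{\eps}(t_{f})=\psi_{0}$ gives $\tilde{\psi}_{\eps}(t_{f},x;\o)=\psi_{\eps}(t_{f},x-B(t_{f};\o);\o)=\psi_{0}(x-B(t_{f};\o))$, so it becomes $\int_{\T^{d}}\tilde{\rho}(t_{f};\o)\,\psi_{0}(\cdot-B(t_{f};\o))\,dx$, the left-hand side of \eqref{eq_duality_formula}. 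On the right-hand side one gets $\int_{\T^{d}}\rho_{0}\,\tilde{\psi}_{\eps}(0;\o)\,dx$ together with the integral over $[0,t_{f}]\times\T^{d}$ of $\tilde{\rho}\,\partial_{s}\tilde{\psi}_{\eps}+\tilde{\rho}\,\tilde{u}\cdot\nabla\tilde{\psi}_{\eps}$ (the $\partial_{s}\tilde{\psi}_{\eps}$ term being the one carried by a time-dependent test function in the weak transport identity). Second, I would invoke the hypothesis that $\tilde{\psi}_{\eps}(\o)$ solves the shifted backward equation: since $\psi_{\eps}$ is a \emph{smooth} solution of \eqref{eq_backward_STE} with the mollified drift $u_{\eps}$, the shift $\tilde{\psi}_{\eps}(t,x;\o)=\psi_{\eps}(t,x-B(t;\o);\o)$ removes the Stratonovich term and leaves the pointwise identity $\partial_{s}\tilde{\psi}_{\eps}(s;\o)+\tilde{u}_{\eps}(s;\o)\cdot\nabla\tilde{\psi}_{\eps}(s;\o)=0$ for a.e.\ $s\in[0,t_{f}]$ (the backward counterpart of the It\^{o}--Stratonovich computation carried out in Section \ref{sec_main_results}). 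Substituting $\partial_{s}\tilde{\psi}_{\eps}=-\tilde{u}_{\eps}\cdot\nabla\tilde{\psi}_{\eps}$ collapses the integrand $\tilde{\rho}\,\partial_{s}\tilde{\psi}_{\eps}+\tilde{\rho}\,\tilde{u}\cdot\nabla\tilde{\psi}_{\eps}$ to $\tilde{\rho}\,(\tilde{u}-\tilde{u}_{\eps})\cdot\nabla\tilde{\psi}_{\eps}$, which is exactly the remaining term in \eqref{eq_duality_formula}.

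The remaining checks are routine and pose no real obstacle. For the fixed $\o$ at hand all integrals make sense because $\tilde{\rho}(\o)\in L^{\infty}_{t}L^{p}_{x}$, $\tilde{u}(\o)-\tilde{u}_{\eps}(\o)\in L^{\infty}_{t}L^{q}_{x}$ with $1/p+1/q<1$ by \eqref{eq_main_scaling_cond_app}, and $\nabla\tilde{\psi}_{\eps}(\o)$ is bounded (as $\psi_{\eps}$ is smooth), so the integrand is in $L^{1}([0,t_{f}]\times\T^{d})$; similarly $\tilde{\rho}(\o)\tilde{u}(\o)\in L^{\infty}_{t}L^{1}_{x}$ makes the term $\int\tilde{\rho}\,\tilde{u}\cdot\nabla\tilde{\psi}_{\eps}$ produced by the forward weak formulation meaningful, and $\tilde{\rho}(0;\o)=\rho_{0}$ is the initial condition in \eqref{eq_forward_shifted}. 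The genuinely delicate point is not in this proposition but in its hypotheses: establishing that $\tilde{\psi}_{\eps}(\o)$ belongs to $C^{1}([0,1];C^{\infty}(\T^{d}))$ and solves the shifted backward PDE for a.e.\ $\o$, which rests on the $\o$-wise It\^{o}--Wentzell formula applied to the smooth solution $\psi_{\eps}$ of \eqref{eq_backward_STE} and on the \emph{a priori} bounds of Proposition \ref{prop_AppB_approx}; once those are in hand the present proposition is bookkeeping of boundary data plus one substitution. The assumptions that $1/p+1/q<1$ strictly and $p>1$ are not needed for \eqref{eq_duality_formula} itself; they become essential only afterwards, in the $\eps\downarrow 0$ limit, where one must bound $\int\tilde{\rho}\,(\tilde{u}-\tilde{u}_{\eps})\cdot\nabla\tilde{\psi}_{\eps}$ uniformly in $\eps$ (there the factor $\nabla\tilde{\psi}_{\eps}$ is controlled only in $L^{\infty}_{t}L^{m}_{x}$ in expectation via Proposition \ref{prop_AppB_approx}, not pointwise).
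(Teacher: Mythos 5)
The paper itself does not prove this proposition; it simply states that the duality formula follows from ``relatively abstract arguments'' as in \cite[Proposition~3.12]{BFGM19}, so the comparison is really against that source. Your argument is exactly the standard duality pairing---test the forward weak formulation against the smooth shifted backward solution, use the backward PDE to trade $\partial_s\tilde\psi_\eps$ for $-\tilde u_\eps\cdot\nabla\tilde\psi_\eps$, read off the boundary data from the terminal condition---and it is correct in substance. Three imprecisions in the paper's own statement are worth flagging, all of which you silently correct and which should be pointed out explicitly rather than left implicit. \textbf{(a)} The display in item~(iii) of Definition~\ref{def_weak_soln_shifted} as printed does \emph{not} include the term $\int_0^t\!\int_{\T^d}\tilde\rho\,\partial_s\phi\,dx\,ds$, yet the admitted test functions are time-dependent ($\phi\in C^1([0,1];C^\infty(\T^d))$); the missing term is forced (take $\phi(t,x)=\alpha(t)\varphi(x)$ and compare against the identity for time-independent $\varphi$ to see the formula as printed is inconsistent). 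Your computation uses the corrected formulation, which is the one the authors evidently intend. \textbf{(b)} The regularity hypothesis $\psi_\eps(\o)\in C^1([0,1];C^\infty(\T^d))$ should be phrased about the shifted field $\tilde\psi_\eps$, not about $\psi_\eps$: since $\tilde\psi_\eps(t,x;\o)=\psi_\eps(t,x-B(t;\o);\o)$ and $B(\cdot;\o)$ is only $C^{1/2-\kappa}$ in time, the two fields cannot simultaneously be $C^1_t$ unless $\nabla\psi_\eps\equiv 0$; it is precisely $\tilde\psi_\eps$, as the classical solution of the random transport PDE whose distributional time derivative equals the continuous function $-\tilde u_\eps\cdot\nabla\tilde\psi_\eps$, that gains $C^1_t$ regularity, while $\psi_\eps$ inherits only $C^{1/2-\kappa}_t$. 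You correctly take the hypothesis to mean $\tilde\psi_\eps(\o)\in C^1([0,1];C^\infty(\T^d))$, which is the reading under which the substitution step is licit. \textbf{(c)} The assumption that ``the identity \eqref{eq_backward_shifted} holds for $\tilde\psi_\eps$'' must be understood with $\tilde u_\eps$ in place of $\tilde u$ (as the qualifier ``under the assumption \eqref{eq_uniq_approx_ass}'' suggests and as you assume); otherwise the difference $\tilde u-\tilde u_\eps$ could not appear in the conclusion \eqref{eq_duality_formula}.
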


Now we combine the two ingredients to get the main result of this section.

\begin{proposition}[Analogue of \cite{BFGM19}, Theorem 3.14]
 There exists a full measure set $\O_{0} \subset \O$ such that for all $\o \in \O_{0}$ the following property holds: for every $\rho_{0} \in L^{p}(\T^{d})$, Eq. \eqref{eq_forward_shifted} has at most one weak solution $\tilde{\rho}(\o) \colon [0,1] \times \T^{d} \to \R$ of class $L^{\infty}_{t} L_{x}^{p}$. In particular, pathwise uniqueness holds for the stochastic transport equation \eqref{eq_STE_proper}.
\end{proposition}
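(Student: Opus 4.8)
The plan is to run the duality argument of \cite[Theorem~3.14]{BFGM19}, feeding in Propositions~\ref{prop_AppB_approx} and~\ref{prop_AppB_duality} as the two analytic inputs. By linearity it is enough to prove that, for $\P$-a.e.\ $\o$, the only weak solution of class $L^\infty_t L^p_x$ of the forward shifted equation \eqref{eq_forward_shifted} with $\rho_0 = 0$ is the zero solution: the general case follows by subtracting two solutions, and pathwise uniqueness for \eqref{eq_STE_proper} then follows by composing with $\Psi^{-1}$ exactly as in the proof of Theorem~\ref{thm_main_SPDE}. Fix a final time $t_f$ and a smooth $\psi_0 \in C^\infty(\T^d)$, and take $u_\eps := u * \eta_\eps$ (spatial mollification); by Young's inequality $\|u_\eps\|_{L^\infty_t L^q_x}\le \|u\|_{L^\infty_t L^q_x}$ and $u_\eps \to u$ a.e., so \eqref{eq_uniq_approx_ass} holds and Proposition~\ref{prop_AppB_approx} provides smooth solutions $\psi_\eps$ of \eqref{eq_backward_STE} together with the uniform bound $\sup_t \E\|\psi_\eps(t)\|_{W^{1,m}}^m \le C\|\psi_0\|_{W^{1,2m}}^m$ for every even integer $m$.

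Next I would invoke the duality identity \eqref{eq_duality_formula} with $\rho_0 = 0$: on a full-measure set of $\o$ (those for which $\psi_\eps(\o)$ is smooth and the shifted identity \eqref{eq_backward_shifted} holds for $\tilde\psi_\eps(\o)$, which is the case $\P$-a.s.\ by Proposition~\ref{prop_AppB_approx} and Itô's formula, as in the proof of Theorem~\ref{thm_main_SPDE}),
\begin{equation*}
\int_{\T^d}\tilde\rho(t_f;\o)\,\psi_0(\cdot - B(t_f;\o))\,dx = \int_0^{t_f}\!\!\int_{\T^d}\tilde\rho(s;\o)\,\bigl(\tilde u(s;\o)-\tilde u_\eps(s;\o)\bigr)\cdot\nabla\tilde\psi_\eps(s;\o)\,dx\,ds .
\end{equation*}
The left-hand side does not depend on $\eps$, so it suffices to show the right-hand side tends to $0$ along a sequence $\eps_k\downarrow 0$. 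Since $\Psi$ acts as a translation, $\|(\tilde u-\tilde u_\eps)(s;\o)\|_{L^r_x}=\|(u-u_\eps)(s)\|_{L^r_x}$ and $\|\nabla\tilde\psi_\eps(s;\o)\|_{L^m_x}=\|\nabla\psi_\eps(s;\o)\|_{L^m_x}$, so Hölder's inequality in $x$ with exponents $p$, $r$, $m$ — where $p'<r<q$ and $m$ is an even integer large enough that $\tfrac1p+\tfrac1r+\tfrac1m\le 1$, a choice possible precisely because $\tfrac1p+\tfrac1q<1$ strictly and $p>1$ so that $p'<\infty$ — together with Jensen's inequality and Proposition~\ref{prop_AppB_approx} gives
\begin{equation*}
\E\!\left[\int_0^{t_f}\!\left|\int_{\T^d}\tilde\rho\,(\tilde u-\tilde u_\eps)\cdot\nabla\tilde\psi_\eps\,dx\right|ds\right] \le \|\tilde\rho\|_{L^\infty_\o L^\infty_t L^p_x}\;C^{1/m}\|\psi_0\|_{W^{1,2m}}\int_0^{t_f}\|(u-u_\eps)(s)\|_{L^r_x}\,ds .
\end{equation*}
Because $r<q$, $u_\eps\to u$ a.e.\ and the $L^q$ norms are uniformly bounded, a Vitali/dominated-convergence argument shows $\|(u-u_\eps)(s)\|_{L^r_x}\to0$ for a.e.\ $s$, dominated by a constant, hence the last integral tends to $0$; so the whole expression vanishes as $\eps\to0$. (When $\tilde\rho(\o)$ is merely pathwise bounded, one argues for a.e.\ fixed $\o$ directly, using Hölder in $s$ and extracting an $\o$-dependent subsequence along which $\int_0^{t_f}\|\nabla\psi_\eps(s;\o)\|_{L^m_x}^m\,ds$ stays bounded, via Fatou applied to the uniform bound of Proposition~\ref{prop_AppB_approx}.)

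Writing $E(\o):=\int_{\T^d}\tilde\rho(t_f;\o)\psi_0(\cdot-B(t_f;\o))\,dx$, the displayed identity gives $E(\o)$ equal to the right-hand side at every valid $\eps_k$, hence $\E|E|\le \E\int_0^{t_f}|\int_{\T^d}\tilde\rho(\tilde u-\tilde u_{\eps_k})\cdot\nabla\tilde\psi_{\eps_k}\,dx|\,ds\to0$, so $E=0$ $\P$-a.s.\ (for this fixed $t_f,\psi_0$). Doing this for a countable dense family of times $\{t_f^{(i)}\}\subset[0,1]$ and a countable family $\{\psi_0^{(n)}\}$ dense in $C^0(\T^d)$ (e.g.\ trigonometric polynomials with rational coefficients), and intersecting the countably many resulting full-measure sets, produces a single $\O_0$ of full measure such that, for every $\o\in\O_0$ and every weak solution $\tilde\rho(\o)$ of \eqref{eq_forward_shifted} with $\rho_0=0$, one has $\int\tilde\rho(t_f^{(i)};\o)\psi_0^{(n)}(\cdot-B(t_f^{(i)};\o))\,dx=0$ for all $i,n$; since translation is a homeomorphism of $C^0(\T^d)$ the functions $\psi_0^{(n)}(\cdot-B(t_f^{(i)};\o))$ remain dense, so $\tilde\rho(t_f^{(i)};\o)=0$ in $L^1\supseteq L^p$, and by the weak time-continuity required in Definition~\ref{def_weak_soln_shifted} together with density of $\{t_f^{(i)}\}$ we conclude $\tilde\rho(\o)\equiv0$. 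Un-shifting yields pathwise uniqueness for \eqref{eq_STE_proper}. I expect the main obstacle to be the vanishing of the commutator-type error term $\int_0^{t_f}\!\int\tilde\rho\,(\tilde u-\tilde u_\eps)\cdot\nabla\tilde\psi_\eps$, which is exactly where the a priori regularity estimate of Proposition~\ref{prop_AppB_approx} and the strict inequalities \eqref{eq_main_scaling_cond_app} (and $p>1$) enter essentially; the remainder is exponent and null-set bookkeeping, the one subtle point being that a single null set must serve all initial data, which is handled by reducing to $\rho_0=0$ and using the countable dense families above.
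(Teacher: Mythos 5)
Your argument is substantively the same as the paper's (duality via the approximating backward SPDE, the \emph{a priori} Sobolev bound of Proposition~\ref{prop_AppB_approx}, H\"older to control the commutator-type remainder, and a diagonal argument over countable dense families of $t_f$ and $\psi_0$), but there are two technical deviations worth flagging.

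First, you use a three-way H\"older split $(p,r,m)$ in $x$ with $p'<r<q$ and $m$ large, whereas the paper first applies H\"older in $(t,x)$ with $(p,p')$ to peel off the $\tilde\rho$ factor pathwise, and then estimates $\int_0^{t_f}\!\int|(\tilde u-\tilde u_\eps)\cdot\nabla\tilde\psi_\eps|^{p'}$ with a two-way split $(r,r')$, $r=q/p'$. Both choices are admissible under the strict inequalities in \eqref{eq_main_scaling_cond_app}, and both eventually call Proposition~\ref{prop_AppB_approx} with $m$ a large even integer, so nothing hinges on this.

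Second, and more to the point: your main displayed estimate bounds $\E|E|$ by $\|\tilde\rho\|_{L^\infty_\o L^\infty_t L^p_x}$, which presupposes a \emph{uniform-in-$\o$} bound on $\tilde\rho$; Definition~\ref{def_weak_soln_shifted} only assumes a pathwise $L^\infty_t L^p_x$ bound, since the shifted equation is solved $\o$ by $\o$. You do acknowledge this in the parenthetical and sketch a Fatou-plus-$\o$-dependent-subsequence fix, which is fine — the subsequence depends only on $\psi_\eps(\o)$, not on the solution $\tilde\rho$, so the null set you intersect over $(t_f^{(i)},\psi_0^{(n)})$ indeed serves all solutions at once. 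The paper's route is slightly cleaner: it takes expectation only of the kernel $\int_0^{t_f}\!\int|(\tilde u-\tilde u_\eps)\cdot\nabla\tilde\psi_\eps|^{p'}$, extracts a single ($\o$-independent) subsequence along which this vanishes a.s., and only then multiplies by the pathwise constant $C(\o)=\|\tilde\rho(\o)\|_{L^\infty_t L^p_x}$. This avoids the Fatou detour and the $\o$-dependence of the subsequence entirely. If you promote the parenthetical to the main argument (or switch to the kernel-expectation variant), the proof is complete and correct; as written, the main line as displayed is not quite justified under the stated hypotheses.
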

\begin{proof}[Sketch of proof]
    Since the equation is linear, if we take two solutions $\tilde{\rho}_{1}(\o), \tilde{\rho}_{2}(\o)$ to the same initial condition, their difference $\tilde{\rho}(\o) := \tilde{\rho}_{1}(\o) - \tilde{\rho}_{2}(\o)$ satisfies the same equation but with zero initial condition. From Eq. \eqref{eq_duality_formula} for given $t_{f} \in [0,1]$, $\psi_{0} \in C^{\infty}(\T^{d})$ and $\eps > 0$, we then get that 
    \begin{equation}\label{eq_uniq_proof_dual}
        \int_{\T^{d}} \tilde{\rho}(t_{f};\o) \psi_{0}(\cdot - B(t_{f};\o)) dx = \int_{0}^{t_{f}} \int_{\T^{d}} \tilde{\rho}(s;\o) (\tilde{u}(s;\o) - \tilde{u}_{\eps}(s;\o)) \cdot \nabla \tilde{\psi}_{\eps}(s) dx ds.
    \end{equation}
        
    As a first step, fix $t_{f} \in [0,1]$ and $\psi_{0} \in C^{\infty}(\T^{d})$. Let us observe that by applying the H\"older inequality to the space-time integral, we find (noting that $r = \frac{q}{p'} > 1$ by \eqref{eq_main_scaling_cond_app}, and hence $r' < \infty$)
    \begin{equation}\label{eq_uniq_prime_est}
        \begin{split}
        \E \int_{0}^{t_{f}}\int | (\tilde{u} - \tilde{u}_{\eps}) \cdot \nabla \tilde{\psi}_{\eps} |^{p'} dx ds  &\leq \E \left[ \left( \int_{0}^{t_{f}}\int | \tilde{u} - \tilde{u}_{\eps} |^{q} dx ds \right)^{1/r} \left( \int_{0}^{t_{f}}\int | \nabla \tilde{\psi}_{\eps} |^{p' r'} dx ds \right)^{1/r'} \right] \\
        &=  \left( \int_{0}^{t_{f}}\int | u - u_{\eps} |^{q} dx ds \right)^{1/r} \E \left[ \left( \int_{0}^{t_{f}}\int | \nabla \tilde{\psi}_{\eps} |^{p' r'} dx ds \right)^{1/r'} \right] \\
        &\overset{\eps \to 0}{\longrightarrow} 0, 
        \end{split}
    \end{equation}
    where we used that the first term on the right-hand side converges by \eqref{eq_uniq_approx_ass}, whereas the second term is bounded by a constant $C = C_{\psi_{0},t_{f}}$ by Proposition \ref{prop_AppB_approx} for $m =2\lceil p'r' \rceil < \infty$.
    This implies the existence of a set of full measure $\O_{t_{f},\psi_{0}}$ and a subsequence of $(\eps_{n})_{n}$ such that the convergence in \eqref{eq_uniq_prime_est} holds pointwise in $\o$ for all $\o \in \O_{t_{f},\psi_{0}}$. 
    
    This, in turn, implies that for all $\o \in \O_{t_{f},\psi_{0}}$ and a constant $C(\o) := \| \tilde{\rho}(\o) \|_{L^{\infty}_{t} L_{x}^{p}} < \infty$ (note the $\o$-dependence and that we are not assuming a uniform bound in $\o$), we find for the right-hand side of Eq. \ref{eq_uniq_proof_dual}
    \begin{align*}
        &\left| \int_{0}^{t_{f}}\int \tilde{\rho}(\o) (\tilde{u}(\o) - \tilde{u}_{\eps}(\o)) \cdot \nabla \tilde{\psi}_{\eps}(\o) dxds \right| \\
        &\leq \left( \int_{0}^{t_{f}} \int |\tilde{\rho}(\o)|^{p} dxds \right)^{1/p} \left( \int_{0}^{t_{f}}\int | (\tilde{u}(\o) - \tilde{u}_{\eps}(\o)) \cdot \nabla \tilde{\psi}_{\eps}(\o) |^{p'} dx ds  \right)^{1/p'} \\
        &\leq C(\o)\left( \int_{0}^{t_{f}}\int | (\tilde{u}(\o) - \tilde{u}_{\eps}(\o)) \cdot \nabla \tilde{\psi}_{\eps}(\o) |^{p'} dx ds  \right)^{1/p'} \overset{n \to \infty}{\longrightarrow} 0.
    \end{align*}
    The above proof worked for fixed $\psi_{0} \in C^{\infty}(\T^{d})$ and $t_{f} \in [0,1]$ and produced a set of full measure $\O_{t_{f},\psi_{0}}$. 
    A careful diagonal argument using separability to find countable, dense subsets $\mathcal{D}_{t} \subset [0,1]$ and $\mathcal{D} \subset C^{\infty}(\T^{d})$ (with respect to the $L^{p'}$-norm)
    then yields the set $\O_{0}$ as the intersection of the countably many sets $\O_{t_{f},\psi_{0}}$. Further details are given in the proof of \cite[Theorem 3.14]{BFGM19}. 
    
   Thus we have shown that for $\o \in \O_{0}$, $t_{f} \in \mathcal{D}_{t}$,
    \begin{equation}\label{eq_uniq_fundlemma}
        \int_{\T^{d}} \tilde{\rho}(t_{f};\o) \psi_{0}(\cdot -B(t_{f};\o)) dx = 0 \quad \forall \psi_{0} \in \mathcal{D}.
    \end{equation}
    By the fundamental lemma of the calculus of variations, we conclude that $\tilde{\rho}(t_{f};\o) = 0$. Recall that $\tilde{\rho}$ was continuous as a function of time with values in distributions. This, together with the denseness of $\mathcal{D}_{t} \subset [0,1]$, implies
    \begin{align*}
        \tilde{\rho}(\o) = 0
    \end{align*}
    for $\o \in \O_{0}$, which yields the desired uniqueness. 
\end{proof}

\section*{Acknowledgements}
A.S. gratefully acknowledges the support by the German Research Foundation (DFG) through the SFB 1283 and through the Walter Benjamin Programme, Project number 507913792. The authors would further like to thank Franco Flandoli, Guy Foghem, Chengcheng Ling, Mario Maurelli and Stefano Spirito for helpful discussions. 

 \bibliographystyle{plain}
\bibliography{refs-1}

\def\ocirc#1{\ifmmode\setbox0=\hbox{$#1$}\dimen0=\ht0 \advance\dimen0
  by1pt\rlap{\hbox to\wd0{\hss\raise\dimen0
  \hbox{\hskip.2em$\scriptscriptstyle\circ$}\hss}}#1\else {\accent"17 #1}\fi}
\begin{thebibliography}{10}

\bibitem{Aizenman78}
Michael Aizenman.
\newblock On vector fields as generators of flows: a counterexample to
  {N}elson's conjecture.
\newblock {\em Ann. of Math. (2)}, 107(2):287--296, 1978.

\bibitem{Ambrosio04}
Luigi Ambrosio.
\newblock Transport equation and {C}auchy problem for {$BV$} vector fields.
\newblock {\em Invent. Math.}, 158(2):227--260, 2004.

\bibitem{AF11}
Stefano Attanasio and Franco Flandoli.
\newblock Renormalized solutions for stochastic transport equations and the
  regularization by bilinear multiplication noise.
\newblock {\em Comm. Partial Differential Equations}, 36(8):1455--1474, 2011.

\bibitem{BFGM19}
Lisa Beck, Franco Flandoli, Massimiliano Gubinelli, and Mario Maurelli.
\newblock Stochastic {ODE}s and stochastic linear {PDE}s with critical drift:
  regularity, duality and uniqueness.
\newblock {\em Electron. J. Probab.}, 24:Paper No. 136, 72, 2019.

\bibitem{Berkemeier22}
Stefanie~Elisabeth Berkemeier.
\newblock On the 3{D} {N}avier--{S}tokes equations with a linear multiplicative
  noise and prescribed energy.
\newblock {\em arXiv preprint}, \url{https://arxiv.org/abs/2212.11257}, 2022.

\bibitem{Bianchini:2017vf}
Stefano Bianchini and Paolo Bonicatto.
\newblock A uniqueness result for the decomposition of vector fields in
  {$\mathbb R^d$}.
\newblock {\em Invent. Math.}, 220(1):255--393, 2020.

\bibitem{bianchini1}
Stefano Bianchini, Maria Colombo, Gianluca Crippa, and Laura Spinolo.
\newblock Optimality of integrability estimates for advection–diffusion
  equations.
\newblock {\em NODEA}, 24(33), 2017.

\bibitem{BFH20}
Dominic Breit, Eduard Feireisl, and Martina Hofmanov\'{a}.
\newblock On solvability and ill-posedness of the compressible {E}uler system
  subject to stochastic forces.
\newblock {\em Anal. PDE}, 13(2):371--402, 2020.

\bibitem{brue2021positive}
Elia Bru{\'e}, Maria Colombo, and Camillo De~Lellis.
\newblock Positive solutions of transport equations and classical nonuniqueness
  of characteristic curves.
\newblock {\em Archive for Rational Mechanics and Analysis}, 240(2):1055--1090,
  2021.

\bibitem{BDLSV18}
Tristan Buckmaster, Camillo de~Lellis, L\'{a}szl\'{o} Sz\'{e}kelyhidi, Jr., and
  Vlad Vicol.
\newblock Onsager's conjecture for admissible weak solutions.
\newblock {\em Comm. Pure Appl. Math.}, 72(2):229--274, 2019.

\bibitem{novack2}
Tristan Buckmaster, Nader Masmoudi, Matthew Novack, and Vlad Vicol.
\newblock Intermittent convex integration for the 3d euler equations.
\newblock {\em Ann. of Math. Studies}, 217, 2023.

\bibitem{BV19b}
Tristan Buckmaster and Vlad Vicol.
\newblock Convex integration and phenomenologies in turbulence.
\newblock {\em EMS Surv. Math. Sci.}, 6(1-2):173--263, 2019.

\bibitem{BV19a}
Tristan Buckmaster and Vlad Vicol.
\newblock Nonuniqueness of weak solutions to the {N}avier-{S}tokes equation.
\newblock {\em Ann. of Math. (2)}, 189(1):101--144, 2019.

\bibitem{CDZ22}
Weiquan Chen, Dong Zhao, and Xiangchan Zhu.
\newblock Sharp non-uniqueness of solutions to stochastic 3{D}
  {N}avier--{S}tokes equations.
\newblock {\em arXiv preprint}, \url{https://arxiv.org/abs/2208.08321}, 2022.

\bibitem{CL21}
Alexey Cheskidov and Xiaoyutao Luo.
\newblock Nonuniqueness of weak solutions for the transport equation at
  critical space regularity.
\newblock {\em Ann. PDE}, 7(1):Paper No. 2, 45, 2021.

\bibitem{CL22}
Alexey Cheskidov and Xiaoyutao Luo.
\newblock Extreme temporal intermittency in the linear {S}obolev transport:
  almost smooth nonunique solutions.
\newblock {\em arXiv preprint}, \url{https://arxiv.org/abs/2204.08950}, 2022.

\bibitem{CFF19}
Elisabetta Chiodaroli, Eduard Feireisl, and Franco Flandoli.
\newblock Ill posedness for the full {E}uler system driven by multiplicative
  white noise.
\newblock {\em arXiv preprint}, \url{https://arxiv.org/abs/1904.07977}, 2019.

\bibitem{Crippa1}
Gianluca Crippa, Camilla Nobili, Christian Seis, and Stefano Spirito.
\newblock {E}ulerian and {L}agrangian solutions to the continuity and {E}uler
  equations with {$L^1$} vorticity.
\newblock {\em Siam J. Math. Anal.}, 49(5):3973–3998, 2017.

\bibitem{DS17}
Sara Daneri and L\'{a}szl\'{o} Sz\'{e}kelyhidi, Jr.
\newblock Non-uniqueness and h-principle for {H}\"{o}lder-continuous weak
  solutions of the {E}uler equations.
\newblock {\em Arch. Ration. Mech. Anal.}, 224(2):471--514, 2017.

\bibitem{DLS09}
Camillo De~Lellis and L\'{a}szl\'{o} Sz\'{e}kelyhidi, Jr.
\newblock The {E}uler equations as a differential inclusion.
\newblock {\em Ann. of Math. (2)}, 170(3):1417--1436, 2009.

\bibitem{Depauw03}
Nicolas Depauw.
\newblock Non-unicit\'{e} du transport par un champ de vecteurs presque {BV}.
\newblock In {\em Seminaire: \'{E}quations aux {D}\'{e}riv\'{e}es {P}artielles,
  2002--2003}, S\'{e}min. \'{E}qu. D\'{e}riv. Partielles, pages Exp. No. XIX,
  9. \'{E}cole Polytech., Palaiseau, 2003.

\bibitem{DPL89}
Ronald~J. DiPerna and Pierre-Louis Lions.
\newblock Ordinary differential equations, transport theory and {S}obolev
  spaces.
\newblock {\em Invent. Math.}, 98(3):511--547, 1989.

\bibitem{FF11}
Ennio Fedrizzi and Franco Flandoli.
\newblock Pathwise uniqueness and continuous dependence of {SDE}s with
  non-regular drift.
\newblock {\em Stochastics}, 83(3):241--257, 2011.

\bibitem{FF13}
Ennio Fedrizzi and Franco Flandoli.
\newblock Noise prevents singularities in linear transport equations.
\newblock {\em J. Funct. Anal.}, 264(6):1329--1354, 2013.

\bibitem{FNO18}
Ennio Fedrizzi, Wladimir Neves, and Christian Olivera.
\newblock On a class of stochastic transport equations for {$L^2_{\rm loc}$}
  vector fields.
\newblock {\em Ann. Sc. Norm. Super. Pisa Cl. Sci. (5)}, 18(2):397--419, 2018.

\bibitem{Figalli08}
Alessio Figalli.
\newblock Existence and uniqueness of martingale solutions for {SDE}s with
  rough or degenerate coefficients.
\newblock {\em J. Funct. Anal.}, 254(1):109--153, 2008.

\bibitem{FGP10}
Franco Flandoli, Massimiliano Gubinelli, and Enrico Priola.
\newblock Well-posedness of the transport equation by stochastic perturbation.
\newblock {\em Invent. Math.}, 180(1):1--53, 2010.

\bibitem{Frisch68}
Uriel Frisch.
\newblock Wave propagation in random media.
\newblock In {\em Probabilistic {M}ethods in {A}pplied {M}athematics, {V}ol.
  1}, pages 75--198. Academic Press, New York, 1968.

\bibitem{Funaki79}
Tadahisa Funaki.
\newblock Construction of a solution of random transport equation with boundary
  condition.
\newblock {\em J. Math. Soc. Japan}, 31(4):719--744, 1979.

\bibitem{giri20233}
Vikram Giri, Hyunju Kwon, and Matthew Novack.
\newblock The {$L^3$}-based strong {O}nsager theorem.
\newblock {\em arXiv preprint}, \url{https://arxiv.org/abs/2305.18509}, 2023.

\bibitem{giri2023wavelet}
Vikram Giri, Hyunju Kwon, and Matthew Novack.
\newblock A wavelet-inspired {$L^3$}-based convex integration framework for the
  {E}uler equations.
\newblock {\em arXiv preprint}, \url{https://arxiv.org/abs/2305.18142}, 2023.

\bibitem{giri2021non}
Vikram Giri and Massimo Sorella.
\newblock Non-uniqueness of integral curves for autonomous {H}amiltonian vector
  fields.
\newblock {\em Differential Integral Equations}, 35(7-8):411--436, 2022.

\bibitem{Gromov86}
Mikhael Gromov.
\newblock {\em Partial differential relations}, volume~9 of {\em Ergebnisse der
  Mathematik und ihrer Grenzgebiete (3) [Results in Mathematics and Related
  Areas (3)]}.
\newblock Springer-Verlag, Berlin, 1986.

\bibitem{HLP22}
Martina Hofmanov\'{a}, Theresa Lange, and Umberto Pappalettera.
\newblock Global existence and non-uniqueness of 3{D} {E}uler equations
  perturbed by transport noise.
\newblock {\em arXiv preprint}, \url{https://arxiv.org/abs/2212.12217}, 2022.

\bibitem{HZZ19}
Martina Hofmanov\'{a}, Rongchan Zhu, and Xiangchan Zhu.
\newblock Non-uniqueness in law of stochastic 3{D} {N}avier--{S}tokes
  equations.
\newblock {\em J. Eur. Math. Soc.}, to appear, 2019.

\bibitem{HZZ22b}
Martina Hofmanov\'{a}, Rongchan Zhu, and Xiangchan Zhu.
\newblock A class of supercritical/critical singular stochastic {PDE}s:
  existence, non-uniqueness, non-{G}aussianity, non-unique ergodicity.
\newblock {\em arXiv preprint}, \url{https://arxiv.org/abs/2205.13378}, 2022.

\bibitem{HZZ22a}
Martina Hofmanov\'{a}, Rongchan Zhu, and Xiangchan Zhu.
\newblock Non-unique ergodicity for deterministic and stochastic 3{D}
  {N}avier--{S}tokes and euler equations.
\newblock {\em arXiv preprint}, \url{https://arxiv.org/abs/2208.08290}, 2022.

\bibitem{HZZ22}
Martina Hofmanov\'{a}, Rongchan Zhu, and Xiangchan Zhu.
\newblock On ill- and well-posedness of dissipative martingale solutions to
  stochastic 3{D} {E}uler equations.
\newblock {\em Comm. Pure Appl. Math.}, 75(11):2446--2510, 2022.

\bibitem{HZZ21b}
Martina Hofmanov\'{a}, Rongchan Zhu, and Xiangchan Zhu.
\newblock {Global {E}xistence and {N}on-{U}niqueness for 3{D}
  {N}avier--{S}tokes {E}quations with {S}pace-{T}ime {W}hite {N}oise}.
\newblock {\em Arch. Ration. Mech. Anal.}, 247:46, 2023.

\bibitem{HZZ21a}
Martina Hofmanov\'{a}, Rongchan Zhu, and Xiangchan Zhu.
\newblock Global-in-time probabilistically strong and {M}arkov solutions to
  stochastic 3{D} {N}avier-{S}tokes equations: existence and nonuniqueness.
\newblock {\em Ann. Probab.}, 51(2):524--579, 2023.

\bibitem{Isett18}
Philip Isett.
\newblock A proof of {O}nsager's conjecture.
\newblock {\em Ann. of Math. (2)}, 188(3):871--963, 2018.

\bibitem{KS91}
Ioannis Karatzas and Steven~E. Shreve.
\newblock {\em Brownian motion and stochastic calculus}, volume 113 of {\em
  Graduate Texts in Mathematics}.
\newblock Springer-Verlag, New York, second edition, 1991.

\bibitem{Keller62}
Joseph~B. Keller.
\newblock Wave propagation in random media.
\newblock In {\em Proc. {S}ympos. {A}ppl. {M}ath., {V}ol. {XIII}}, pages
  227--246. American Mathematical Society, Providence, R.I., 1962.

\bibitem{KY22}
Ujjwal Koley and Kazuo Yamazaki.
\newblock Non-uniqueness in law of transport-diffusion equation forced by
  random noise.
\newblock {\em arXiv preprint}, \url{https://arxiv.org/abs/2203.13456}, 2022.

\bibitem{KR05}
Nicolai~V. Krylov and Michael R\"{o}ckner.
\newblock Strong solutions of stochastic equations with singular time dependent
  drift.
\newblock {\em Probab. Theory Related Fields}, 131(2):154--196, 2005.

\bibitem{Krylov23}
N.V. Krylov.
\newblock On weak solutions of time inhomogeneous {I}t\^{o}'s equations with
  {VMO} diffusion and {M}orrey drift.
\newblock {\em arXiv preprint}, \url{https://arxiv.org/abs/2303.11238}, 2023.

\bibitem{Kunita84b}
Hiroshi Kunita.
\newblock First order stochastic partial differential equations.
\newblock In {\em Stochastic analysis ({K}atata/{K}yoto, 1982)}, volume~32 of
  {\em North-Holland Math. Library}, pages 249--269. North-Holland, Amsterdam,
  1984.

\bibitem{Kunita84a}
Hiroshi Kunita.
\newblock Stochastic differential equations and stochastic flows of
  diffeomorphisms.
\newblock In {\em \'{E}cole d'\'{e}t\'{e} de probabilit\'{e}s de
  {S}aint-{F}lour, {XII}---1982}, volume 1097 of {\em Lecture Notes in Math.},
  pages 143--303. Springer, Berlin, 1984.

\bibitem{Kunita97}
Hiroshi Kunita.
\newblock {\em Stochastic flows and stochastic differential equations},
  volume~24 of {\em Cambridge Studies in Advanced Mathematics}.
\newblock Cambridge University Press, Cambridge, 1997.
\newblock Reprint of the 1990 original.

\bibitem{LBL04}
Claude Le~Bris and Pierre-Louis Lions.
\newblock Renormalized solutions of some transport equations with partially
  {$W^{1,1}$} velocities and applications.
\newblock {\em Ann. Mat. Pura Appl. (4)}, 183(1):97--130, 2004.

\bibitem{LBL08}
Claude Le~Bris and Pierre-Louis Lions.
\newblock Existence and uniqueness of solutions to {F}okker-{P}lanck type
  equations with irregular coefficients.
\newblock {\em Comm. Partial Differential Equations}, 33(7-9):1272--1317, 2008.

\bibitem{LR15}
Wei Liu and Michael R\"{o}ckner.
\newblock {\em Stochastic partial differential equations: an introduction}.
\newblock Universitext. Springer, Cham, 2015.

\bibitem{LZ22}
Huaxiang L\"u and Xiangchan Zhu.
\newblock Global-in-time probabilistically strong solutions to stochastic
  power-law equations: existence and non-uniqueness.
\newblock {\em arXiv preprint}, \url{https://arxiv.org/abs/2209.02531}, 2022.

\bibitem{Lunardi18}
Alessandra Lunardi.
\newblock {\em Interpolation theory}, volume~16 of {\em Appunti. Scuola Normale
  Superiore di Pisa (Nuova Serie) [Lecture Notes. Scuola Normale Superiore di
  Pisa (New Series)]}.
\newblock Edizioni della Normale, Pisa, 2018.
\newblock Third edition [of MR2523200].

\bibitem{Maurelli11}
Mario Maurelli.
\newblock Wiener chaos and uniqueness for stochastic transport equation.
\newblock {\em C. R. Math. Acad. Sci. Paris}, 349(11-12):669--672, 2011.

\bibitem{MS20}
Stefano Modena and Gabriel Sattig.
\newblock Convex integration solutions to the transport equation with full
  dimensional concentration.
\newblock {\em Ann. Inst. H. Poincar\'{e} C Anal. Non Lin\'{e}aire},
  37(5):1075--1108, 2020.

\bibitem{MS18}
Stefano Modena and L\'{a}szl\'{o} Sz\'{e}kelyhidi, Jr.
\newblock Non-uniqueness for the transport equation with {S}obolev vector
  fields.
\newblock {\em Ann. PDE}, 4(2):Paper No. 18, 38, 2018.

\bibitem{MS19}
Stefano Modena and L\'{a}szl\'{o} Sz\'{e}kelyhidi, Jr.
\newblock Non-renormalized solutions to the continuity equation.
\newblock {\em Calc. Var. Partial Differential Equations}, 58(6):Paper No. 208,
  30, 2019.

\bibitem{MNP15}
Salah-Eldin~A. Mohammed, Torstein~K. Nilssen, and Frank~N. Proske.
\newblock Sobolev differentiable stochastic flows for {SDE}s with singular
  coefficients: applications to the transport equation.
\newblock {\em Ann. Probab.}, 43(3):1535--1576, 2015.

\bibitem{MS03}
Stefan M\"{u}ller and Vladim\'{i}r \v{S}ver\'{a}k.
\newblock Convex integration for {L}ipschitz mappings and counterexamples to
  regularity.
\newblock {\em Ann. of Math. (2)}, 157(3):715--742, 2003.

\bibitem{Nam20}
Kyeongsik Nam.
\newblock Stochastic differential equations with critical drifts.
\newblock {\em Stochastic Process. Appl.}, 130(9):5366--5393, 2020.

\bibitem{Nash54}
John Nash.
\newblock {$C^1$} isometric imbeddings.
\newblock {\em Ann. of Math. (2)}, 60:383--396, 1954.

\bibitem{NO15}
Wladimir Neves and Christian Olivera.
\newblock Wellposedness for stochastic continuity equations with
  {L}adyzhenskaya-{P}rodi-{S}errin condition.
\newblock {\em NoDEA Nonlinear Differential Equations Appl.}, 22(5):1247--1258,
  2015.

\bibitem{NO16}
Wladimir Neves and Christian Olivera.
\newblock Stochastic continuity equations---a general uniqueness result.
\newblock {\em Bull. Braz. Math. Soc. (N.S.)}, 47(2):631--639, 2016.

\bibitem{novack1}
Matthew Novack and Vlad Vicol.
\newblock An intermittent {O}nsager theorem.
\newblock {\em Invent. Math.}, 2023.

\bibitem{Ogawa73}
Shigeyoshi Ogawa.
\newblock A partial differential equation with the white noise as a
  coefficient.
\newblock {\em Z. Wahrscheinlichkeitstheorie und Verw. Gebiete}, 28:53--71,
  1973/74.

\bibitem{Ogawa78}
Shigeyoshi Ogawa.
\newblock Remarks on the {$B$}-shifts of generalized random processes.
\newblock In {\em Proceedings of the {I}nternational {S}ymposium on
  {S}tochastic {D}ifferential {E}quations ({R}es. {I}nst. {M}ath. {S}ci.,
  {K}yoto {U}niv., {K}yoto, 1976)}, pages 327--339. Wiley, New
  York-Chichester-Brisbane, 1978.

\bibitem{Pappalettera23}
Umberto Pappalettera.
\newblock Global existence and non-uniqueness for the {C}auchy problem
  associated to 3{D} {N}avier-{S}tokes equations perturbed by transport noise.
\newblock {\em arXiv preprint}, \url{https://arxiv.org/abs/2303.02363}, 2023.

\bibitem{pitcho2021almost}
Jules Pitcho and Massimo Sorella.
\newblock Almost everywhere non-uniqueness of integral curves for
  divergence-free sobolev vector fields.
\newblock {\em arXiv preprint}, \url{https://arxiv.org/abs/2108.03194}, 2021.

\bibitem{RS23}
Marco Rehmeier and Andre Schenke.
\newblock Nonuniqueness in law for stochastic hypodissipative {N}avier-{S}tokes
  equations.
\newblock {\em Nonlinear Anal.}, 227:Paper No. 113179, 37, 2023.

\bibitem{RY99}
Daniel Revuz and Marc Yor.
\newblock {\em Continuous martingales and {B}rownian motion}, volume 293 of
  {\em Grundlehren der Mathematischen Wissenschaften [Fundamental Principles of
  Mathematical Sciences]}.
\newblock Springer-Verlag, Berlin, third edition, 1999.

\bibitem{RZ23}
Michael R\"{o}ckner and Guohuan Zhao.
\newblock S{DE}s with critical time dependent drifts: {W}eak solutions.
\newblock {\em Bernoulli}, 29(1):757--784, 2023.

\bibitem{Tartar79}
Luc Tartar.
\newblock Compensated compactness and applications to partial differential
  equations.
\newblock In {\em Nonlinear analysis and mechanics: {H}eriot-{W}att
  {S}ymposium, {V}ol. {IV}}, volume~39 of {\em Res. Notes in Math.}, pages
  136--212. Pitman, Boston, Mass.-London, 1979.

\bibitem{Taylor3}
Michael~E. Taylor.
\newblock {\em Partial differential equations {III}. {N}onlinear equations},
  volume 117 of {\em Applied Mathematical Sciences}.
\newblock Springer, New York, second edition, 2011.

\bibitem{WLW17}
Jinlong Wei, Guangying Lv, and Jiang-Lun Wu.
\newblock On weak solutions of stochastic differential equations with sharp
  drift coefficients.
\newblock {\em arXiv preprint}, \url{https://arxiv.org/abs/1711.05058}, 2017.

\bibitem{Yamazaki21}
Kazuo Yamazaki.
\newblock Non-uniqueness in law of three-dimensional magnetohydrodynamics
  system forced by random noise.
\newblock {\em arXiv preprint}, \url{https://arxiv.org/abs/2109.07015}, 2021.

\bibitem{Yamazaki22B}
Kazuo Yamazaki.
\newblock Non-uniqueness in law for the {B}oussinesq system forced by random
  noise.
\newblock {\em Calc. Var. Partial Differential Equations}, 61(5):Paper No. 177,
  65, 2022.

\bibitem{Yamazaki22S}
Kazuo Yamazaki.
\newblock Non-uniqueness in law of the two-dimensional surface
  quasi-geostrophic equations forced by random noise.
\newblock {\em arXiv preprint}, \url{https://arxiv.org/abs/2208.05673}, 2022.

\bibitem{Yamazaki22L}
Kazuo Yamazaki.
\newblock Remarks on the non-uniqueness in law of the {N}avier-{S}tokes
  equations up to the {J}.-{L}. {L}ions' exponent.
\newblock {\em Stochastic Process. Appl.}, 147:226--269, 2022.

\end{thebibliography}
 \end{document}